\DeclareMathAlphabet{\mathpzc}{OT1}{pzc}{m}{it}
\newtheorem{definition}{Definition}
    \newtheorem{Theorem}{Theorem}[section]
\newtheorem{Lemma}[Theorem]{Lemma}
\newtheorem{corollary}[Theorem]{Corollary}
\theoremstyle{remark}
\newtheorem{remark}[Theorem]{Remark}
\newcommand{\RR}{\mathbb{R}}
\newcommand{\EE}{\mathbb{E}}
\newcommand{\MM}{\mathcal{M}}
\newcommand{\DD}{\mathcal{D}}
\newcommand{\OO}{\mathcal{O}}
\newcommand{\UU}{\mathcal{U}}
\newcommand{\WW}{\mathcal{W}}
\newcommand{\KK}{\mathcal{K}}
\newcommand{\LL}{\mathcal{L}}
\newcommand{\C}{\mathcal{C}}
\newcommand{\HH}{\mathcal{H}}
\newcommand{\PP}{\mathcal{P}}
\newcommand{\eps}{\varepsilon}
\newcommand{\defeq}{\stackrel{\bigtriangleup}{=}}
\newcommand{\ceil}[1]{\left\lceil #1 \right\rceil}
\newcommand{\abs}[1]{\left\vert #1 \right\vert}
\newcommand{\norm}[1]{\left\Vert #1 \right\Vert}
\newcommand{\dist}{\operatorname{dist}}
\newcommand{\wtilde}[1]{\widetilde{#1}}
\newcommand{\lrangle}[1]{\langle #1 \rangle}
\newcommand{\lrbrackets}[1]{\left( #1 \right)}
\newcommand{\emphbrackets}[1]{\emph{(}#1\emph{)}}
\newcommand{\maxangle}{\angle_{\max}}
\newcommand{\stau}{\tau}
\newcommand{\sM}{{M}}
\DeclareMathOperator*{\argmin}{\arg\!\min}
\DeclareMathOperator{\vol}{Vol}
\DeclareMathOperator{\Ima}{Im}
\DeclareMathOperator{\cov}{cov}
\newrobustcmd*{\mysquare}[1]{\tikz{\filldraw[draw=#1,fill=#1] (0,0)
rectangle (0.2cm,0.2cm);}}
\newrobustcmd*{\mycircle}[1]{\tikz{\filldraw[draw=#1,fill=#1] (0,0) circle [radius=0.1cm];}}
\newlist{myEnum}{enumerate}{9}
\setlist[myEnum,1]{label=\arabic*.}
\setlist[myEnum,2]{label=(\alph*)}
\setlist[myEnum,3]{label=\roman*.}
\setlist[myEnum,4]{label=\Alph*.}
\setlist[myEnum,5]{label=(\arabic*)}
\setlist[myEnum,6]{label=(\Roman*)}
\setlist[myEnum,7]{label=(\Alph*)}
\setlist[myEnum,8]{label=(\roman*)}
\setlist[myEnum,9]{label=(\arabic*)}
\definecolor{myPurple}{rgb}{0.4940, 0.1840, 0.5560}
\definecolor{myGreen}{rgb}{0, 0.5, 0}
\tikzstyle{decision} = [diamond, aspect=2, draw, fill=yellow!0,
\tikzstyle{block} = [rectangle, draw, fill=blue!20,
\tikzstyle{wblock} = [rectangle, draw, fill=blue!20,
\tikzstyle{line} = [draw, -latex']
\tikzstyle{bbblock} = [rectangle, draw, fill=blue!20,
\tikzstyle{xbblock} = [rectangle, draw, fill=blue!20,
\tikzstyle{wwblock} = [rectangle, draw, fill=blue!20,
\tikzstyle{Iwwblock} = [rectangle, draw, fill=blue!50,
\tikzstyle{line} = [draw, -latex']
\tikzstyle{Iblock} = [rectangle, draw, fill=blue!40,
\tikzstyle{Iwblock} = [rectangle, draw, fill=blue!40,
\tikzstyle{Rblock} = [rectangle, draw, fill=red!20,
\tikzstyle{Rbbblock} = [rectangle, draw, fill=red!20,
\newcommand{\dd}{2cm}
\tikzstyle{innerWhite} = [-, white,line width=1mm, shorten >= 4.5pt]
\tikzstyle{innerWhiteEq} = [-, white,line width=1mm, shorten >= 0pt]
\tikzstyle{imp} = [-{implies}, double, double distance=1.1mm, line width=0.3mm]
\tikzstyle{Rimp} = [-{implies}, red,double, double distance=1.1mm, line width=0.3mm]
\tikzstyle{Gimp} = [-{implies}, green,double, double distance=1.1mm, line width=0.3mm]
\tikzstyle{Req} = [=, red,double, double distance=1.1mm, line width=0.3mm]
\tikzstyle{eq} = [=,  double distance=1.1mm, line width=0.4mm]
\tikzstyle{Geq} = [=, green, double distance=1.1mm, line width=0.4mm]
\newcommand{\blind}{0}
\begin{document}
\title{Estimation of Local Geometric Structure on Manifolds from Noisy Data}
\author[1]{Yariv Aizenbud\footnote{authors contributed equally}}
\author[2]{Barak Sober$^*$}
\affil[1]{Department of Mathematics, Tel Aviv University}
\affil[2]{Department of Statistics and Data Science, Center of Digital Humanities, The Hebrew University of Jerusalem}

\maketitle

\begin{abstract}
A common observation in data-driven applications is that high-dimensional data have a low intrinsic dimension, at least locally. 
In this work, we consider the problem of point estimation for manifold-valued data.
Namely, given a finite set of noisy samples of $\mathcal{M}$, a $d$ dimensional submanifold of $\mathbb{R}^D$, and a point $r$ near the manifold we aim to project $r$ onto the manifold.
Assuming that the data was sampled uniformly from a tubular neighborhood of a $k$-times smooth boundaryless and compact manifold, we present an algorithm that takes $r$ from this neighborhood and outputs $\hat p_n\in \mathbb{R}^D$, and $\widehat{T_{\hat p_n}\mathcal{M}}$ an element in the Grassmannian $Gr(d, D)$.
We prove that as the number of samples $n\to\infty$, the point $\hat p_n$ converges to $\mathbf{p}\in \mathcal{M}$, the projection of $r$ onto $\MM$, and $\widehat{T_{\hat p_n}\mathcal{M}}$ converges to $T_{\mathbf{p}}\mathcal{M}$ (the tangent space at that point) with high probability.  
Furthermore, we show that $\hat p_n$ approaches the manifold with an asymptotic rate of $n^{-\frac{k}{2k + d}}$, and that $\hat p_n, \widehat{T_{\hat p_n}\MM}$ approach $\mathbf{p}$ and $T_{\mathbf{p}}\MM$ correspondingly with asymptotic rates of $n^{-\frac{k-1}{2k + d}}$.
\end{abstract}
\smallskip

\section{Introduction}
Differentiable manifolds are an indispensable language in modern physics and mathematics. 
As such, there is a plethora of analytic tools designed to investigate manifold-based models (e.g., connections, differential forms, curvature tensors, parallel transport, bundles). 
In order to facilitate these tools, one normally assumes access to a manifold's atlas of charts (i.e., local parametrizations). 
Over the past few decades, manifold-based modeling has permeated into data analysis as well (e.g., see  \cite{hastie1984principal,roweis2000LLE, scholkopf1998KPCA}), usually to avoid working in high dimensions, due to the known curse of dimensionality \cite{stone1980optimal}. 
However, in these data-driven models, charts are not accessible and the only information at hand are the samples themselves. 
As a result, a common practice in Manifold Learning is to embed the data in a lower dimensional Euclidean domain, while maintaining some notion of distance (e.g., geodesic or diffusion).
Subsequently, the embedded data is being processed using linear methods on the low dimensional domain; to mention just a few of this body of literature, see  \cite{belkin2003laplacian,belkin2006manifold,coifman2006diffusion,saul2003LLE}. Some of the approaches have robustness guarantees~\cite{ding2020phase,dunson2021spectral,el2016graph,shen2020scalability}.
The main drawback of such dimensionality reduction approaches is that they inevitably lose some of the information in the process of data projection.

In recent years there have been a growing interest in the problem of manifold estimation.
The aim of these approaches is to reconstruct an underlying manifold $\widehat{\MM}$, approximating the sampled one $\MM$, based upon a given discrete sample set.
The first attempt (neglecting the literature dealing with approximations of curves and surfaces \cite{dey2006curve,wendland2004scattered}) at this problem was probably made by Cheng et. al. at 2005 \cite{cheng2005manifold} who present an algorithm that outputs a simplicial complex, homeomorphic to the original manifold $\MM$ and is proven to be close to it in the Hausdorff sense.
However, this algorithm is deemed intractable by the authors as it is based on the creation of Delaunay complexes through Voronoi diagrams in the ambient space. 
Harvesting the idea of tangential Delaunay complexes, Boissonat and Ghosh \cite{boissonnat2014manifold} have provided a method reconstructing a simplicial complex which is computationally tractable (i.e., its complexity has linear dependency in the ambient dimension).
In this approach there is an underlying assumption that the local tangent at each sampled point is given, and they recommend using a local Principal component analysis (PCA) to find these tangents.
The choice of local PCA as an approximating tangent is shown to be a valid one in the analysis given in \cite{aamari2018stability,kaslovsky2011optimal,kaslovsky2014non,singer2012vector}. Furthermore, \cite{aamari2018stability} shows that the estimate given by Boissonat and Ghosh~\cite{boissonnat2014manifold} achieves optimal minmax rates of convergence in case of noiseless samples, with respect to a certain class of manifolds.
Some other works aims at learning multiscale dictionaries to describe the manifold data \cite{allard2012multi}. 

In parallel, meshless methods for the reconstruction of manifolds from point sets have been developed.
Niyogi et. al. \cite{niyogi2008finding} present such an approach through a union of $\varepsilon$-balls around the samples.
They show that this approximant can be homologous to $\MM$ under certain conditions. 
This approach is somewhat similar to the one proposed by Fefferman et. al. in \cite{fefferman2018fitting}, where an analysis of convergence under a Gaussian noise model is given as well. 
Furthermore, Fefferman et. al. \cite{fefferman2019fitting} 
propose another meshless way of approximating manifolds from point sets up to arbitrarily small Hausdorff distance in the noiseless case, and such that the approximant itself is a smooth manifold of the same intrinsic dimension as $\MM$ (\cite{mohammed2017manifold} uses this framework to provide two more algorithms of such properties).
Faigenbaum-Golovin and Levin uses a generalization of the $L_1$ median to approximate manifolds from meshless data \cite{faigenbaum2020manifold,faigenbaum2025mind}.
Sober and Levin \cite{sober2016MMLS} give an approximation scheme based upon a generalization of the the Moving Least-Squares (MLS) \cite{levin2004mesh,mclain1976two} that provides a smooth manifold with optimal convergence rates in the noiseless case as well (this approach is referred below as the Manifold-MLS).
Their approximation is built through a two-stage procedure, first estimating a local coordinate system and then building a local polynomial regression over it. 
This framework is extended to deal with approximations of functions over manifolds \cite{sober2017approximation} as well as geodesic distances \cite{sober2020Geodesics}.
Aamari and Levrard \cite{aamari2019nonasymptotic} provide a different algorithm, which is shown to be optimal in the noiseless case as well. 
Differently from Sober and Levin's approach, this algorithm estimates a tangent along with a polynomial estimation above the tangent domain at once.
However, in their practical implementation Aamari and Levrard propose a two step solution (first perform PCA to achieve a tangent and then a polynomial regression above it). Note, that although there are results regarding the convergence of local PCA to the tangent space of some manifold these works assume that the localization is around a point on the manifold itself, which is not given in the current problem setting.

Upper bounds on the minimax rates of convergence were first introduced for smooth manifolds by Genovese et. al. \cite{genovese2012minimax,genovese2012manifold}. Later, in \cite{kim2015tight}, the same rates were shown to be optimal.
These results were later refined to a class of H\"older-like smooth manifolds by Aamari and Levrard \cite{aamari2019nonasymptotic}.
They come to the conclusion that the optimal rate of convergence for such $k$-times smooth manifold estimation is $\mathcal O(n^{-k/d})$ for the noiseless case and is bounded from below by $\wtilde{\mathcal{O}}(n^{-k/(k+d)})$ in an additive orthogonal noise model, where $d$ is the intrinsic dimension of $\MM$, $n$ is the number of samples, $\sigma$ is a bound on the noise level, and $\wtilde{\mathcal{O}}$ neglects $\log$ factors in the order.
They do not show that their bound in the noisy case is achievable. 
However, in the case $k=2$, Genovese et. al. showed that this rate is indeed optimal.
It is worth noting here that in the work of Stone from 1982 \cite{stone1982optimal} (quintessential in the analysis we perform below), he shows that the optimal rates of convergence for function estimation are $\wtilde{\mathcal O}(n^{-k/(k+d)})$  which is slower than the result of Genovese et. al. for manifold estimation in the case of $k=2$.
Despite the different noise models of both analyses, we admit that we find this difference surprising, as the a priori case of manifold estimation seems like a harder problem.
Another recent work in this area \cite{lim2021tangent} takes a different approach and gives the sample complexity required to estimate tangent spaces and intrinsic dimensions of manifolds.

Many manifold reconstruction algorithms in the literature show convergence to the underlying manifold under noise assumptions when the noise level decays to zero as the sample size $n$ tends to infinity \cite{aamari2019nonasymptotic,puchkin2022structure}. 
In their recent work, Fefferman, Ivanov, Lassas, and Narayanan presented an algorithm that fits an entire manifold to noisy samples.
They show that the algorithm converges to the underlying manifold in the Hausdorff sense in the presence of large noise \cite{fefferman2023fitting}.
The case they analyze restricts the set of manifolds that can be reconstructed to $R$-exposable manifolds, and they achieve $\mathcal{O}(\log(n)^{-1/2})$ convergence rates.

In the current paper, we assume a sample of size $n$ drawn from the uniform distribution on $\MM_\sigma$, a $\sigma$-tubular neighborhood of the manifold $\MM$.
Then, for a given $ r  \in \MM_\sigma $, we present an algorithm that outputs a point $ \hat p_n\in \RR^D$ and $ \widehat{T_{\hat p_n}\MM}\in Gr(d, D) $, a $d$-dimensional linear subspace of $\RR^D$, which estimate $ \mathbf{p}= Proj_{\MM}(r)\in \MM $, the projection of $r$ onto $\MM$, and $ T_{ \mathbf{p}}\MM $, the subspace tangent to $\MM$ at $\mathbf{p}$.
We prove, in Theorem \ref{thm:MainResult}, that with high probability $dist(\hat p_n, \MM) \leq \wtilde{ \mathcal{O}}(n^{-k/(2k+d)})$,
$ \norm{\hat p_n -  \mathbf{p}} = \wtilde{\mathcal{O}}(n^{-(k-1)/(2k+d)}) $ 
and that $ \maxangle(\widehat{T_{\hat p_n}\MM}, T_{ \mathbf{p}}\MM) = \wtilde{\mathcal{O}}(n^{-(k-1)/(2k+d)}) $, 
where $ \wtilde{\mathcal{O}} $ neglects dynamics weaker than polynomial order (e.g., $ \ln(n) $ and $ \ln(\ln(n)) $).
These achieved convergence rates are similar to the optimal rates of non-parametric estimation of functions \cite{stone1980optimal}.
To avoid notation inconvenience, we assume in all of our proofs that the noise level $\sigma > 0$.
Our approach is based on the Manifold-MLS \cite{sober2016MMLS}, but differs from it, as explained below.

This paper presents an improved algorithm that builds upon our previous work in \cite{aizenbud2021non}. 
The key advancement is that we now demonstrate that the limit point $\mathbf{p}$ as $n\to \infty$ is indeed the projection of $r$ onto $\MM$ -- a critical property we were unable to establish in our earlier analysis. 
To achieve this, we have both simplified the algorithm and refined the theoretical analysis.

The algorithm presented in this paper is divided into two steps. In Step 1, we find an initial local coordinate system. 
It is proved in Theorem \ref{thm:Step1} that this coordinate system is a ``reasonable" approximation to the tangent of the manifold at some point. 
Next, in step 2, we improve the estimation of step 1 in an iterative manner (sharing some similarity in spirit with the approach presented in \cite{puchkin2022structure}) to get an accurate estimation of the clean point on the manifold (i.e., the projection or $r$ onto the manifold) and its tangent.  
We prove, in Theorem \ref{thm:Step2}, that these iterations indeed converge to an accurate estimate, and show the convergence rates mentioned above. The formal problem setting, along with the algorithm's description are presented in Section \ref{sec:Problem_Setting}. In Section \ref{sec:mainResults} the formal results are presented, where Theorem \ref{thm:MainResult} is the main result of this paper. The theorems of Section \ref{sec:mainResults} are proved in Section \ref{sec:Proofs}. Finally, in Section \ref{sec:applications} we present one possible application of the presented method. Although there are many possible applications (e.g.  denoising, trajectory reconstruction, etc.) , we chose one which can easily be demonstrated visually. The code for the algorithm in this paper, along with examples, can be found in  \ifthenelse{\blind=1}{\textcolor{blue}{\href{https://github.com/aizeny/manapprox}{https://github.com/aizeny/manapprox}}}{the \textcolor{blue}{attached code}}.

\section{Problem Setting and Algorithm Description}
\label{sec:Problem_Setting}
Throughout the paper we limit the discussion to estimation of $\MM\in\C^k$, $k$-times smooth, compact submanifolds of $\RR^D$ without boundary.
This limitation is important for the analysis, but the algorithm we present is local and thus from a practical perspective, a local version of these assumptions should suffice. The smoothness requirement is also crucial for the algorithm. However, it is shown in \cite{kouvrimska2024free} that a non-smooth manifold can be approximated arbitrarily well by a smooth manifold without a significant change to the reach. This implies that the method presented in this paper is useful for non-smooth manifolds as well.
A key concept in the analysis of manifold estimates is the \textit{reach} of a manifold (e.g., see the analyses at \cite{aamari2018stability,aamari2019nonasymptotic,fefferman2018fitting,niyogi2008finding}), which was introduced by Federer in \cite{federer1959curvature} and is defined as the maximal distance for which there exists a unique projection onto $\MM$.
\begin{definition}[Reach \cite{federer1959curvature}]\label{def:reach}
	The reach of a subset $A$ of $\RR^D$, is the largest $\tau$ \emph{(}possibly $\infty$\emph{)} such that for any $x\in\RR^D$ that maintains $dist(A,x)\leq \tau$, there exists a unique point $Proj_A(x)\in A$, nearest to $x$. 
\end{definition}
Using the reach, we can bound both the local behavior ($1/\tau$ bounds all sectional curvatures of the manifold) and the global behavior of a manifold (i.e., it measures how close a manifold can get to itself) \cite{boissonnat2017reach}. 
Thus, the reach provides a good way of expressing our limitations in the problem of manifold estimation (in \cite{niyogi2008finding} the same concept is defined as the \textit{condition number} of a manifold).
For example, if the reach is too small and the sampling density is not fine enough, we would expect that small features could not be recovered.
Moreover, through the reach of a manifold we can define the acceptable levels of noise that do not obscure the geometrical shape.
In accordance with that, we limit our discussion to manifolds with a reach bounded away from zero (notice that in the case of flat manifolds, the reach is infinite) and with a noise model that limits the noise level from above by the reach.

Our noise model in the analysis is as follows: 
We assume that we are given a finite set of samples $\{r_i\}_{i=1}^n$ drawn independently from 
\begin{equation}\label{eq:Msigma}
	\MM_\sigma \defeq \{q\in \RR^D ~|~ dist(q, \MM) < \sigma \}
	,\end{equation}
a tubular neighborhood of $\MM$.
Explicitly, we assume that $r_i \sim \operatorname{Unif}(\MM_\sigma)$, which is the uniform distribution on $\MM_\sigma$; i.e., the normalized Lebesgue measure with respect to $\RR^D$.

Finally, let $p\in\MM$, we wish to describe $W_p \subset \MM$ a neighborhood of $p$ as a graph of a function 
\begin{equation}
	\varphi_p:W_{T_p\MM}\to \MM \label{eq:LocalChart_clean}    
\end{equation}
where $W_{T_p\MM} = Proj_{T_p\MM}(W_p)$, the projection of $W_p$ onto the tangent, and $\varphi_p$ is defined by
\begin{equation}
	\varphi_p(x) = p+(x,\phi_p(x))_{T_p\MM}\label{eq:phi_def_clean}
	,\end{equation}
where $x\in \RR^d \simeq T_p\MM$, $\phi_p(x)\in \RR^{D-d} \simeq T_p\MM^\perp $, and $(x,y)_{T_p\MM} \in \RR^D$ denotes that $x\in\RR^d$ and $y\in\RR^{D-d}$ represent a point in some basis of $T_p\MM$ and $T_p\MM^\perp$ correspondingly.
Then, we define the graph of $\phi_p$ to be
\begin{equation}\label{eq:FunctionGraph_init}
	\Gamma_{\phi_p, W_p} \defeq \{p + (x, \phi_p(x))_{T_p\MM} | x\in W_{T_p\MM} \}  
	.\end{equation}
For simplicity, throughout the paper we identify the graph of $\phi_p$ with $\Gamma_{\phi_p,W_p}$. That is, we refer to $\MM$ as locally a graph of $\phi_p$ (see Figure \ref{fig:SkewedH}).

We would like to stress that throughout the paper there is a slight misuse of notation.
Explicitly, when we refer to $T_p\MM$ and $T_p\MM^\perp$, we sometimes look at it as elements in the Grassmannian $Gr(d,D)$ and $Gr(D-d, D)$; i.e., subspaces of $\RR^D$ with dimensions $d$ and $D-d$ correspondingly.
On the other hand, in some other occasions (as in \eqref{eq:FunctionGraph_init}) we neglect the fact that these are subsets of $\RR^D$ which is equivalent to choosing some basis and working in it.

\begin{SCfigure}
	\centering
	\includegraphics[width=0.6\linewidth]{./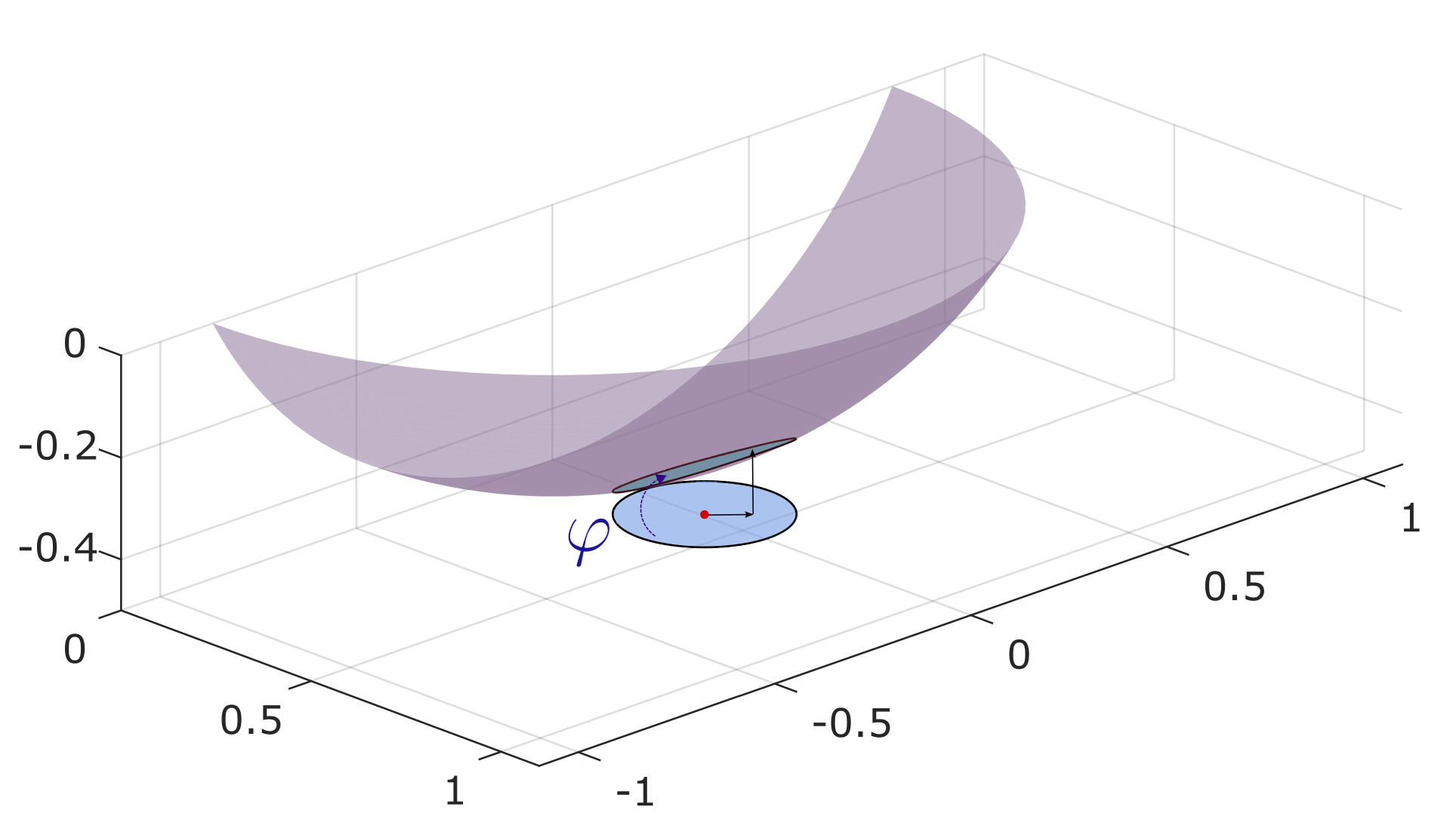}
	\caption{Illustration of $ \varphi_{p} $: The $ xy $-plane is $ H(p) $; The local origin $ q(p) $, which is mapped by $ \phi_{p} $ to $ p $ is marked by the red dot; the vector $ x $ represents a ``tangential" movement; and $ \phi_{p}(x) $ is a normal movement. }
	\label{fig:SkewedH}
\end{SCfigure}

\subsection{Summary of manifold and sampling assumptions}\label{sec:SamplingAssumptions}
Throughout this paper we assume that the (unknown) manifold $\MM$ and the samples $\{r_i\}_{i=1}^n$ satisfy the following requirements: 
\begin{enumerate}
	\item $\MM\in\C^k$ is a compact $d$-dimensional sub-manifold of $\RR^D$ without boundary
	\item $M = \frac{\tau}{\sigma}$ is large enough, where $\tau$ is the reach of $\MM$ and $\sigma>0$ is the noise level. 
	\label{assume:noise}
	\item $\{r_i\}_{i=1}^n$ are samples drawn independently and uniformly from $\MM_\sigma$ (i.e., $r_i\sim \operatorname{Unif}(\MM_\sigma)$).\label{assume:sampling}
\end{enumerate}

\subsection{Algorithm Description}\label{sec:Algorithm}
As explained above, given a point $r\in\MM_\sigma$ we aim at providing a procedure $\PP(r)$ that will estimate $ p = Proj_\MM(r) \in \MM$, the projection of $r$ on $\MM$. 
This is performed through an altered version of the Manifold-MLS that was introduced in \cite{sober2016MMLS}. 
The Manifold-MLS is constructed through a two-step procedure. 
First, an estimate of a local coordinate system is computed.
Second, above this local coordinate system, a local polynomial regression is performed, by which we derive the estimate for the projection onto $\MM$ as well as the tangent domain $T_p\MM$.
Below, we show that the first step of the Manifold-MLS yields a reasonable estimate of the tangent even in the presence of noise.
However, performing the second step, which is just a local polynomial regression, above the slightly tilted domain results in a biased estimate.
That is, if we try to estimate the manifold locally as a function over this approximated domain, the noise in the sample is biased with respect to this coordinate system (see Figure \ref{fig:TiltedBias}).
To account for the bias, in our altered version of the algorithm, we perform the second step iteratively, taking the tangent estimate at each iteration as an improved coordinate system.
We show that in the limit, as the number of samples $n$ approaches $\infty$, our estimate projects $r$ onto $\MM$, and the estimated tangent coincides with the tangent at that projected point.

Before describing the steps in detail, we mention that from a practical point of view, there are some minor adaptations that we made to the implemented version. 
These changes are described in detail in Section \ref{sec:practical}.

\begin{figure}
	\centering
	\includegraphics[width=0.6\textwidth]{./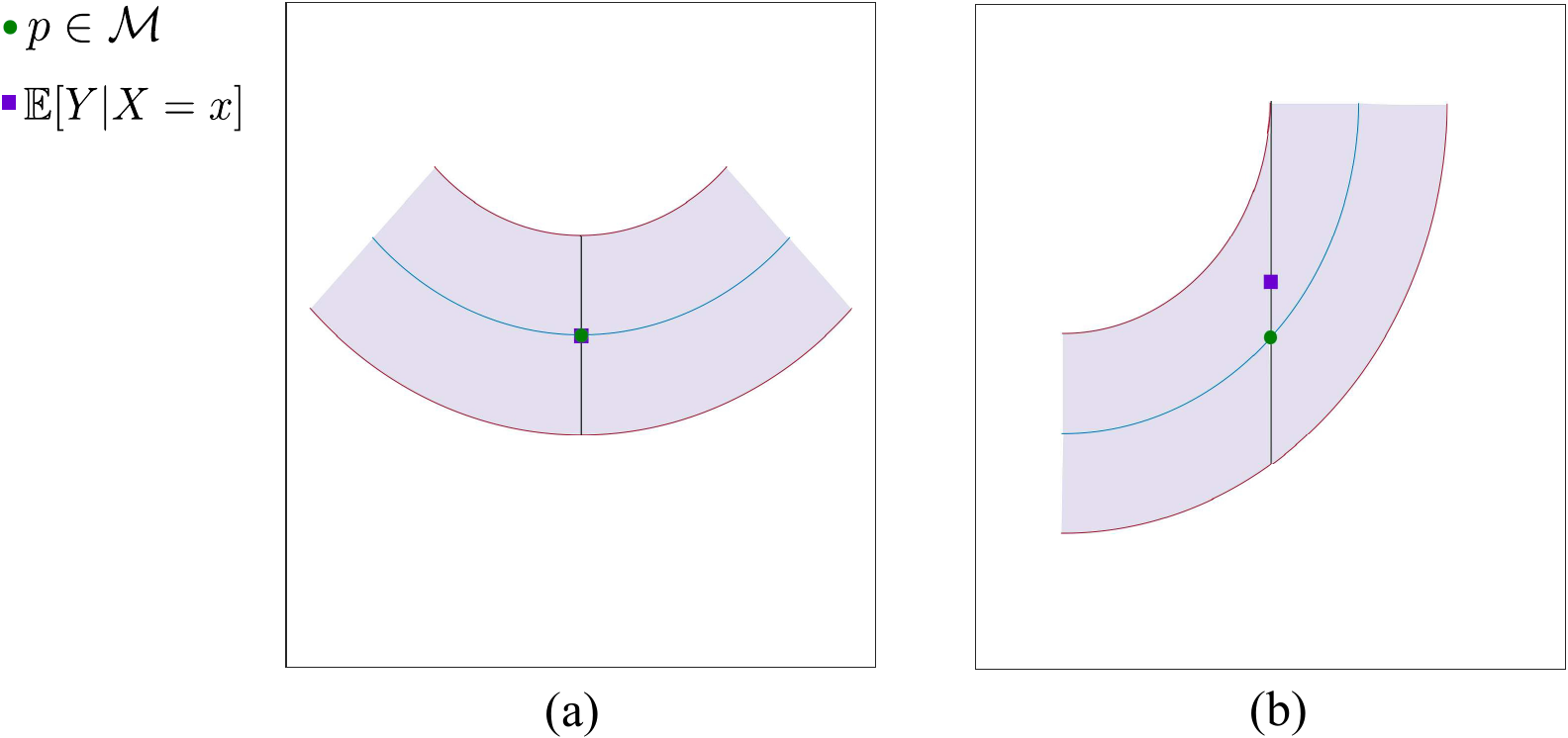}
	\caption{Illustration of a manifold $\MM$ (marked by the blue line) along with its tubular neighborhood $\MM_\sigma$. Assuming uniform sampling in $\MM_\sigma$ we mark the point $p\in\MM$ by \mycircle{myGreen} and the expected value with respect to the given coordinate system by \mysquare{myPurple}. \emph{(}a\emph{)} The coordinate system is aligned with the tangent. \emph{(}b\emph{)} The coordinate system is tilted with respect to the tangent. As can be seen, in (a) the two points coincide, whereas in (b) the expected value differ from the point we wish to estimate.}
	\label{fig:TiltedBias}
\end{figure}

\subsubsection{Step 1 - The Initial Coordinate System}
Given a point $r\in\MM_\sigma$ we limit the the region of interest (ROI) to:
\begin{equation}\label{eq:ROI_clean}
	U_{\textrm{ROI}} = {\{r_i|\dist(r_i,r)<\sqrt{\sigma\tau}\}}
	,\end{equation}
and denote the number of samples in the ROI by $N$.

Intuitively, the ROI has to be significantly smaller than $\tau$ so that we will not include samples of faraway regions in the Riemannian sense.
We also require that the ROI is not a subset of $\MM_\sigma$ as this causes the tangential directionality of the sample to become ambiguous. 

Then, we define the relevant coordinate system as the pair $(q, H)\in \RR^D\times Gr(d, D)$, which minimizes the functional:
\begin{equation}\label{eq:J1_clean}
	J_1(r; q, H) = \frac{1}{N}\sum_{r_i\in U_{\textrm{ROI}}} \dist^2 (r_i - q,H)
\end{equation}
under the constraints
\begin{enumerate}
	\item Orthogonality: $ r - q\perp H $.
	\item Region of interest: $ r_i \in U_{\textrm{ROI}} $.
	\item Search region: $\norm{r-q} < 2\sigma$.
\end{enumerate}
Explicitly, we denote
\begin{equation}\label{eq:argmin1_clean}
	q^*(r), H^*(r) = \argmin_{\substack{ q, H \\ r-q \perp H \\ \|r- q\|<2\sigma}} J_1(r; q, H)
\end{equation}

Note, that Constraint 2 limits our region of interest in accordance with sampling assumptions \ref{assume:sampling} and \ref{assume:noise}.
Furthermore, since $r\in\MM_\sigma$ we know that the true projection onto the manifold is in the search region defined in Constraint 3 (if $q=Proj_\MM(r)$, the projection of $r$ onto $\MM$ and $H= T_{q}\MM$).
Finally, Constraint 1 extends the notion of orthogonal projection onto manifolds.
As discussed in \cite{sober2016MMLS}, this constraint is responsible for having a unique minimizer for \eqref{eq:argmin1_clean} given enough samples. 
The aforementioned minimization problem is summarized in Algorithm \ref{alg:alg1_clean}.

\begin{algorithm}[H]
	\caption{Step 1: Find an initial coordinate system}
	\label{alg:alg1_clean}
	\begin{algorithmic}[1]
		\Statex {\bfseries Input:}\begin{tabular}[t]{ll}
			$\{r_i\}_{i=1}^n\subset \MM_\sigma$ & noisy samples of a $d$ dimensional manifold $ \MM $.\\
			$r\in \MM_\sigma$ & a point in a tubular neighborhood of $\MM$.\\
            $d$ & The dimension of the manifold $\MM$.\\
            $\tau$ & lower bound on the reach of $\MM$. \\
            $\sigma$ & upper bound on the noise level.\\
            
		\end{tabular}
		\Statex {\bfseries Output:}\begin{tabular}[t]{ll}
			$q^*$ & crude estimation of $p = Proj_\MM(r)$.\\
			$H^*$ & estimation of $T_p\MM$.\\
		\end{tabular}
		\State Disregard points outside of $U_{\textrm{ROI}}$ (plugging $\tau_\text{bound}, \sigma_\text{bound}$ into Eq. \eqref{eq:ROI_clean}).
		\State Find $q^*, H^*$ minimizing \eqref{eq:argmin1_clean} subject to $\norm{q^* - r}<2\sigma$ and $r - q^* \perp H^*$. 
	\end{algorithmic}
\end{algorithm}

\subsubsection{Step 2 - The iterated projection}
Given $(q, H)\in \RR^D\times Gr(d,D)$ we define the following minimization scheme, known as local polynomial regression (e.g., \cite{cleveland1979robust,mclain1976two}):
Find $\pi\in \Pi_{k-1}^{d\mapsto D}$ a polynomial of total degree $deg(\pi)\leq k-1$ from $\RR^d$ to $\RR^{D-d}$ which minimizes
\begin{equation}\label{eq:Step2_clean}
	J_2(\pi ~|~ q, H) = \frac{1}{N_{q, H}}\sum_{r_i \in U_{\textrm{ROI}}^n} \norm{r_i - (x_i, \pi(x_i))_H}^2
	,\end{equation}
where $x_i\in \RR^d$ are the projections of $r_i - q$ onto $H$, $(x, y)_H\in \RR^d\times\RR^{D-d}$ are coefficients in a basis of $H\times H^\perp$, $U_{\textrm{ROI}}^n(q,H)$ is defined through a bandwidth $\epsilon_n$ as
\begin{equation}\label{eq:ROI_ell_clean}
	U_{\textrm{ROI}}^n(q,H) = {\{r_i\in U_\textrm{ROI}~|~\norm{x_i}<\epsilon_n\}}
	,\end{equation}
and $N_{q, H}$ denotes the number of samples in $U_{\textrm{ROI}}^n(q, H)$.
Explicitly, the local polynomial regression is defined through
\begin{equation}\label{eq:argmin2_clean}
	\pi^*_{q, H} = \argmin_{\pi\in \Pi_{k-1}^{d\mapsto D}} J_2(\pi ~|~ q, H)
	.\end{equation}
As required to ensure convergence in probability for local polynomial regression \cite{aizenbud2021VectorEstimation,stone1980optimal}, we demand that the bandwidth $\epsilon_n\to 0$ as $n\to\infty$ is such that 
\begin{equation}\label{eq:bandwidthDecay_clean}
	0<\lim_{n\rightarrow\infty}n^{1/(2k+d)}\cdot \epsilon_n < \infty
	.\end{equation}

Unfortunately, the minimization problem stated in Equation \eqref{eq:argmin2_clean} could not be used as-is, since we need an explicit relation between the failure probability, the error bound, and the number of samples. 
The analysis of convergence of local polynomial regression of vector-valued functions in Theorem 3.2 of \cite{aizenbud2021VectorEstimation}, uses a variant of local polynomial regression, namely some sort of ``median trick"~\cite{alon1999space} on \eqref{eq:argmin2_clean} to unveil such an explicit connection. 
For simplicity of notations, we abuse the definition of $ \pi^*_{q, H}$ in \eqref{eq:argmin2_clean}, and define
\begin{equation}\label{eq:pi_star_median_trick}
	\pi^*_{q, H} = \mbox{Algorithm 2 of \cite{aizenbud2021VectorEstimation}} (x_i,y_i),
\end{equation}
where $y_i$ are the projection of $r_i-q$ onto $H^\perp$. 
Any derivative of $\pi^*_{q, H}$ can also be estimated by means of Algorithm 2 of~\cite{aizenbud2021VectorEstimation}. For simplicity of presentation, throughout the paper when we write $\partial \pi^*_{q, H}$ or $\DD_{\pi^*_{q_\ell,H_{\ell}}}$ what we actually mean is the estimate of the derivative rather than the derivative of $\pi^*_{q, H}$.

We begin with $q^*$ (a coarse approximation of a point on the manifold) and $H_0 = H^*$ (the initial tangent estimate) resulting from Algorithm \ref{alg:alg1_clean}.
Then, we update iteratively the directions of $H$ with respect to a tangent of $\MM$.
Explicitly, in iteration $\ell$, we define $H_{\ell+1}$ to be the subspace coinciding with $ \Ima(\DD_{\pi^*_{r, H_{\ell}}}[0]) $ the image of the differential of $(\textrm{I}_d,\pi^*_{r,H_{\ell}}):\RR^d\to\RR^D$ at $ 0 $ (i.e., the tangent to the graph of $ \pi^*_{r, H_{\ell }} $).
In other words, we look at the manifold as a local graph of a function 
\begin{equation}
	f_{\ell}:\RR^d\simeq H_\ell \to \RR^{D-d} \simeq H_\ell ^\perp 
	.\end{equation}
That is, we define a local patch of the manifold through the graph 
\begin{equation}\label{eq:Graph_fl}
	\Gamma_{f_\ell, r, H_\ell} = \{r + (x, f_\ell(x))_{H_\ell} | x\in B_{H_\ell}(0,\rho) \}\subset\MM,   
\end{equation}
where $\rho$ is some radius where this function is defined (see Lemma \ref{lem:M_is_locally_a_fuinction_clean} for more details regarding the existence of such $\rho > 0$). 
Then, we estimate the first order differential of $f_\ell$ through taking the differential of the local polynomial regression estimate $\pi^*_{r,H_{\ell }}$.
The image of the differential determines $d$-directions in $\RR^D$ (i.e., an element in the Grassmannian $Gr(d,D)$), by which we define $H_{\ell+1}$. 
Following this, we define 
\begin{equation}\label{eq:fl_def}
	f_{\ell+1}:(r, H_{\ell+1}) \to H_{\ell+1} ^\perp \simeq \RR^{D-d}    
\end{equation}

We note that the estimate for the first derivative in case of scalar valued functions was analyzed by \cite{stone1980optimal} (as well as others) and was shown to converge to the true derivative with optimal rates in case of unbiased noise. The results of \cite{stone1980optimal} are generalized to vector valued functions in \cite{aizenbud2021VectorEstimation}. 
A core assumption in these results is that $\EE(Y | X=x)$, the expected value of the samples, aligns with the estimated function.
However, in our case this assumption does not hold, since the noise model is tubular with respect to the manifold and unless the coordinate system is aligned with the tangent, the expected value of $Y$ given $X=x$ does not equal to $f_\ell(x)$ (see Figure \ref{fig:TiltedBias}); i.e., the samples have bias.
We show below that the iterations described above improves the maximal angle with respect to a true tangent. 
Thus, eliminating the problem of bias iteratively. 

Finally, after performing $\kappa$ iterations we get the estimate for $ p\in \MM$ and $ T_{ p}\MM $ by 
\begin{equation}
	\hat p_n \defeq (0, \pi^*_{r, H_\kappa}(0))_{r, H_\kappa} ,\quad \widehat{T_{\hat p_n}\MM} \defeq H_\kappa
	.\end{equation}
Below, we show for a specific value of $\kappa$ that with probability tending to 1 (as the number of samples tend to $\infty$), $dist(\hat p_n,\MM) = \wtilde{\mathcal{O}}(n^{-k/(2k+d)})$, $ \norm{\hat p_n -  p} =  \wtilde{\mathcal{O}}(n^{-(k-1)/(2k+d)}) $ and that $ \maxangle(\widehat{T_{\hat p_n}\MM}, T_{ p}\MM) = \wtilde{\mathcal{O}}(n^{-(k-1)/(2k+d)}) $, where $ \wtilde{\mathcal{O}} $ neglects dynamics weaker than polynomial order (e.g., $ \ln(n) $ and $ \ln(\ln(n)) $). The aforementioned algorithm is summarized in Algorithm \ref{alg:step2_clean}.

\begin{algorithm}[H]
	\caption{Step 2: estimating the manifold from a good initial guess}
	\label{alg:step2_clean}
	\begin{algorithmic}[1]
		\State {\bfseries Input:}\begin{tabular}[t]{ll}
			$\{r_i\}_{i=1}^n\subset \MM_\sigma$ & Noisy samples of a $d$ dimensional manifold $ \MM $.\\
            $d$ & The dimension of the manifold $\MM$.\\
			$r\in \MM_\sigma$ & a point in the tubular neighborhood of $\MM$\\
			$H_0$ & Initial approx. of the tangent
		\end{tabular}
		\State {\bfseries Output:}\begin{tabular}[t]{ll}
			$\hat p_n$ & Estimation of $Proj_{\MM}(r)\in \MM$ .\\
			$\widehat{T_{\hat p_n}\MM}$ & Estimation of   $T_{Proj_{\MM}(r)}\MM$.\\
		\end{tabular}
		
		\For{$\ell=0$ to $\kappa-1$}
		\State Compute  $\pi^*_{r,H_{\ell}}$ through a version of linear least-squares minimization  \eqref{eq:pi_star_median_trick}.
		\State Compute $ \DD_{\pi^*_{r,H_{\ell}}}[0] $ the differential of  $\pi^*_{r,H_{\ell}}$ in some basis at zero.
		\State $H_{\ell+1} = \Ima(\DD_{\pi^*_{r,H_{\ell}}}[0]) $ \Comment{this is the column space of $ \DD_{\pi^*_{r,H_{\ell}}}[0] $}.
		\EndFor 
		\State $\hat p_n = r + (0, \pi^*_{r, H_{\kappa}}(0))^T$ 
		\State $\widehat{T_{\hat p_n}\MM} = H_\kappa $
		\State\Return $\hat p_n$ and $\widehat{T_{\hat p_n}\MM}$.
	\end{algorithmic}
\end{algorithm}

\subsection{Practical Considerations of Implementation Details }\label{sec:practical}
The minimization problem portrayed in \eqref{eq:argmin1_clean} is non-linear since we optimize for both $q$ and $H$ at the same time; note that if we fix $q$ this amounts to the Principal Component Analysis (which is also related to the iterated linear least-squares problem motivating our algorithms -- see \cite{aizenbud2019approximating}).
This problem has already been studied in \cite{sober2017approximation,sober2016MMLS} and we recommend using the iterative scheme presented in Algorithm \ref{alg:step1Inpractice_clean} to solve it (which is a slight adaptation of the algorithm proposed in \cite{sober2016MMLS}). We note that as the initial estimation of the tangent (step \ref{algstep:U_init_in_practice} in Algorithm \ref{alg:step1Inpractice_clean}) we use the local PCA, which was utilized in many other works and shown to be of merit \cite{aamari2019nonasymptotic,aizenbud2015OutOfSample,kaslovsky2014non}. However, if one wishes to improve the computational complexity, the initialization step can be done in a randomized manner as well \cite{aizenbud2016SVD,halko2011algorithm}.
Algorithm \ref{alg:step1Inpractice_clean} can be shown to converge in theory to a local minimizer of \eqref{eq:argmin1_clean}. 
As explained at length in \cite{sober2016MMLS} under some conditions this minimization has a unique minimum. 
Furthermore, in practical implementations we experienced very fast convergence to a minimum.

As for the practical implementation of Step 2, we note that the derivatives of $f_{\ell + 1/2}$ identify with those of $f_{\ell +1}$.
Finally, the number of iterations $\kappa$ in Algorithm \ref{alg:step2_clean} can be computed explicitly to obtain the rates of convergence as explained in the proofs of Theorem \ref{thm:MainResult}.
However, for the practical implementation, given a specific sample we suggest to iterate until convergence.
See Algorithm \ref{alg:step2InPractice_clean} for the adapted implementation.

\begin{algorithm}[H]
\caption{Step 1: in practice}
\label{alg:step1Inpractice_clean}
\begin{algorithmic}[1]
\State {\bfseries Input:} $\lbrace r_i \rbrace_{i=1}^N, r, d, \tau, \sigma,\epsilon$
\State{\bfseries Output:}\begin{tabular}[t]{ll}
                         $q$ - an $n$ dimensional vector \\
                         $U$ - an $n\times d$ matrix whose columns are $\lbrace u_j \rbrace_{j=1}^d$ 
                         \end{tabular}
                         \Comment{$H = q + Span\lbrace u_j \rbrace_{j=1}^d$}
\State define $R$ to be an $n\times N$ matrix whose columns are $r_i$
\State initialize $U$ with the first $d$ principal components of the spatially weighted PCA \label{algstep:U_init_in_practice}
\State $q\leftarrow r$
\Repeat
    \State $q_{prev} = q$
    \State $\tilde{R} = (R - repmat(q,1,N))\cdot \Theta$ \Comment{where $\Theta$ is an indicator for points in $U_{\textrm{ROI}}$}
    \State $X_{N\times d} = \tilde{R}^T U$ \Comment{find the representation of $r_i$ in $Col(U)$}
    \State define $\tilde{X}_{N\times (d+1)} = \left[(1,...,1)^T, X\right]$
    \State solve $\tilde{X}^T\tilde{X}\alpha = \tilde{X}^T \tilde{R}^T$ for $\alpha \in M_{(d+1)\times n}$ \Comment{solving the LS minimization of $\tilde{X}\alpha \approx \tilde{R}^T$}
    \State $\tilde{q} = q + \alpha(1,:)^T$
    \State $Q, \hat{R} = qr(\alpha(2:end, :)^T)$ \Comment{where $qr$ denotes the QR decomposition}
    \State $U \leftarrow Q$
    \State $q = \tilde{q} + U U^T (r-\tilde{q})$
\Until {$\|q-q_{\text{prev}}\|<\epsilon$}
\end{algorithmic}
\end{algorithm}

\begin{algorithm}[H]
	\caption{Step 2: in practice}
	\label{alg:step2InPractice_clean}
	\begin{algorithmic}[1]
		\State {\bfseries Input:}\begin{tabular}[t]{ll}
            $\{r_i\}_{i=1}^n$& a set of points in $\MM_\sigma$ \\
            $d$ & The dimension of the manifold $\MM$.\\
            $q_{-1}$ & a crude approximation of $p_r = Proj_\MM(r)$ \\ &(initialized with $q^*$ of \eqref{eq:argmin1_clean})\\
			$H_0$ & Initial approximation of $T_{p}\MM$ s.t. $\angle_{\max}(H_0,T_{p}\MM) < \alpha_0$ \\
            &(initialized with $H^*$ of \eqref{eq:argmin1_clean})\\
		\end{tabular}
		\State {\bfseries Output:}\begin{tabular}[t]{ll}
				$\hat p_n$ & Estimation of some $p\in\MM$.\\
				$\widehat{T_{\hat p_n}\MM}$ & Estimate of  $T_p\MM$.\\
		\end{tabular}
		
		\State Compute $\pi^*_{q_{-1}, H_0}$
		\State $q_0 = q_{-1} + (0,\pi^*_{q_{-1}, H_0}(0))_{H_0}$ 
        \Repeat 
        \State Compute $\pi^*_{q_\ell,H_{\ell}}$ through the least-squares minimization of \eqref{eq:argmin2_clean}.
        \State Compute $\DD_{\pi^*_{q_\ell,H_{\ell}}}[0]$.
        \State Set $H_{\ell + 1} = \Ima(\DD_{\pi^*_{q_\ell,H_{\ell}}}[0])$
        \State Set $q_{\ell+1} = q_\ell + (0, \pi^*_{q_\ell,H_{\ell}}(0))_{H_\ell}$
		\Until {$|q_{\ell} - q_{\ell +1}| \leq \epsilon$}
		\State \Return $\hat p_n = q_{\ell+1}$,  $\widehat{T_{\hat p_n}\MM} = H_{\ell+1} $
	\end{algorithmic}
\end{algorithm}

\section{Main Results} \label{sec:mainResults}
The main result reported in this paper is Theorem \ref{thm:MainResult}.
For convenience, we wish to reiterate the sampling assumptions presented above in Section \ref{sec:SamplingAssumptions}, as they are relevant for all the following theorems.
Namely, we assume that 
\begin{enumerate}
	\item[i] $\MM\in\C^k$ is a compact $d$-dimensional sub-manifold of $\RR^D$ without boundary.
	\item[ii] $M = \frac{\tau}{\sigma}$ is large enough, where $\tau$ is the reach of $\MM$ and $\sigma>0$ is the noise level. 
	\item[iii] $\{r_i\}_{i=1}^n$ are samples drawn independently and uniformly from $\MM_\sigma$ (that is, $r_i\sim \operatorname{Unif}(\MM_\sigma)$).
\end{enumerate}

\begin{Theorem}\label{thm:MainResult}
	Assume $M > C_\stau \sqrt{D\log D}$ for some constant $C_\stau$ independent of $\tau$, and let $r\in \MM_\sigma$.
	Then, for any $\delta>0$ arbitrarily small, there exists $ N $ such that for any number of samples $ n > N $,
	applying Algorithm~\ref{alg:step2_clean} with inputs $q_{-1}, H_0$ being the outputs of Algorithm~\ref{alg:alg1_clean}, and with the number of iterations $\kappa$ specified in Lemma \ref{lem:comute_kappa} and is dependent on $n,d, \delta, k$, we get $ \hat p_n, \widehat{T_{\hat p_n}\MM} $, for which 
	\[
	dist(\hat p_n, \MM) \leq \frac{C_1\ln\left(\frac{1}{\delta}\right)}{n^{r_0}}
	\]
	\[
	\norm{\hat p_n - \mathbf{p}} \leq C_2\ln\left(\frac{1}{\delta}\right) \left(\frac{ n}{\ln\left(\ln(n) \right)^2}\right)^{-r_1} = \wtilde{\mathcal{O}}(n^{-r_1}) = \mathcal{O}(n^{-r_1})
	,\]
	and 
	\[
	\angle_{\max}(\widehat{T_{\hat p_n}\MM},T_{\mathbf{p}}\MM) \leq  C_3\ln\left(\frac{1}{\delta}\right) \left(\frac{ n}{\ln\left(\ln(n) \right)^2}\right)^{-r_1} = \wtilde{\mathcal{O}}(n^{-r_1}) 
	\]
	with probability of at least $ 1 - \delta $, where $ r_0 = \frac{k}{2k +d} $, $ r_1 = \frac{k-1}{2k +d} $ and $\mathbf{p} = Proj_\MM (r)$ is the projection of $r$ onto $\MM$.
\end{Theorem}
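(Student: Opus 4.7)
The plan is to compose the two steps of the algorithm. First, I would apply Theorem \ref{thm:Step1} at failure probability $\delta/2$ to conclude that, with probability at least $1-\delta/2$, the output $(q^*, H^*)$ of Algorithm \ref{alg:alg1_clean} provides a valid initialization for Algorithm \ref{alg:step2_clean}: the subspace $H^*$ has maximal angle below the constant threshold $\alpha_0$ required by Theorem \ref{thm:Step2} with some true tangent $T_{p_0}\MM$, $p_0 \in \MM$, and $q^*$ lies within $O(\sigma)$ of the manifold. This places the iterative refinement in its basin of convergence.

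Second, I would invoke Theorem \ref{thm:Step2} at failure probability $\delta/2$ with the iteration count $\kappa$ prescribed by Lemma \ref{lem:comute_kappa}. At each iteration $\ell$, the estimate $\pi^*_{r,H_\ell}$ is produced by local polynomial regression of degree $k-1$ over a region of interest of bandwidth $\epsilon_n \sim n^{-1/(2k+d)}$. Two error contributions arise: a bias term from the misalignment between $H_\ell$ and the true tangent at the current base point (cf.\ Figure \ref{fig:TiltedBias}), and a statistical variance term from the finite sample. The iteration is designed to contract the bias geometrically in $\ell$, while the variance is essentially independent of $\ell$, scaling as $n^{-k/(2k+d)}$ for the value of $\pi^*_{r,H_\ell}(0)$ and as $n^{-(k-1)/(2k+d)}$ for its first derivative, equivalently for $H_{\ell+1}$. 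Choosing $\kappa$ as in Lemma \ref{lem:comute_kappa} drives the bias below this statistical floor; the compound cost of the union bound across iterations together with the $\kappa$-dependent regression analysis produces the $\ln(\ln(n))^2$ factor appearing in the final rate.

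Third, I would identify the limit point with $\mathbf{p} = Proj_\MM(r)$. At convergence, $H_\kappa$ approximates the tangent space at a point of $\MM$ reached orthogonally from $r$; since $r \in \MM_\sigma$ and $\sigma < \tau$, the reach property forces uniqueness of such a point, hence equality with $\mathbf{p}$. The bound $\dist(\hat p_n, \MM) = \wtilde{\OO}(n^{-k/(2k+d)})$ then follows directly from the regression rate for function values, whereas $\|\hat p_n - \mathbf{p}\|$ and $\maxangle(\widehat{T_{\hat p_n}\MM}, T_\mathbf{p}\MM)$ both inherit the slower $n^{-(k-1)/(2k+d)}$ rate because they are controlled by the tangent tilt $\maxangle(H_\kappa, T_\mathbf{p}\MM)$ through a reach-based perturbation argument that moves along $\MM$ by an amount of the same order as the tilt. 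A union bound over the two failure events of Theorems \ref{thm:Step1} and \ref{thm:Step2} gives total failure probability at most $\delta$.

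The principal obstacle is the statistical coupling between iterations: the same sample $\{r_i\}$ is reused at every step, so a naive union bound over independent events is unavailable. Lemma \ref{lem:comute_kappa} must choose $\kappa$ large enough for the geometric contraction of the bias to reach the statistical floor, yet small enough that a single favorable sample-dependent event controls all $\kappa$ iterations simultaneously, while also tracking the nonlinear dependence of the bias at step $\ell$ on the tilt at step $\ell-1$. Once that bookkeeping is settled, substituting the prescribed $\kappa$ into the bounds of Theorem \ref{thm:Step2} yields the stated rates.
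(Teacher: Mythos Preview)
Your proposal is correct and follows the paper's approach: Theorem~\ref{thm:MainResult} is proved exactly by composing Theorem~\ref{thm:Step1} (valid initialization with high probability) with Theorem~\ref{thm:Step2} (which already delivers all three bounds and the identification $\mathbf{p}=Proj_\MM(r)$), combined via a union bound on the failure events. One correction on your side remark: a union bound across the $\kappa$ iterations \emph{is} what the paper uses inside the proof of Theorem~\ref{thm:Step2}---independence is irrelevant for a union bound---so events $A_0,\ldots,A_\kappa,B$ are each allotted failure $\delta/(\kappa{+}1)$ and summed; there is no need for a ``single favorable event'' to cover all iterations.
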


While the explicit formula for $\kappa$, the number of iterations needed in Algorithm \ref{alg:step2_clean}, is given in Lemma \ref{lem:comute_kappa}, it is worth mentioning that it depends only logarithmically on the number of samples $n$.

We derive the result of Theorem \ref{thm:MainResult} by showing that Algorithm \ref{alg:alg1_clean} yields a ``reasonable" estimation for the tangent directions (Theorem \ref{thm:Step1}), and the fact that Algorithm \ref{alg:step2_clean} yields estimates that converge to a point and its tangent on the original manifold as $n \to \infty$ (Theorem \ref{thm:Step2}). Consequently, Theorem \ref{thm:MainResult} can be proven directly from Theorems \ref{thm:Step1} and \ref{thm:Step2}.

\begin{Theorem}\label{thm:Step1}
	Let $ (q^*(r), H^*(r)) $, the output of Algorithm \ref{alg:alg1_clean}, and let $p = Proj_\MM(q^*)$. 
	Denote $\alpha = \sqrt{C_\sM/M}$ for some constant $C_\sM$ (independent of $\alpha$ and $M$).
	Then, for any $\delta>0$ arbitrarily small, there exists $ N_{\delta} $ such that for all $ n > N_{\delta} $ 
	\[\angle_{\max}(H^*, T_p\MM)\leq \alpha\]
	with probability of at least $ 1 - \delta $.
	Furthermore, we have
	\[
	\norm{p - q^*} \leq 3 \sigma
	\]
\end{Theorem}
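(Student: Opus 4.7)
My plan is to split Theorem~\ref{thm:Step1} into the distance bound $\|p - q^*\| \le 3\sigma$, which is essentially a direct consequence of the algorithm's constraints, and the angular bound, which requires an empirical-process style argument combined with a geometric analysis of the expected cost functional.

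For the distance bound, I would argue as follows. By Constraint~3 of Algorithm~\ref{alg:alg1_clean} we have $\|q^* - r\| < 2\sigma$, and by the hypothesis $r\in \MM_\sigma$ we have $\|r - Proj_\MM(r)\| < \sigma$. The triangle inequality then yields
\[
\|q^* - Proj_\MM(r)\| \le \|q^* - r\| + \|r - Proj_\MM(r)\| < 3\sigma.
\]
Since $p = Proj_\MM(q^*)$ is by definition the closest point on $\MM$ to $q^*$, we get $\|q^* - p\| \le \|q^* - Proj_\MM(r)\| < 3\sigma$. One only needs $3\sigma < \tau$ so that $Proj_\MM(q^*)$ is well defined, which holds under assumption~(ii) for $M$ large.

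For the angular bound, the strategy is a two-part argument. First, I would establish a uniform concentration inequality of the form
\[
\Pr\bigg[\sup_{(q,H)\in \mathcal{S}}\bigl| J_1(r;q,H) - \EE J_1(r;q,H)\bigr| > \eta_n\bigg] \le \delta,
\]
where $\mathcal{S}$ is the compact feasible set in $\RR^D \times Gr(d,D)$ and $\eta_n \to 0$ as $n \to \infty$. This follows from a standard covering/chaining argument: on the ROI, the integrand $\dist^2(r_i - q, H)$ is bounded by $\sigma \tau$, so Hoeffding's inequality applied pointwise plus a union bound over an $\epsilon$-net on $\mathcal{S}$ (whose covering number is polynomial in the relevant parameters) gives the uniform deviation control.

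Second, I would analyze the expected functional $\EE J_1(r; q, H) = \EE \bigl[\|Proj_{H^\perp}(R - q)\|^2\bigr]$ with $R\sim \operatorname{Unif}(U_{\textrm{ROI}})$ in two regimes. For the ``benchmark'' choice $\tilde H = T_{Proj_\MM(r)}\MM$ together with the unique $\tilde q$ enforcing the orthogonality constraint, I would write each sample in local coordinates aligned with $\tilde H$ as $r_i = (\text{manifold point at tangential coordinate } x_i) + (\text{tubular noise of order } \sigma)$. Using that the manifold offset from its tangent is $O(\|x_i\|^2/\tau)$ by the reach, and integrating over the ROI of radius $\sqrt{\sigma\tau}$, this yields an upper bound $\EE J_1(r;\tilde q,\tilde H) \le C\sigma^2$. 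For any competing $H$ with $\angle_{\max}(H, T_p\MM) \ge \alpha$, tilting by angle $\alpha$ introduces an additional $H^\perp$-component of size $\|x_i\|\sin\alpha$ for each sample at tangential coordinate $x_i$; integrating the square of this contribution against the density of tangential coordinates over the ROI of radius $\sqrt{\sigma\tau}$ produces an extra cost of order $\alpha^2 \cdot \sigma\tau$. Setting $\alpha^2 = C_\sM/M = C_\sM \sigma/\tau$, the excess is $C_\sM \sigma^2$, and choosing $C_\sM$ sufficiently large makes this dominate both the benchmark $C\sigma^2$ and the concentration slack $\eta_n$ for all $n$ larger than some $N_\delta$. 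Hence the minimizer $H^*$ cannot have a tilt exceeding $\alpha$.

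The principal obstacle I anticipate is producing the sharp geometric lower bound on the expected cost as a function of the tilt, cleanly disentangling the contributions of curvature, tubular noise, the off-manifold location of $q$, and the large-ambient-dimension factor $\sqrt{D\log D}$ that appears in the hypothesis on $M$. A careful decomposition of $r_i$ into its manifold and normal components, together with an explicit computation of the variance in the tilted $H^\perp$ direction, should be enough to push through the desired lower bound with the constants tracked to match the claimed $\alpha = \sqrt{C_\sM/M}$.
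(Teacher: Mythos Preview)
Your distance bound is exactly the paper's argument.

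Your angular bound is correct in outline but takes a different route from the paper. You propose uniform concentration of $J_1$ over the feasible set $(q,H)\in\mathcal S$ via Hoeffding plus a net on $\RR^D\times Gr(d,D)$, and then a comparison of expected costs. The paper instead avoids concentrating $J_1$ uniformly: its upper bound $J_1(r;p_r,T_{p_r}\MM)\le 50\sigma^2$ (Lemma~\ref{lem:J1pTp_clean}) is \emph{deterministic}, holding sample-by-sample via the reach inequality $\dist(r_i-p_r,T_{p_r}\MM)\le \tau-\sqrt{\tau^2-\|x_i\|^2}+\sigma$. For the lower bound over all tilted $H$, the paper does not net the Grassmannian; it uses the principal-angle decomposition
\[
\dist^2(x-p_r,H)=\dist^2(x-p_r,T_{p_r}\MM)+\sum_{j=1}^d\bigl[\langle x-p_r,\tilde y_j\rangle^2-\langle x-p_r,y_j\rangle^2\bigr],
\]
and then shows that for the worst principal direction $u_{\tilde j}$ there is a fixed sub-disk of the ROI (in $T_{p_r}\MM$) where $\langle r_i-p_r,\tilde y_{\tilde j}\rangle^2\gtrsim \sigma\tau\sin^2\alpha$. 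The only randomness that must be controlled uniformly is then the \emph{sample count} in such disks (Lemma~\ref{lem:num_of_samples_in_a_ball_clean}), which lives on $T_{p_r}\MM\simeq\RR^d$ rather than on $Gr(d,D)$. Your approach buys conceptual simplicity; the paper's buys a much smaller covering problem and a cleaner separation of the tilt contribution from curvature and noise --- precisely the ``disentangling'' you flag as the obstacle. If you pursue your route, the principal-angle identity above is the tool you are missing to make the lower-bound computation sharp, since the naive ``$\|x_i\|\sin\alpha$'' heuristic only captures the component of $x_i$ along the tilted direction.

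One correction: the $\sqrt{D\log D}$ condition on $M$ is a hypothesis of Theorems~\ref{thm:MainResult} and~\ref{thm:Step2}, not of Theorem~\ref{thm:Step1}. For Step~1 the paper only needs $M$ large enough that $\sqrt{\sigma/\tau}+\sigma/\tau<\tfrac12$ and the various geometric lemmas apply; the ambient-dimension factor enters only in the Step~2 bias analysis (Lemma~\ref{lem:Tf_TtildeF_D}).
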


The proof of this theorem can be found in Section \ref{sec:Step1Analysis}.
\begin{Theorem}\label{thm:Step2}
	Assume that $M>C_\stau\sqrt{D\log D}$.  Denote $\mathbf{p} = Proj_\MM (r)$ where r is the input point for Algorithm \ref{alg:step2_clean}.
	Let $(q, H)$ be an initial coordinate system for which $\norm{q - p}\leq 3 \sigma$ and $ \maxangle (H, T_p\MM) \leq \sqrt{c_\sM/M}$, where $p= Proj_\MM(q) \in \MM$.
	For any $\delta>0$ arbitrarily small, denote by $ \hat p_n, \widehat{T_{\hat p_n}\MM} $ the estimates derived from Algorithm \ref{alg:step2_clean} initialized with $(q,H)$ with the number of iterations $\kappa$ specified in Lemma \ref{lem:comute_kappa}. Then, there is $ N_{\delta} $ such that for all $ n > N_{\delta}  $ we have 
	\begin{equation}\label{eq:thm3.3_p_dist_bound}
		dist(\hat p_n, \MM) \leq \frac{C_1\ln\left(\frac{1}{\delta}\right)}{n^{r_0}}= \mathcal{O}(n^{-r_0}),
	\end{equation}
	\begin{equation}\label{eq:thm3.3_p_bound}
		\norm{\hat p_n - \mathbf{p}} \leq C_2\ln\left(\frac{1}{\delta}\right) \left(\frac{ n}{\ln\left(\ln(n) \right)^2}\right)^{-r_1} = \wtilde{\mathcal{O}}(n^{-r_1})  ,
	\end{equation}
	and 
	\begin{equation}\label{eq:thm3.3_tangent_ang}
		\angle_{\max}(\widehat{T_{\hat p_n}\MM},T_{\mathbf p}\MM) \leq C_3\ln\left(\frac{1}{\delta}\right) \left(\frac{ n}{\ln\left(\ln(n) \right)^2}\right)^{-r_1} = \wtilde{\mathcal{O}}(n^{-r_1})    
	\end{equation}
	with probability of at least $ 1 - \delta $, where $ r_0 = \frac{k}{2k +d}, r_1 = \frac{k-1}{2k +d} $, and $ C_1,C_2,C_3$ are constants independent of $d, \delta$. 
\end{Theorem}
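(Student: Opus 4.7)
The plan is to analyze the iteration in Algorithm \ref{alg:step2_clean} as a sequence of coordinate systems $(H_\ell)_{\ell\ge 0}$ that converges to $T_{\mathbf p}\MM$, and then to combine the resulting tangent accuracy with the standard nonparametric rate for local polynomial regression to obtain the three bounds simultaneously. Throughout, I use the local graph representation guaranteed by Lemma \ref{lem:M_is_locally_a_fuinction_clean}: for each $H_\ell$ whose tilt against $T_{p_\ell}\MM$ is small (initially bounded by $\sqrt{c_\sM/M}$ by the hypothesis and Theorem \ref{thm:Step1}), there is a radius $\rho_\ell>0$ on which $\MM$ is the graph over $H_\ell$ of a $\mathcal{C}^k$ function $f_\ell$, with $\mathcal{C}^k$-norm controlled by the reach $\tau$ and the tilt angle.

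First, I would apply the high-probability guarantee of Theorem~3.2 of \cite{aizenbud2021VectorEstimation} (invoked via \eqref{eq:pi_star_median_trick}) to $\pi^*_{r,H_\ell}$ and $\DD_{\pi^*_{r,H_\ell}}[0]$. With bandwidth $\epsilon_n\sim n^{-1/(2k+d)}$ as in \eqref{eq:bandwidthDecay_clean}, the stochastic error at the level of values is $\widetilde{\mathcal O}(n^{-k/(2k+d)})$ and at the level of derivatives is $\widetilde{\mathcal O}(n^{-(k-1)/(2k+d)})$. The nontrivial part is that $\EE(Y\mid X=x)$ does \emph{not} equal $f_\ell(x)$ whenever $H_\ell$ is tilted against the true tangent, because the tubular-noise slab $\MM_\sigma$ is symmetric around the true normal direction, not around $H_\ell^\perp$. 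I would show, by expanding the indicator of $\MM_\sigma$ in a neighborhood of $p_\ell$ in coordinates adapted to $H_\ell$, that this conditional-mean bias satisfies
\[
\bigl\|\EE(Y\mid X=x)-f_\ell(x)\bigr\|=\mathcal O\!\bigl(\sigma\,\maxangle(H_\ell,T_{p_\ell}\MM)^{2}\bigr),
\]
with a matching bound for its first derivative. The squaring comes from a symmetry/cancellation argument: uniformity of the samples in $\MM_\sigma$ kills the linear-in-tilt contribution, leaving only the second-order distortion of the slab.

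Next comes the contraction step, which I expect to be the main obstacle. Setting $\beta_\ell:=\maxangle(H_\ell,T_{p_\ell}\MM)$, combining the bias bound above with the variance bound from Theorem~3.2 of \cite{aizenbud2021VectorEstimation} yields a recursion of the form
\[
\beta_{\ell+1}\;\le\;C\,\beta_\ell^{2}\;+\;C'\ln(1/\delta')\left(\tfrac{n}{\ln(\ln n)^{2}}\right)^{-r_1},
\]
with some $\delta'$ to be chosen per iteration. Since the initial $\beta_0\le\sqrt{c_\sM/M}<1$ by Theorem~\ref{thm:Step1}, the deterministic part contracts double-exponentially and hits the stochastic floor after $\kappa=\Theta(\log\log n)$ steps; matching this to the specification of $\kappa$ in Lemma \ref{lem:comute_kappa} is what produces the $\ln(\ln n)^{2}$ factor in \eqref{eq:thm3.3_p_bound} and \eqref{eq:thm3.3_tangent_ang}. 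A union bound over the $\kappa$ iterations, absorbing per-iteration failure probability $\delta/\kappa$ into the constants, gives the $\ln(1/\delta)$ factor.

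Finally, I would convert the tangent accuracy into point accuracy and identify the limit as $\mathbf{p}=Proj_\MM(r)$. Because Algorithm~\ref{alg:step2_clean} is anchored at $r$, the candidate point $\hat p_n=r+(0,\pi^*_{r,H_\kappa}(0))_{H_\kappa}$ is the image under $\pi^*_{r,H_\kappa}$ of the zero tangential coordinate in $H_\kappa$. Once $\maxangle(H_\kappa,T_{\mathbf p}\MM)$ is $\widetilde{\mathcal O}(n^{-r_1})$, the $H_\kappa^\perp$-component of $\pi^*_{r,H_\kappa}(0)$ agrees with the $H_\kappa^\perp$-component of $\mathbf p-r$ up to an $\widetilde{\mathcal O}(n^{-r_0})$ error, giving \eqref{eq:thm3.3_p_dist_bound}. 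The residual tangential mismatch $\hat p_n-\mathbf p$ is controlled by the product of the tangent error and $\|r-\mathbf p\|=\mathcal O(\sigma)$, which yields \eqref{eq:thm3.3_p_bound}. The identification of the limit with $Proj_\MM(r)$ (rather than $Proj_\MM(q_{-1})$) is precisely this anchoring: the condition ``zero tangential coordinate at $r$'' is exactly the normality condition defining $Proj_\MM(r)$ in the limit $H_\kappa\to T_{\mathbf p}\MM$. Bound \eqref{eq:thm3.3_tangent_ang} is already in hand from the contraction. The hardest technical piece remains the quadratic-in-tilt bound on the conditional-mean bias in Step~2; without that quadratic dependence the recursion would stall at the initial tilt $\sqrt{c_\sM/M}$ instead of reaching the stochastic floor.
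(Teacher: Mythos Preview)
Your overall architecture --- iterate to shrink the tilt, then read off the three bounds from the final tilt plus the regression rate --- matches the paper's. But the quantitative engine of your recursion differs from what the paper actually establishes, and this mismatch is not cosmetic: it contradicts the $\kappa$ prescribed by Lemma~\ref{lem:comute_kappa} that the theorem statement invokes.

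You assert that both the conditional-mean bias and its first derivative are $\mathcal O(\sigma\beta_\ell^{2})$, yielding a quadratic recursion $\beta_{\ell+1}\le C\beta_\ell^{2}+\text{(stochastic floor)}$ and hence $\kappa=\Theta(\log\log n)$. The paper proves, and uses, only a \emph{linear} bound on the derivative bias: Lemma~\ref{lem:Tf_TtildeF_D} (via Lemma~\ref{lem:bound_Df-Dfwtilde}) gives $\maxangle(T_0 f_\ell,T_0\tilde f_\ell)\le \alpha_\ell/6$, where the small constant comes from $M=\tau/\sigma$ being large, not from an extra factor of $\alpha_\ell$. This produces only the halving recursion $\alpha_{\ell+1}\le\alpha_\ell/2$ of Lemma~\ref{lem:main_support_theorem_step2}, which requires $\kappa\sim r_1\log_2 n$ iterations --- exactly the formula in Lemma~\ref{lem:comute_kappa}. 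Your symmetry/cancellation heuristic does deliver a quadratic bound for the \emph{value} bias (the paper also proves $\|\tilde f_\ell(0)-f_\ell(0)\|\le 8\sigma D\alpha_\ell^{2}$ in Lemma~\ref{lem:dist_to_f_l0_srtong_small_alpha}), but differentiating costs a power: since the local tilt satisfies $\alpha(x)\approx\alpha+\mathcal O(|x|/\tau)$, the $x$-derivative of a quantity of size $\sigma\alpha(x)^{2}$ is of size $\sigma\alpha/\tau$, i.e.\ linear in $\alpha$. So the quadratic derivative-bias claim is unsubstantiated, and with it your $\kappa=\Theta(\log\log n)$.

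You also omit a structural piece the paper handles explicitly. Rotating $H_\ell\to H_{\ell+1}$ while keeping the origin at $r$ moves the base point on $\MM$ from $(0,f_\ell(0))_{H_\ell}$ to a \emph{different} manifold point $(0,f_{\ell+1}(0))_{H_{\ell+1}}$; the tilt you have just controlled is $\maxangle(H_{\ell+1},T_0 f_\ell)$, not $\maxangle(H_{\ell+1},T_0 f_{\ell+1})$. The paper bridges this via Lemma~\ref{lem:shift_ang_general}, and in the same breath tracks that $\|f_\ell(0)\|$ remains uniformly bounded across all $\kappa$ iterations (the telescoping product $\prod_\ell(1+40\alpha_\ell^{2})$ in the proof of Theorem~\ref{thm:Step2}). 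Without this, neither the graph representation of Lemma~\ref{lem:M_is_locally_a_fuinction_clean} nor the bias constants are guaranteed to survive the full iteration.
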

The proof of this theorem can be found in Section \ref{sec:Step2Analysis}.

Note that since the errors in the manifold and tangent estimation are bounded we can also conclude from Theorem \ref{thm:MainResult}, by choosing $\delta = n^{-r_0}$, that
\[
E(dist(\hat p_n, \MM)) \leq C_4 n^{-r_0}.
\]
Similarly, by choosing $\delta = n^{-r_1}$, we have
\[
E(\angle_{\max}(\widehat{T_{\hat p_n}\MM},T_{\hat p}\MM)) \leq  C_5\left(\frac{ n}{\ln\left(\ln(n) \right)^2}\right)^{-r_1}.
\]

We wish to note here that although we show convergence to the underlying manifold when $n\to\infty$ the convergence rates we achieve are sub-optimal in the case of $k=2$ as observed by Genovese et. al. \cite{genovese2012minimax} where the optimal rates for manifold estimation in the Hausdorff sense are shown to be $\mathcal{O}(n^{-2/(2+d)})$. 
This analysis combined with the fact shown by Amaari and Levrard \cite{aamari2019nonasymptotic} lends the possibility that the optimal rates for manifold estimation should be $\mathcal{O}(n^{-k/(k+d)})$, which are not achieved here.
However, the convergence rates we achieve here rely on the rates for local polynomial estimation, which are known to be optimal for non-parametric point estimation of functions.
Thus, we believe that our analysis of the algorithm represents its true rates of convergence rather than just bounding them. While the presented approach may be extended to estimating the entire manifold, and the current bounds may be extended to bound the Hausdorff distance between the estimated manifold and the underling manifold, we believe that the convergence rates will stay as they are.

\section{Proofs}
\label{sec:Proofs}
Theoretically, if we had known the tangent bundle of the sampled manifold at every point, we could have utilized it as a moving frame for the ``$x$-domain" to simply perform  a Moving Least-Squares approximation.
In this case, the convergence analysis would have been similar to standard local polynomial regression \cite{stone1980optimal} (with a varying coordinate system), as the sample bias issue described above would not have occurred. 
Thus, the first part of our investigation is focused on proving that $(q^*(r), H^*(r))$, the solution to the minimization problem of Equation \eqref{eq:argmin1_clean}, yields crude approximations to a tangent of the manifold.

Then, we refine the coordinate system in order to prevent bias introduced by the fact that $H^*(r)$ is tilted with respect to $T_{p}\MM$.
Yet, as we show below, one of the keys to unlocking the convergence rates are the known rates for local polynomial regression.

\subsection{Proof of Theorem \ref{thm:Step1}}
\label{sec:Step1Analysis}
\begin{proof}[proof of Theorem \ref{thm:Step1}]
	The proof can be described by the following three arguments which are proven in Lemmas \ref{lem:J1pTp_clean}, \ref{lem:J_1qH_clean}.
	\begin{enumerate}
		\item[Arg. 1:]\label{outline:Step1_1_clean}
		Denote $p_r = Proj_\MM(r)$.
		Then, since $r - p_r\perp T_{p_r}\MM $ and $r\in\MM_\sigma$, we have that $q = p_r$ along with $H= T_{p_r}\MM$ are in the search space defined by the constraints of \eqref{eq:argmin1_clean}.
		
		\item[Arg. 2:]\label{outline:Step1_2_clean} 
		From Lemma \ref{lem:J1pTp_clean} it follows that for large enough $M = \tau/\sigma$ such that $ \sqrt{\frac{\sigma}{\tau}}  + \frac{\sigma}{\tau} < \frac{1}{2} $ we have $J_1(r; p_r; T_{p_r}\MM) \leq 50\cdot \sigma^2$. 
		According to Assumption \ref{assume:noise} in Section \ref{sec:SamplingAssumptions}, we have that $M$ is large enough, and Lemma \ref{lem:J1pTp_clean} hold.
		Due to the definition of $q^*, H^*$ in \eqref{eq:argmin1_clean}, we achieve that $J_1(r; q^*, H^*) \leq 50 \cdot \sigma^2$ as well.
		
		\item[Arg. 3:]\label{outline:Step1_3_clean} From Lemma \ref{lem:J_1qH_clean}, we have that  
		for $\alpha = \sqrt{C_\sM/M}$, where $M = \frac{\tau}{\sigma}$, and $C_\sM$ is a constant
		the following holds:
		For any $\delta >0$ arbitrarily small there is $ N_{\delta} $ sufficiently large (independent of $\alpha$) such that for all $ n>N_{\delta} $, \textbf{all} pairs $(q, H)$ in the search space of \eqref{eq:argmin1_clean} such that $\angle_{max}(H,T_{p_r}\MM) > \alpha$, the score $J_1(r; q, H) \geq 100 \sigma^2 $ with probability of at least $1 - \delta$.
	\end{enumerate}
	Combining Arguments 2 and 3 we have that 
	for $\alpha = \sqrt{C_\sM/M}$, where $M = \frac{\tau}{\sigma}$, and $C_\sM$ is a constant
	the following holds:
	For any $\delta>0$ arbitrarily small, there exists $ N_{\delta} $ (independent of $\alpha$) such that for all $ n > N_{\delta} $ 
	\[\angle_{\max}(H^*, T_p\MM)\leq \alpha\]
	with probability of at least $ 1 - \delta $. Additionally, since the search space of \eqref{eq:argmin1_clean} requires that $\|r-q^*\| <2\sigma$, we have that $\|p-q^*\|\leq 3\sigma$, and the proof is concluded.
\end{proof}

\begin{Lemma}\label{lem:J1pTp_clean}
	Let the sampling assumption of \ref{sec:SamplingAssumptions} hold, let ${p_r} = Proj_\MM(r)$ be the orthogonal projection of $r$ onto $\MM$, and let $ T_{p_r}\MM $ be the tangent to $ \MM $ at $ {p_r} $.
	Then, for $M$ (of Assumption \ref{assume:noise}) large enough
	\begin{equation}\label{eq:TangentScore_clean}
		J_1(r; {p_r}, T_{{p_r}}\MM) \leq 50 \cdot \sigma^2 
	\end{equation}
\end{Lemma}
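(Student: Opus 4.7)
The plan is to bound $\dist^2(r_i - p_r, T_{p_r}\MM)$ deterministically for each sample $r_i$ in the ROI, and then average. Since the per-sample bound will not exceed a fixed multiple of $\sigma^2$, the average score $J_1$ will immediately satisfy the claimed inequality.

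First I would decompose each sample as $r_i = s_i + \eta_i$ where $s_i = Proj_\MM(r_i) \in \MM$ and $\|\eta_i\| < \sigma$ (this is possible by the assumption that $r_i \in \MM_\sigma$ and the reach-based uniqueness of the projection). Then, using the triangle inequality together with $r_i \in U_{\textrm{ROI}}$ (so $\|r_i - r\| < \sqrt{\sigma\tau}$) and $r \in \MM_\sigma$ (so $\|r - p_r\| < \sigma$), I would bound
\[
\|s_i - p_r\| \leq \|s_i - r_i\| + \|r_i - r\| + \|r - p_r\| \leq \sqrt{\sigma\tau} + 2\sigma.
\]

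Next, I would invoke the standard reach-based estimate that quantifies how close $\MM$ stays to its tangent: for any two points $a, b \in \MM$ with $\|a - b\|$ sufficiently small compared to $\tau$, the orthogonal component of $a - b$ relative to $T_b \MM$ satisfies
\[
\bigl\|Proj_{T_b\MM^\perp}(a - b)\bigr\| \leq \frac{\|a - b\|^2}{2\tau}.
\]
Applying this to $a = s_i$, $b = p_r$, and using the bound on $\|s_i - p_r\|$ above, yields
\[
\bigl\|Proj_{T_{p_r}\MM^\perp}(s_i - p_r)\bigr\| \leq \frac{(\sqrt{\sigma\tau}+2\sigma)^2}{2\tau} = \frac{\sigma}{2}\left(1 + 2\sqrt{\sigma/\tau}\right)^2.
\]
Adding the trivial bound $\|\eta_i\| \leq \sigma$ gives
\[
\dist(r_i - p_r, T_{p_r}\MM) \leq \frac{\sigma}{2}\left(1 + 2\sqrt{\sigma/\tau}\right)^2 + \sigma.
\]

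Finally, I would plug in the largeness condition on $M = \tau/\sigma$ stated in the lemma, namely $\sqrt{\sigma/\tau} + \sigma/\tau < 1/2$, which makes the parenthesized factor bounded by a small absolute constant (e.g.\ $\leq 3$). This yields $\dist(r_i - p_r, T_{p_r}\MM) \leq C\sigma$ for an explicit constant $C$, and squaring produces a per-sample bound of at most $50\sigma^2$ (with considerable slack). Averaging over $i$ then proves $J_1(r; p_r, T_{p_r}\MM) \leq 50 \sigma^2$. I do not anticipate any real obstacle here: the bound is deterministic given the sampling constraints $r_i \in \MM_\sigma$ and $r_i \in U_{\textrm{ROI}}$, and the only nontrivial ingredient is the classical reach-based tangent approximation, which is standard in this literature.
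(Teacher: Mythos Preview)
Your proof is correct and follows essentially the same approach as the paper: both arguments bound $\dist(r_i - p_r, T_{p_r}\MM)$ deterministically for every $r_i \in U_{\textrm{ROI}}$ via a reach-based tangent approximation, then average. The paper invokes its Bounding Ball lemma to get $\dist(r_i - p_r, T_{p_r}\MM) \leq \tau - \sqrt{\tau^2 - \|x_i\|^2} + \sigma$ with $\|x_i\| \leq \sqrt{\sigma\tau} + \sigma$, and then expands via Taylor to reach $49\sigma^2$; you instead use Federer's quadratic form $\|Proj_{T_{p_r}\MM^\perp}(s_i - p_r)\| \leq \|s_i - p_r\|^2/(2\tau)$ directly, which is the same estimate to leading order and yields an even smaller constant. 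One tiny numerical slip: with $\sqrt{\sigma/\tau} < 1/2$ the factor $(1+2\sqrt{\sigma/\tau})^2$ is bounded by $4$ rather than $3$, giving $\dist \leq 3\sigma$ and hence $9\sigma^2$, still well under $50\sigma^2$.
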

\textbf{Idea of the proof: } Since all the sampled points are $\sigma$-close to the manifold which is linearly approximated by the tangent, the mean squared distance to the tangent is of the order of $\OO(\sigma^2)$.
The proof is given in Appendix \ref{subsec:proof_lem_J1pTp_clean}

We show, in Lemma \ref{thm:J1pH_clean} that given a coordinate system $(p_r, H)$ with $\angle_{max}(H,T_{{p_r}}\MM) > \alpha$ with yields a large score of of our cost $J_1$ with high probability.
This will be generalized to a coordinate system $(q, H)$ around any origin $q$ in the search space of \eqref{eq:argmin1_clean} in Lemma \ref{lem:J_1qH_clean}.

\begin{Lemma}\label{thm:J1pH_clean}
	Let the sampling assumptions of Section \ref{sec:SamplingAssumptions} hold.
	Let ${p_r} = Proj_\MM(r)$ be the projection of $r$ onto $\MM$, and $ T_{p_r}\MM $ be the tangent to $ \MM $ at $ {p_r} $. 
	For $\alpha = \sqrt{C_\sM/M}$, where $M = \frac{\tau}{\sigma}$, and $C_\sM$ is a constant,
	the following holds: For any $\delta >0$ there is $N_\delta$ (independent of $\alpha$) such that $\forall n > N_\delta$ all linear sub-spaces $H\in Gr(d,D)$ with $\angle_{max}(H,T_{{p_r}}\MM) > \alpha$, yield a score 
	\[J_1(r; {p_r}, H) \geq 109 \sigma^2 ,\]
	with probability of at least $1 - \delta$.
\end{Lemma}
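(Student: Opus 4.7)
The plan is to split the proof into (i) a deterministic lower bound on the expectation $\EE[J_1(r;p_r,H)]$, uniform over the set $\mathcal{S}_\alpha:=\{H\in Gr(d,D)\,:\,\maxangle(H,T_{p_r}\MM)>\alpha\}$, and (ii) a uniform concentration inequality showing that $J_1(r;p_r,H)$ is close to its expectation for \emph{all} $H\in\mathcal{S}_\alpha$ simultaneously, with probability at least $1-\delta$. Combining them gives $J_1 \geq \EE[J_1] - (\text{deviation}) \geq 150\sigma^2 - 40\sigma^2 > 109\sigma^2$ uniformly over $\mathcal{S}_\alpha$.

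For (i), decompose each sample as $v_i := r_i - p_r = v_i^T + v_i^N$ in $T_{p_r}\MM \oplus T_{p_r}\MM^\perp$. The reach bound together with $r_i\in\MM_\sigma$ implies $\|v_i^N\|\leq \sigma + O(\|v_i^T\|^2/\tau)$. Given $H\in\mathcal{S}_\alpha$, fix a principal basis $u_1,\ldots,u_d$ of $T_{p_r}\MM$ with principal angles $\alpha_1\leq\cdots\leq\alpha_d$ relative to $H$, so that for tangential $x=\sum_j x_j u_j$ one has $\dist^2(x,H)=\sum_j x_j^2\sin^2\alpha_j$. Since $\MM_\sigma\cap U_\textrm{ROI}$ is, up to $O(\sigma/\tau)$ distortions from curvature, a slab of thickness $2\sigma$ over a tangential disk of radius $\sqrt{\sigma\tau}$, the isotropy of the uniform measure on this slab gives $\EE[(v_i^T)_j^2]\geq c\,\sigma\tau/d$ for a universal $c$. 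Combining $\dist^2(v_i,H)\geq \dist^2(v_i^T,H)-2\|v_i^N\|\|v_i^T\|$ (Cauchy--Schwarz on the cross term) with $\alpha_d\geq\alpha=\sqrt{C_\sM/M}$ and taking expectations yields
\[
\EE[J_1(r;p_r,H)]\;\geq\;\frac{c\sigma\tau}{d}\sin^2\alpha - C\sigma^2 \;\geq\;\frac{c\,C_\sM}{2d}\sigma^2 - C\sigma^2,
\]
which is at least $150\sigma^2$ once $C_\sM$ is chosen sufficiently large relative to $d$ and $C$.

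For (ii), fix $H\in\mathcal{S}_\alpha$ first. Since each $\dist^2(v_i,H)$ lies in $[0,\sigma\tau+O(\sigma^2)]$, Bernstein's inequality gives $|J_1(r;p_r,H)-\EE[J_1]|\leq 40\sigma^2$ with probability $1-\exp(-c'N/M^2)$. To make this uniform in $H$, build a $\beta$-net $\mathcal{N}_\beta\subset Gr(d,D)$ in an operator-norm Grassmann metric, of cardinality $(C_0/\beta)^{d(D-d)}$. Using the Lipschitz estimate $|\dist^2(v,H)-\dist^2(v,H')|\leq 2\|v\|^2 \,d_{Gr}(H,H')$ together with $\|v_i\|\leq\sqrt{\sigma\tau}+\sigma$, and choosing $\beta\asymp 1/M$, a union bound over $\mathcal{N}_\beta$ yields the desired uniform concentration on $\mathcal{S}_\alpha$ with probability at least $1-\delta$, valid for all $n\geq N_\delta$ where $N_\delta$ depends on $\delta,d,D$ but not on $\alpha$ (the net enters only logarithmically in the sample complexity).

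I expect the main obstacle to be this uniform-concentration step: one must carefully balance the Lipschitz modulus of $H\mapsto\dist^2(v,H)$, the covering number of the Grassmannian, and the exponent in Bernstein, in order to ensure $N_\delta$ is genuinely independent of $\alpha$. A secondary technical point is making the isotropy claim in (i) precise: this requires expressing the uniform measure on $\MM_\sigma\cap U_\textrm{ROI}$ in Fermi coordinates around $p_r$ and verifying that curvature introduces only $O(\sigma/\tau)$ multiplicative corrections to the density, which are absorbed by taking $M$ large.
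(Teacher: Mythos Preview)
Your step (i) has a genuine gap. The crude Cauchy--Schwarz bound $\dist^2(v_i,H)\geq \dist^2(v_i^T,H)-2\|v_i^N\|\|v_i^T\|$ is correct, but the resulting cross term is \emph{not} $O(\sigma^2)$. Since $r_i\in U_{\textrm{ROI}}$ has tangential part $\|v_i^T\|\leq \sqrt{\sigma\tau}+\sigma$ while $\|v_i^N\|\lesssim \sigma$, the cross term is of order $\sigma\cdot\sqrt{\sigma\tau}=\sigma^2\sqrt{M}$. Your main term $\frac{c\sigma\tau}{d}\sin^2\alpha\approx \frac{cC_\sM}{d}\sigma^2$ is therefore swamped by the subtracted $C'\sigma^2\sqrt{M}$ once $M$ is large, and the displayed lower bound $\EE[J_1]\geq \frac{cC_\sM}{2d}\sigma^2-C\sigma^2$ is false as written. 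A repair would require either tightening the cross term to $2\|P_{H^\perp}v_i^T\|\cdot\|v_i^N\|$ and then controlling how this scales with $\sin\alpha_d$, or computing the expectation of the signed cross term directly; neither is done, and the latter is delicate because $v_i^N$ contains a curvature contribution $\phi_{p_r}(x_i)$ that is not mean-zero.

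The paper avoids this cross-term issue entirely by using the \emph{exact} principal-angle identity
\[
\dist^2(v,H)=\dist^2(v,T_{p_r}\MM)+\sum_{j=1}^d\bigl[\langle v,\tilde y_j\rangle^2-\langle v,y_j\rangle^2\bigr],
\]
so that $J_1(r;p_r,H)=J_1(r;p_r,T_{p_r}\MM)+R_1$. The indices are split into $\mathcal{K}'$ (angles $>\alpha$) and $\mathcal{K}''$ (angles $\le\alpha$); the $\mathcal{K}''$ part is lower-bounded by $-9\sigma^2$ deterministically via $\|v_i^N\|\lesssim\sigma$. For $j\in\mathcal{K}'$, rather than taking expectations, the paper exhibits a concrete sub-ball $B_{T_{p_r}\MM}(x_0,a_2\sqrt{\sigma\tau})$ centred at $x_0=(a_1\sqrt{\sigma\tau})u_j$ on which $\langle r_i-p_r,\tilde y_j\rangle^2\gtrsim \sigma\tau\sin^2\alpha$ holds \emph{deterministically} for every sample landing there. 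The only probabilistic input is a sample-counting lemma uniform over \emph{all} ball centres $x_0$ in $T_{p_r}\MM$; once that event holds, the bound applies to whichever $x_0$ is dictated by the principal direction of any given $H$. Thus uniformity over $H\in\mathcal{S}_\alpha$ is obtained without netting the Grassmannian at all --- the net lives only in the $d$-dimensional tangent plane, which also keeps $N_\delta$ free of $\alpha$.
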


The proof of Lemma \ref{thm:J1pH_clean} appears in Section \ref{sec:proof_thmJ1pH_clean}.

\begin{Lemma}\label{lem:J_1qH_clean}
	Let the sampling assumption of Section \ref{sec:SamplingAssumptions} hold. Let ${p_r} = Proj_\MM(r)$ be the projection of $r$ onto $\MM$, and $ T_{p_r}\MM $ be the tangent to $ \MM $ at $ {p_r} $.
	For $\alpha = \sqrt{C_\sM/M}$, where $M = \frac{\tau}{\sigma}$, and $C_\sM$ is a constant  the following holds:
	For any $\delta >0$ there is $N_\delta$ (independent of $\alpha$) such that $\forall n > N_\delta$ all linear sub-spaces $H \in Gr(d,D)$ with $\angle_{max}(H,T_{{p_r}}\MM) > \alpha$, and all $q$ in the search space of \eqref{eq:argmin1_clean} yield a score 
	\[
	J_1(r; {q}, H) \geq 100\sigma^2 
	\]
	with probability of at least $1 - \delta$. 
\end{Lemma}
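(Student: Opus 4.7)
The plan is to deduce Lemma~\ref{lem:J_1qH_clean} from Lemma~\ref{thm:J1pH_clean} by a convexity computation that quantifies the dependence of $J_{1}(r;q,H)$ on $q$ at fixed $H$. Write $q=p_{r}+u$, and for a fixed $H$ set
\[
v_{i}\defeq Proj_{H^{\perp}}(r_{i}-p_{r}),\quad w\defeq Proj_{H^{\perp}}(u),\quad \bar v\defeq\frac{1}{N}\sum_{r_{i}\in U_{\textrm{ROI}}}v_{i}.
\]
The identity $r_{i}-q=(r_{i}-p_{r})-u$ gives $\dist^{2}(r_{i}-q,H)=\norm{v_{i}-w}^{2}$, so averaging and completing the square yield, for \emph{every} $q$ (feasible or not),
\[
J_{1}(r;q,H)=J_{1}(r;p_{r},H)+\norm{w-\bar v}^{2}-\norm{\bar v}^{2}\geq J_{1}(r;p_{r},H)-\norm{\bar v}^{2},
\]
where the inequality uses $\norm{w-\bar v}^{2}\geq 0$. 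It therefore suffices to establish two high-probability events whose intersection has probability at least $1-\delta$: (i) $J_{1}(r;p_{r},H)\geq 109\sigma^{2}$ uniformly over $H$ with $\maxangle(H,T_{p_{r}}\MM)>\alpha$, which is exactly Lemma~\ref{thm:J1pH_clean} applied at confidence $\delta/2$; and (ii) $\norm{\bar v}^{2}\leq 9\sigma^{2}$ uniformly over $H$.

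For (ii), the key observation is the $H$-free majorization $\norm{\bar v}\leq\norm{\bar r_{\textrm{ROI}}-p_{r}}$, where $\bar r_{\textrm{ROI}}\defeq\frac{1}{N}\sum_{r_{i}\in U_{\textrm{ROI}}}r_{i}$. This replaces a Grassmannian-uniform statement by a single scalar bound, avoiding a covering argument over $Gr(d,D)$. I then split
\[
\bar r_{\textrm{ROI}}-p_{r}\;=\;(\mu-p_{r})\;+\;(\bar r_{\textrm{ROI}}-\mu),\qquad \mu\defeq\EE[\,r_{i}\mid r_{i}\in U_{\textrm{ROI}}\,],
\]
and treat the two pieces separately. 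For the fluctuation $\bar r_{\textrm{ROI}}-\mu$, a coordinate-wise Hoeffding bound (each coordinate of $r_{i}$ is confined to an interval of length $O(\sqrt{\sigma\tau})$ in the ROI), combined with a union bound over the $D$ coordinates, gives $\norm{\bar r_{\textrm{ROI}}-\mu}\leq\sigma$ with probability $\geq 1-\delta/2$ whenever $N\gtrsim DM\log(D/\delta)$, which is automatic for $n$ sufficiently large.

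For the bias $\norm{\mu-p_{r}}$, I will use the tangent-plane parametrization $\MM_{\sigma}\simeq T_{p_{r}}\MM\times B^{\perp}(0,\sigma)$ of the tubular neighborhood near $p_{r}$. Modulo curvature corrections of order $1/\tau$, the region $U_{\textrm{ROI}}$ is invariant under tangential reflection through $r$, so the tangential component of $\mu-p_{r}$ vanishes to leading order. In the normal direction, because the ROI radius $\sqrt{\sigma\tau}$ dominates the tube radius $\sigma$, the normal cross-sections are essentially the full disk $B^{\perp}(0,\sigma)$, and a second-order Taylor expansion of the slice-volume weight $(\sigma\tau-\norm{y-r_{\perp}}^{2})^{d/2}$ shows the centroid deviates from the origin by $O(d\sigma/M)$. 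Combined with the curvature corrections, $\norm{\mu-p_{r}}\leq 2\sigma$ is guaranteed whenever $M$ is large enough (as required by Assumption~\ref{assume:noise}), so $\norm{\bar v}\leq 3\sigma$ with probability $\geq 1-\delta/2$, and a union bound delivers $J_{1}(r;q,H)\geq 109\sigma^{2}-9\sigma^{2}=100\sigma^{2}$ with probability $\geq 1-\delta$ for all feasible $(q,H)$ simultaneously. The principal technical difficulty is precisely the bias estimate $\norm{\mu-p_{r}}=O(\sigma)$, since controlling both the near-reflection-symmetry of the tube cross-sections and the curvature-induced drift of the centroid requires careful bookkeeping; the structural observation that keeps the proof clean is the $H$-free majorization $\norm{\bar v}\leq\norm{\bar r_{\textrm{ROI}}-p_{r}}$, which decouples the geometric estimate entirely from the choice of subspace $H$.
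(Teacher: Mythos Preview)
Your argument is sound in outline but takes a genuinely different route from the paper. The paper's proof is a two-line reduction: it uses the search-space constraint $\|r-q\|<2\sigma$ (hence $\|q-p_r\|<3\sigma$) to compare $J_1(r;q,H)$ with $J_1(r;p_r,H)$ by a direct per-point distance inequality, losing at most $9\sigma^2$, and then invokes Lemma~\ref{thm:J1pH_clean}; no probabilistic input beyond that lemma enters, and the feasibility constraint on $q$ does all the work. You instead discard the $q$-constraint entirely: your completed-square identity gives $J_1(r;q,H)\geq J_1(r;p_r,H)-\|\bar v\|^2$ for \emph{every} $q$, and you pay for that generality with a separate concentration-plus-centroid argument to bound $\|\bar v\|\leq\|\bar r_{\textrm{ROI}}-p_r\|\leq 3\sigma$. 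The $H$-free majorization is clean and does avoid any covering of $Gr(d,D)$, but you have traded a one-line use of the search-space constraint for a genuine geometric estimate on the centroid of $\MM_\sigma\cap U_{\textrm{ROI}}$ that the paper never needs.

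On that estimate, one correction is needed. Your normal-direction bias bound $O(d\sigma/M)$ only captures the slice-weight asymmetry in the flat model; it misses the dominant curvature contribution. The normal cross-section at tangential position $x$ is centered at $\phi_{p_r}(x)$, not at the origin of $T_{p_r}\MM^\perp$, so after averaging over $x$ the centroid picks up a shift of size $\|\EE_x[\phi_{p_r}(x)]\|\leq\EE_x[\|x\|^2/\tau]\approx\tfrac{d}{d+2}\sigma$ by Lemma~\ref{lem:f_bound_circle_clean}. This is $O(\sigma)$ with a constant strictly below $1$, not $O(\sigma/M)$. Your conclusion $\|\mu-p_r\|\leq 2\sigma$ still holds once this term is included, so the overall argument survives, but the stated order of the normal bias is incorrect and ``curvature corrections of order $1/\tau$'' understates what you actually need to absorb.
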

\begin{proof}
	From Lemma \ref{thm:J1pH_clean}, we have that there is a constant $C_\sM$ such that for any $ {\alpha} < \frac{\pi}{2} $ and $M = C_\sM/\alpha^2$, and for any $\tau$ and $\sigma$ maintaining $\frac{\tau}{\sigma}>M$
	the following hold:
	For any $\delta >0$ arbitrarily small there is $ N_\delta $ sufficiently large such that for all $ n>N_\delta $, \textbf{all} linear spaces $H$ with $\angle_{max}(H,T_{p_r}\MM) > \alpha$, yield a score $J_1(r; p_r, H) \geq 109 \cdot \sigma^2 $, with probability of at least $1 - \delta$. 
	We now wish to show that $J_1(r; q, H) \geq 100 \sigma^2$ with high probability as well.
	For convenience we wish to reiterate \eqref{eq:J1_clean}
	\[
	J_1(r; q, H) = \frac{1}{n}\sum_{r_i\in U_{\textrm{ROI}}} \dist^2 (r_i - q,H)
	.\]
	By Constraint 3 of \eqref{eq:J1_clean} we achieve that $dist(q, \MM) \leq 3\sigma$ and so for $p=Proj_\MM(q)$ we have
	$
	\norm{q - p} \leq 3\sigma
	$.
	Thus,
	\[
	\dist^2 (r_i - q,H) = \norm{p - q}^2 + \dist^2 (r_i - p,H) \geq \dist^2 (r_i - p,H) - 9\sigma^2
	,\]
	and we achieve
	$
	J_1(r; q, H) \geq J_1(r; p, H) - 9\sigma^2
	$.
	By Lemma \ref{thm:J1pH_clean} we conclude the proof of the current lemma.
\end{proof}

\subsection{Proof of Theorem \ref{thm:Step2}}
\label{sec:Step2Analysis}

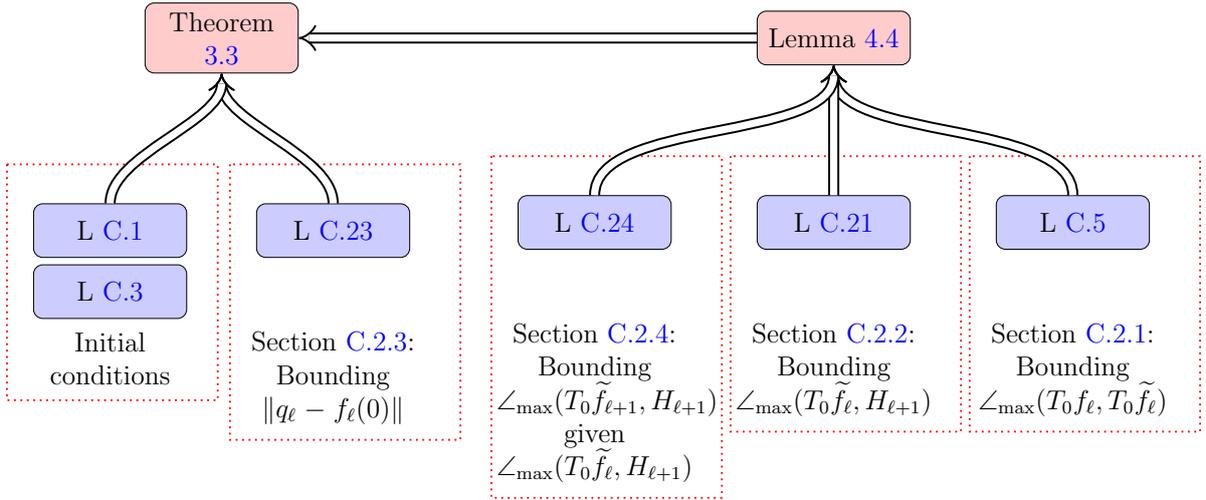
\begin{figure}[htb!]
	\centering
	\resizebox{16cm}{!}{
		\begin{tikzpicture}[auto]
			
			\node [Rblock] (main_lem) {Lemma \textcolor{blue}{\ref{lem:main_support_theorem_step2}}};
			\node [Rblock,left = 7cm of main_lem] (thm33) {Theorem \textcolor{blue}{\ref{thm:Step2}}};
			
			\draw [imp] (main_lem) to[out=180,in=0] (thm33) node [midway, below, sloped] (TextNode) {};
			
			\node [block, below right =\dd and 1.3cm of main_lem] (Tf_Tf_tild) {L \textcolor{blue}{\ref{lem:Tf_TtildeF_D}}};
			\node [below = 1cm of Tf_Tf_tild, text width = 3cm, align = center] 
			(secTf_Tf_tilde_title) {Section \textcolor{blue}{\ref{sec:Tf_Tf_tilde_err}}: Bounding $\maxangle(T_0 f_\ell, T_0 {\wtilde{f}_\ell})$};
			\draw[red,thick,dotted] ($( Tf_Tf_tild.north west)+(-0.4,0.6)$)  rectangle ($(secTf_Tf_tilde_title.south east)+(0.3,-0.1)$);
			\node [block, below =\dd of main_lem] (Tf_tilde_finite_est) {L \textcolor{blue}{\ref{lem:AngleImprovement}}};
			\node [below = 1cm of Tf_tilde_finite_est, text width = 3cm, align = center]
			(secfinit_sample_tangen_title) 
			{Section \textcolor{blue}{\ref{sec:FiniteSample_err}}: 
				Bounding
				$\maxangle(T_0\wtilde f_\ell, H_{\ell + 1})$};
			\draw[red,thick,dotted] ($( Tf_tilde_finite_est.north west)+(-0.4,0.6)$)  rectangle ($(secfinit_sample_tangen_title.south east)+(0.3,-.1)$);
			\node [block, below left = \dd and 1.3cm of main_lem]  (Tf_move_rot) {L \textcolor{blue}{\ref{lem:shift_ang_general}}};
			\node [below = 1cm of Tf_move_rot, text width = 3cm, align = center]
			(secshifted_center_t_title) {Section \textcolor{blue}{\ref{sec:ShiftedCenter_err}}: 
				Bounding 
				$\maxangle(T_0\wtilde f_{\ell+1}, H_{\ell + 1})$ 
				given 
				$\maxangle(T_0\wtilde f_\ell, H_{\ell + 1})$};
			\draw[red,thick,dotted] ($( Tf_move_rot.north west)+(-0.4,0.6)$)  rectangle ($(secshifted_center_t_title.south east)+(0.3,-0.1)$);
			\node[block, below right = \dd and -0.65 cm of thm33](f0_move_rot2) {L \textcolor{blue}{\ref{lem:dist_to_f_l0_srtong_small_alpha}}};
			\node [below = 1cm of f0_move_rot2, text width = 3cm, align = center] 
			(secf0_est_title) {Section \textcolor{blue}{\ref{sec:f0_est}}: Bounding $\|q_\ell - f_\ell(0)\| $};
			\draw[red,thick,dotted] ($( f0_move_rot2.north west)+(-0.4,0.6)$)  rectangle ($(secf0_est_title.south east)+(0.3,-0.1)$);

			\node [block, below left = \dd and -0.65 of thm33](init_H0) {L \textcolor{blue}{\ref{lem:H0_Tf_Init}}};
			\node [block, below = 0.1cm of init_H0](init_f0) {L \textcolor{blue}{\ref{lem:dist_to_f_00}}};

			\node [below = 1cm of init_H0, text width = 3cm, align = center] (secX_title) {Initial \\ conditions};
			\draw[red,thick,dotted] ($( init_H0.north west)+(-0.4,0.6)$)  rectangle ($(secX_title.south east)+(0,-0.1)$);
			
			
			\draw [imp] (Tf_Tf_tild) to[out=90,in=-90,looseness=0.73] (main_lem) node [midway, below, sloped] (TextNode) {};
			\draw [imp] (Tf_tilde_finite_est) to[out=90,in=-90,looseness=0.73] (main_lem) node [midway, below, sloped] (TextNode) {};
			\draw [imp] (Tf_move_rot) to[out=90,in=-90,looseness=0.73] (main_lem) node [midway, below, sloped] (TextNode) {};
			\draw [imp] (f0_move_rot2) to[out=90,in=-90,looseness=0.73] (thm33) node [midway, below, sloped] (TextNode) {};
			
			\draw [imp] (init_H0) to[out=90,in=-90,looseness=0.73] (thm33) node [midway, below, sloped] (TextNode) {};

		\end{tikzpicture}
	}
	\caption{Road-map for proof of Theorem \ref{thm:Step2}}
	\label{tikz:thm33_lemmas}
\end{figure}


Before delving into the details of the proof, we wish to reiterate the steps of Algorithm~\ref{alg:step2_clean} while introducing some useful notations.
According to the assumptions of Theorem \ref{thm:Step2} we have a local coordinate system $(q, H)\in \RR^D\times Gr(d, D)$, such that $ \norm{q-p} \leq 3 \sigma $ and $\angle_\textrm{max}(T_p\MM, H) \leq \alpha$, where $p\in \MM$ .
As Algorithm \ref{alg:step2_clean} involves an iterative process, we denote the initial directions of the tangent estimation as $H_0 := H$.
Then, at each iteration we update the local coordinates' directions $H_\ell$ (for $\ell=1, \ldots, \kappa$). 

Similar to \eqref{eq:FunctionGraph_init} and using the result of Lemma \ref{lem:M_is_locally_a_fuinction_clean}, we begin by looking at the manifold patch $\MM \cap \textrm{Cyl}_{H_0}(p,c_{\pi/4}\tau,\tau/2)$ as the graph of a function $f_{0}:(r,H_0) \simeq\RR^d\to\RR^{D-d}$; i.e., 
\begin{equation}
	\Gamma_{f_{0}, r,H_0} \defeq \{r + (x, f_{0}(x))_{H_0} ~|~ x\in B_{H_0}(0, c_{\pi/4}\tau)\}  
	,\end{equation}
where $\textrm{Cyl}_{H_0}(p,c_{\pi/4}\tau, \tau/2)$ is the $D$-dimensional cylinder with the base $B_H(p, c_{\pi/4}\tau)\subset H_0$ and height $\tau/2$ in any direction on $H_0^\perp$. 
For the remainder of this section, we assume that $ \alpha = \sqrt{C_\sM/M} $ is small enough (see the sampling assumptions in Section \ref{sec:SamplingAssumptions}).
Then, at each iteration of Algorithm \ref{alg:step2_clean} we update $H_\ell$ to $H_{\ell + 1}$ through taking the linear space coinciding with the directions of the estimated tangent to $f_{\ell}$ at zero. 
Explicitly, given $(r, H_{\ell})$, we look at the manifold as $\Gamma_{f_{\ell}, r, H_{\ell}}$ the local graph of a function $f_{\ell}$ and estimate $T_0 f_{\ell} \in Gr(d, D)$ the tangent to the graph of $f_{\ell}$ at $0$ through taking the image of $\DD_{\pi^*_{r, H_{\ell}}}[0]$, the first order differential of $\pi^*_{r, H_{\ell }}$ at zero.
In other words, we ``rotate" the coordinate system to the point where $H_{\ell+1}$ aligns with the former tangent estimation to get $f_{\ell+1}$. 

However, if we want to use the well-known convergence rates of local polynomial regression (i.e., the minimization of \eqref{eq:argmin2_clean}), a key assumption in the analysis is that the noise has a zero mean (see \cite{stone1980optimal}). 
However, in our case, for any $x\in H_\ell$, the samples above $x$ are uniformly distributed~in 
\begin{equation}\label{eq:Omega_def}
	\Omega_\ell(x) = (x + H_\ell^\perp) \cap  \MM_\sigma
	,\end{equation}
where $x + H_\ell^\perp = \{x + y ~|~ y\in H_\ell^\perp\}$.
That is, $\Omega_\ell(x):H_\ell \simeq \RR^d \to 2^{\RR^{D-d}}$.
Thus, denoting $\eta_\ell(x)\sim \textrm{Unif}(\Omega(x))$ and defining 
\begin{equation}\label{eq:ftilde_def}
	\wtilde f_\ell(x) = \mathbb{E}[\eta_\ell(y | x)]     \neq f_\ell
	,\end{equation}
the result of the regression will estimate $\wtilde f_\ell$ rather than $f_\ell$ itself  (see Figure \ref{fig:TiltedBias}). Thus, when we wish to estimate $f_\ell$ or its derivatives using local polynomial regression, we estimate a different function, for which the noise has zero mean (see Figure \ref{fig:bias_ex_clean}).

\begin{figure}[htb]
	\floatbox[{\capbeside\thisfloatsetup{capbesideposition={right,top},capbesidewidth=8cm}}]{figure}[\FBwidth]
	{\caption{Illustration of $ \MM $ as a graph of a function $ f_\ell $ \emphbrackets{marked by the red solid line} above the coordinate system $ f_\ell $. The boundary of  $ \MM_\sigma $ is delineated by the pink lines and $ \wtilde f_\ell $ is the conditioned expectancy $ \EE[\eta_\ell(y ~|~ x)] $ of this domain with respect to the presented $ y $-axis. }\label{fig:bias_ex_clean}}
	{\includegraphics[width=0.3\textwidth,height= 0.35\textwidth]{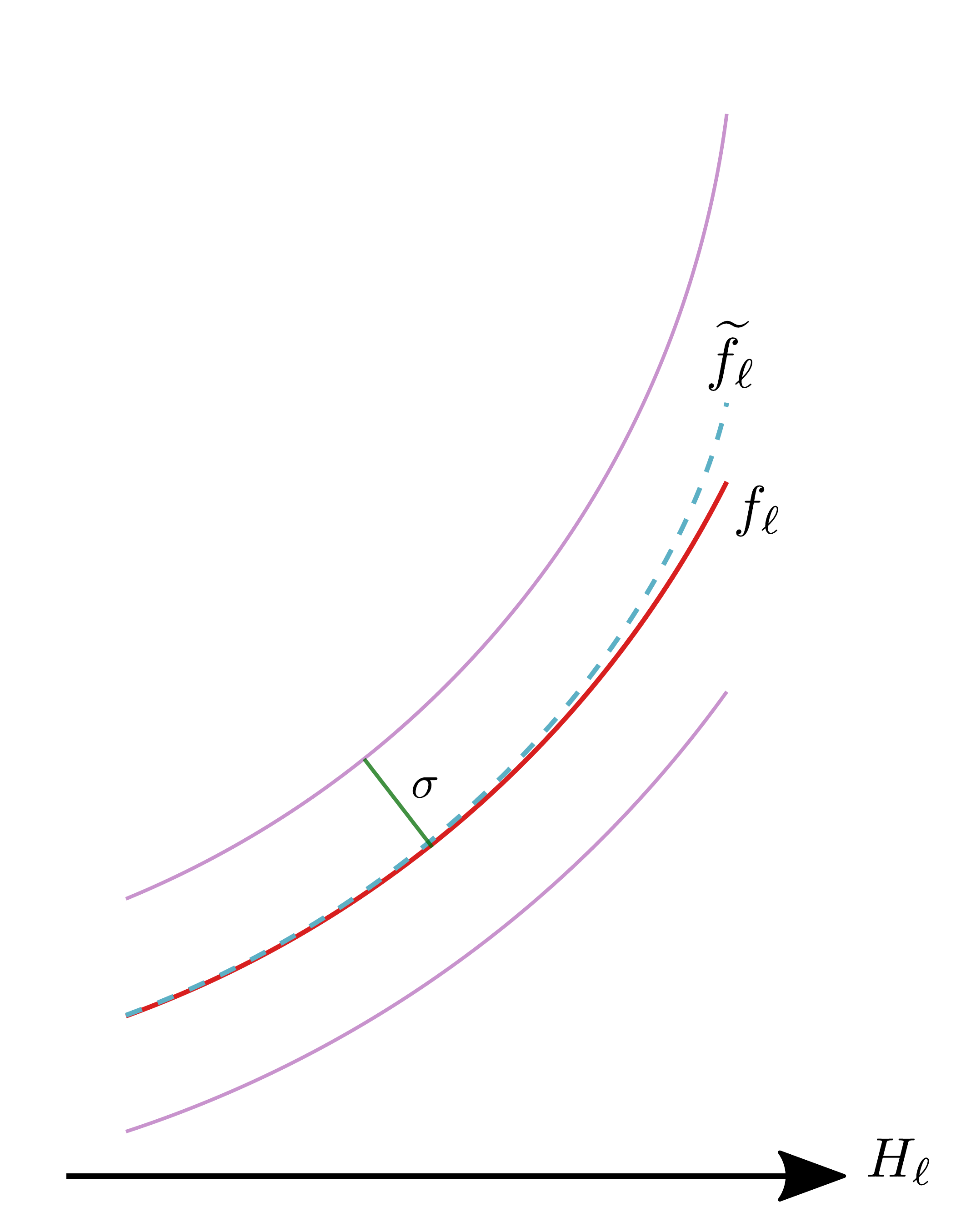}}
\end{figure}

Since for any arbitrarily fixed $ x $ the density of the random variable $ \eta_\ell(y | x) $ is constant, $\wtilde{f}_\ell(x)$ can be calculated as the mean of the set $\Omega_\ell(x)$ defined in \eqref{eq:Omega_def}.
Furthermore, the farthest point in each such direction is exactly $\sigma$ away from the graph of $f_\ell$ (see Figure \ref{fig:draw_step_2_bias_clean}).

\begin{figure}[b]
	\floatbox[{\capbeside\thisfloatsetup{capbesideposition={right,top},capbesidewidth=8cm}}]{figure}[\FBwidth]
	{\caption{Illustration of $g(0,\vec{y})$ and $\Omega(0)$ in the two dimensional case. Let $H$ be some local coordinate system and consider $\MM$ as a local graph of some function $f:H\to H^\perp$. The upper bound for the values of the sample distribution above $0$ in some direction $\theta$ of $H^\perp$ is $g(0,\theta)$. This value is $\sigma$-away from some point on $\MM$. We denote this point by $f(\wtilde x(0))\in \MM$.}\label{fig:draw_step_2_bias_clean}}
	{\includegraphics[width=0.45\textwidth]{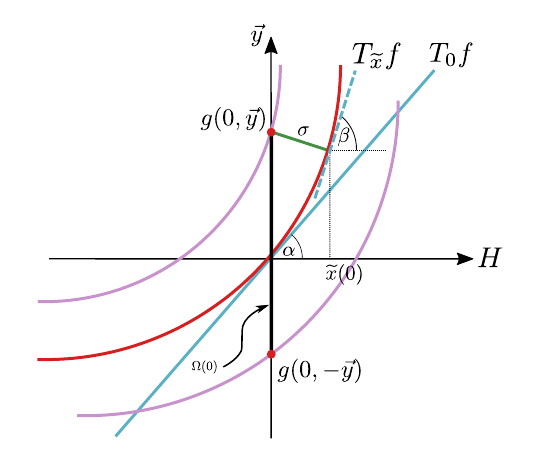}}
\end{figure}

A key lemma in the proof of Theorem \ref{thm:Step2} is the following, which is proved in Section \ref{subsubsec:proof_of_main_support_lemma}.

\begin{Lemma} \label{lem:main_support_theorem_step2}
	Let $H_\ell\in Gr(d, D)$, and let $f_\ell:H_\ell\simeq\RR^d \rightarrow \RR^{D-d}$, defined as in \eqref{eq:fl_def}. Define $H_{\ell+1} = \Ima(\DD_{\pi^*_{q_\ell,H_{\ell}}}[0])$, as in Algorithm \ref{alg:step2_clean}, and let $r_1 = \frac{k-1}{2k +d}$. 
	Assume that $M = \frac{\tau}{\sigma} \geq C_\stau \sqrt{D\log D}$ (where $C_\stau$ is a constant from Lemma \ref{lem:Tf_TtildeF_D}).
	For any $\delta>0$, there is $N$ such that for any number of samples $n>N$, and any $\alpha \leq\sqrt{1/D}$ smaller than some constant \emph{(}see Theorem \ref{thm:Step1} and Section \ref{sec:SamplingAssumptions} for the definition of $M$ and $C_\sM$\emph{)}.  If
	\begin{enumerate}
		\item $\maxangle (H_\ell, T_0 f_\ell) = \alpha_\ell\leq \alpha$
		\item $\|f_\ell(0)\| \leq \min (c_{\pi/4} \cdot \tau, \sqrt{\tau/32} -1) $
		\item $N$ satisfies $ 12\sqrt{d} \frac{C_0 \ln(1/\delta)}{N^{r_1}}\leq \alpha $ 
	\end{enumerate}
	hold, where $C_0$ is some constant. Then, we have  
	\[
	\alpha_{\ell+1} = \maxangle (H_{\ell+1}, T_0 f_{\ell+1})  = \maxangle (H_{\ell+1}, T_0 f_{\ell+1/2}) \leq \alpha/2
	\]
	and
	\[
	\|f_{\ell+1}(0)\| \leq \|f_\ell(0)\|\left(1+40\alpha^2\right) 
	\]
	with probability at least $1-\delta$.
\end{Lemma}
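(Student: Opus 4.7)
The plan is to combine the three supporting Lemmas \ref{lem:Tf_TtildeF_D}, \ref{lem:AngleImprovement}, and \ref{lem:shift_ang_general} depicted in Figure \ref{tikz:thm33_lemmas} through a $\maxangle$-triangle inequality. Concretely, I would decompose
\[
\alpha_{\ell+1}=\maxangle(H_{\ell+1},T_0 f_{\ell+1})\leq \maxangle(H_{\ell+1},T_0\wtilde f_{\ell+1})+\maxangle(T_0\wtilde f_{\ell+1},T_0 f_{\ell+1}),
\]
where the first summand aggregates the statistical and coordinate-shift errors in recovering the tangent of the conditional-mean function $\wtilde f_{\ell+1}$ from samples, and the second summand is the bias incurred because the samples have nonzero conditional mean relative to $f_{\ell+1}$.

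For the first summand, I would insert $T_0\wtilde f_\ell$ and split further as
\[
\maxangle(H_{\ell+1},T_0\wtilde f_{\ell+1})\leq \maxangle(H_{\ell+1},T_0\wtilde f_\ell)+\maxangle(T_0\wtilde f_\ell,T_0\wtilde f_{\ell+1}).
\]
The term $\maxangle(H_{\ell+1},T_0\wtilde f_\ell)$ is exactly the finite-sample error of estimating the derivative of $\wtilde f_\ell$ at $0$ by the local polynomial regression $\DD_{\pi^*_{q_\ell,H_\ell}}[0]$, so by Lemma \ref{lem:AngleImprovement} (essentially the vector-valued local-polynomial rates of \cite{aizenbud2021VectorEstimation,stone1980optimal}) it is bounded with probability $\geq 1-\delta$ by $C_0\ln(1/\delta) N^{-r_1}$, which Assumption 3 forces to be at most $\alpha/(12\sqrt d)$. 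The term $\maxangle(T_0\wtilde f_\ell,T_0\wtilde f_{\ell+1})$ measures the change of the conditional-mean tangent under rotating the chart from $H_\ell$ to $H_{\ell+1}$, and is controlled by Lemma \ref{lem:shift_ang_general} together with the preceding bound on $\maxangle(H_{\ell+1},T_0\wtilde f_\ell)$ and the hypothesis $\maxangle(H_\ell,T_0 f_\ell)=\alpha_\ell\leq\alpha$.

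For the second summand I would apply Lemma \ref{lem:Tf_TtildeF_D} at iteration $\ell+1$: the bias in the tangent of $\wtilde f$ versus that of $f$ is a second-order quantity in the current tilt, of order $\alpha^2\sqrt{D\log D}/\tau$, which is exactly where the assumption $M\geq C_\stau\sqrt{D\log D}$ and $\alpha\leq \sqrt{1/D}$ enters: under these hypotheses, the bias term collapses to a small constant multiple of $\alpha$. Summing the three pieces, the statistical contribution is $O(\alpha/\sqrt d)$ and the two geometric contributions are $O(\alpha^2\sqrt{D\log D}/\tau)$ and $O(\alpha\cdot \alpha_\ell)$; choosing the constants so that each is at most $\alpha/6$ gives the desired halving $\alpha_{\ell+1}\leq \alpha/2$. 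The identity $\maxangle(H_{\ell+1},T_0 f_{\ell+1})=\maxangle(H_{\ell+1},T_0 f_{\ell+1/2})$ just records that rotating the chart and then re-centering at $f_{\ell+1}(0)$ does not alter the tangent of the same manifold point, and is used implicitly to identify the various tangents in the above decomposition.

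The main obstacle is the interlocking nature of these three bounds: Lemma \ref{lem:shift_ang_general} requires control of the rotation $H_\ell\to H_{\ell+1}$, which itself depends on the very quantity $\maxangle(H_{\ell+1},T_0\wtilde f_\ell)$ that we are bounding via Lemma \ref{lem:AngleImprovement}, and simultaneously the bias estimate of Lemma \ref{lem:Tf_TtildeF_D} must be second-order in $\alpha$ rather than first-order, or else no contraction is possible. The key ingredients that make this work out are the $\sqrt{D\log D}$ condition on $M$ (which ensures the high-dimensional bias averaging gives an $\alpha^2$, not $\alpha$, dependence) and the $\alpha\leq \sqrt{1/D}$ smallness condition. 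Finally, for the bound $\|f_{\ell+1}(0)\|\leq \|f_\ell(0)\|(1+40\alpha^2)$, I would use that $r+(0,f_\ell(0))_{H_\ell}$ and $r+(0,f_{\ell+1}(0))_{H_{\ell+1}}$ are two nearby points of $\MM$ obtained by intersecting the same local patch with two normal lines through $r$ whose directions differ by an angle at most $\alpha$; a direct Taylor expansion using the reach bound from Lemma \ref{lem:M_is_locally_a_fuinction_clean} (so that the local graph has $C^2$-norm $\lesssim 1/\tau$) produces the factor $1+40\alpha^2$ with the explicit constant absorbing the curvature and the assumed bound $\|f_\ell(0)\|\leq \sqrt{\tau/32}-1$.
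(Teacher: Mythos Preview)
Your overall strategy—combine Lemmas \ref{lem:Tf_TtildeF_D}, \ref{lem:AngleImprovement}, and \ref{lem:shift_ang_general} through triangle inequalities on $\maxangle$—matches the paper, but the specific decomposition you chose has two genuine problems, and the paper avoids both by ordering the pieces differently.

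First, you invoke Lemma \ref{lem:Tf_TtildeF_D} at iteration $\ell+1$ to bound $\maxangle(T_0\wtilde f_{\ell+1},T_0 f_{\ell+1})$. But that lemma requires as a hypothesis that $\maxangle(H_{\ell+1},T_0 f_{\ell+1})\leq\alpha$, which is precisely the quantity $\alpha_{\ell+1}$ you are trying to bound; the argument is circular. Second, you apply Lemma \ref{lem:shift_ang_general} to control $\maxangle(T_0\wtilde f_\ell,T_0\wtilde f_{\ell+1})$. That lemma compares tangents to the \emph{same} manifold viewed from two charts, i.e.\ it applies to $f_\ell$ versus $f_{\ell+1}$ (whose graphs are both patches of $\MM$). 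The biased functions $\wtilde f_\ell$ and $\wtilde f_{\ell+1}$ are conditional means with respect to \emph{different} coordinate systems and do not parametrize a common set, so the lemma does not apply to them. (Also, Lemma \ref{lem:Tf_TtildeF_D} gives a first-order bound $\alpha/6$, not a second-order $O(\alpha^2)$ bound; the condition $M\geq C_\tau\sqrt{D\log D}$ makes the \emph{constant} small, not the order.)

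The paper's decomposition sidesteps both issues by working entirely at level $\ell$ first and shifting last. It bounds $\maxangle(T_0 f_\ell,T_0\wtilde f_\ell)\leq \alpha/6$ via Lemma \ref{lem:Tf_TtildeF_D} at level $\ell$ (legitimate, since $\alpha_\ell\leq\alpha$ is assumed), bounds $\maxangle(T_0\wtilde f_\ell,H_{\ell+1})\leq \alpha/6$ via Lemma \ref{lem:AngleImprovement} and Assumption 3, and combines these to get $\maxangle(T_0 f_\ell,H_{\ell+1})\leq \alpha/3$. Only then does it apply Lemma \ref{lem:shift_ang_general} with $g_0=f_\ell$, $g_1=f_{\ell+1}$, $G_0=H_\ell$, $G_1=H_{\ell+1}$ (so $\beta=\maxangle(H_\ell,H_{\ell+1})\leq 4\alpha/3$) to obtain both $\maxangle(T_0 f_{\ell+1},H_{\ell+1})\leq\alpha/3+\alpha/6=\alpha/2$ and the $\|f_{\ell+1}(0)\|$ bound directly from that lemma's second conclusion, with the assumption $\|f_\ell(0)\|\leq\sqrt{\tau/32}-1$ used to reduce the multiplicative factor to $1+40\alpha^2$.
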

We now turn to the proof of Theorem~\ref{thm:Step2}.
\begin{proof}[proof of Theorem \ref{thm:Step2}]
	We divide Algorithm \ref{alg:step2_clean} into four steps:
	\begin{enumerate}
		\item[(i)] Show that the initialization $H_0 $ results with $\maxangle(H_0, T_{0} f_0)$ and $\|f_0(0)\|$ small enough (using Lemmas \ref{lem:H0_Tf_Init} and \ref{lem:dist_to_f_00}).
		\item[(ii)] Show that estimating $H_{\ell+1}$ from $H_{\ell}$ (with the origin set at $r$) improves the angle  $\maxangle(H_\ell, T_{0} f_\ell)$ - corresponds to rows 4 to 6 in Algorithm \ref{alg:step2_clean} (using Lemma \ref{lem:main_support_theorem_step2}).
		\item[(iii)] We show that for $\mathrm{\mathbf{p}} = Proj_\MM(r)$ - the projection of $r$ onto $\MM$, $\maxangle(H_\kappa, T_{\mathrm{\mathbf{p}}}\MM)$  is small (as required in \eqref{eq:thm3.3_tangent_ang}) (Using Lemmas \ref{lem:dist_given_angle}).
		\item[(iv)] Show that $\hat p_n$  is a good estimate of  $\mathrm{\mathbf{p}}$ - corresponds to row 8 in Algorithm \ref{alg:step2_clean} (using Lemma~\ref{lem:dist_to_f_l0_srtong_small_alpha}).
	\end{enumerate}
	Note that step (ii) is repeated $\kappa$ times.
	Below, we treat the four steps one by one. 
	Although we prove this point later, we start by assuming that $\kappa$ and $\delta_1$ are
	\begin{equation}\label{eq:n_bound1}
		12\sqrt{d}\frac{C_0 \ln(1/\delta_1)}{n^{r_1}} \leq \alpha_0 2^{-\kappa+1},
	\end{equation}
	where $C_0$ is the constant from Lemma \ref{lem:main_support_theorem_step2}. 
	
	\paragraph*{Step (i):}
	Let $\alpha_0=\sqrt{1/D}$ and denote by $A_0$ the event of
	\[
	\maxangle (T_0f_{0}, H_{0}) \leq \alpha_0 
	~~~~\mbox{ and } ~~~~
	\|f_0(0)\| \leq 10\sigma 
	.\]
	From Lemma \ref{lem:H0_Tf_Init} and Theorem \ref{thm:Step1} that for $M$ large enough, there is $N_{1,\delta_1}$ such that for all $n>N_{1,\delta_1}$ we have that:
	\[
	\Pr\left(\maxangle (T_0f_{0}, H_{0}) \leq \alpha_0 \right) \geq 1-\delta_1,
	.\]
	From Lemma \ref{lem:dist_to_f_00} we have that
	\[
	\Pr\left(\|f_0(0)\| \leq 10\sigma \right) \geq 1-\delta_1.
	\]
	This means that $P(A_0) \geq 1-\delta_1$ (note that the failure probability comes from Theorem \ref{thm:Step1}, and once the result of the theorem holds, the conditions of Lemmas \ref{lem:H0_Tf_Init}, \ref{lem:dist_to_f_00} are met).

	\paragraph*{Step(ii):}
	Denote by $A_\ell$ the event that $\maxangle (T_0f_{\ell}, H_{\ell}) \leq \alpha_0 2^{-\ell} = \alpha_{\ell}$ and also
	\[
	\|f_\ell(0)\| \leq \|f_{\ell-1}(0)\|\left(1+40\alpha_{\ell-1}^2\right)
	.\]
	We now wish to apply Lemma \ref{lem:main_support_theorem_step2} in an iterated fashion.
	To initiate the iterative process, we have to assume that $n$ is at least $N_{2, \delta_1} = \left( \frac{12\sqrt{d}C_0\ln(1/\delta)}{\alpha_0}\right)^{1/r_1}$ (i.e., requirement 3 of the Lemma).
	To comply with requirement 3 of Lemma \ref{lem:main_support_theorem_step2} in all the following iterations, the total number of iterations $ \kappa$ that we can perform is limited by the actual number of samples~$n$ (since  $\alpha$ shrinks with each iteration).
	Below, we show what should be the value of $\kappa$ that allows for the iterated application of Lemma \ref{lem:main_support_theorem_step2} given $n$.
	
	Since $M>C_\stau \sqrt{D\log D}$, and $\alpha_0 \leq \sqrt{ 1/D}$ if events $A_0, \ldots, A_\ell$ hold, then the requirements of Lemma \ref{lem:main_support_theorem_step2} are met for $\alpha = \alpha_\ell = \alpha_0 2^{-\ell}$. 
	The only nontrivial requirement is the bound on $\|f_\ell(0)\|$. Since all the events $A_{i}$ for $i\leq \ell$ hold we have
	\begin{align*}
		\|f_\ell(0)\| &\leq \|f_{\ell-1}(0)\|\left(1+40\alpha_{\ell-1}^2  \right) \nonumber
		\\
		&\leq \|f_{\ell-2}(0)\|\left(1+40\alpha_{\ell-2}^2 \right)\left(1+40\alpha_{\ell-1}^2  \right) \nonumber
		\\
		&\leq \|f_{0}(0)\|\prod_{i=0}^{\ell} \left(1+40\alpha_{i}^2  \right)
		\\
		&\leq \|f_{0}(0)\|\prod_{i=0}^{\ell} \left(1+40\alpha_0^2 2^{-2i} \right)
	\end{align*}
	Denote $a_0 = 40\alpha_0 ^2$, and then
	\begin{align*}
		\|f_\ell(0)\| &\leq \|f_{0}(0)\|\prod_{i=0}^{\ell} \left(1+a_0 2^{-2i} \right)
		\\
		& = \|f_{0}(0)\|\exp\left(\sum_{i=0}^{\ell} \ln\left(1+a_0 2^{-2i} \right)\right)
		\\
		& \leq \|f_{0}(0)\|\exp\left(\sum_{i=0}^{\ell} a_0 2^{-2i} \right)
		\\
		& \leq \|f_{0}(0)\|\exp\left(\sum_{i=0}^{\infty} a_0 2^{-2i} \right)
		\\
		& \leq \|f_{0}(0)\|\exp\left(2a_0  \right)
	\end{align*}
	Thus, we have that for $n> N_{2,\delta_1}$, with probability of at least $1-\delta_1$, the event $A_{\ell+1}$ holds given events $A_0, \ldots, A_{\ell}$ hold for any $\ell \leq \kappa$.

	To conclude our arguments so far, since $\kappa$ satisfies \eqref{eq:n_bound1}, using the union bound on the events $A_0,\ldots, A_\kappa$  we have that 
	\begin{equation}\label{eq:final_prob_bound1}
		\Pr\left(\maxangle (T_0 f_{\kappa}, H_{\kappa}) \leq \alpha_0 2^{-\kappa}\right) \geq 1 -  \kappa\delta_1.        
	\end{equation}
	Choosing $\delta_1 = \frac{\delta}{\kappa+1}$, we have from Lemma \ref{lem:comute_kappa}, that $\kappa$ from \eqref{eq:kappa_res_lem} satisfies  \eqref{eq:n_bound1}. We also have from Lemma \ref{lem:comute_kappa} that 
	\[
	\alpha_0 2^{-\kappa} \leq \tilde C_{d} \ln\left(\frac{1}{\delta}\right) n^{-r_1}\left(\ln\left(\ln(n) \right)\right)^{2r_1}.
	\]
	Thus, we have that 
	\begin{equation}\label{eq:prob_ang_bound_Tfk_Hk}
		\maxangle (Tf_{\kappa}(0), H_{\kappa}) \leq \tilde C_{d}\ln\left(\frac{1}{\delta}\right) n^{-r_1}\left(\ln\left(\ln(n) \right)\right)^{2r_1},    
	\end{equation}
	given events $A_0,\ldots A_\kappa$ hold.
	\paragraph*{Step (iii):} Denote by $p_{\kappa}=(r,f_\kappa(0))_{r, H_{\kappa}}$, and $\mathrm{\mathbf{p}} = Proj_\MM(r)$. Applying Lemma \ref{lem:dist_given_angle} we have that 
	\begin{equation}\label{eq:P_q_dist_bound}
		\|\mathrm{\mathbf{p}} - p_{\kappa}\| \leq \frac{\hat C_{d}\ln\left(\frac{1}{\delta}\right) n^{-r_1}\left(\ln\left(\ln(n) \right)\right)^{2r_1}}{\left(\frac{1}{2\sigma(2+\sigma/\tau)}-\frac{c}{\tau} \right)},
	\end{equation}
	with probability of at least $1-\delta$. From Corollary 3 in \cite{boissonnat2017reach}, we have that 
	\begin{equation} \label{eq:sin_TPM_Tfk}
		\sin \frac{\maxangle(T_{\mathbf{p}}\MM,Tf_\kappa(0))}{2} \leq \frac{\|\mathrm{\mathbf{p}}-p_\kappa\|}{2\tau}.
	\end{equation}
	or,
	\begin{equation} \label{eq:TPM_Tfk}
		\maxangle(T_{\mathbf{p}}\MM,Tf_\kappa(0)) \leq \frac{2\|\mathrm{\mathbf{p}}-p_\kappa\|}{\tau}.
	\end{equation}
	Combining \ref{eq:TPM_Tfk} and \eqref{eq:P_q_dist_bound} we have 
	\begin{equation} \label{eq:TPM_Tfk_full}
		\maxangle(T_{\mathbf{p}}\MM,Tf_\kappa(0))
		\leq \frac{2\hat C_{d}\ln\left(\frac{1}{\delta}\right) n^{-r_1}\left(\ln\left(\ln(n) \right)\right)^{2r_1}}{\tau\left(\frac{1}{2\sigma(2+\sigma/\tau)}-\frac{c}{\tau} \right)} = 
		\frac{2\hat C_{d}\ln\left(
			\frac{1}{\delta}\right) n^{-r_1}\left(\ln\left(\ln(n) \right)\right)^{2r_1}}{\frac{M}{4 + 2/M} -c} 
		,
	\end{equation}
	given events $A_0,\ldots A_\kappa$ hold.
	Note that the expression $\frac{M}{4 + 2/M} -c $ in \eqref{eq:TPM_Tfk_full} grows with $M$.
	Combining \eqref{eq:TPM_Tfk_full} with \eqref{eq:prob_ang_bound_Tfk_Hk}, and using the triangle inequality, we get 
	\begin{equation}\label{eq:res_thm33_tang}
		\maxangle(T_{\mathbf{p}}\MM,H_\kappa) \leq C_d \cdot \ln(1/\delta)n^{-r_1} (\ln \ln(n))^{2 r_1}
	\end{equation}
	given events $A_0,\ldots A_\kappa$ hold, where $C_d$ is some general constant. 
	Note that according to Algorithm~\ref{alg:step2_clean}  $\widehat{T_{\hat p_n}\MM} = H_\kappa$.
	
	\paragraph*{Step (iv):} In order to bound the error $\|\hat p_n - \mathrm{\mathbf{p}}\|$ we use the triangle inequality 
	\[
	\|\hat p_n - \mathrm{\mathbf{p}}\| \leq \|\hat p_n - p_\kappa\| + \| p_\kappa - \mathrm{\mathbf{p}}\|, 
	\]
	where $p_\kappa = (r,f_\kappa(0))_{H_{\kappa}}$,
	and bound the two parts in the right hand side separately.
	
	The first part, $\| p_\kappa - \mathrm{\mathbf{p}}\| $ is bounded in \eqref{eq:P_q_dist_bound}.
	
	Furthermore, assuming event $A_\kappa$ holds, since $\alpha = \alpha_0 2^{-\kappa}<1/D$, there is $N_3$ such that for $n>N_{3}$ we have from Lemma \ref{lem:dist_to_f_l0_srtong_small_alpha} that  
	\[
	\|\hat p_n - p_\kappa \| \leq 8 \sigma D \alpha_1^2 2^{-2\kappa} + \frac{c\ln\left(\frac{1}{\delta}\right)}{n^{r_0}}
	\]
	holds with probability at least $1-\delta/(\kappa+1)$, where $c$ is some general constant and $ r_0 = \frac{k}{2k +d}$. Explicitly. 
	Denote this event by~$B$.
	Substituting $\kappa$ from \eqref{eq:kappa_res_lem} we have from Lemma \ref{lem:comute_kappa} that 
	\[
	\| \hat p_n - p_\kappa \| \leq 8\sigma D C_{d}^2 \ln\left(\frac{1}{\delta}\right)^2 n^{-2r_1}\left(\ln\left(\ln(n) \right)\right)^{4r_1} + c\ln\left(\frac{1}{\delta}\right)n^{-r_0}
	\]
	or, for $n$ large enough, and some constant $ C_1$,
	\begin{equation}\label{eq:pnhat-pkappa_bound}
		\|\hat p_n - p_\kappa \| \leq C_1 \ln\left(\frac{1}{\delta}\right) n^{-r_0}.
	\end{equation}
	Note that $p_\kappa \in \MM$, so $dist(\hat p_n,\MM) \leq \|\hat p_n - p_\kappa \|$.
	Combining this with \eqref{eq:P_q_dist_bound}, assuming events $A_0,\ldots A_\kappa,B$ hold, we have that 
	\[
	\| \hat p_n - \mathrm{\mathbf{p}}\|  \leq  C_1 \ln\left(\frac{1}{\delta}\right) n^{-r_0} + \frac{\hat C_{d}\ln\left(\frac{1}{\delta}\right) n^{-r_1}\left(\ln\left(\ln(n) \right)\right)^{2r_1}}{\left(\frac{1}{2\sigma(2+\sigma/\tau)}-\frac{c}{\tau} \right)}.
	\] 
	Alternatively, for $n$ large enough, and some constant $C$, given the events $A_1,\ldots, A_\kappa, B$ hold, 
	\begin{equation} \label{eq:pn-p_bound}
		\|\hat p_n - \mathrm{\mathbf{p}}\| \leq C\ln\left(\frac{1}{\delta}\right) n^{-r_1}  \left(\ln\left(\ln(n) \right)\right)^{2r_1}.    
	\end{equation}
	Using the union bound on $A_1,\ldots, A_\kappa, B$, we have that Equations \eqref{eq:res_thm33_tang}, \eqref{eq:pnhat-pkappa_bound} and \eqref{eq:pn-p_bound} are satisfied with probability of at least $1 - \delta$.
	This concludes the proof of Theorem \ref{thm:Step2}. 
\end{proof}

\section{A Possible Application}\label{sec:applications}
While there are numerous applications for this method, we present here one example that demonstrate the potential of the presented approach.
In this example we show how the presented method can be used to follow the trajectory of a geodesic line on a manifold. 
We assume that a point $x_0$ is chosen on the manifold and some direction $\vec v_0$ on the tangent $T_{x_0} \MM$ (In practice, the process can initialized with a point near the manifold $\MM$ and then project it to the manifold). 

The process of tracking a geodesic line is iterative. 
At each step, we compute $\wtilde x_{i+1} =  x_i + \eps \vec{v}$, then ``project'' the new point back to the manifold $x_{i+1} \approx \mbox{Proj}_\MM \wtilde x_{i+1}$ through Algorithms \ref{alg:step1Inpractice_clean} and \ref{alg:step2InPractice_clean}, and parallel transport $\vec v_i$ to $T_{x_{i+1}}$ to get $\vec v_{i+1}$.

In the first toy case, the manifold $\MM$ is a circle of radius 10 in $\RR^2$. 
The dataset consists of 5000 points. 
We start with some sample (illustrated in red in Figure \ref{fig:geoWalk_circle}), project it onto the circle (in Figure \ref{fig:geoWalk_circle}, the circle is marked in blue, and the projected point in green), and than move the point in some direction, project it again (shown in another green point in Figure \ref{fig:geoWalk_circle}), etc.

\begin{figure}
	\floatbox[{\capbeside\thisfloatsetup{capbesideposition={left,top},capbesidewidth=8cm}}]{figure}[\FBwidth]
	{\caption{Geodesic ``walk" on a circle. The red point is the initial point. The yellow points are the data set. The red point is then projected onto the estimation of the blue circle. Then at each step, a new point is generated along the circle (the black arrows connect the points. The plot illustrates 30 steps.}\label{fig:geoWalk_circle}}
	{\includegraphics[width=0.35\textwidth]{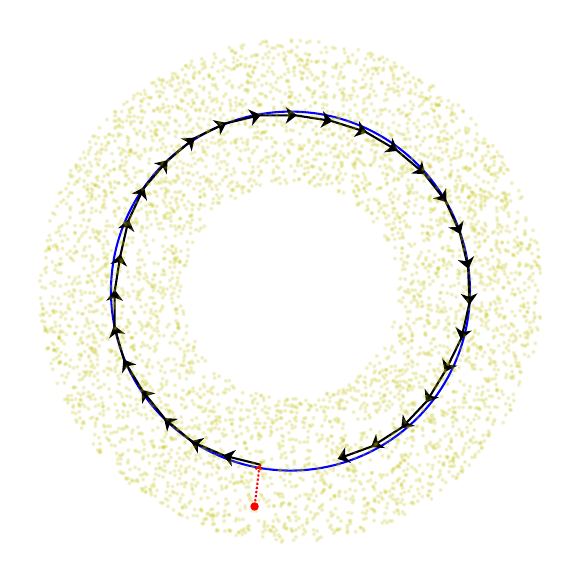}}
\end{figure}

In the second example, we took a 3d model of an airplane\footnote{\textcolor{blue}{\href{http://3dmag.org/en/market/download/item/4740/}{http://3dmag.org/en/market/download/item/4740/}}}, rotated it in the z-axis, and took 2000 snapshots. Each snapshot is an image of $290 \times 209$ gray-scale pixels. The input data set consist of the unsorted images, sampled from a one dimensional manifold embedded in $\RR^{60,610}$.
Several such images appear in Figure \ref{fig:geoWalk_airplane}. 
Starting from some image, we create a movie of the rotating airplane. 
The movie can be found in
\ifthenelse{\blind=1}{\textcolor{blue}{\href{https://youtu.be/aHYyUvu1Q-8}{https://youtu.be/aHYyUvu1Q-8}}}{the \textcolor{blue}{attached mp4 file}}, and the code for generating it can be found in \ifthenelse{\blind=1}{\textcolor{blue}{\href{https://github.com/aizeny/manapprox}{https://github.com/aizeny/manapprox}}}{the \textcolor{blue}{attached code zip file}}.

\begin{figure}[H]
	\centering
	\includegraphics[width=0.16\textwidth]{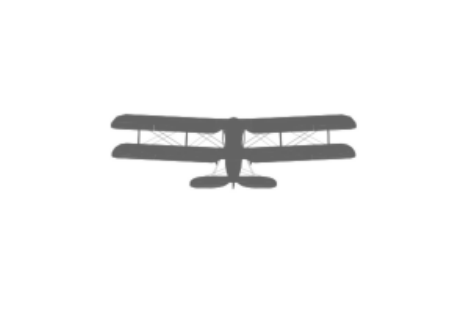}
	\includegraphics[width=0.16\textwidth]{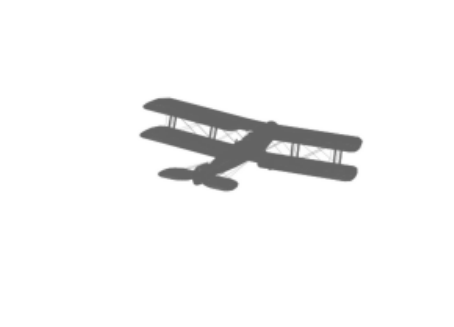}
	\includegraphics[width=0.16\textwidth]{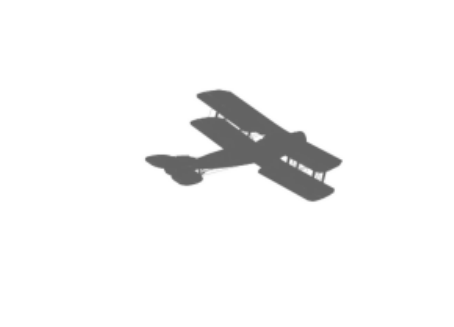}
	\includegraphics[width=0.16\textwidth]{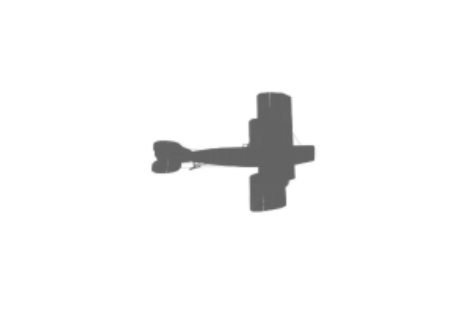}
	\includegraphics[width=0.16\textwidth]{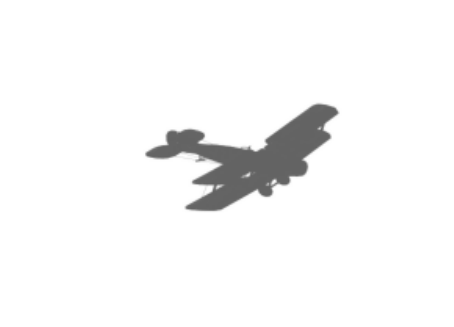}
	\includegraphics[width=0.16\textwidth]{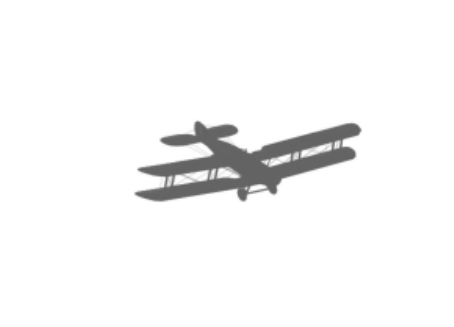}
	\\
	\includegraphics[width=0.16\textwidth]{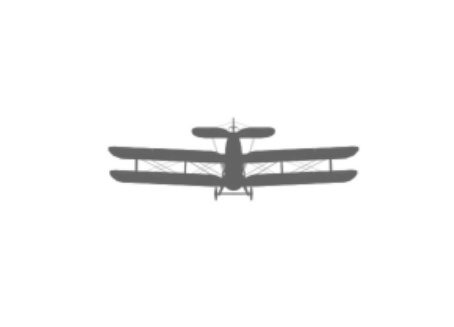}
	\includegraphics[width=0.16\textwidth]{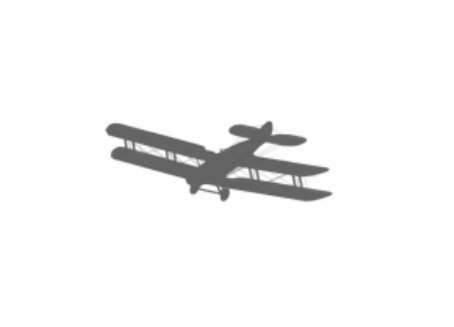}
	\includegraphics[width=0.16\textwidth]{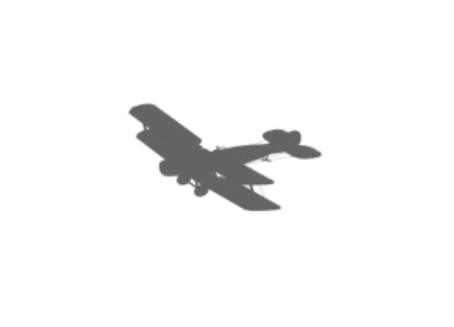}
	\includegraphics[width=0.16\textwidth]{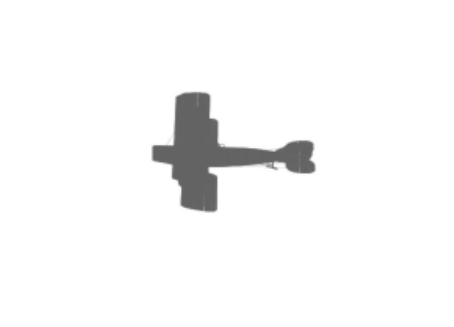}
	\includegraphics[width=0.16\textwidth]{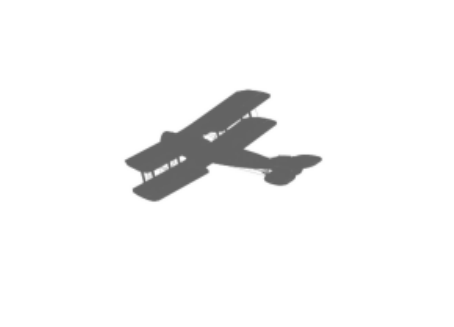}
	\includegraphics[width=0.16\textwidth]{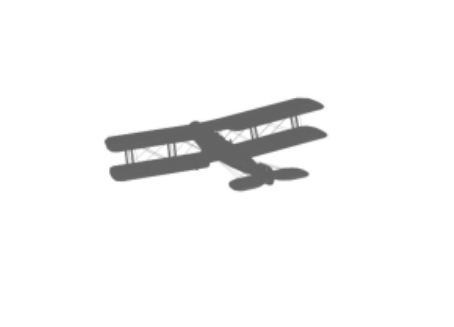}
	\caption{Sample images from a 3d model of an airplane.}
	\label{fig:geoWalk_airplane}
\end{figure}

\section{Conclusions}
This paper presents a novel algorithm for point estimation and denoising of manifold data. Our approach achieves theoretical convergence guarantees and demonstrates practical utility in manifold reconstruction tasks. The key contributions and implications of our work can be summarized as follows:
First, we developed and analyzed a two-step algorithm that effectively estimates points on the underlying manifold from noisy samples. The algorithm first establishes an initial coordinate system and then iteratively refines it to improve accuracy. Our theoretical analysis shows that this approach achieves convergence rates of $\OO(n^{-k/(2k+d)})$ for manifold estimation and $\OO(n^{-(k-1)/(2k+d)})$ for tangent space estimation.
A limitation of our analysis is the sampling model, which assumes uniform distribution within the $\sigma$-tubular neighborhood. The uniform sampling assumption enables rigorous theoretical analysis and convergence guarantees, but it is different than the additive normal noise models commonly used in manifold estimation literature. 
An important future direction would be extending our framework to handle more general noise distributions while maintaining theoretical guarantees.
Our method's practical value is demonstrated through applications like geodesic trajectory tracking, as illustrated in our examples. 
These applications suggest the algorithm's potential utility in various domains requiring manifold reconstruction and geometric processing of noisy data.

Several important research directions emerge from this work. 
A natural extension would be developing a complete manifold estimation framework that maintains our theoretical guarantees while reconstructing the entire manifold, not just individual points. 
Additionally, there are intriguing theoretical questions regarding the relationship between our achieved convergence rates and known optimal rates in manifold estimation. 
In particular, the apparent discrepancy between our rates and those established by Genovese et al. for manifold estimation in the Hausdorff sense warrants further investigation. 
Understanding these connections could provide deeper insights into the fundamental limits of manifold estimation under different noise models and sampling conditions.

In conclusion, while our current sampling assumptions present some limitations, the theoretical guarantees and practical effectiveness of our algorithm provide a solid foundation for future developments in manifold estimation and geometric data processing.

\section{Acknowledgments}
We wish to thank Prof. Felix Abramovich for driving us to do this work and for the fruitful discussions.
We also thank Prof. Ingrid Daubechies and Prof. David Levin for various discussions along the road.
B. Sober is supported by The Hebrew University of Jerusalem.

\begin{appendices}

\section{Preliminaries}
Before we delve into the proofs, we wish to introduce the concepts of Principal Angles between linear sub-spaces \cite{jordan1875essai,bjorck1973numerical} as well as develop some general results concerning the viewpoint of the manifold as being locally a graph of some function from a local coordinate system.
Both of these topics will play a key role in the proofs below.

In addition, two bounds resulting from the Taylor expansion will be used extensively in our proofs.
Thus, we note them here as the two following remarks:

\begin{remark}\label{rem:taylor_sqrt1-x2_clean}
For $x\in[0,\sqrt{3}/2]$
\begin{equation}\label{eq:Taylor2ndOrder}
    1-1/2x^2 \geq \sqrt{1 - x^2} \geq 1-x^2
\end{equation}
\end{remark}
\begin{remark}\label{rem:taylor2_sqrt1-x2_clean}
For $x\in[0,\sqrt{3}/2]$
\begin{equation}
1-1/2x^2-1/8x^4 \geq \sqrt{1 - x^2} \geq 1-\frac{1}{2}x^2 - x^4    
\end{equation}
\end{remark}

\subsection{Principal angles between linear sub-Spaces}
The concept of Principal Angles between flats were first introduced by Jordan in 1875 \cite{jordan1875essai}.
Below, we use the definition of Principal Angles between subspaces as described in \cite{bjorck1973numerical}.
\begin{definition}[Principal Angles]\label{def:principal_angles_clean}
	Let $V$ be an inner product space. Given two sub-spaces $\UU, \WW$ of dimensions $dim(\UU)=k, dim(\WW) = l$, where $k \leq l$ there exists a sequence of $k$ angles $0 \leq \beta_1 \leq \ldots \leq \beta_k \leq \pi/2$ called the principal angles and their corresponding principal pairs of vectors $(u_i, w_i)\in\UU\times\WW$ for $i=1,\ldots,k$ such that $\angle(u_i,w_i) = \beta_i$ are defined by: 
	\begin{equation*}
	\begin{array}{ll}
	    u_1, w_1 \defeq \argmin\limits_{\substack{u\in\UU, w\in\WW \\ \norm{u}=\norm{w} =1}}\arccos\left(\abs{\langle u, w \rangle}\right), &
	    \beta_1 \defeq \angle(u_1, w_1)
	\end{array}    
	,\end{equation*}
	and for $i > 1$
	\begin{equation*}
	\begin{array}{ll}
	    u_i, w_i \defeq \argmin\limits_{\substack{u\perp\UU_{i-1}, w\perp\WW_{i-1} \\ \norm{u}=\norm{w} =1}}\arccos\left(\abs{\langle u, w \rangle}\right), &
	    \beta_i \defeq \angle(u_i, w_i)
	\end{array}
	,\end{equation*}
    where $$\UU_i \defeq Span\{u_j\}_{j=1}^i ~,~ \WW_i \defeq Span\{w_j\}_{j=1}^i.$$
\end{definition}
In other words, given two linear subspaces of $\RR^D$ of the same dimension $d$ we can measure the distance between them based upon the principal angles. 
In our case, we measure the distance between two subspaces by taking the maximal principal angle (maximal angle) , and denote it as
\begin{equation}\label{eq:TheAngleLinear_clean}
    \angle_{\max}(\UU,\WW) \defeq \max_{1\leq i\leq d}\beta_i
.\end{equation}

Lemmas \ref{lem:angle_space_to_vec} and \ref{lem:angle_space_perp_space} are reformulation of results proven in \cite{knyazev2007majorization} and will be also used later on.
\begin{Lemma}\label{lem:angle_space_to_vec}
Let $F$ and $G$ be two linear spaces of dimension $d$ in $\RR^D$. Assume that $\maxangle(F,G)\leq \alpha$. Then for any vector $v \in F^\perp$
\[
\min\limits_{w\in G^\perp} \angle(v,w) \leq \alpha
\]
\end{Lemma}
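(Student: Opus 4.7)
The plan is to reduce the single-vector claim to the fact that complementation preserves the maximal principal angle when $\dim F = \dim G$, and then read off the conclusion from the extremal characterization of $\maxangle$.

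For the first step, I would invoke the classical identity
\[
\sin \maxangle(\UU, \WW) \;=\; \|P_\UU - P_\WW\|_{\mathrm{op}}
\]
valid whenever $\dim \UU = \dim \WW$, where $P_\UU, P_\WW$ are the orthogonal projections. Since $P_{F^\perp} = I - P_F$ and $P_{G^\perp} = I - P_G$, we have $P_{F^\perp} - P_{G^\perp} = -(P_F - P_G)$, so the two operator norms coincide. Because $\dim F^\perp = \dim G^\perp = D - d$, applying the identity to the complements yields $\maxangle(F^\perp, G^\perp) = \maxangle(F, G) \leq \alpha$.

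For the second step, I would use the extremal characterization that follows from the CS decomposition of $(P_\UU, P_\WW)$: for equal-dimensional subspaces $\UU, \WW$,
\[
\min_{u \in \UU,\ \|u\|=1} \|P_\WW u\| \;=\; \cos \maxangle(\UU, \WW).
\]
Applied to $\UU = F^\perp$ and $\WW = G^\perp$, this gives $\|P_{G^\perp} v\| \geq \cos\alpha$ for every unit $v \in F^\perp$. Since angles are scale-invariant, we may assume $\|v\|=1$, and since the statement is vacuous for $\alpha \geq \pi/2$ we may assume $\alpha < \pi/2$ so that $P_{G^\perp} v \neq 0$. Taking
\[
w \;:=\; \frac{P_{G^\perp} v}{\|P_{G^\perp} v\|} \in G^\perp,
\]
we obtain $\langle v, w\rangle = \|P_{G^\perp} v\| \geq \cos\alpha$, so $\angle(v, w) \leq \alpha$, as required.

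The only non-routine point is the complement-invariance $\maxangle(F^\perp, G^\perp) = \maxangle(F, G)$; I chose the projection-norm identity because it is short and avoids bookkeeping with the full list of principal angles. An alternative would be to build principal pairs for $(F^\perp, G^\perp)$ directly from those of $(F, G)$ via the CS decomposition and compare angles term by term, but both routes rest on the same canonical decomposition.
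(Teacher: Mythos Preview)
Your proof is correct. The paper does not give its own proof of this lemma; it simply states that Lemmas~\ref{lem:angle_space_to_vec} and~\ref{lem:angle_space_perp_space} are reformulations of results in \cite{knyazev2007majorization}. Your argument via the projection-norm identity $\sin\maxangle(\UU,\WW) = \|P_\UU - P_\WW\|_{\mathrm{op}}$ to obtain complement-invariance, followed by the extremal characterization $\min_{\|u\|=1,\,u\in\UU}\|P_\WW u\| = \cos\maxangle(\UU,\WW)$, is a clean self-contained route that recovers exactly what the cited reference provides. The only remark is that both ingredients you invoke are themselves consequences of the CS decomposition, so your two ``alternatives'' are really the same argument viewed at different levels of abstraction; but the version you chose is the most economical.
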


\begin{Lemma}\label{lem:angle_space_perp_space}
Let $F$ and $G$ be two linear spaces of dimension $d$ in $\RR^D$. Assume that $\maxangle(F.G)\leq \alpha$. Then for any vector $v \in F^\perp$ and  $w\in G$,
\[
\angle(v,w) \geq \pi/2 - \alpha
\]
\end{Lemma}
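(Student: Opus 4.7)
The plan is to reduce this statement to the previous Lemma~\ref{lem:angle_space_to_vec} via the triangle inequality for angles between vectors. The geometric picture is the following: if $v\in F^\perp$, then $v$ is close (angle at most $\alpha$) to some vector in $G^\perp$, and since any vector in $G^\perp$ is exactly orthogonal to $w\in G$, the vector $v$ cannot be ``too aligned'' with $w$.

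Concretely, I would proceed as follows. First, apply Lemma~\ref{lem:angle_space_to_vec} to $v\in F^\perp$: since $\maxangle(F,G)\leq \alpha$, there exists some $w'\in G^\perp$ with $\angle(v,w')\leq \alpha$. Second, observe that $w\in G$ and $w'\in G^\perp$ are orthogonal, so $\angle(w,w')=\pi/2$. Third, invoke the triangle inequality for the angle distance between unit vectors (equivalently, the geodesic distance on the unit sphere, identified up to sign so that angles lie in $[0,\pi/2]$):
\[
\angle(v,w)\;\geq\;\angle(w,w')-\angle(v,w')\;\geq\;\frac{\pi}{2}-\alpha.
\]
This gives exactly the claimed bound.

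The only step requiring any care is the triangle inequality for the angle function, but this is standard (the unoriented angle is a metric on the projective sphere). There is no real obstacle here; the lemma is essentially a corollary of Lemma~\ref{lem:angle_space_to_vec} together with the elementary fact that $G$ and $G^\perp$ are orthogonal. No additional hypothesis on $\alpha$ (such as $\alpha<\pi/2$) is needed, since if $\alpha\geq \pi/2$ the conclusion $\angle(v,w)\geq \pi/2-\alpha$ is trivially non-positive and hence automatically true.
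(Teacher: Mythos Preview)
Your argument is correct. The reduction to Lemma~\ref{lem:angle_space_to_vec} via the triangle inequality on the sphere is clean and complete: from $\angle(v,w')\leq\alpha$ and $\angle(w,w')=\pi/2$ you get $\pi/2-\alpha\leq\angle(v,w)\leq\pi/2+\alpha$, and either bound yields the unoriented inequality $\angle(v,w)\geq\pi/2-\alpha$.

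As for comparison with the paper: there is nothing to compare against. The paper does not prove Lemma~\ref{lem:angle_space_perp_space} (nor Lemma~\ref{lem:angle_space_to_vec}); it simply states that both are reformulations of results in \cite{knyazev2007majorization}. Your proof therefore supplies something the paper omits --- a short self-contained derivation of the second lemma from the first --- whereas the paper defers both to an external reference. The benefit of your route is that it makes the dependency explicit and avoids a second appeal to the cited source; the paper's choice keeps the appendix lighter at the cost of one more black box.
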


\subsection{Viewing the manifold locally as a function graph}
It is well known that, locally, a sub-manifold of $\RR^D$ can be described as a graph of a function defined from the tangent space to its orthogonal complement.
In this section, we deal with expressing a manifold as a local function graph with respect to some tilted coordinate system and bounding the size of the neighborhood for which this definition still holds.
The results reported below are general and relate closely to the concept of the Reach (see Definition \ref{def:reach}), which was introduced by \cite{federer1959curvature} and further studied by Boissonat, Lieutier, and Wintraecken \cite{boissonnat2017reach}. 

\begin{Lemma}[Corollary 8 from \cite{boissonnat2017reach}] \label{lem:reach_ball_no_intersect_clean}
    Let $\MM$ be a sub-manifold of $\RR^D$ with reach $\tau$ and let $p\in\MM$.
    Then, any open $D$ dimensional ball of radius $\rho \leq \tau$ that is tangent to $\MM$ at $p$ does not intersect $\MM$.
\end{Lemma}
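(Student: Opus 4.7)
My plan is to reduce the geometry of the tangency condition to a purely algebraic consequence of the reach, rather than running a continuity/medial-axis argument along the segment from $p$ to $c$ (which is feasible but messier because one has to control what happens to the projection as it jumps). Since the statement is quoted verbatim from \cite{boissonnat2017reach}, the quickest route is to cite it; below I sketch a self-contained proof.

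First I would fix the parametrization forced by tangency. Because the open ball $B(c,\rho)$ is tangent to $\MM$ at $p$, the point $p$ lies on $\partial B(c,\rho)$ and the tangent hyperplane to $\partial B(c,\rho)$ at $p$ contains $T_p\MM$. This is equivalent to $c = p + \rho v$ for some unit vector $v\in (T_p\MM)^\perp$. In particular $\|c-p\|=\rho$.

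Next I would invoke Federer's normal-component bound (Federer 1959, Thm.~4.18), which is the foundational quantitative consequence of the definition of reach: for any $p,q\in\MM$ and any unit $v\in (T_p\MM)^\perp$,
\[
\langle v,\,q-p\rangle \;\leq\; \frac{\|q-p\|^2}{2\tau}.
\]
Using this, a one-line computation closes the argument. For every $q\in\MM$,
\[
\|q-c\|^2 \;=\; \|q-p\|^2 - 2\rho\langle v,\,q-p\rangle + \rho^2 \;\geq\; \|q-p\|^2\Bigl(1-\tfrac{\rho}{\tau}\Bigr) + \rho^2 \;\geq\; \rho^2,
\]
where the last step uses $\rho\leq\tau$. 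Thus $\|q-c\|\geq \rho$, so $q\notin B(c,\rho)$; since $q\in\MM$ was arbitrary, $\MM\cap B(c,\rho)=\emptyset$.

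The main obstacle in this plan is really Federer's normal bound itself, which is logically on par with the lemma we are proving: any self-contained proof must reach into the analytic meaning of reach. I would either cite Federer (and the explicit re-derivation in \cite{boissonnat2017reach}), or, to avoid external appeal, argue by contradiction along the segment $c_t = p + tv$, $t\in[0,\rho]$, using that for $t$ small enough the $C^2$ (indeed $C^k$ with $k\geq 2$) structure of $\MM$ forces $p$ to be the unique nearest point, and that the first value $t^\star\leq\rho$ at which $\mathrm{dist}(c_t,\MM)<t$ fails yields (via compactness) two distinct nearest points of $c_{t^\star}$ at distance $t^\star\leq\rho\leq\tau$ from $\MM$, contradicting the uniqueness-of-projection definition of the reach.
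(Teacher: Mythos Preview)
Your proof is correct. The paper itself does not prove this lemma at all: it is stated as a direct citation of Corollary~8 from \cite{boissonnat2017reach} and no argument is given. Your route via Federer's normal-component inequality $\langle v,\,q-p\rangle \leq \|q-p\|^2/(2\tau)$ and the one-line expansion of $\|q-c\|^2$ is exactly the standard derivation (and indeed is essentially how \cite{boissonnat2017reach} obtains it), so you have supplied a self-contained proof where the paper simply appeals to the literature.
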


\begin{Lemma}[Bounding Ball]\label{lem:f_bound_circle_no_func_clean}
Let $\MM$ be a $d$-dimensional sub-manifold of $\RR^D$ with reach $\tau$. For any $p \in \MM$, let $T_{p}\MM$ be the tangent of $\MM$ at $p$.
    For any $x = p + x_T$, where $x_T\in T_p\MM$, and $y\in (T_p\MM)^\perp$  such that $\|x_T\|\leq \tau$, $\|y\| \leq \tau/2$ and $(x + y)\in \MM$, we have that 
    \[
    \norm{y} \leq \tau-\sqrt{\tau^2-\norm{x_T}^2}
    \]
\end{Lemma}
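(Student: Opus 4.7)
The plan is to invoke Lemma \ref{lem:reach_ball_no_intersect_clean} with a ball of radius $\tau$ chosen so that the point $x+y$ is forced outside of it, then translate the resulting inequality into the desired bound on $\|y\|$.

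First, if $y = 0$ the bound is immediate, so I would assume $y \neq 0$ and set $\hat n \defeq y/\|y\|$. Since $y \in (T_p\MM)^\perp$, the unit vector $\hat n$ is normal to $T_p\MM$ at $p$. Therefore the open ball $B$ of radius $\tau$ centered at $c \defeq p + \tau \hat n$ is tangent to $\MM$ at $p$. By Lemma \ref{lem:reach_ball_no_intersect_clean}, $B \cap \MM = \emptyset$, and since $x + y \in \MM$ by hypothesis, we must have $\|(x+y) - c\| \geq \tau$.

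Next I would compute this distance explicitly. Writing $x + y - c = x_T + (\|y\| - \tau)\hat n$ and using the orthogonality $x_T \perp \hat n$ (which holds because $x_T \in T_p\MM$ and $\hat n \in (T_p\MM)^\perp$), we get
\[
\|(x+y) - c\|^2 = \|x_T\|^2 + (\|y\|-\tau)^2 \geq \tau^2.
\]
Expanding and cancelling $\tau^2$ yields $\|x_T\|^2 \geq 2\tau\|y\| - \|y\|^2$, which rearranges to $(\tau - \|y\|)^2 \geq \tau^2 - \|x_T\|^2$. Since $\|y\| \leq \tau/2$ the quantity $\tau - \|y\|$ is positive, and since $\|x_T\| \leq \tau$ the right-hand side is nonnegative, so we may take square roots to conclude $\tau - \|y\| \geq \sqrt{\tau^2 - \|x_T\|^2}$, which is the claim.

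There is no real obstacle here; the only care needed is to pick the correct side for the tangent ball (on the same side as $y$, so that moving $x_T$ in the tangent direction and $y$ in the normal direction pushes us into the ball rather than away from it), and to check that the hypotheses $\|x_T\| \leq \tau$ and $\|y\| \leq \tau/2$ are enough to legitimize the square-root step. Both checks are straightforward.
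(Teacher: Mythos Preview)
Your argument is correct and is precisely the approach the paper has in mind: the paper states only that ``the proof follows directly from Lemma~\ref{lem:reach_ball_no_intersect_clean}'', and you have written out exactly those details---placing the tangent ball of radius $\tau$ on the side of $y$, using $(x+y)\notin B$, and solving the resulting inequality with the hypotheses $\|x_T\|\le\tau$ and $\|y\|\le\tau/2$ to justify the square root.
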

The proof follows directly from Lemma \ref{lem:reach_ball_no_intersect_clean}.

\begin{Lemma}[Bounding Ball with Noise]\label{lem:dist_ri_TpM_clean}
Under the sampling assumption of \ref{sec:SamplingAssumptions}. For $r_i\in U_{ROI}$ where $U_{ROI}$ is defined in \eqref{eq:ROI_clean}. Denote $p_r = Proj_\MM(r)$, and $x_i = Proj_{T_{p_r}\MM}(r_i - p_r)$. Then,
\[
\dist(r_i - p_r,T_{p_r}\MM) \leq \tau-\sqrt{\tau^2-\|x_i\|^2}  + \sigma
.\]    
\end{Lemma}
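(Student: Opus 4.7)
The plan is to reduce the noisy case to the manifold-only case handled by Lemma~\ref{lem:f_bound_circle_no_func_clean} by decomposing $r_i$ as a point on $\MM$ plus a small orthogonal error. Set $q_i := Proj_\MM(r_i)$ so that $u := r_i - q_i$ satisfies $\|u\|\leq\sigma$ (by $r_i\in\MM_\sigma$) and $u\in(T_{q_i}\MM)^{\perp}$. Decompose $q_i - p_r = x^{q_i} + y^{q_i}$ and $u = u_T + u_N$ according to the orthogonal splitting $\RR^D = T_{p_r}\MM\oplus(T_{p_r}\MM)^{\perp}$. Then $\dist(r_i-p_r,T_{p_r}\MM)=\|y^{q_i}+u_N\|$, and the triangle inequality gives $\|y^{q_i}+u_N\|\leq\|y^{q_i}\|+\|u_N\|\leq\|y^{q_i}\|+\sigma$.

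Next I apply Lemma~\ref{lem:f_bound_circle_no_func_clean} to the manifold point $q_i$. To check the hypotheses, the ROI condition $\|r_i-r\|<\sqrt{\sigma\tau}$ combined with $\|r-p_r\|\leq\sigma$ and $\|r_i-q_i\|\leq\sigma$ yields $\|q_i-p_r\|\leq\sqrt{\sigma\tau}+2\sigma$, which for $M=\tau/\sigma$ large lies well inside the regime $\|x^{q_i}\|\leq\tau$, $\|y^{q_i}\|\leq\tau/2$ demanded by the lemma. This gives $\|y^{q_i}\|\leq\tau-\sqrt{\tau^2-\|x^{q_i}\|^2}$, and combining with the earlier estimate:
\[
\dist(r_i-p_r,T_{p_r}\MM)\leq\tau-\sqrt{\tau^2-\|x^{q_i}\|^2}+\sigma.
\]

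The final step, and the main technical obstacle, is to replace $\|x^{q_i}\|$ by $\|x_i\|$ inside the square root. Since $t\mapsto\tau-\sqrt{\tau^2-t^2}$ is monotone increasing on $[0,\tau]$, the desired bound follows immediately in the case $\|x^{q_i}\|\leq\|x_i\|$. In the opposite case one invokes the orthogonality $u\perp T_{q_i}\MM$ together with the standard reach estimate $\maxangle(T_{q_i}\MM,T_{p_r}\MM)=O(\|q_i-p_r\|/\tau)$, which forces $u$ to lie nearly in $(T_{p_r}\MM)^{\perp}$. Consequently $\|u_T\|$ is of strictly lower order than $\sigma$, so the discrepancy $\bigl|\,\|x^{q_i}\|-\|x_i\|\,\bigr|\leq\|u_T\|$ is of smaller order than $\sigma$ and is absorbed into the $+\sigma$ term to leading order in $\sigma/\tau$, which is the regime we work in. This closing case analysis, rather than the basic decomposition, is where the careful work lies.
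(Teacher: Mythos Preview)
Your decomposition is exactly the paper's: set $p_i=Proj_\MM(r_i)$, use the triangle inequality to get $\dist(r_i-p_r,T_{p_r}\MM)\leq\dist(p_i-p_r,T_{p_r}\MM)+\sigma$, then invoke Lemma~\ref{lem:f_bound_circle_no_func_clean} on the manifold point $p_i$. Where you are more careful than the paper is in noticing that the bounding ball lemma gives $\tau-\sqrt{\tau^2-\|x^{q_i}\|^2}$ with $x^{q_i}=Proj_{T_{p_r}\MM}(p_i-p_r)$, not with $x_i=Proj_{T_{p_r}\MM}(r_i-p_r)$. The paper's proof simply writes $\|x_i\|$ in the final display without addressing this swap at all.

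However, your proposed resolution of this last step does not actually deliver the \emph{exact} inequality stated. Your argument that $u=r_i-p_i\perp T_{p_i}\MM$ and $\maxangle(T_{p_i}\MM,T_{p_r}\MM)=O(\|p_i-p_r\|/\tau)$ correctly gives $\|u_T\|=O(\sigma\sqrt{\sigma/\tau})=o(\sigma)$, but ``absorbed into the $+\sigma$ term to leading order'' is not the same as the sharp bound. In fact the stated inequality can fail by a lower-order amount: take $\MM$ a circle of radius $\tau$, $p_r$ on it, and $r_i$ obtained by moving from $p_i\in\MM$ a distance $\sigma$ along the \emph{inward} normal; a direct computation shows $\dist(r_i-p_r,T_{p_r}\MM)$ exceeds $\tau-\sqrt{\tau^2-\|x_i\|^2}+\sigma$ by a term of order $\sigma\|x_i\|^2/\tau^2$. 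So the gap you identified is genuine, the paper's proof has it too, and your fix does not close it as stated.

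This is harmless for the paper's purposes. Every use of Lemma~\ref{lem:dist_ri_TpM_clean} (e.g.\ in Lemma~\ref{lem:J1pTp_clean} and in \eqref{eq:ri_yk_bound_s_square_clean}) immediately replaces $\|x_i\|$ by the uniform ROI bound $\sqrt{\sigma\tau}+\sigma$, and the very same bound (up to an irrelevant extra $\sigma$) holds for $\|x^{q_i}\|$. So the clean version of the lemma with $\|x^{q_i}\|$ in place of $\|x_i\|$ --- which your first two paragraphs prove rigorously --- is all that is ever needed downstream.
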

\begin{proof}
We first note that since $r$ is at distance at most $\sigma$ from $\MM$, we have that  $U_{\textrm{ROI}} \subset B_D(p_r, \sqrt{\sigma\tau}+\sigma )$. Denote $p_i = Proj_\MM(r_i)$.
By Lemma \ref{lem:f_bound_circle_no_func_clean} the distance between $r_i$ and $ T_{p_r}\MM $ is bounded by 
\[
\dist(r_i-p_r,T_{p_r}\MM) \leq \dist(p_i-p_r,T_{p_r}\MM) +\sigma \leq \tau-\sqrt{\tau^2-\|x_i\|^2}  + \sigma
.\]
\end{proof}

\begin{Lemma}[Bounding Ball From a Tilted Plane] \label{lem:f_bound_circle_H_no_func_clean}

    Let $\MM$ be a $d$-dimensional sub-manifold of $\RR^D$ with reach $\tau$. For any $p \in \MM$, let $T_{p}\MM$ be the tangent of $\MM$ at $p$ and let $ H\in Gr(d, D) $ such that $ \maxangle(T_p\MM, H) = \alpha \leq \pi/4 $.
    For any $x = p + x_H$, where $x_H\in H$, $\|x_H\|\leq c_{\pi/4}\tau$ for some constant $c_{\pi/4}$ , and $y\in H^\perp$  such that $\|x-p\|\leq \tau \cos \alpha$, $\|y\| \leq \tau/2$ and $(x + y)\in \MM$, we have that 
	\[
	-\tau\cos\alpha+\sqrt{\tau^2-(\norm{x} - \tau\sin\alpha)^2} \leq \|y\| \leq \tau\cos\alpha-\sqrt{\tau^2-(\norm{x} + \tau\sin\alpha)^2}
	\]
\end{Lemma}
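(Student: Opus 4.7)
The plan is to apply Lemma \ref{lem:reach_ball_no_intersect_clean}: for every unit $n\in T_p\MM^\perp$, the open ball $B(p+\tau n,\tau)$ contains no point of $\MM$. Setting $p=0$ without loss of generality and writing $q:=x_H+y\in\MM$, each such $n$ yields
\[
\|q-\tau n\|^{2}\ge\tau^{2}\quad\Longleftrightarrow\quad \|q\|^{2}\ge 2\tau\langle q,n\rangle .
\]
The proof then reduces to choosing $n$ cleverly so that $\langle q,n\rangle$ is bounded below by a function of $\|x_H\|$, $\|y\|$ and $\alpha$ alone, and then rewriting the resulting quadratic as an equation of a circle in the $(\|x_H\|,\|y\|)$-plane.

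For the upper bound, let $u:=x_H/\|x_H\|\in H$ and $v:=y/\|y\|\in H^\perp$, the degenerate cases $x_H=0$ and $y=0$ being immediate. Applying Lemma \ref{lem:angle_space_to_vec} with $F=H$, $G=T_p\MM$, and the distinguished vector $v\in H^\perp$, I obtain a unit $n\in T_p\MM^\perp$ with $\langle v,n\rangle\ge\cos\alpha$. Lemma \ref{lem:angle_space_perp_space}, applied to $u\in H$ and $n\in T_p\MM^\perp$ (and also to $-u\in H$), gives $|\langle u,n\rangle|\le\sin\alpha$, so in particular $\langle u,n\rangle\ge-\sin\alpha$. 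Substituting these into the tangent-ball inequality I get
\[
\|x_H\|^{2}+\|y\|^{2}\ \ge\ 2\tau\bigl(\|y\|\cos\alpha-\|x_H\|\sin\alpha\bigr),
\]
which is algebraically equivalent to $(\|y\|-\tau\cos\alpha)^{2}+(\|x_H\|+\tau\sin\alpha)^{2}\ge\tau^{2}$. With $\|y\|\le\tau/2<\tau\cos\alpha$ (because $\alpha\le\pi/4$) and with $c_{\pi/4}$ chosen small enough that $\|x_H\|+\tau\sin\alpha\le\tau$, selecting the correct root of the quadratic gives the claimed upper bound on $\|y\|$.

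For the lower bound I run the dual construction. Apply Lemma \ref{lem:angle_space_to_vec} to $-v\in H^\perp$ to extract a unit $n'\in T_p\MM^\perp$ with $\langle v,n'\rangle\le-\cos\alpha$, and use the tangent-ball inequality for the opposite ball centred at $-\tau n'$ (equivalently, apply the same inequality to the unit vector $-n'\in T_p\MM^\perp$). Combined with the bound $|\langle u,n'\rangle|\le\sin\alpha$ from Lemma \ref{lem:angle_space_perp_space}, the same algebraic manipulation produces $(\|y\|+\tau\cos\alpha)^{2}+(\|x_H\|-\tau\sin\alpha)^{2}\ge\tau^{2}$, from which the non-negative square-root extraction delivers the stated lower bound.

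I anticipate the real difficulty to lie not in the geometry but in the bookkeeping of signs in the $x_H$-term: in each of the two constructions I have to choose the orientation of $u$ within $H$ so that the worst-case sign of $\langle u,n\rangle$ is realised, relying on the fact that both $u$ and $-u$ are in $H$ and hence Lemma \ref{lem:angle_space_perp_space} applies symmetrically. A secondary step is the explicit calibration of $c_{\pi/4}$ needed to keep $|\|x_H\|\pm\tau\sin\alpha|\le\tau$, which is precisely what makes the arguments of the square roots non-negative and turns the quadratic inequality into the two-sided bound claimed in the lemma.
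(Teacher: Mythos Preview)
Your upper-bound argument is correct and, in fact, more rigorous than the paper's, which simply writes that the bound ``follows directly from applying Lemma~\ref{lem:f_bound_circle_no_func_clean} and observing the illustration in Figure~\ref{fig:BoundingCircle}'' (a two-dimensional picture). Your use of Lemmas~\ref{lem:angle_space_to_vec} and~\ref{lem:angle_space_perp_space} to select the normal $n$ and control $\langle u,n\rangle$ and $\langle v,n\rangle$ is precisely what that picture encodes, but now carried out in arbitrary dimension.

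The lower-bound argument, however, has a genuine gap --- and it is not repairable. Your ``dual construction'' reproduces the \emph{same} inequality as the upper bound: applying Lemma~\ref{lem:angle_space_to_vec} to $-v$ produces some $n'\in T_p\MM^{\perp}$ with $\langle -v,n'\rangle\ge\cos\alpha$, but one may simply take $n'=-n$, and then the ball centred at $-\tau n'=\tau n$ returns $\|q\|^{2}\ge 2\tau(\|y\|\cos\alpha-\|x_H\|\sin\alpha)$ again. To obtain $(\|y\|+\tau\cos\alpha)^{2}+(\|x_H\|-\tau\sin\alpha)^{2}\ge\tau^{2}$ you would need a unit $m\in T_p\MM^{\perp}$ with $\langle u,m\rangle\ge\sin\alpha$, whereas Lemma~\ref{lem:angle_space_perp_space} only guarantees $|\langle u,m\rangle|\le\sin\alpha$; equality is attainable only when $u$ points in the principal direction realising the maximal angle.

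In fact the lower bound as stated is false for $d\ge 2$ whenever $x_H$ lies in a direction whose principal angle with $T_p\MM$ is strictly less than $\alpha$. Concretely: let $\MM$ be the sphere of radius $\tau$ in $\RR^{3}$ tangent to the $xy$-plane at $p=0$, let $H=\mathrm{span}\{(1,0,0),(0,\cos\alpha,\sin\alpha)\}$, and take $x_H=(t,0,0)$ along the shared direction. Then $\|y\|=\tau\cos\alpha-\sqrt{\tau^{2}\cos^{2}\alpha-t^{2}}$, which for $t=\tfrac{1}{2}\tau\sin\alpha$ and small $\alpha$ is strictly smaller than the claimed lower bound $-\tau\cos\alpha+\sqrt{\tau^{2}-(t-\tau\sin\alpha)^{2}}$. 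The paper's picture-proof is implicitly two-dimensional (where there is a single principal direction and the bound does hold); fortunately only the upper bound is ever used downstream (Lemmas~\ref{lem:f_bound_circle_H_no_func_ver2_clean}, \ref{lem:M_is_locally_a_fuinction_clean}, \ref{lem:rough_w_bound}).
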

The proof of Lemma \ref{lem:f_bound_circle_H_no_func_clean} follows directly from applying Lemma \ref{lem:f_bound_circle_no_func_clean} and observing the illustration in Figure~\ref{fig:BoundingCircle}.

\begin{figure}[htb]
	\centering
	\includegraphics[width=0.6\linewidth]{./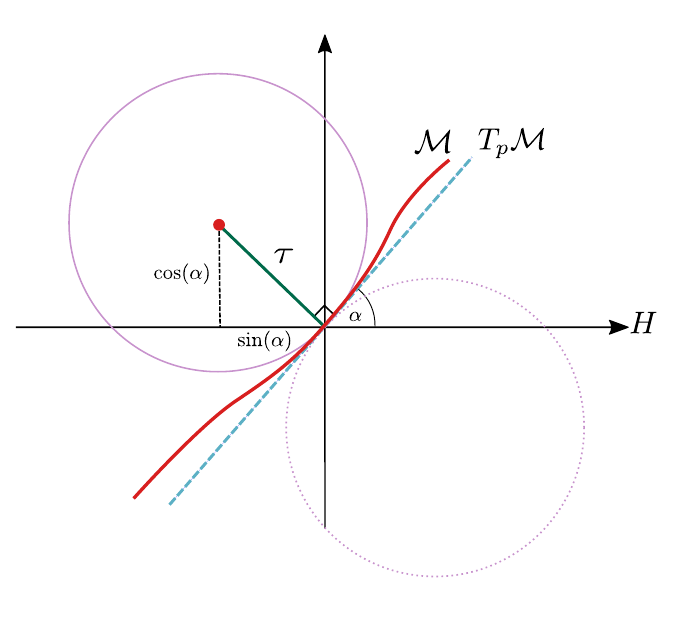}
	\caption{Illustration of bounding ball for $ d=1,~D=2 $. The manifold $ \MM $ is marked by the red solid line, $ T_\MM $ is marked by the blue dashed line and $ \alpha = \maxangle(H, T_p\MM) $ is the angle betweeen the $ x $-axis and $ T_p\MM $. The bounding balls defined by the reach $ \tau $ are marked in pink \emphbrackets{solid and dotted}  }
	\label{fig:BoundingCircle}
\end{figure}

\begin{Lemma}\label{lem:f_bound_circle_H_no_func_ver2_clean}
    Let $\MM$ be a $d$-dimensional sub-manifold of $\RR^D$ with reach $\tau$. For any $p \in \MM$, let $T_{p}\MM$ be the tangent of $\MM$ at $p$ and let $ H\in Gr(d, D) $ such that $ \maxangle(T_p\MM, H) = \alpha  \leq \pi/4$.
    For any $x = p + x_H$, where $x_H\in H$,$\|x_H\|\leq c_{\pi/4}\tau$ for some constant $c_{\pi/4}$, and $y\in H^\perp$  such that $\|x-p\|\leq \tau \cos \alpha$, $\|y\| \leq \tau/2$ and $(x + y)\in \MM$, we have that 
	\[
	\|y\| \leq \norm{x_H}(\tan \alpha  +c\|x_H\|/\tau)
	\]
 where $c$ is some constant depending on $c_{\pi/4}$.
\end{Lemma}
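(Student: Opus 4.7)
The plan is to derive the stated bound directly from the upper estimate of Lemma~\ref{lem:f_bound_circle_H_no_func_clean} through a ``double rationalization'' argument. Let $u = \|x_H\|$ and $A \defeq \sqrt{\tau^2-(u+\tau\sin\alpha)^2}$. First, since $u \geq 0$ and $\alpha \in [0,\pi/4]$ imply $(u+\tau\sin\alpha)^2 \geq (u-\tau\sin\alpha)^2$, the positive upper bound $\tau\cos\alpha - A$ of Lemma~\ref{lem:f_bound_circle_H_no_func_clean} is at least as large in magnitude as the absolute value of the corresponding lower bound. Hence $\|y\| \leq \tau\cos\alpha - A$, and it suffices to bound this quantity.

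Applying the conjugate identity $a - b = (a^2 - b^2)/(a+b)$, and observing that $(\tau\cos\alpha)^2 - \tau^2 = -\tau^2\sin^2\alpha$ cancels exactly with the $\tau^2\sin^2\alpha$ term of $(u+\tau\sin\alpha)^2$, one obtains
\[
\tau\cos\alpha - A \;=\; \frac{u(u + 2\tau\sin\alpha)}{\tau\cos\alpha + A} \;=\; \frac{u^2}{\tau\cos\alpha + A} + \frac{2u\tau\sin\alpha}{\tau\cos\alpha + A}.
\]
Under the hypotheses $u \leq c_{\pi/4}\tau$ and $\alpha \leq \pi/4$, choosing $c_{\pi/4}$ small enough keeps $u/\tau + \sin\alpha$ bounded away from $1$, so $A$ is comparable to $\tau$ and $\tau\cos\alpha + A \geq \tau\cos\alpha \geq \tau/\sqrt{2}$. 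This immediately yields $u^2/(\tau\cos\alpha + A) \leq K_1\, u^2/\tau$ for a constant $K_1$ depending only on $c_{\pi/4}$.

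For the linear piece, the main subtlety is the coefficient: the naive estimate $\tau\cos\alpha + A \geq \tau\cos\alpha$ gives $2u\tan\alpha$, whereas the lemma demands exactly $u\tan\alpha$. The remedy is a second rationalization. I would write
\[
\frac{2\tau\sin\alpha}{\tau\cos\alpha + A} \;=\; \tan\alpha + \tan\alpha\cdot\frac{\tau\cos\alpha - A}{\tau\cos\alpha + A},
\]
and then estimate $(\tau\cos\alpha - A)/(\tau\cos\alpha + A) \leq 1 - A/(\tau\cos\alpha) = 1 - \sqrt{1-(u/\tau)(u/\tau + 2\sin\alpha)/\cos^2\alpha}$; the elementary inequality $1-\sqrt{1-x}\leq x$ (a direct consequence of Remark~\ref{rem:taylor_sqrt1-x2_clean}) then bounds this by $(u/\tau)(u/\tau + 2\sin\alpha)/\cos^2\alpha \leq K_2\, u/\tau$. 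Multiplying by $u$ and using $\tan\alpha \leq 1$ yields $2u\tau\sin\alpha/(\tau\cos\alpha + A) \leq u\tan\alpha + K_2\, u^2/\tau$. Combining with the quadratic piece produces $\|y\| \leq u(\tan\alpha + (K_1+K_2)u/\tau)$, matching the claim with $c = K_1+K_2$ depending only on $c_{\pi/4}$. The key technical moment is precisely this second rationalization, which isolates the sharp leading coefficient $1$ on $u\tan\alpha$ and relegates the residual into a higher-order $u^2/\tau$ correction.
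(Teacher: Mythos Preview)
Your argument is correct and reaches the same conclusion as the paper, but by a genuinely different elementary route. The paper's proof factors out $\tau\cos\alpha$ and applies the second-order Taylor estimate of Remark~\ref{rem:taylor2_sqrt1-x2_clean}, namely $\sqrt{1-t}\geq 1-\tfrac{t}{2}-t^2$, to the quantity under the radical; the leading coefficient $\tan\alpha$ then falls out of the $t/2$ term, and the $t^2$ term together with the remaining piece of $t/2$ is absorbed into $c\|x_H\|/\tau$. You instead rationalize twice: the first conjugate multiplication makes the cancellation $\tau^2\cos^2\alpha-\tau^2=-\tau^2\sin^2\alpha$ exact and isolates the numerator $u(u+2\tau\sin\alpha)$, and the second rationalization is what sharpens the naive $2\tan\alpha$ to the required $\tan\alpha$ plus an $O(u/\tau)$ remainder. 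Your approach has the minor advantage of needing only the cruder bound $1-\sqrt{1-x}\leq x$ (Remark~\ref{rem:taylor_sqrt1-x2_clean}) rather than the second-order version, at the cost of one extra algebraic step. One small remark: your opening paragraph comparing the magnitudes of the upper and lower bounds from Lemma~\ref{lem:f_bound_circle_H_no_func_clean} is unnecessary, since that lemma already states the upper bound $\|y\|\leq\tau\cos\alpha-A$ directly; you may simply invoke it and proceed.
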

\begin{proof}
 Recalling Lemma \ref{lem:f_bound_circle_H_no_func_clean}, we have that 
	\begin{equation}
	\|y\| \leq \tau \cos \alpha - \sqrt{\tau^2-(\norm{x_H}+\tau \sin \alpha)^2}
	.\end{equation}
	Therefore,
	\begin{align*}
	\|y\| &\leq \tau (\cos \alpha - \sqrt{1-(\norm{x_H}/\tau + \sin \alpha)^2})\\
	      &=  \tau (\cos \alpha - \sqrt{1-\norm{x_H}^2/\tau^2  + 2\norm{x_H}/\tau\sin \alpha - \sin^2\alpha})\\
	      &=  \tau (\cos \alpha - \sqrt{\cos^2\alpha-\norm{x_H}^2/\tau^2 - 2\norm{x_H}/\tau\sin \alpha })\\
	      &=  \tau \cos \alpha(1 - \sqrt{1-\norm{x_H}^2/(\tau^2\cos^2\alpha)  - 2\norm{x_H}/\tau\tan \alpha / \cos\alpha })\\
	      &\leq  \tau \cos \alpha (1 - (1-\frac{\norm{x_H}^2}{2\tau^2\cos^2\alpha}  - \frac{\norm{x_H}\tan \alpha}{\tau \cos\alpha}  - \left(\frac{\norm{x_H}^2}{\tau^2\cos^2\alpha}  + 2\frac{\norm{x_H}\tan \alpha}{\tau\cos\alpha} \right)^2))\\
	      &= \tau   \cos\alpha(\frac{\norm{x_H}^2}{2\tau^2\cos^2\alpha}  + \frac{\norm{x_H}\tan \alpha}{\tau\cos\alpha}  + \left(\frac{\norm{x_H}^2}{\tau^2\cos^2\alpha}  + 2\frac{\norm{x_H}\tan \alpha}{\tau\cos\alpha} \right)^2)\\
	      &= \norm{x_H}\tan \alpha  +\frac{\norm{x_H}^2}{2\tau\cos\alpha}  +  \left(\frac{\norm{x_H}^2}{\tau^{3/2}\cos^{3/2}\alpha}  + 2\frac{\norm{x_H}\tan \alpha}{\tau^{1/2}\cos^{1/2}\alpha} \right)^2\\
	      &\leq \norm{x_H} (\tan \alpha + c\|x_H\|/\tau)
	\end{align*}
	for some constant $c$ depending only on $c_{\pi/4}$.
\end{proof}

\begin{Lemma}\label{lem:alpha-beta_x_no_func}
	Let $\MM$ be a $d$-dimensional sub-manifold of $\RR^D$ with reach $\tau$. For any $p \in \MM$, let $T_{p}\MM$ be the tangent of $\MM$ at $p$ and let $ H\in Gr(d, D) $ such that $ \maxangle(T_p\MM, H) = \alpha\leq \pi/4 $.
	For any $x = p + x_H$, where $x_H\in H$,$\|x_H\|\leq c_{\pi/4}\tau$ for some constant $c_{\pi/4}$, and $y\in H^\perp$  such that $\|x-p\|\leq \tau \cos \alpha$, $\|y\| \leq \tau/2$ and $(x + y)\in \MM$.
	Then, we get that
	\[
	\sin(\maxangle (T_{x+y}\MM, T_{p}\MM)) \leq \frac{\norm{x_H}}{\tau}(1 + \tan^2 \alpha)  +\OO(\|x_H\|^2/\tau^2)
	\]
\end{Lemma}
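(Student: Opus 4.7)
My plan is to reduce the tangent-angle bound to a distance bound between $p$ and the point $q := x+y$, and then apply the standard reach-based comparison of tangent spaces.

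\textbf{Step 1: Use the reach to pass from angle to distance.} Since $q \in \MM$ (by hypothesis) and $\|q-p\|\le \tau$ (which will follow easily from the hypotheses on $\|x-p\|$ and $\|y\|$), Corollary~3 of \cite{boissonnat2017reach} applies, giving
\[
\sin\!\left(\tfrac{1}{2}\maxangle(T_q\MM,T_p\MM)\right) \;\le\; \frac{\|q-p\|}{2\tau}.
\]
Using $\sin\theta = 2\sin(\theta/2)\cos(\theta/2)\le 2\sin(\theta/2)$, this upgrades to
\[
\sin\!\bigl(\maxangle(T_{x+y}\MM,T_p\MM)\bigr) \;\le\; \frac{\|q-p\|}{\tau}.
\]
So the task reduces to bounding $\|q-p\|$ in terms of $\|x_H\|/\tau$ and $\alpha$.

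\textbf{Step 2: Estimate $\|q-p\|$ by splitting into $H$ and $H^\perp$ components.} Because $x_H\in H$ and $y\in H^\perp$, the Pythagorean identity gives $\|q-p\|^2 = \|x_H\|^2 + \|y\|^2$. By Lemma~\ref{lem:f_bound_circle_H_no_func_ver2_clean}, the normal component satisfies
\[
\|y\| \;\le\; \|x_H\|\tan\alpha + c\,\|x_H\|^2/\tau,
\]
so squaring and keeping only the leading term in $\|x_H\|/\tau$,
\[
\|y\|^2 \;\le\; \|x_H\|^2\tan^2\alpha + O\!\bigl(\|x_H\|^3/\tau\bigr).
\]
Hence
\[
\|q-p\|^2 \;\le\; \|x_H\|^2\bigl(1+\tan^2\alpha\bigr) + O\!\bigl(\|x_H\|^3/\tau\bigr) \;=\; \|x_H\|^2\sec^2\alpha + O\!\bigl(\|x_H\|^3/\tau\bigr).
\]

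\textbf{Step 3: Take the square root and finish.} Factoring out $\|x_H\|\sec\alpha$ and using $\sqrt{1+u}\le 1+u/2$ for the $O$-term (valid since $\alpha\le \pi/4$ keeps $\sec\alpha$ bounded and $\|x_H\|/\tau\le c_{\pi/4}$),
\[
\|q-p\| \;\le\; \|x_H\|\sec\alpha + O\!\bigl(\|x_H\|^2/\tau\bigr).
\]
Dividing by $\tau$ gives $\sin(\maxangle) \le \tfrac{\|x_H\|}{\tau}\sec\alpha + O(\|x_H\|^2/\tau^2)$. Since $\sec\alpha \le \sec^2\alpha = 1+\tan^2\alpha$ for $\alpha\in[0,\pi/4]$, we obtain the claimed inequality.

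\textbf{Main obstacle.} Each ingredient (the reach-based tangent comparison and Lemma~\ref{lem:f_bound_circle_H_no_func_ver2_clean}) is already available, so there is no conceptual difficulty. The only care needed is bookkeeping: absorbing the mixed term $2c\tan\alpha\cdot\|x_H\|^3/\tau$ and the $c^2\|x_H\|^4/\tau^2$ term from squaring into $O(\|x_H\|^3/\tau)$, and justifying the slightly lossy upgrade from $\sec\alpha$ to $1+\tan^2\alpha$ using $\alpha\le \pi/4$; neither is delicate.
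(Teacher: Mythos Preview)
Your proposal is correct and follows essentially the same route as the paper: invoke Corollary~3 of \cite{boissonnat2017reach} to bound the tangent angle by $\|q-p\|/\tau$, then control $\|q-p\|^2 = \|x_H\|^2 + \|y\|^2$ via Lemma~\ref{lem:f_bound_circle_H_no_func_ver2_clean}. The only cosmetic difference is that the paper writes the bound directly as $\frac{\|x_H\|}{\tau}\sqrt{1+\tan^2\alpha+O(\|x_H\|/\tau)}$ and then loosens $\sqrt{1+\tan^2\alpha}$ to $1+\tan^2\alpha$, whereas you first extract $\sec\alpha$ and then note $\sec\alpha\le\sec^2\alpha$; these are the same inequality.
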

\begin{proof}
From  Corollary 3 in \cite{boissonnat2017reach} bounds the maximal angle between the tangent spaces at two points on $\MM$ through their Euclidean distance and $\tau$.
	Namely, let $p_1, p_2 \in \MM$
	\[
	sin(\maxangle(T_{p_1}\MM, T_{p_2}\MM)/2) \leq\frac{\norm{p_1 - p_2}}{2\tau}
	.\]
	Therefore, in our case we obtain,
	\begin{equation}\label{eq:sin_alpha_1}
	\sin(\maxangle (T_{x+y}\MM, T_{p}\MM)/2) \leq \frac{\sqrt{\norm{x_H}^2 + \|y\|^2}}{2\tau}
	.\end{equation}
		
	Recalling Lemma \ref{lem:f_bound_circle_H_no_func_ver2_clean}, we have that 
	\begin{equation*}
	\|y\| \leq \norm{x_H}\tan \alpha  +\OO(\|x_H\|^2/\tau)
	\end{equation*}
	
	\begin{align*}
	\sin(\maxangle (T_{x+y}\MM, T_{p}\MM)/2) &\leq \frac{\sqrt{\norm{x_H}^2 + \|y\|^2}}{2\tau}    \\
	&= \frac{\sqrt{\norm{x_H}^2 +  \norm{x_H}^2\tan^2 \alpha  +\OO(\|x_H\|^2/\tau)}}{2\tau}\\
	&=\frac{\norm{x_H}}{2\tau}\sqrt{1 + \tan^2 \alpha  +\OO(\|x_H\|/\tau)}\\
	&\leq\frac{\norm{x_H}}{2\tau}(1 + \tan^2 \alpha)  +\OO(\|x_H\|^2/\tau^2)
	\end{align*}
	Note that for sufficiently small $\gamma$ we can bound $sin(\gamma) < 2\sin(\gamma/2)$. 
    Thus, for $\tau$ large enough we obtain
	\[
	\sin(\maxangle (T_{x+y}\MM, T_{p}\MM)) \leq \frac{\norm{x_H}}{\tau}(1 + \tan^2 \alpha)  +\OO(\|x_H\|^2/\tau^2)
	\]
\end{proof}

\begin{Lemma}\label{lem:alpha_beta_x_no_func}
	Let $\MM$ be a $d$-dimensional sub-manifold of $\RR^D$ with reach $\tau$. For any $p \in \MM$, let $T_{p}\MM$ be the tangent of $\MM$ at $p$ and let $ H\in Gr(d, D) $ such that $ \maxangle(T_p\MM, H) = \alpha\leq \pi/4 $.
	For any $x = p + x_H$, where $x_H\in H$,$\|x_H\|\leq c_{\pi/4}\tau$ for some constant $c_{\pi/4}$, and $y\in H^\perp$  such that $\|x-p\|\leq \tau \cos \alpha$, $\|y\| \leq \tau/2$ and $(x + y)\in \MM$, we denote $ \beta = \maxangle(T_{x+y}\MM, H) $.
	Then, we get that
        \[
      \alpha- 2\frac{\norm{x_H}}{\tau}(1 + \alpha)  +c\|x_H\|^2/\tau^2 \leq \beta \leq \alpha+ 2\frac{\norm{x_H}}{\tau}(1 + \alpha)  +c\|x_H\|^2/\tau^2
	 ,\]
  for some constant $c\in \RR$.

\end{Lemma}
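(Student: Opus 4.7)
The plan is to use Lemma \ref{lem:alpha-beta_x_no_func} as a black box and then apply a triangle inequality on the Grassmannian $Gr(d,D)$. Recall that the max-angle $\maxangle(\cdot,\cdot)$ (equivalently, the operator norm distance between orthogonal projectors) is a genuine metric on $Gr(d,D)$; this is standard from the theory of principal angles (cf.\ the references cited at the beginning of the Preliminaries). Consequently, for the three subspaces $H$, $T_p\MM$, and $T_{x+y}\MM$ (all of dimension $d$), we have the reverse triangle inequality
\[
\bigl|\maxangle(T_{x+y}\MM, H) - \maxangle(T_p\MM, H)\bigr| \leq \maxangle(T_{x+y}\MM, T_p\MM).
\]
Since $\maxangle(T_p\MM,H)=\alpha$ by assumption and $\beta=\maxangle(T_{x+y}\MM,H)$ by definition, this reads $\alpha-\gamma\le\beta\le\alpha+\gamma$ where $\gamma:=\maxangle(T_{x+y}\MM,T_p\MM)$, so it only remains to bound $\gamma$ in terms of $\|x_H\|/\tau$ and $\alpha$.

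For this I would invoke Lemma \ref{lem:alpha-beta_x_no_func} directly (the hypotheses there match ours exactly), which gives
\[
\sin\gamma \;\le\; \frac{\|x_H\|}{\tau}\bigl(1+\tan^2\alpha\bigr) + \OO(\|x_H\|^2/\tau^2).
\]
To convert this $\sin$-bound into a bound on $\gamma$ itself, I would use the Taylor expansion $\arcsin(t)=t+\OO(t^3)$ valid uniformly for $t$ bounded away from $1$ (and our $t$ is of order $\|x_H\|/\tau$, which is small); the cubic correction is then absorbed into the $\OO(\|x_H\|^2/\tau^2)$ term. Next, since $\alpha\le \pi/4$, the function $\tan^2\alpha/\alpha$ is monotone on $[0,\pi/4]$ and attains its maximum $4/\pi<2$ at $\alpha=\pi/4$; hence $\tan^2\alpha\le 2\alpha$ on our range, and therefore $1+\tan^2\alpha\le 1+2\alpha\le 2(1+\alpha)$. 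Combining,
\[
\gamma \;\le\; 2\,\frac{\|x_H\|}{\tau}\,(1+\alpha) + c\,\|x_H\|^2/\tau^2
\]
for some absolute constant $c$ depending only on $c_{\pi/4}$. Plugging this into the triangle inequality above yields both the claimed upper and lower bounds on $\beta$ simultaneously.

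I do not expect a serious obstacle here, because Lemma \ref{lem:alpha-beta_x_no_func} already does the geometric heavy lifting (reach estimates and the bounding-ball arguments). The only delicate bookkeeping is (i) justifying that max-angle obeys the triangle inequality on $Gr(d,D)$, which is classical, and (ii) checking that replacing $1+\tan^2\alpha$ by $2(1+\alpha)$ does not cost anything beyond what can be absorbed into the $\OO(\|x_H\|^2/\tau^2)$ remainder. Both of these are routine once one has Lemma \ref{lem:alpha-beta_x_no_func} in hand.
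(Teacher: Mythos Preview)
Your proposal is correct and follows essentially the same approach as the paper: apply the triangle inequality for $\maxangle$ on $Gr(d,D)$ to get $|\alpha-\beta|\le\maxangle(T_{x+y}\MM,T_p\MM)$, then invoke Lemma~\ref{lem:alpha-beta_x_no_func} to bound the right-hand side. The only cosmetic differences are that the paper converts the sine bound to an angle bound via $\gamma\le 2\sin\gamma$ (rather than your $\arcsin$ Taylor expansion) and then replaces $\tan^2\alpha$ by $\alpha$ directly, whereas you (more carefully) use $\tan^2\alpha\le 2\alpha$ on $[0,\pi/4]$; either route produces the stated factor $2(1+\alpha)$.
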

\begin{proof}
	Using the triangle inequality for maximal angles we have that 
	\[
	\maxangle(T_p \MM, H) \leq \maxangle(T_p \MM,T_{x+y} \MM)+\maxangle(T_{x+y} \MM,H),\]
 as well as
	\[ \maxangle(T_{x+y} \MM,H)
	 \leq \maxangle(T_p \MM,T_{x+y} \MM)+
  \maxangle(T_p \MM, H),\]

 which can be written as 
	\[
	|\alpha - \beta| \leq \maxangle(T_{x+y} \MM, T_p \MM)
	.\]
	From Lemma~\ref{lem:alpha-beta_x_no_func} we have that 
	\[
	\sin(\maxangle (T_{x+y}\MM, T_{p}\MM)) \leq \frac{\norm{x_H}}{\tau}(1 + \tan^2 \alpha)  +\OO(\|x_H\|^2/\tau^2)
	,\]
	and thus, 
	\[
	\sin(|\alpha - \beta|) \leq \frac{\norm{x_H}}{\tau}(1 + \tan^2 \alpha)  +\OO(\|x_H\|^2/\tau^2)
	\]
	Since for $x\leq \pi/2$, we have that $x/2<\sin(x)$, we have 
	
	\[
	|\alpha - \beta| \leq 2\frac{\norm{x_H}}{\tau}(1 + \tan^2 \alpha)  +\OO(\|x_H\|^2/\tau^2)
	.\]
	Since for $\alpha \leq \pi/4$, we have that $\tan \alpha \leq 1$, and $\tan^2\alpha \leq \alpha$, we have that 
    \[
      \alpha- 2\frac{\norm{x_H}}{\tau}(1 + \alpha)  +\OO(\|x_H\|^2/\tau^2) \leq \beta \leq \alpha+ 2\frac{\norm{x_H}}{\tau}(1 + \alpha)  +\OO(\|x_H\|^2/\tau^2)
	.\]

\end{proof}

\begin{Lemma} [$\MM$ is locally a function graph over a tilted plane]\label{lem:M_is_locally_a_fuinction_clean}
Let $\MM$ be a $d$-dimensional sub-manifold of $\RR^D$ with reach $\tau$. For any $p \in \MM$, let $T_{p}\MM$ be the tangent of $\MM$ at $p$. Let $H \in Gr(d,D)$, such that $\maxangle(H,T_p\MM) = \alpha \leq \pi/4$.  
Then $\MM \cap \emph{Cyl}_H(p,\rho, \tau/2)$ is locally a function over $H$, where $\emph{Cyl}_H(p,\rho, \tau/2)$ is the $D$-dimensional cylinder with the base $B_H(p, \rho)\subset H$ and height $\tau/2$ in any direction on $H^\perp$, for $\rho = c_{\pi/4}\tau$ where $c_{\pi/4}$ is some general constant. 

\noindent Explicitly, there exists a function 
$$
f:B_H(p, \rho) \to H^\perp
$$
such that the graph of $f$ defined as
$$
\Gamma_f = \{p + (x,f(x)) | x\in B_H(p, \rho) \} 
$$
identifies with $\MM \cap \emph{Cyl}_H(p,\rho, \tau/2)$.
\end{Lemma}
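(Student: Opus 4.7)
The plan is to realize $\MM \cap \textrm{Cyl}_H(p, \rho, \tau/2)$ as a graph over $H$ by transporting the (already established) graph representation of $\MM$ over $T_p\MM$ through the orthogonal projection onto $H$. First, the reach hypothesis together with Lemma \ref{lem:f_bound_circle_no_func_clean} yields a radius $R_0$ of order $\tau$ such that $\MM \cap \textrm{Cyl}_{T_p\MM}(p, R_0, \tau/2)$ is the graph of a $C^k$ map $g: B_{T_p\MM}(p, R_0) \to T_p\MM^\perp$ with $g(0) = 0$ and $Dg(0) = 0$. Write the parametrization as $\psi(x) = p + (x, g(x))_{T_p\MM}$ and denote by $\Pi_H$ the orthogonal projection onto $H$.

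Next, I would study the composite $\Phi(x) := \Pi_H(\psi(x) - p) \in H$ on $B_{T_p\MM}(p, R_0)$. Its differential at $x = 0$ equals $\Pi_H|_{T_p\MM}$, whose smallest singular value is at least $\cos\alpha \geq \sqrt{2}/2$ since $\alpha \leq \pi/4$; in particular $D\Phi(0)$ is invertible. The reach-controlled first-order bound $\|Dg(x)\| = O(\|x\|/\tau)$, which is the infinitesimal content of Lemmas \ref{lem:f_bound_circle_H_no_func_ver2_clean} and \ref{lem:alpha_beta_x_no_func}, keeps $D\Phi(x)$ a small perturbation of $D\Phi(0)$ as long as $\|x\|$ is an absolute fraction of $\tau$. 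A quantitative inverse function theorem (Banach contraction) then produces an absolute constant $c_{\pi/4}$ and a $C^k$ inverse $\Phi^{-1}: B_H(p, c_{\pi/4}\tau) \to B_{T_p\MM}(p, R_0)$. Setting $f(y) := \Pi_{H^\perp}\bigl(\psi(\Phi^{-1}(y)) - p\bigr)$ then yields a $C^k$ function on $B_H(p, \rho)$ with $\rho = c_{\pi/4}\tau$ whose graph is by construction contained in $\MM$.

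It remains to verify the two containments. The inclusion $\Gamma_f \subseteq \textrm{Cyl}_H(p, \rho, \tau/2)$ reduces to $\|f(y)\| \leq \tau/2$, which follows from Lemma \ref{lem:f_bound_circle_H_no_func_clean} once $c_{\pi/4}$ is chosen small enough that $\tau\cos\alpha - \sqrt{\tau^2 - (\rho + \tau\sin\alpha)^2} \leq \tau/2$ uniformly in $\alpha \in [0, \pi/4]$. For the reverse inclusion, any $q \in \MM \cap \textrm{Cyl}_H(p, \rho, \tau/2)$ satisfies $\|q - p\| \leq \sqrt{\rho^2 + (\tau/2)^2} < R_0$ after possibly shrinking $c_{\pi/4}$, so $q = \psi(x)$ for a unique $x$ in the domain of $\Phi$, and injectivity of $\Phi$ forces $x = \Phi^{-1}(\Pi_H(q - p))$, whence $q \in \Gamma_f$.

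The main obstacle is the simultaneous bookkeeping of the constant $c_{\pi/4}$: it must be small enough that (i) the Banach contraction producing $\Phi^{-1}$ closes on the target ball $B_H(p, c_{\pi/4}\tau)$, (ii) the vertical deviation $\|f(y)\|$ stays under $\tau/2$ on that ball, and (iii) the entire cylinder $\textrm{Cyl}_H(p, c_{\pi/4}\tau, \tau/2)$ is contained in $\textrm{Cyl}_{T_p\MM}(p, R_0, \tau/2)$ so that the graph over $T_p\MM$ is valid there. Because $\sin\alpha$ and $\cos\alpha$ stay bounded away from their degenerate values on $[0, \pi/4]$, all three requirements can be met by a single universal constant, justifying the $\alpha$-independent notation $c_{\pi/4}$.
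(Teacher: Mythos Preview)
Your approach differs from the paper's in a clean way: you transport the already-known graph over $T_p\MM$ to a graph over $H$ via a quantitative inverse function theorem, whereas the paper argues directly over $H$ by an extension argument (showing that a local graph over $H$ continues out to radius $\sim\tau$ because $\maxangle(T_x f, H) < \pi/2$ there, using Lemma~\ref{lem:alpha_beta_x_no_func}) and then establishes uniqueness via a tangent-ball dichotomy (two manifold points over the same $x\in H$ are either equal or at least $\sim 2\tau\cos\beta$ apart in $H^\perp$). Your route is more classical and delivers $f\in C^k$ for free; the paper's route is self-contained and does not presuppose the $T_p\MM$ case.

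Two points need attention. First, your starting input---that $\MM \cap \textrm{Cyl}_{T_p\MM}(p, R_0, \tau/2)$ is the graph of some $g$ with $R_0$ of order $\tau$---is exactly Corollary~\ref{cor:GraphOfFunctionTau2_clean}, which in this paper is \emph{derived from} the very lemma you are proving; Lemma~\ref{lem:f_bound_circle_no_func_clean} alone only bounds the height of manifold points already known to lie in the cylinder and does not furnish existence or single-valuedness of $g$. You must either prove the $\alpha=0$ case separately (the continuation argument simplifies there since $D\Phi(0)=I$) or cite it externally. Second, your reverse-inclusion step is too quick: the bound $\|q-p\| \le \sqrt{\rho^2 + (\tau/2)^2}$ does not by itself place $q$ inside $\textrm{Cyl}_{T_p\MM}(p, R_0, \tau/2)$, because the $T_p\MM^\perp$-component of $q-p$ can a priori be as large as $\tau/2 + \rho\sin\alpha > \tau/2$. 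What closes the gap is the tangent-ball dichotomy from Lemma~\ref{lem:reach_ball_no_intersect_clean}: for $q\in\MM$ with $\|\Pi_{T_p\MM}(q-p)\|<\tau$, either $\|\Pi_{T_p\MM^\perp}(q-p)\| \le \tau - \sqrt{\tau^2 - \|\Pi_{T_p\MM}(q-p)\|^2}$ or it exceeds $\tau$; since $\|q-p\|<\tau$ (for $\rho$ small enough) rules out the latter, $q$ indeed lies on the graph of $g$ and hence on $\Gamma_f$.
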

\begin{proof}
We split our arguments to two separate parts.
First, we show that for $c < 0.02$ there exists a function $f$ such that $\Gamma_f \subset \MM\cap \textrm{Cyl}_H(p,  c \tau, \tau/2)$.
Then, in the second part of the proof, we show that there is a constant $c_{\pi/4} < c$ such that $f$ is defined uniquely and $\Gamma_f = M\cap \textrm{Cyl}_H(p, \rho, \tau/2)$.

By definition, there is an open ball $U_T\subset T_p\MM$ of $p$ such that there is a neighborhood $W_\MM \subset \MM$ that can be written as a graph of a function from $U_T\simeq \RR^d$ to $T_p\MM^\perp\simeq \RR^{D-d}$.
Accordingly, for any $H$ such that $\maxangle(H, T_p\MM) < \pi/2$ there is an open ball $B_H(p,\eps) \subset H$ such that $W_H \subset \MM$ can be written as a graph of a function $f$  from $B_H(p,\eps)$ to $H^\perp$.
We wish to show that $f$ can be extended to a ball $B_H(p, 0.02 \tau)\subset H$ such that the graph of $f$ is a subset of $\MM$ (note that $f(0) = 0$).

By contradiction, let us assume that $\frak{r}$ the maximal radius of an open ball such that the $\Gamma_f \subset \MM$, is strictly smaller than $0.02 \tau$.
We claim that the graph $\Gamma_f$ is defined on the closed ball $\bar B_H(p, \frak{r})$ and is also subset of $\MM$.
This is true, from the following argument:
Take a sequence of points $\{x_n\}$ converging to $ x\in \partial \bar B_H(p, \frak{r})$, a point on the boundary of $\bar B_H(p, \rho)$, and consider $\{y_n = p + (x_n, f(x_n))\in \MM\}$. From the compactness of $\MM$ the sequence $y_n$ has a converging subsequence $y_{n_k}$ and we denote its limit as $y$.
Since $x_n \to x$, we define $f(x)\defeq\lim f(x_{n_k})$ and  $y = p + (x, f(x))$.

We now wish to show that there is $\eps>0$ such that $f$ can be extended to $B_H(p, \frak{r} + \eps)$.
Using a similar argument to the one used in the beginning of the proof, by showing that for any $x\in\partial B_H(p, \frak{r})$ the angle $\maxangle(T_x f, H) < \pi/2$, we get that there is $W_H\subset\MM$, a neighborhood of $y\in \MM$ that is the image of some function from $B_H(x,\eps_x)$ to $H^\perp$.
Therefore, $f$ can be extended into this neighborhood.
Taking $\eps$ to be the minimum over all $\eps_x$, which exists since $x$ is in $H(p, \frak{r})$, which is compact, we get that $f$ can be extended to $B_H(p, \frak{r}+ \eps)$.

The remaining piece of the existence puzzle is showing that for all $x\in\partial B_H(p, \frak{r})$ we have $\maxangle(T_x f, H) < \pi/2$.
From Lemma \ref{lem:alpha_beta_x_no_func} we have that for any $x$ such that 
\[
       \frac{\pi}{2} \leq \frac{\pi}{4}+ 2\frac{\norm{x}}{\tau}(1 + \frac{\pi}{4})  +c\|x\|^2/\tau^2
	 .\]

$\maxangle(T_xf,H) <\pi/2$ holds.
Rewriting the inequality we get
\[
\begin{array}{cc}
      \\
    \frac{\pi}{4} > & 2\frac{\norm{x}}{\tau}(1 + \frac{\pi}{4})  +c\|x\|^2/\tau^2\\
    0 > & c\frac{\norm{x}^2}{\tau^2} +  \frac{\norm{x}}{\tau} (1 + \frac{\pi}{4}) - \frac{\pi}{8},\\
\end{array}
\]
Thus, for $x< c_1 \tau$ we have that  $\maxangle(T_xf,H) <\pi/2$ holds.

We now turn to show that there is a constant $c_{\pi/4}$ for which $f$ is uniquely defined in $B_H(p, c_{\pi/4}\tau)$.
From Lemma \ref{lem:f_bound_circle_H_no_func_clean} we know that for any $x \in H$ with $\|x\|\leq \tau/2$ all the $y\in H^\perp$ such that $(x,y)\in\MM$ and $\|y\| \leq \tau/2$ must satisfy:
\begin{equation}
    \|y\| \leq \tau\cos\alpha-\sqrt{\tau^2-(\norm{x} + \tau\sin\alpha)^2} = \tau\left(\cos\alpha-\sqrt{1-(\frac{\norm{x}}{\tau} + \sin\alpha)^2}\right)
\end{equation}
Let $y_1, y_2$ be such that $(x,y_1),(x,y_2)\in\MM$ where $\norm{x}=\bar x\tau$.
Then,
\begin{equation}
    \|y_j\| \leq \tau\left(\cos\alpha-\sqrt{1-(\bar x+ \sin\alpha)^2}\right),\quad (j=1,2)
.\end{equation}
In other words, $y_1$ and $y_2$ cannot be too far from one another, and note that as $\bar x \to 0$
\begin{equation}\label{eq:y2_y1_diff}
\norm{y_2 - y_1} \to 0
\end{equation}

On the other hand, taking the point $(x, y_1)\in \MM$, denoting $\beta = \maxangle (T_{(x, y_1)}\MM, H)$, and applying Lemma \ref{lem:alpha_beta_x_no_func} we have that
\[
\beta \leq \alpha+ 2\frac{\bar x}{\tau}(1 + \alpha)  +\OO(\bar x^2/\tau^2)
\]
which tends to $\alpha$ when $\bar x\to 0$.
From Lemma \ref{lem:reach_ball_no_intersect_clean} we know that $(x,y_2)$ cannot be in any ball tangent to $\MM$ at $(x, y_1)$ of radius $\tau$.
We denote by $v$ the direction $(0, y_2 - y_1)\in H^\perp$. 
From Lemma \ref{lem:angle_space_to_vec}  we know that there is $w \in \lrbrackets{ T_{(x, y_1)} \MM} ^\perp$ such that $\angle (v, w) \leq \beta$.
Therefore, we can now limit our discussion to $L_{y_1}$ the affine space spanned by $v$ and $w$ from $(x, y_1)$, and note that it contains $(x, y_2)$ as well.
Taking the two balls $B_D((x, y_1) \pm \tau\cdot w, \tau)$ and intersecting them with $L_{y_1}$ we get two 2-dimensional disks of radius $\tau$ (see Figure \ref{fig:bouding_balls_for lem_locally_func}).
Thus, $(x, y_2)$ cannot be within either disks.
From basic trigonometry we achieve that either $y_2 = y_1$ or
\[
\norm{y_2 - y_1} \geq 2\tau\cos(\beta) \geq 2\tau\cos(\alpha + 2\sqrt{\bar x (2\alpha + \bar x)})
,\]
which tends to $2\tau \cos \alpha$ as $\bar x \to 0$.
Combining this with \eqref{eq:y2_y1_diff} we get there is $c_{\pi/4}$  such that for all $\bar x \leq c_{\pi/4}$ we have $y_1 = y_2$.
\begin{figure}[htb!]
    \centering
    \includegraphics
 [width=0.45\textwidth]{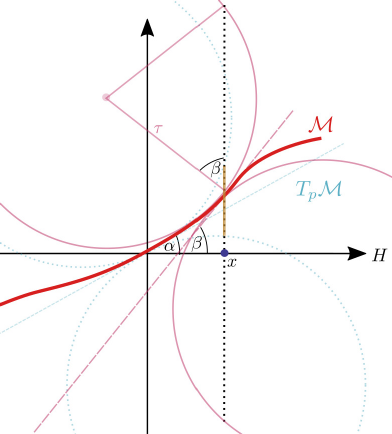}
    \caption{Illustration of bounding balls. The reach $\tau$ of the manifold $\MM$ (marked by the red line) bounds its sectional curvatures. Namely, the manifold cannot intersect a tangent open ball of radius $\tau$. In this illustration we have some coordinate system $H$ and the manifold can be described locally as a graph of some function $f:H\to H^\perp$. The coordinate system $H$ is not aligned with $T_0f$, the tangent at zero; i.e., $\maxangle(H, T_0f) = \alpha$. Then the value of $f$ at $x$ is bounded to the markered interval in $H^\perp$ above $x$. Furthermore, in order to have two different points in $\MM$ above $x$, the manifold cannot curve too fast as it cannot enter neither the dotted balls nor the solid ones.}
    \label{fig:bouding_balls_for lem_locally_func}
\end{figure}

\end{proof}

\begin{corollary} \label{cor:GraphOfFunctionTau2_clean}
Under the requirements of Lemma \ref{lem:M_is_locally_a_fuinction_clean} we get that $\MM$ is a function graph over $T_{p}\MM$ in a ${\tau}/2$ neighborhood of $p$. 
Reiterating \eqref{eq:phi_def_clean}, we have a function 
\[
\phi_p:B_{T_p\MM}(0,\tau/2) \rightarrow T_p\MM^\perp
\]
such that the graph of $\phi$ shifted to $p$ coincides with $\MM \cap \mathrm{Cyl}_{T_p\MM}(p,\tau/2,\tau/2)$.

\end{corollary}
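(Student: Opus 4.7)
The plan is to specialise Lemma \ref{lem:M_is_locally_a_fuinction_clean} to the case $H = T_{p}\MM$, in which the tilt angle is $\alpha = 0$. In this untilted regime the estimates used in the proof of that lemma simplify considerably, and the constant $c_{\pi/4}$ can be sharpened to $1/2$, yielding exactly the neighbourhood claimed in the corollary.

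First I would establish existence of $\phi_p$ on $B_{T_p\MM}(0,\tau/2)$ by re-running the maximality argument from the first half of the proof of Lemma \ref{lem:M_is_locally_a_fuinction_clean}. Suppose, for contradiction, that the maximal radius $\mathfrak{r}$ on which such a graph representation exists satisfies $\mathfrak{r} < \tau/2$. Applying Lemma \ref{lem:f_bound_circle_no_func_clean} directly (since $\alpha = 0$, no tilt correction is needed), for any $\|x\| \leq \mathfrak{r} < \tau/2$ the value $\phi_p(x)$ satisfies $\|\phi_p(x)\| \leq \tau - \sqrt{\tau^{2}-\|x\|^{2}} \leq \tau(1 - \sqrt{3}/2) < \tau/2$, so the graph stays inside the cylinder $\mathrm{Cyl}_{T_p\MM}(p,\tau/2,\tau/2)$. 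To extend $\phi_p$ past the boundary of $B_{T_p\MM}(p,\mathfrak{r})$, I would use Lemma \ref{lem:alpha_beta_x_no_func} with $\alpha = 0$ to conclude that on this boundary the tangent plane $T_x\Gamma_{\phi_p}$ makes an angle of at most $2\|x\|/\tau + O(\|x\|^{2}/\tau^{2})$ with $T_p\MM$; this is bounded by approximately $5/4 < \pi/2$ when $\|x\| \leq \tau/2$, so the Implicit Function Theorem furnishes a local extension of $\phi_p$ in a neighbourhood of every boundary point, contradicting the maximality of $\mathfrak{r}$.

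For uniqueness on $B_{T_p\MM}(0,\tau/2)$ I would re-use the two-ball argument from the second half of the proof of Lemma \ref{lem:M_is_locally_a_fuinction_clean} with $\alpha = 0$. If there were two distinct points $(p+x,y_1),(p+x,y_2)\in\MM$ with $\|x\|\leq \tau/2$ and $\|y_j\|\leq \tau/2$, Lemma \ref{lem:f_bound_circle_no_func_clean} forces $\|y_1-y_2\| \to 0$ as $\|x\|\to 0$, whereas by Lemma \ref{lem:alpha_beta_x_no_func} (again with $\alpha = 0$) the tangent at $(p+x,y_1)$ deviates from $T_p\MM$ by $\beta \leq 2\|x\|/\tau + O(\|x\|^{2}/\tau^{2})$, so the pair of tangent balls of radius $\tau$ supplied by Lemma \ref{lem:reach_ball_no_intersect_clean} separate any two distinct candidate points by at least $2\tau\cos\beta \to 2\tau$. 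These two conclusions are incompatible for all $\|x\|\leq \tau/2$, forcing $y_1 = y_2$.

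The only mildly delicate point is verifying that the generic constant $c_{\pi/4}$ obtained in Lemma \ref{lem:M_is_locally_a_fuinction_clean} really becomes $1/2$ in the $\alpha = 0$ case; but this is precisely because every place where the proof of that lemma used the tilt $\alpha$ (the bounding-ball inequality of Lemma \ref{lem:f_bound_circle_H_no_func_clean} and the angle bound of Lemma \ref{lem:alpha_beta_x_no_func}) collapses to its classical $\alpha = 0$ form, and the resulting numerical inequalities are satisfied for $\|x\| \leq \tau/2$ without further hypotheses. Hence no new ideas beyond those of Lemma \ref{lem:M_is_locally_a_fuinction_clean} are required.
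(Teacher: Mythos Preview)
The paper states this corollary without a written proof, evidently treating it as immediate from Lemma \ref{lem:M_is_locally_a_fuinction_clean}. Your plan---specialising that lemma's proof to $\alpha=0$ and checking that the neighbourhood radius sharpens to $\tau/2$---is exactly the right way to substantiate it, and your outline of both the existence and uniqueness arguments is sound.

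One point deserves tightening. You invoke Lemma \ref{lem:alpha_beta_x_no_func} at $\|x\|=\tau/2$ to bound the tangent angle, but that lemma (and the chain leading to it) carries the hypothesis $\|x_H\|\leq c_{\pi/4}\tau$, which is precisely what you are trying to improve; moreover the unspecified constant $c$ in its $c\|x\|^2/\tau^2$ term makes your ``approximately $5/4$'' unjustified. To avoid this circularity, bypass Lemma \ref{lem:alpha_beta_x_no_func} and combine Lemma \ref{lem:f_bound_circle_no_func_clean} (which only needs $\|x\|\leq\tau$) directly with Corollary~3 of \cite{boissonnat2017reach}: one gets
\[
\sin(\beta/2)\;\leq\;\frac{1}{2\tau}\sqrt{\|x\|^2+\bigl(\tau-\sqrt{\tau^2-\|x\|^2}\bigr)^2}
\;=\;\frac{1}{2}\sqrt{2\bigl(1-\sqrt{1-\|x\|^2/\tau^2}\bigr)},
\]
so at $\|x\|=\tau/2$ this yields $\beta\leq 2\arcsin\bigl(\tfrac12\sqrt{2-\sqrt3}\bigr)\approx 0.52<\pi/2$ for the extension step, and $2\tau\cos\beta\approx 1.73\tau$ versus $\|y_1-y_2\|\leq 2\tau(1-\sqrt3/2)\approx 0.27\tau$ for the uniqueness step. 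With this explicit bookkeeping both halves of your argument go through cleanly on the full ball $B_{T_p\MM}(0,\tau/2)$.
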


\begin{corollary}\label{cor:GraphOfFunctionTau_ROI}
Let the requirements of Lemma \ref{lem:M_is_locally_a_fuinction_clean}, and the sampling assumptions of Section \ref{sec:SamplingAssumptions} hold. Denote the projection of $r$ onto $\MM$ by $p_r = Proj_\MM(r)$, let $U_{\textrm{ROI}}$ be as defined in \eqref{eq:ROI_clean}. 
Then, any $r_i \in U_{\textrm{ROI}}$ can be written as
\begin{equation}\label{eq:FunctionGraph_pi_ei_clean}
r_i = \underbrace{p_r + (x_i, \phi_{p_r}(x_i))_{T_{p_r}\MM}}_{p_i}+\eps_i
,\end{equation}
where $x_i= Proj_{T_{p_r}\MM}(r_i - p_r)$ and $\|\eps_i\| \leq \sigma$.
\end{corollary}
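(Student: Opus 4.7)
The plan is to combine the local graph representation from Corollary \ref{cor:GraphOfFunctionTau2_clean} with the tubular noise bound on $r_i$, then track how that noise sits inside the graph's own coordinate splitting. The heart of the matter is showing that the remainder $\eps_i$ is almost entirely the normal component of the noise.

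First I would invoke Corollary \ref{cor:GraphOfFunctionTau2_clean} at the base point $p_r$, which realises $\MM \cap \mathrm{Cyl}_{T_{p_r}\MM}(p_r, \tau/2, \tau/2)$ as the graph of the well-defined function $\phi_{p_r}\colon B_{T_{p_r}\MM}(0,\tau/2) \to T_{p_r}\MM^\perp$. Then I would verify that the tangential projection $x_i = \mathrm{Proj}_{T_{p_r}\MM}(r_i - p_r)$ actually lies in this domain: combining $\|r - p_r\| \leq \sigma$ (from $r \in \MM_\sigma$) with the ROI condition $\|r_i - r\| < \sqrt{\sigma\tau}$, one gets $\|x_i\| \leq \|r_i - p_r\| < \sqrt{\sigma\tau} + \sigma$, well inside $\tau/2$ whenever $M = \tau/\sigma$ is large (Assumption 2 of Section \ref{sec:SamplingAssumptions}). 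Defining $p_i = p_r + (x_i, \phi_{p_r}(x_i))_{T_{p_r}\MM}$ automatically puts $p_i$ on $\MM$, and decomposing $r_i - p_r = x_i + y_i$ in the $T_{p_r}\MM \oplus T_{p_r}\MM^\perp$ basis yields $\eps_i := r_i - p_i = y_i - \phi_{p_r}(x_i) \in T_{p_r}\MM^\perp$ by direct computation, so $\eps_i$ is purely a vertical offset in the graph's frame.

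For the norm bound I would trace the tubular noise through this decomposition. Set $q_i = \mathrm{Proj}_\MM(r_i)$, so that $\nu := r_i - q_i$ satisfies $\|\nu\| \leq \sigma$ and $\nu \perp T_{q_i}\MM$. Writing $q_i = p_r + (\tilde x_i, \phi_{p_r}(\tilde x_i))_{T_{p_r}\MM}$ (still inside the graph region, by the same ROI estimate extended by $\sigma$), splitting $\nu = \nu_T + \nu_N$ in the $T_{p_r}\MM$ tangent/normal basis, and matching coordinates gives $\tilde x_i = x_i - \nu_T$ together with $\eps_i = \phi_{p_r}(x_i - \nu_T) - \phi_{p_r}(x_i) + \nu_N$. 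The reach-based angle estimate from Lemma \ref{lem:alpha-beta_x_no_func}, specialised to $\alpha=0$, forces $\maxangle(T_{q_i}\MM, T_{p_r}\MM) = O(\|q_i - p_r\|/\tau) = O(\sqrt{\sigma/\tau})$, so $\|\nu_T\|$ is of order $\sigma\sqrt{\sigma/\tau}$; paired with the local Lipschitz bound on $\phi_{p_r}$ (whose differential vanishes at $0$ and grows at rate $\|x\|/\tau$ from the reach), this makes the $\phi_{p_r}$ difference $O(\sigma^2/\tau)$, leaving $\|\eps_i\|$ dominated by $\|\nu_N\| \leq \sigma$.

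The main obstacle is extracting a clean, unit-constant bound rather than a crude $\sqrt{2}\sigma$ or $C\sigma$. A straight triangle inequality gives away too much; the sharpening depends on exploiting the almost-verticality of $\nu$ in the $T_{p_r}\MM$ frame, which itself rests on the reach-controlled angle between $T_{q_i}\MM$ and $T_{p_r}\MM$. Because the tangential component $\nu_T$ is strictly lower order than $\sigma$ in the large-$M$ regime, $\nu_N$ inherits essentially the full $\sigma$ noise budget while the $\phi_{p_r}$-difference contributes only an $O(\sigma/\sqrt{M})$ correction, which gets absorbed into the sampling assumptions to yield $\|\eps_i\| \leq \sigma$.
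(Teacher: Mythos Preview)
Your decomposition is far more detailed than the paper's argument. The paper's proof is essentially two lines: it checks that $\sqrt{\sigma\tau}+\sigma<\tau/2$ for large $M$, invokes Corollary~\ref{cor:GraphOfFunctionTau2_clean} to get the graph $\phi_{p_r}$ over the relevant cylinder, and then says ``since $r_i$ are in a tubular neighborhood of $\MM$, the proof is concluded.'' Read charitably, the paper is taking the footpoint $q_i=\mathrm{Proj}_\MM(r_i)$, expressing it as a graph point $p_r+(\tilde x_i,\phi_{p_r}(\tilde x_i))$, and setting $\eps_i=r_i-q_i$, which has norm $\leq\sigma$ trivially. You have instead taken the stated formula $x_i=\mathrm{Proj}_{T_{p_r}\MM}(r_i-p_r)$ at face value; this makes $p_i\neq q_i$ in general and forces the tangential/normal splitting of $\nu$ that you carry out.

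The gap in your argument is the final ``absorption'' step. Your chain of estimates correctly produces $\|\eps_i\|\leq\|\nu_N\|+O(\sigma^2/\tau)\leq\sigma+O(\sigma^2/\tau)$, and this excess is real: on the model curve $y=x^2/(2\tau)$ with $r_i$ at normal distance exactly $\sigma$ from a point with tangential coordinate $a\sim\sqrt{\sigma\tau}$, one computes $\|\eps_i\|=\sigma\sqrt{1+a^2/\tau^2}+O(\sigma^2 a^2/\tau^3)>\sigma$. Assuming $M=\tau/\sigma$ large makes the correction $O(\sigma/M)$ small \emph{relative} to $\sigma$, but never zero, so you cannot squeeze out an exact $\|\eps_i\|\leq\sigma$ from these inequalities. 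Either one accepts the paper's short reading (with $p_i$ the true projection, so that the displayed formula for $x_i$ is to be read as the tangential coordinate of $p_i$ rather than of $r_i$), or one weakens the conclusion to $\|\eps_i\|\leq\sigma(1+C/M)$ --- which is what your argument actually proves and which suffices for every downstream use in the paper.
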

\begin{proof}
From the assumptions of Section \ref{sec:SamplingAssumptions} we know that  $\tau/\sigma > M $.
Therefore, there is $M$ such that $\sqrt{\sigma\tau}+\sigma < \tau/2$ and thus, from Corollary \ref{cor:GraphOfFunctionTau2_clean}, the intersection of $\MM$ with $\textrm{Cyl}_{T_{p_r}\MM}(p_r,\sqrt{\sigma \tau}+\sigma, \tau/2)$, a cylinder with base $B_{T_{p_r}\MM}(p_r, \sqrt{\sigma\tau}+\sigma)\subset T_{p_r}\MM$ and heights $\tau/2$ in $T_{p_r}\MM^\perp$ can be written as $\Gamma_{\phi_{p_r}, B_{T_{p_r}\MM}(p_r, \sqrt{\sigma\tau} + \sigma)}$, the graph of $\phi_{p_r}:T_{p_r}\MM\to T_{p_r}\MM^\perp$. Since $r_i$ are in a tubular neighborhood of $\MM$, the proof is concluded. 
\end{proof}

\begin{Lemma}[Function version of Lemma \ref{lem:f_bound_circle_no_func_clean}]\label{lem:f_bound_circle_clean}
    Let $\MM$ be a $d$-dimensional sub-manifold of $\RR^D$ with reach $\tau$. For any $p \in \MM$, let $T_{p}\MM$ be the tangent of $\MM$ at $p$.
    Let $\phi_p:B_{T_p\MM}(0, \tau/2)\to T_p\MM^\perp$ be defined as in Corollary \ref{cor:GraphOfFunctionTau2_clean}; that is, 
    \[\Gamma_{\phi_p,B_{T_p\MM}(0, \tau/2)}\subset\MM,\]
    where
    \[\Gamma_{\phi_p,B_{T_p\MM}(0, \tau/2)} = \{p + (x, \phi_p(x)) | x\in B_{T_p\MM}(0, \tau/2)\}.\]
    Then, for any $v \in T_p\MM^\perp$, such that $\norm{v} = 1$
    \[
    \lrangle{v, \phi_p(x)} \leq \tau-\sqrt{\tau^2-\norm{x}^2}
    \]
\end{Lemma}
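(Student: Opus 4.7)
The plan is to reduce the statement to the already-established non-function bounding ball result (Lemma \ref{lem:f_bound_circle_no_func_clean}) and then apply Cauchy--Schwarz. Concretely, I set $x_T = x$ and $y = \phi_p(x)$ and first verify that these satisfy the hypotheses of Lemma \ref{lem:f_bound_circle_no_func_clean}: we have $x \in T_p\MM$ with $\norm{x} \leq \tau/2 \leq \tau$; by Corollary \ref{cor:GraphOfFunctionTau2_clean} the graph sits inside $\mathrm{Cyl}_{T_p\MM}(p, \tau/2, \tau/2)$, which gives $\norm{\phi_p(x)} \leq \tau/2$; and the point $p + (x, \phi_p(x))$ lies on $\MM$ by definition of the graph. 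Invoking the lemma then yields the scalar bound
\[
\norm{\phi_p(x)} \leq \tau - \sqrt{\tau^2 - \norm{x}^2}.
\]
The final step is Cauchy--Schwarz: for any unit $v \in T_p\MM^\perp$,
\[
\lrangle{v, \phi_p(x)} \leq \norm{v}\cdot\norm{\phi_p(x)} = \norm{\phi_p(x)} \leq \tau - \sqrt{\tau^2 - \norm{x}^2},
\]
which is exactly the claimed inequality and holds uniformly over all unit directions of $T_p\MM^\perp$.

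For completeness, a direct argument that avoids routing through Lemma \ref{lem:f_bound_circle_no_func_clean} is the following: for each unit $v \in T_p\MM^\perp$ consider the open ball $B_D(p + \tau v, \tau)$; since $v$ is normal to $\MM$ at $p$, this ball is tangent to $\MM$ at $p$, and Lemma \ref{lem:reach_ball_no_intersect_clean} says it is disjoint from $\MM$. Writing out the condition that $p + (x, \phi_p(x))$ lies outside this ball, using that $x \in T_p\MM$ is orthogonal to $T_p\MM^\perp$, gives after expansion
\[
2\tau \lrangle{v, \phi_p(x)} \leq \norm{x}^2 + \norm{\phi_p(x)}^2,
\]
which by itself is weaker than the target. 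The tight bound is recovered by specializing to $v = \phi_p(x)/\norm{\phi_p(x)}$ when $\phi_p(x) \neq 0$, yielding the same scalar bound on $\norm{\phi_p(x)}$ as before; Cauchy--Schwarz then handles an arbitrary $v$.

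I do not expect any substantive obstacle. The only point requiring mild care is checking that the hypotheses of Lemma \ref{lem:f_bound_circle_no_func_clean} hold, and in particular the norm bound $\norm{\phi_p(x)} \leq \tau/2$, which is precisely the property built into the cylinder in Corollary \ref{cor:GraphOfFunctionTau2_clean}.
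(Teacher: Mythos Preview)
Your proposal is correct and follows essentially the same approach as the paper, whose proof is the one-liner ``This follows immediately from Lemma~\ref{lem:f_bound_circle_no_func_clean} and Corollary~\ref{cor:GraphOfFunctionTau2_clean}.'' You have simply spelled out the implicit details---verifying the hypotheses of the non-function lemma and applying Cauchy--Schwarz to pass from the norm bound to the directional bound---and your alternative direct argument via tangent balls is a harmless extra.
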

\begin{proof}
    This follows immediately from Lemma \ref{lem:f_bound_circle_no_func_clean} and Lemma \ref{cor:GraphOfFunctionTau2_clean}.
\end{proof}

\begin{corollary}
It follows immediately from Lemma \ref{lem:f_bound_circle_clean} 
\begin{equation}\label{eq:eq:bound_phix_clean}
    \norm{\phi_p(x)}_{\RR^{D-d}} \leq \tau - \sqrt{\tau^2 - \norm{x}_{\RR^d}^2}
,\end{equation}
and using the triangle inequality we can say that
\begin{equation}\label{eq:bound_x_phix_clean}
    \norm{(x,\phi_p(x))}_{\RR^{D}} \leq \norm{x}_{\RR^d} + \tau - \sqrt{\tau^2 - \norm{x}_{\RR^d}^2}
.\end{equation}
\end{corollary}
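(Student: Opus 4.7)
The plan is to derive both bounds directly from Lemma \ref{lem:f_bound_circle_clean}, which says that for any unit vector $v \in T_p\MM^\perp$ we have $\langle v, \phi_p(x)\rangle \leq \tau - \sqrt{\tau^2 - \|x\|^2}$. Since $\phi_p(x) \in T_p\MM^\perp$ by construction, the natural move is to specialize the test vector $v$ in the lemma to the direction of $\phi_p(x)$ itself.

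First, to establish \eqref{eq:eq:bound_phix_clean}, I would consider two cases. If $\phi_p(x) = 0$ the inequality is trivial since the right-hand side is non-negative. Otherwise, I set $v := \phi_p(x)/\|\phi_p(x)\|_{\RR^{D-d}}$, which is a unit vector in $T_p\MM^\perp$. Plugging this choice into Lemma \ref{lem:f_bound_circle_clean} yields
\[
\|\phi_p(x)\|_{\RR^{D-d}} \;=\; \langle v,\phi_p(x)\rangle \;\leq\; \tau - \sqrt{\tau^2 - \|x\|_{\RR^d}^2},
\]
which is exactly \eqref{eq:eq:bound_phix_clean}. Note that the constraint $\|x\|_{\RR^d} \leq \tau/2 < \tau$ from Corollary \ref{cor:GraphOfFunctionTau2_clean} ensures the square root is well defined.

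Second, for \eqref{eq:bound_x_phix_clean}, I would use the fact that $(x,\phi_p(x)) \in \RR^D$ decomposes orthogonally into its components in $T_p\MM$ and $T_p\MM^\perp$, so the triangle inequality gives
\[
\|(x,\phi_p(x))\|_{\RR^D} \;\leq\; \|(x,0)\|_{\RR^D} + \|(0,\phi_p(x))\|_{\RR^D} \;=\; \|x\|_{\RR^d} + \|\phi_p(x)\|_{\RR^{D-d}}.
\]
Substituting the bound from \eqref{eq:eq:bound_phix_clean} on the second term yields the desired inequality. There is no real obstacle here — both inequalities are essentially unwrappings of Lemma \ref{lem:f_bound_circle_clean} together with the triangle inequality, and the only subtlety is handling the degenerate case $\phi_p(x) = 0$ when choosing the unit test vector.
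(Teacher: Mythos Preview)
Your proposal is correct and matches the paper's approach exactly: the paper does not give a separate proof but simply states that the first bound follows immediately from Lemma~\ref{lem:f_bound_circle_clean} and the second from the triangle inequality, which is precisely what you unpack by specializing $v = \phi_p(x)/\|\phi_p(x)\|$ and then splitting $(x,\phi_p(x))$ into its two orthogonal components.
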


\begin{Lemma}[Function version of Lemma \ref{lem:f_bound_circle_H_no_func_clean}]\label{lem:f_bound_circle_H_clean}
    Let $\MM$ be a $d$-dimensional sub-manifold of $\RR^D$ with reach $\tau$. For any $p \in \MM$, let $T_{p}\MM$ be the tangent of $\MM$ at $p$ and let $ H\in Gr(d, D) $ such that $ \maxangle(T_p\MM, H) = \alpha \leq \pi/4$.
	Let $f_p:\RR^d\to \RR^{D-d}$ be such that the neighborhood $W_p\subset\MM$ can be descried as the the graph of $f_p$
	\[\Gamma_{f_p,W_p} = \{p + (x, f_p(x)) | x\in Proj_{H}(W_p)\}\]
	Then, for any $v \in H^\perp$, such that $\norm{v} = 1$
	\[
	-\tau\cos\alpha+\sqrt{\tau^2-(\norm{x} - \tau\sin\alpha)^2} \leq \lrangle{v, f_p(x)} \leq \tau\cos\alpha-\sqrt{\tau^2-(\norm{x} + \tau\sin\alpha)^2}
	\]
\end{Lemma}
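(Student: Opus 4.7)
The plan is to mirror the geometric argument behind Lemma~\ref{lem:f_bound_circle_H_no_func_clean}, now adapted to yield a \emph{signed} two-sided bound on $\langle v, f_p(x)\rangle$ for each unit $v \in H^\perp$. The key ingredient is Lemma~\ref{lem:reach_ball_no_intersect_clean}: the two open balls of radius $\tau$ tangent to $\MM$ at $p$ along any direction in $T_p\MM^\perp$ do not meet $\MM$. Since the point $q = p + (x, f_p(x))_H$ lies on $\MM$, this will translate to explicit inequalities on its coordinates in the $(H, H^\perp)$-decomposition.

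Concretely, I would first use $\maxangle(H, T_p\MM) = \alpha$ together with Lemma~\ref{lem:angle_space_to_vec} (applied to the perpendicular spaces) to pick a unit $n \in T_p\MM^\perp$ with $\angle(n, v) \leq \alpha$. Decomposing $n = n_H + n_{H^\perp}$ with $n_H \in H$ and $n_{H^\perp} \in H^\perp$, one has $\|n_H\| \leq \sin\alpha$ and $\|n_{H^\perp}\| \geq \cos\alpha$, and $\langle v, n_{H^\perp}\rangle = \cos\alpha$ when $n$ is chosen as the principal mate of $v$. Applying Lemma~\ref{lem:reach_ball_no_intersect_clean} to the balls $B_D(p \pm \tau n,\tau)$ gives
\[
\|q - (p \pm \tau n)\|^2 \geq \tau^2.
\]
Expanding using orthogonality of $H$ and $H^\perp$, and bounding $\|x \mp \tau n_H\|$ above by $\|x\| + \tau\sin\alpha$ (for the upper-bound side) and below by $|\|x\| - \tau\sin\alpha|$ (for the lower-bound side), one reduces the two inequalities to the form
\[
\bigl(\langle v, f_p(x)\rangle \mp \tau\cos\alpha\bigr)^2 \geq \tau^2 - (\|x\| \pm \tau\sin\alpha)^2,
\]
after projecting the $H^\perp$-component onto $v$ and absorbing the $H$-component into the $\|x\| \pm \tau\sin\alpha$ term.

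Finally, one selects the correct branch of each quadratic inequality. Since $f_p(0) = 0$ lies on the branch close to the $H$ plane (as opposed to the far branch beyond the bounding ball), continuity of $f_p$ on $\mathrm{Proj}_H(W_p)$ forces the sign choices that produce the stated bounds
\[
-\tau\cos\alpha + \sqrt{\tau^2 - (\|x\| - \tau\sin\alpha)^2} \leq \langle v, f_p(x)\rangle \leq \tau\cos\alpha - \sqrt{\tau^2 - (\|x\| + \tau\sin\alpha)^2}.
\]
The main obstacle is the careful geometric bookkeeping of the tilted frame: translating bounding balls whose centers naturally live in $T_p\MM^\perp$ into constraints on the $H^\perp$-coordinate of $q$, with the correct $\pm$ signs in the two sides of the inequality. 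Everything else is a straightforward manipulation analogous to Lemma~\ref{lem:f_bound_circle_H_no_func_clean}, and the branch-selection step is short once Lemma~\ref{lem:M_is_locally_a_fuinction_clean} guarantees $f_p$ is well-defined and continuous on the relevant ball.
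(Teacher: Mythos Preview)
Your route is much longer than the paper's. The paper's proof is literally one sentence: it invokes Lemma~\ref{lem:f_bound_circle_H_no_func_clean} (which already gives the two-sided bound on $\|y\|$ for any $y\in H^\perp$ with $p+(x_H,y)_H\in\MM$) together with Lemma~\ref{lem:M_is_locally_a_fuinction_clean} (which furnishes $f_p$). Once $f_p$ exists, set $y=f_p(x)$ and use $|\langle v,f_p(x)\rangle|\le\|f_p(x)\|$; the displayed inequalities are then read off from Lemma~\ref{lem:f_bound_circle_H_no_func_clean}. You instead go back to Lemma~\ref{lem:reach_ball_no_intersect_clean} and redo the tangent-ball exclusion computation, which is exactly what Lemma~\ref{lem:f_bound_circle_H_no_func_clean} already packages.

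Beyond the redundancy, your projection step has a genuine gap. From the ball exclusion you obtain
\[
\|f_p(x)\mp\tau n_{H^\perp}\|^2 \;\ge\; \tau^2-\|x\mp\tau n_H\|^2,
\]
but projecting onto $v$ gives $\|w\|^2\ge\langle v,w\rangle^2$, which points the \emph{wrong} way for deducing $(\langle v,f_p(x)\rangle\mp\tau\cos\alpha)^2\ge\tau^2-(\|x\|\pm\tau\sin\alpha)^2$. In codimension $D-d>1$, a single excluded ball centered along one normal direction $n$ constrains only the $n_{H^\perp}$-component of $f_p(x)$ and says nothing about components of $f_p(x)$ orthogonal to $n_{H^\perp}$ within $H^\perp$; so a signed bound on $\langle v,f_p(x)\rangle$ for an \emph{arbitrary} unit $v\in H^\perp$ does not fall out of one ball-exclusion plus a projection. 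The paper avoids this entirely by first bounding the norm $\|f_p(x)\|$ via Lemma~\ref{lem:f_bound_circle_H_no_func_clean} and then passing to the inner product trivially.
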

\begin{proof}
    This follows directly from Lemma \ref{lem:f_bound_circle_H_no_func_clean} and Lemma \ref{lem:M_is_locally_a_fuinction_clean}.
\end{proof}

\begin{Lemma}[Function Version of Lemma \ref{lem:alpha_beta_x_no_func}]\label{lem:alpha_beta_x}
    Let $\MM$ be a $d$-dimensional sub-manifold of $\RR^D$ with reach $\tau$. 
    For any $p \in \MM$, let $T_{p}\MM$ be the tangent of $\MM$ at $p$ and let $ H\in Gr(d, D) $ such that the origin of $H$ is set at $p$ and $ \maxangle(T_p\MM, H) = \alpha \leq \pi/4$.
	Let $f_p:\RR^d\to \RR^{D-d}$ be such that the neighborhood $W_p\subset\MM$ can be descried as the the graph of $f_p$
	\[\Gamma_{f_p,W_p} = \{p + (x, f_p(x)) | x\in Proj_{H}(W_p)\}\]
    Let $ x_0\in H $ such that $\|x_0\|\leq c_{\pi/4}\tau$, $ \beta(x_0) = \maxangle(T_{x_0}f_p, H) $, where $ T_\xi f $ is the tangent to the graph of $ f_p $ at $ f(\xi) $.
	Then, we get
    \[
      \alpha- 2\frac{\norm{x_0}}{\tau}(1 + \alpha)  +c\|x_0\|^2/\tau^2 \leq \beta(x_0) \leq \alpha+ 2\frac{\norm{x_0}}{\tau}(1 + \alpha)  +c\|x_0\|^2/\tau^2
	 ,\]
    for some constant $c\in \RR$.
\end{Lemma}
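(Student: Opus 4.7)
The plan is to reduce this claim to its already-proved non-function counterpart, Lemma~\ref{lem:alpha_beta_x_no_func}, by observing that every hypothesis and conclusion of Lemma~\ref{lem:alpha_beta_x} is just a restatement of the corresponding hypothesis and conclusion of Lemma~\ref{lem:alpha_beta_x_no_func} under the graph parametrization of $\MM$ near $p$. Concretely, the point on $\MM$ to feed into Lemma~\ref{lem:alpha_beta_x_no_func} will be $(x_0, f_p(x_0))_H = p + (x_0, f_p(x_0))_H$, whose coordinates on $H$ and $H^\perp$ play the role of $x_H$ and $y$ respectively; the subspace $T_{x_0} f_p$ (the tangent to the graph of $f_p$ at $f_p(x_0)$) coincides with $T_{(x_0,f_p(x_0))_H}\MM$ since the graph of $f_p$ is, locally, exactly $\MM$ by Lemma~\ref{lem:M_is_locally_a_fuinction_clean}.

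First I would set $x_H = x_0\in H$ and $y = f_p(x_0)\in H^\perp$, so that $x+y := p + (x_0, f_p(x_0))_H$ lies on $\MM$ by the graph hypothesis. Next I would verify the three size conditions needed to invoke Lemma~\ref{lem:alpha_beta_x_no_func}: (i) $\|x_H\| = \|x_0\|\leq c_{\pi/4}\tau$, which is assumed; (ii) $\|x - p\| = \|x_0\|\leq c_{\pi/4}\tau\leq \tau\cos\alpha$, which follows because $\alpha\leq\pi/4$ so $\cos\alpha\geq\sqrt{2}/2$ and $c_{\pi/4}$ is a small constant; (iii) $\|y\| = \|f_p(x_0)\|\leq \tau/2$. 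Once (i)--(iii) hold, Lemma~\ref{lem:alpha_beta_x_no_func} applied with this choice of $x_H$ and $y$ yields
\[
\alpha - 2\tfrac{\|x_0\|}{\tau}(1+\alpha) + c\|x_0\|^2/\tau^2 \leq \maxangle\bigl(T_{x+y}\MM,\,H\bigr) \leq \alpha + 2\tfrac{\|x_0\|}{\tau}(1+\alpha) + c\|x_0\|^2/\tau^2,
\]
and since $T_{x+y}\MM = T_{x_0}f_p$ by the graph identification, this is precisely $\beta(x_0)$.

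The only nontrivial step is verifying (iii), which is where the constant $c_{\pi/4}$ actually enters. For this I would appeal to Lemma~\ref{lem:f_bound_circle_H_clean}, which gives
\[
\|f_p(x_0)\| \leq \tau\cos\alpha - \sqrt{\tau^2 - (\|x_0\| + \tau\sin\alpha)^2}.
\]
Because $\alpha\leq\pi/4$ and $\|x_0\|\leq c_{\pi/4}\tau$ with $c_{\pi/4}$ chosen small (as in the proof of Lemma~\ref{lem:M_is_locally_a_fuinction_clean}), the right-hand side is much smaller than $\tau/2$, so condition (iii) is automatic. With (i)--(iii) all verified, the reduction to Lemma~\ref{lem:alpha_beta_x_no_func} is immediate and no further computation is required. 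I do not expect any real obstacle here; this lemma is essentially a translation of the geometric statement into the function-graph language used in the rest of Section~\ref{sec:Step2Analysis}, and the only thing one must be careful about is ensuring that the ambient-space point $(x_0, f_p(x_0))_H$ lies in the region where $\MM$ is genuinely a graph of $f_p$, which is exactly what $\|x_0\|\leq c_{\pi/4}\tau$ guarantees through Lemma~\ref{lem:M_is_locally_a_fuinction_clean}.
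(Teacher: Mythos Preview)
Your proposal is correct and follows essentially the same approach as the paper: the paper's proof is the one-liner ``This follows immediately from Lemma~\ref{lem:alpha_beta_x_no_func} and Lemma~\ref{lem:M_is_locally_a_fuinction_clean},'' and your argument simply unpacks this reduction by verifying the hypotheses of Lemma~\ref{lem:alpha_beta_x_no_func} via the graph identification guaranteed by Lemma~\ref{lem:M_is_locally_a_fuinction_clean}.
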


\begin{proof}
 This follows immediately from Lemma \ref{lem:alpha_beta_x_no_func} and Lemma \ref{lem:M_is_locally_a_fuinction_clean}.
\end{proof}

\section{Supporting lemmas for Step 1}

\subsection{Proof of Lemma \ref{lem:J1pTp_clean}} \label{subsec:proof_lem_J1pTp_clean}

\begin{proof}
We recall that $ U_\textrm{ROI} $ as defined in \eqref{eq:ROI_clean} is contained in $ B_D({p_r}, \sqrt{\sigma\tau} + \sigma) $.
Using Lemma \ref{lem:dist_ri_TpM_clean}, for any $r_i\in U_\textrm{ROI}$ we know that the distance between $r_i - p_r$ and $ T_{{p_r}}\MM $ is bounded by 
\begin{equation}\label{eq:d_ri_Tp_clean}
\dist(r_i - p_r,T_{{p_r}}\MM) \leq \tau-\sqrt{\tau^2-\|x_i\|^2}  + \sigma
,\end{equation}
where $ x_i = Proj_{T_{p_r}\MM}(r_i - p_r) $.
Since $ x_i \in B_D(0, \sqrt{\sigma\tau}+\sigma )$ we have that $\norm{x_i} \leq \sqrt{\sigma\tau}+\sigma $, and so
\[
J_1(r; {p_r}, T_{{p_r}}\MM) \leq \left(\tau-\sqrt{\tau^2-(\sqrt{\sigma\tau}+\sigma)^2} + \sigma\right)^2 
.\]
By simplifying and bounding this expression using Remark \ref{rem:taylor2_sqrt1-x2_clean} we get
	\begin{align*}
		\left((\sigma+\tau)-\sqrt{\tau^2-(\sqrt{\sigma\tau}+\sigma)^2}\right)^2 = & (\sigma+\tau)^2 + \tau^2-(\sqrt{\sigma\tau}+\sigma)^2 - 2(\sigma+\tau) \sqrt{\tau^2-(\sqrt{\sigma\tau}+\sigma)^2}\\
		= &
		\sigma^2 + 2\sigma\tau + 2 \tau^2 -  (\sqrt{\sigma\tau}+\sigma)^2 \\
		&- 2\tau(\sigma+\tau) \sqrt{1-(\sqrt{\sigma/\tau}+\sigma/\tau)^2}\\
		\leq &
		\sigma^2 + 2\sigma\tau + 2 \tau^2 - (\sqrt{\sigma\tau}+\sigma)^2\\
		& - 2\tau^2 (1-1/2 (\sqrt{\sigma/\tau}+\sigma/\tau)^2- (\sqrt{\sigma/\tau}+\sigma/\tau)^4 )\\& - 2\tau\sigma (1-1/2 (\sqrt{\sigma/\tau}+\sigma/\tau)^2- (\sqrt{\sigma/\tau}+\sigma/\tau)^4)\\
		= &
		\sigma^2 -  \tau^2(\sqrt{\sigma/\tau}+\sigma/\tau)^2 \\
		& + \tau^2  ((\sqrt{\sigma/\tau}+\sigma/\tau)^2  +2(\sqrt{\sigma/\tau}+\sigma/\tau)^4) \\& + \tau\sigma  ((\sqrt{\sigma/\tau}+\sigma/\tau)^2+2(\sqrt{\sigma/\tau}+\sigma/\tau)^4) \\
		= &
		\sigma^2 + 2\tau^2 (\sqrt{\sigma/\tau}+\sigma/\tau)^4\\& + \tau\sigma ( (\sqrt{\sigma/\tau}+\sigma/\tau)^2 +2(\sqrt{\sigma/\tau}+\sigma/\tau)^4)\\
		\leq &
		\sigma^2 + 2 \left(\sqrt{\sigma} + \frac{\sigma}{\sqrt{\tau}}\right)^4
		+ 3\sigma
		\left(\sqrt{\sigma} + \frac{\sigma}{\sqrt{\tau}}\right)^2 
	.\end{align*}
Using the fact that $ \sigma \leq \tau $ we get
\begin{align*}
J_1(r; {p_r}, T_{{p_r}}\MM) &\leq
\sigma^2 + 2(2\sqrt{\sigma})^4 + 3\sigma (2\sqrt{\sigma})^2
\end{align*}
Thus, we obtain
$$J_1(r; {p_r}, T_{{p_r}}\MM) \leq 49 \sigma^2.$$
\end{proof}

\subsection{Proof of Lemma \ref{thm:J1pH_clean}}\label{sec:proof_thmJ1pH_clean}

\begin{proof}
We first wish to denote by $\beta_j$ the principal angles between $H$ and $T_{p_r}\MM$ and their matching principal pairs $(u_j, w_j)\in T_{p_r}\MM\times H$ (see Definition \ref{def:principal_angles_clean}).  
Throughout the proof we work on the sectional planes defined by $\LL_j = Span\{u_j,w_j\}$.
Thus, we can define the orthogonal complement of $u_j$ and $w_j$ on $\LL_j$ by $y_j$ and $\tilde y_j$ correspondingly.
That is, both $\{u_j, y_j\}$ and $\{w_j, \tilde y_j\}$ are orthogonal bases of $\LL_j$.
Since for any $i \neq j$ we have that $u_i \perp w_j$, $u_i \perp u_j$ and $w_i \perp w_j$, we have that both $\{\tilde y_j\}_{j=1}^{d}$ and $\{y_j\}_{j=1}^{d}$ are orthonormal sets.
Then, complete the sets $\{\tilde y_j\}_{j=1}^{d}$ and $\{y_j\}_{j=1}^{d}$  to an orthonormal basis of $H^\perp$  and $T_{p_r}\MM^\perp$ through adding the orthonormal sets $\{\tilde y_j\}_{j=d+1}^{D-d}$ and $\{y_j\}_{j=d+1}^{D-d}$ correspondingly. Note that, since $H^\perp$ is a $D-d$ dimensional space, and $\{\wtilde y_j\}_{j=1}^{d} \in H^\perp$, we need only to add $D-2d$ vectors to have an orthonormal basis.
Explicitly, we know that for all $j' = 1\ldots, d$
$$\textrm{Span}\{\tilde y_j\}_{j=d+1}^{D-d}\perp \LL_{j'} ~~,~~ \textrm{Span}\{ y_j\}_{j=d+1}^{D-d}\perp \LL_{j'}.$$ 
Thus, for $j=d+1, \ldots, D-d$ we have both $\tilde y_j , y_j\in H^\perp \cap T_{p_r}\MM^\perp$ and without limiting the generality we can choose $\tilde y_j = y_j$ for such $j$.
Using this notation we get that for any point $x\in U_\textrm{ROI}$
\[
\dist^2(x - {p_r}, H) = 
\sum_{j=1}^{D-d} \lrangle{x - {p_r}, \tilde y_j}^2 =
\sum_{j=1}^{D-d} \lrangle{x - {p_r}, \tilde y_j}^2 + \sum_{j=1}^{D-d} \left[\lrangle{x - {p_r},  y_j}^2 - \lrangle{x - {p_r},  y_j}^2\right] =
\]
\[
=\sum_{j=1}^{D-d} \lrangle{x - {p_r}, y_j}^2 + \sum_{j=1}^{D-d}\left[ \lrangle{x - {p_r},  \tilde y_j}^2 - \lrangle{x - {p_r},  y_j}^2\right]
,\]
and since $\tilde y_j = y_j$ for $j=d+1,\ldots,D-d$ we have
\begin{align}\label{eq:EJ_1_clean}
	\dist^2(x - {p_r}, H) &=
	\dist^2(x - {p_r}, T_{{p_r}}\MM) + \sum_{j=1}^{d}\left[ \langle x - {p_r}, \tilde{y}_j \rangle^2 - \langle x - {p_r}, {y}_j \rangle^2\right]
.\end{align}

The remainder of the proof is achieved through the following set of claims:
\begin{enumerate}
    \item\label{thm:claim_J1H_clean} From \eqref{eq:EJ_1_clean} it follows that
    \[J_1(r; {p_r}, H) =  J_1(r;{p_r},T_{p_r}\MM) +  R_1(r;{p_r},H),\]
    where
    \begin{equation}\label{eq:sum_k_clean}
    R_1(r; {p_r}, H) = \sum\limits_{j=1}^{d} \frac{1}{\#|U_\textrm{ROI}|}\sum\limits_{r_i\in U_\textrm{ROI}}  \langle r_i - {p_r}, \tilde{y}_j \rangle^2 - \langle r_i - {p_r}, {y}_j \rangle^2
    .\end{equation}
    \item Thus, in order to bound $J_1(r; {p_r}, H)$ from below we can focus on bounding $R_1(r; {p_r}, H)$.
    We then consider separately two sets of indices $\KK', \KK''$ such that $\KK' \cup \KK'' = \{1,\ldots, d\}$, where for all $j\in \KK'$ we have $\beta_j > \alpha$ and for all $j\in \KK''$ we have $\beta_j\leq \alpha$.
    Notice that since $\angle_{max}(H, T_{p_r}\MM) > \alpha$ we, get that $\KK' \not = \emptyset$ and that $\#\KK'' \leq d-1$.
    Writing this explicitly we get
    \begin{equation}\label{eq:RpRpp_clean}
        R_1(r; {p_r}, H) = R_1'(r; {p_r}, H)  +  R_1''(r; {p_r}, H)
    \end{equation}
    where 
    \begin{equation}\label{eq:R1p_def_clean}
    R_1'(r; {p_r}, H) = \sum\limits_{j\in\KK'} \frac{1}{\#|U_\textrm{ROI}|}\sum\limits_{r_i\in U_\textrm{ROI}}  \langle r_i - {p_r}, \tilde{y}_j \rangle^2 - \langle r_i - {p_r}, {y}_j \rangle^2
    ,\end{equation}
    and
    \begin{equation}\label{eq:R1pp_def_clean}
    R_1''(r; {p_r}, H) = \sum\limits_{j\in\KK''} \frac{1}{\#|U_\textrm{ROI}|}\sum\limits_{r_i\in U_\textrm{ROI}}  \langle r_i - {p_r}, \tilde{y}_j \rangle^2 - \langle r_i - {p_r}, {y}_j \rangle^2
    .\end{equation}
    \item\label{thm:claim_R1pp_bound_clean} We show in Section \ref{sec:proof_claim_R1pp_bound_clean} that
    \begin{equation}\label{eq:R1pp_bound_clean}
    R_1''(r; {p_r}, H) \geq
    -9 \sigma^2
    .\end{equation}
    \item\label{thm:claim_ri_tyk_bound_clean} Then, when we focus on $j\in\KK'$ we show that for $a_1 = \frac{1}{4}$ and $a_2 = \frac{1}{8}$ and $x_0\in T_{p_r}\MM$ such that $\norm{{p_r} - x_0} = a_1\sqrt{\sigma\tau}$ and $B_{T_{p_r}\MM}(x_0, a_2 \sqrt{\sigma\tau} + \sigma)\subset B_D({p_r}, \sqrt{\sigma \tau} - \sigma) \subset U_\textrm{ROI}$, and 
    \begin{equation}\label{eq:ri_tyk_bound_clean}
        \lrangle{r_i - {p_r}, \tilde y_j}^2 \geq \frac{1}{64}\sigma \tau sin\alpha - \frac{1}{2}\sigma^{3/2}\tau^{1/2}
    .\end{equation}
    This is proven in \ref{subsec:proof_lem_ri_tyk_bound_clean}.
    \item\label{thm:claim_useStochasticLemma} From Assumption \ref{assume:noise} in Section \ref{sec:SamplingAssumptions} we know that $ \sqrt{\frac{\sigma}{\tau}} < \frac{1}{2} $. 
    In addition, setting $ \rho = a_2 \sqrt{\sigma \tau} $ we know that $ B_{T_{p_r}\MM}(x_0, \rho+\sigma)\subset B_{T_{p_r}\MM}({p_r}, \sqrt{\sigma \tau} - \sigma) $, so we can use Lemma \ref{lem:num_of_samples_in_a_ball_clean}.
    Explicitly, let $\mathrm{V}_d = \frac{\pi^{d/2}}{\Gamma(d/2+1)}$ be the volume of a $ d $-dimensional unit ball, and denote 
     \begin{align*}
    \mu_{\min} &= \mathrm{V}_{D-d}\sigma^{D-d} \min_{\substack{p\in \MM\\ x\in B_{T_p\MM}(0,\sqrt{\sigma\tau}- \sigma)}}  \sqrt{det(G_p(x))}\\
    \mu_{\max} &= \mathrm{V}_{D-d}\sigma^{D-d}\max_{\substack{p\in \MM \\ x \in B_{T_p\MM}(0,\sqrt{\sigma\tau}- \sigma)}}  \sqrt{det(G_p(x))}
    ,
    \end{align*}
    where $ \vol(B_{{T_p\MM}^\perp}(\sigma)) $ is the volume of a $ D-d $ dimensional ball with radius $ \sigma $ and $ G_p $ is the matrix representing the Riemannian metric at $ p $ in the chart $ \varphi_p $ of \eqref{eq:phi_def_clean}.
    Then, from Lemma \ref{lem:num_of_samples_in_a_ball_clean}, we get that for any $ \eps, \delta $ there is $ N $ such that for all $ n > N $
    \begin{align*}
    &\#\{r_i|Proj_{T_p\MM}(r_i) \in B_{T_p\MM}(x_0,\rho) \}
    \leq
    n(2\cdot \mu_{min}\cdot \mathrm{V}_d \cdot\rho^d+\eps) \\
    &\#\{r_i|Proj_{T_p\MM}(r_i) \in B_{T_p\MM}(x_0,\rho+\sigma) \}
    \geq
    n \left(\frac{\mu_{max}}{2}\cdot \mathrm{V}_{d}\cdot \rho^d - \eps\right)
    \end{align*}
    with probability of at least $1-\delta$.
    By setting $ \eps $ appropriately we get 
    \begin{align*}
    &\#\{r_i|Proj_{T_p\MM}(r_i) \in B_{T_p\MM}(x_0,\rho) \}
    \leq
    3 n \cdot\mu_{min}\cdot \mathrm{V}_d \cdot\rho^d \\
    &\#\{r_i|Proj_{T_p\MM}(r_i) \in B_{T_p\MM}(x_0,\rho+\sigma) \}
    \geq
    \frac{n}{3}  \cdot\mu_{max}\cdot \mathrm{V}_{d} \cdot \rho^d 
    .\end{align*}
    \item\label{thm:claim_R1p_bound_clean} Rephrasing \ref{thm:claim_ri_tyk_bound_clean}-\ref{thm:claim_useStochasticLemma} as one statement, there is a large subset of $U_\textrm{ROI}$ with large values of $\lrangle{r_i - p, \tilde y_j}^2$, with probability of at least $1-\delta$.
    Accordingly, we show that for any $ \delta $ there is $ N_\delta $ such that for all $ n>N_\delta $ 
    \begin{align}
    R_1'(r; p, H) \geq  &\frac{\mu_{max}}{\mu_{min} }\cdot\frac{1}{9} \left(\frac{1-\sqrt{\sigma/\tau}}{ 8}\right)^d\cdot \left(\frac{1}{64}\sigma \tau sin\alpha - \frac{1}{2}\sigma^{3/2}\tau^{1/2} \right)  -9 \sigma^2
    \label{eq:R1p_bound_clean},\end{align}
    with probability of at least $1-\delta$.
    This is proven in \ref{subsec:proof_lem_R1p_bound_clean}.
    
    \item Combining \eqref{eq:RpRpp_clean} with \eqref{eq:R1pp_bound_clean} and \eqref{eq:R1p_bound_clean}, we get that for $\alpha = \sqrt{C_\sM/M}$, where $M = \frac{\tau}{\sigma}$, and $C_\sM$ is a constant the following holds:
    For any $ \delta > 0$ there is $ N $ such that for all $ n > N $
    \begin{equation}
        R_1(r;p,H) \geq 109 \sigma^2
    ,\end{equation}
    with probability of at least $1-\delta$.
    \item From Claim \ref{thm:claim_J1H_clean} above, since $J_1(r;{p_r},T_{p_r}\MM) \geq 0$ we achieve that with probability of at least $1-\delta$ there is $N_\delta$ large enough such that for all $ n > N_\delta $ we have
$
       J_1(r; p, H) \geq 109 \sigma^2  
    ,$
    as required.
    
\end{enumerate}

\end{proof}

Next, we prove Claims \ref{thm:claim_R1pp_bound_clean}, \ref{thm:claim_ri_tyk_bound_clean}, and \ref{thm:claim_R1p_bound_clean} of the above proof.
For clarity of presentation after proving Claim \ref{thm:claim_R1pp_bound_clean} we prove Claim \ref{thm:claim_R1p_bound_clean} and only then show the correctness of the formula \eqref{eq:ri_tyk_bound_clean} presented in Claim 
\ref{thm:claim_ri_tyk_bound_clean}.

\subsubsection{Proof of Claim \ref{thm:claim_R1pp_bound_clean} of  Lemma \ref{thm:J1pH_clean}}\label{sec:proof_claim_R1pp_bound_clean}

\begin{proof} 
It is clear from \eqref{eq:R1pp_def_clean} that
\[
R_1''(r; p, H) \geq
- \sum\limits_{j\in\KK''} \frac{1}{\#|U_\textrm{ROI}|}\sum\limits_{r_i\in U_\textrm{ROI}}   \langle r_i - p, {y}_j \rangle^2 =  - \frac{1}{\#|U_\textrm{ROI}|}\sum\limits_{r_i\in U_\textrm{ROI}}\sum\limits_{j\in\KK''}\langle r_i - p, {y}_j \rangle^2
.\]
Furthermore, since $y_j\in T_p\MM^\perp$ and by Lemma \ref{lem:dist_ri_TpM_clean} we get 
\[
\sum\limits_{j\in\KK''}\abs{\lrangle{r_i - p, y_j}}^2 \leq dist^2(r_i - p, T_p\MM) \leq \left(\tau - \sqrt{\tau^2 - \norm{Proj_{T_p\MM}(r_i- p) }^2} + \sigma\right)^2 
,\]
and since $r_i\in U_\textrm{ROI} \subset B_D(p, \sqrt{\sigma\tau} + \sigma)$ we get
\begin{align*}
     \sum\limits_{j\in\KK''}\langle r_i - p,y_j \rangle^2 
     & \leq 
	\left(\tau+\sigma-\sqrt{\tau^2-(\sqrt{\sigma\tau}+\sigma)^2}\right)^2 \\
	& = \lrbrackets{\tau + \sigma - \tau\sqrt{1 - \lrbrackets{\sqrt{\frac{\sigma}{\tau}} + \frac{\sigma}{\tau} }^2 }}^2
\end{align*}
by Taylor expansion (Remark \ref{rem:taylor_sqrt1-x2_clean})
\begin{align*}
     \sum\limits_{j\in\KK''}\langle r_i - p,y_j \rangle^2 
	& \leq 
	\lrbrackets{\tau + \sigma - \tau\lrbrackets{1 - \frac{1}{2}\lrbrackets{\sqrt{\frac{\sigma}{\tau}} + \frac{\sigma}{\tau} }^2}}^2 \\
	&=
	\lrbrackets{\sigma + \tau \frac{1}{2}\lrbrackets{\sqrt{\frac{\sigma}{\tau}} + \frac{\sigma}{\tau} }^2}^2 \\
\end{align*} 
since $ \sigma < \tau $ we get $\sqrt{\frac{\sigma}{\tau}} > \frac{\sigma}{\tau} $ and
\begin{align}\label{eq:ri_yk_bound_s_square_clean}
     \sum\limits_{j\in\KK''}\langle r_i - p,y_j \rangle^2 
	& \leq 
	\lrbrackets{\sigma + 2\tau \frac{\sigma}{\tau}}^2 = 9 \sigma^2
\end{align}
and
\begin{equation*}
R_1''(r; p, H) \geq
-  \frac{1}{\#|U_\textrm{ROI}|}\sum\limits_{r_i\in U_\textrm{ROI}}   9 \sigma^2 =  -9 \sigma^2
\end{equation*}

\end{proof}

\subsubsection{Proof of Claim \ref{thm:claim_R1p_bound_clean} of Lemma \ref{thm:J1pH_clean}}\label{subsec:proof_lem_R1p_bound_clean}

\begin{SCfigure}
	\centering
	\includegraphics[width=0.4\textwidth]{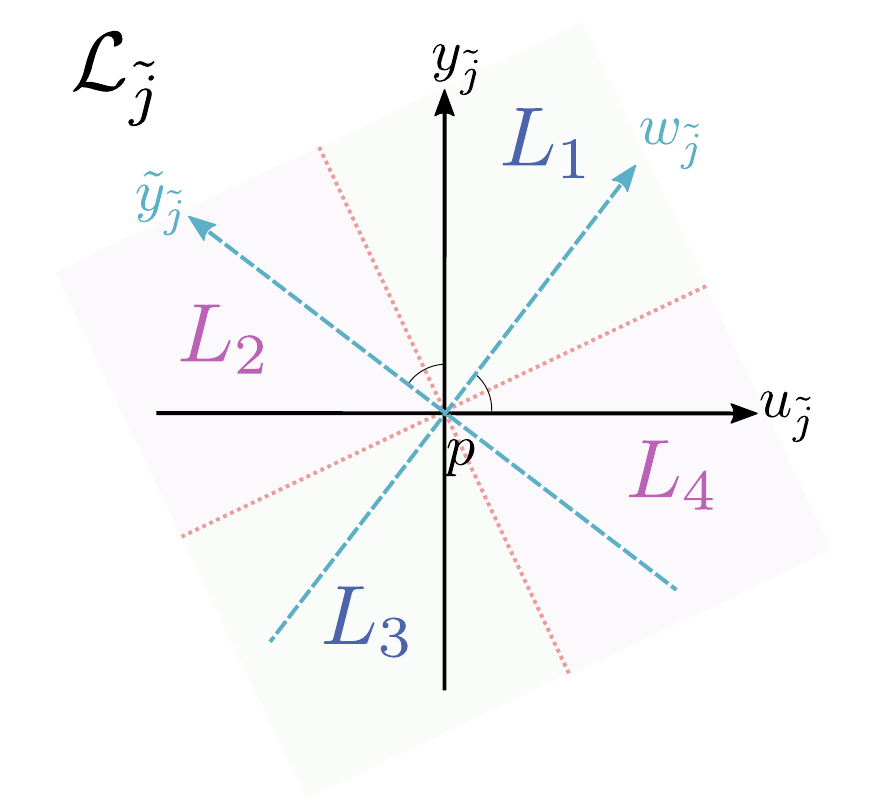}
	\caption{ Illustration of $\mathcal{L}_{\tilde j}$ and the sections $L_1,L_2,L_3,L_4$. $U_{bad}$ is marked in light green and $U_{good}$ is marked in light pink. The angles $\angle (\tilde y_{\tilde j}, y_{\tilde j}), \angle (w_{\tilde j},u_{\tilde j})$ which equal to $\beta_{\tilde j}$ are marked in black. The bisectors of these angles are marked in dotted red lines.}
	\label{fig:Lk_clean}
\end{SCfigure}
\begin{proof} 
For all $j\in \KK'$ we have $\beta_j>\alpha$.
Assume without loss of generality that there is only one index $\tilde j$ in $\KK'$ (otherwise we can treat each index separately and arrive at the same conclusion), then \eqref{eq:R1p_def_clean} can be rewritten as
\[
R_1'(r; p, H) = \frac{1}{\#|U_\textrm{ROI}|}\sum\limits_{r_i\in U_\textrm{ROI}}  \langle r_i - p, \tilde{y}_{\tilde j} \rangle^2 - \langle r_i - p, {y}_{\tilde j} \rangle^2
.\]
Thus, the only property which affect the score of $R_1'$ is the difference between the measurements $\langle r_i - p, \tilde{y}_{\tilde j} \rangle^2$ and $\langle r_i - p, {y}_{\tilde j} \rangle^2$, both on the 2D plane $\LL_{\tilde j}$.
Accordingly, using the bisector of $\angle(\tilde y_{\tilde  j}, y_{\tilde  j})$ and its orthogonal complement, we can split $\LL_{\tilde j}$ into four regions $L_1, L_2, L_3, L_4$ (see Figure \ref{fig:Lk_clean}), where in two regions ($L_2$ and $L_4$ in Figure \ref{fig:Lk_clean}) $\lrangle{r_i - p, \tilde y_{\tilde j}} \geq \lrangle{r_i - p, y_{\tilde j}}$ and in the other two regions ($L_1$ and $L_3$ in Figure \ref{fig:Lk_clean}) $\lrangle{r_i - p, \tilde y_{\tilde j}} \leq \lrangle{r_i - p, y_{\tilde j}}$.
By denoting 
$$U_{bad} = \{r_i \in U_\textrm{ROI} ~|~ Proj_{\LL_{\tilde j}}(r_i - p)\in L_1 \cup L_3\},$$
and
$$U_{good} = \{r_i \in U_\textrm{ROI} ~|~ Proj_{\LL_{\tilde j}}(r_i - p)\in L_2 \cup L_4\}$$
we get that
\[
R_1'(r; p, H) = \frac{1}{\#|U_\textrm{ROI}|}\left[\sum\limits_{r_i\in U_{good}}\left[  \langle r_i - p, \tilde{y}_{\tilde j} \rangle^2 - \langle r_i - p, {y}_{\tilde j} \rangle^2\right] + \sum\limits_{r_i\in U_{bad}} \left[ \langle r_i - p, \tilde{y}_{\tilde j} \rangle^2 - \langle r_i - p, {y}_{\tilde j} \rangle^2\right]\right]
\]
\[
R_1'(r; p, H) \geq \frac{1}{\#|U_\textrm{ROI}|}\left[\sum\limits_{r_i\in U_{good}}  \langle r_i - p, \tilde{y}_{\tilde j} \rangle^2 - \sum\limits_{r_i\in U_\textrm{ROI}} \langle r_i - p, {y}_{\tilde j} \rangle^2\right]
.\]
Similar to \eqref{eq:ri_yk_bound_s_square_clean}, 
\[
\frac{1}{\#|U_\textrm{ROI}|}\sum\limits_{r_i\in U_\textrm{ROI}} \langle r_i - p, {y}_{\tilde j} \rangle^2 \leq 9 \sigma^2
,\]
and thus
\[
R_1'(r; p, H) \geq \frac{1}{\#|U_\textrm{ROI}|}\sum\limits_{r_i\in U_{good}}  \langle r_i - p, \tilde{y}_{\tilde j} \rangle^2
-9\sigma^2
.\]
Therefore, all we need to show is that given $n$ large enough, there are enough samples in $U_{good}$ for which the value $\lrangle{r_i - p, \tilde y_{\tilde j}}^2$ is large enough.
Using Lemma \ref{lem:num_of_samples_in_a_ball_clean}, as described in Claim \ref{thm:claim_useStochasticLemma}, since $U_\textrm{ROI}\subset B_D(Proj_{\MM}(r), \sqrt{\sigma \tau}+\sigma)$, then for any $\delta$ there is $N$ large enough such that for all $n>N$  with probability of at least $1 - \delta$ 
\[
\#|U_\textrm{ROI}| < 3n\cdot\mu_{min}\cdot \mathrm{V}_{d}\cdot (\sqrt{\sigma\tau}+\sigma)^d 
.\]
Thus,
\[
R_1'(r; p, H) \geq \frac{1}{3n\cdot\mu_{min}\cdot V\cdot ( \sqrt{\sigma\tau} +\sigma)^d }\sum\limits_{r_i\in U_{good}}  \langle r_i - p, \tilde{y}_{\tilde j} \rangle^2
-9\sigma^2
.\]
Below in the proof of Claim \ref{thm:claim_ri_tyk_bound_clean} we show that \eqref{eq:ri_tyk_bound_clean} holds (the proof below is independent of the current one, but utilizes the notion of $U_{good}$ defined above).
Explicitly, for $ r_i \in U_{good} $
\begin{equation*}
\lrangle{r_i - p, \tilde y_j}^2 \geq \frac{1}{64}\sigma \tau \sin\alpha_0 - \frac{1}{2}\sigma^{3/2}\tau^{1/2}
.\end{equation*}
Combining this with Lemma \ref{lem:num_of_samples_in_a_ball_clean} we get that for any $\delta$ there is $N$ large enough such that for all $n>N$  with probability of at least $1-\delta$
\begin{align*}
R_1'(r; p, H) 
&\geq 
\frac{\frac{n}{3}\cdot \mu_{max}\cdot \mathrm{V}_d \cdot(a_2\sqrt{\sigma\tau})^d}{3n\cdot\mu_{min}\cdot V\cdot ( \sqrt{\sigma\tau}+\sigma)^d}\left(\frac{1}{64}\sigma\tau sin\alpha_0 -  \frac{1}{2}\sigma^{3/2}\tau^{1/2} \right)
-9\sigma^2 \\
&=
\frac{ a_2^d\cdot\mu_{max}}{9\cdot\mu_{min}}\left(\frac{ 1}{1+\sqrt{\sigma/\tau}}\right)^d\left(\frac{1}{64}\sigma\tau sin\alpha_0 - \frac{1}{2}\sigma^{3/2}\tau^{1/2} \right)
-9 \sigma^2
.\end{align*}
Since $1/(1+x) \geq 1-x$ for sufficiently small $x$ we have that, for large enough $M$ of Assumption \ref{assume:noise}, of section \ref{sec:SamplingAssumptions},
\[
\left(\frac{ 1}{1+\sqrt{\sigma/\tau}}\right)^d \leq (1-\sqrt{\sigma/\tau})^d
\]
holds.
Thus, we have 
\[
R_1'(r; p, H) \geq \frac{ a_2^d\cdot\mu_{max}}{9\cdot\mu_{min}}(1-\sqrt{\sigma/\tau})^d\left(\frac{1}{64}\sigma\tau sin\alpha_0 - \frac{1}{2}\sigma^{3/2}\tau^{1/2} \right)
-9 \sigma^2,
\]
Since $a_2 = 1/8$ we have 
\[
R_1'(r; p, H) \geq \frac{ \mu_{max}}{\mu_{min}}\cdot\frac{1}{9}\left(\frac{1-\sqrt{\sigma/\tau}}{ 8}\right)^d \left(\frac{1}{64}\sigma\tau sin\alpha_0 - \frac{1}{2}\sigma^{3/2}\tau^{1/2} \right)
-9 \sigma^2,
\]
as required.

\begin{figure}[htb]
    \centering
    \includegraphics[width=0.8\textwidth]{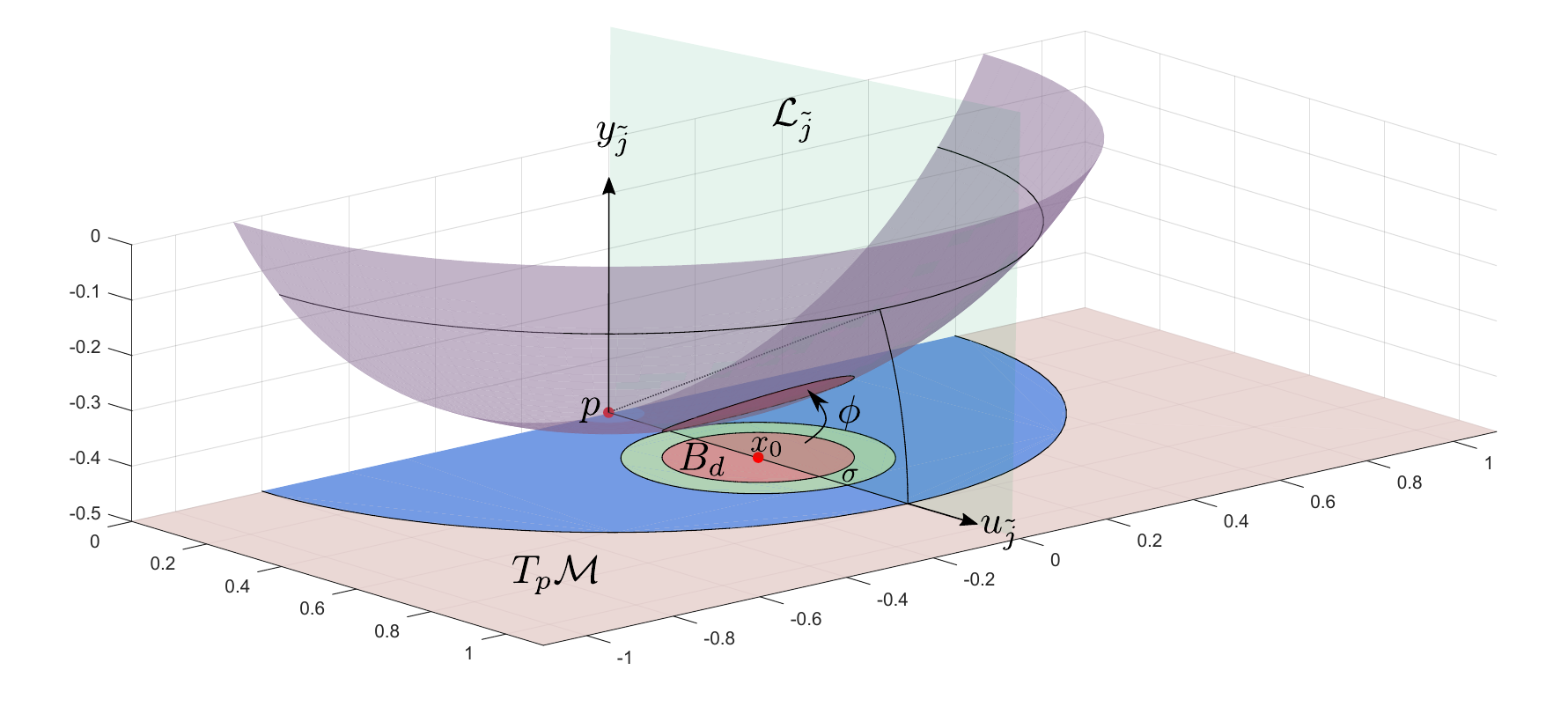}
    \caption{Assisting illustration for the proof of Claim \ref{thm:claim_ri_tyk_bound_clean}: the sphere section \emph{(}in purple\emph{)} represents the manifold; The $xy$-plane represents $T_p\MM$; the plane $\LL_{\tilde j}$ is spanned by $y_{\tilde j}\in T_p\MM^\perp$ and $u_{\tilde j}\in T_p\MM$; the blue disc on $T_p\MM$ is $B_{T_p\MM}(p, \sqrt{\sigma\tau} - \sigma)$; the manifold is considered to be the graph of the function $\phi_p:B_{T_p\MM}(p, \sqrt{\sigma\tau}- \sigma)\to T_p\MM^\perp$. }
    \label{fig:ManifoldGoodDisc_clean}
\end{figure}
\end{proof}
\subsubsection{Proof of Claim \ref{thm:claim_ri_tyk_bound_clean} of Lemma \ref{thm:J1pH_clean}}\label{subsec:proof_lem_ri_tyk_bound_clean}

\begin{proof} 
As a result of Corollary \ref{cor:GraphOfFunctionTau_ROI}, $r_i \in U_\textrm{ROI}$ can be written as 
$$
r_i = \underbrace{p+(x_i, \phi_p(x_i))}_{p_i}+\eps_i ,
$$
where $x_i=Proj_{T_p\MM}(r_i - p)\in T_{p}\MM$ and $\|\eps_i\| \leq \sigma$.
Since 
$$ 
B_D(p, \sqrt{\sigma\tau}-\sigma ) \subset B_D(r, \sqrt{\sigma\tau}) = U_\textrm{ROI}
$$
(see the blue disc on $T_p\MM$ in Figure \ref{fig:ManifoldGoodDisc_clean}), we look for a point $x_0\in T_p\MM\cap \LL_{\tilde j}$ and a radius $\rho >0$ such that $\Gamma_{\phi_p, B_{T_p\MM}(x_0, \rho)}^\sigma\subset B_D(p, \sqrt{\sigma\tau} - \sigma)$, where
\begin{equation}
    \Gamma_{\phi_p, B_{T_p\MM}(x_0, \rho)}^\sigma \defeq \{x ~|~ dist(x, \Gamma_{\phi_p, B_{T_p\MM}(x_0, \rho)}) < \sigma\}
,\end{equation}
where we remind that
\[
\Gamma_{\phi_p, B_{T_p\MM}(x_0, \rho)} = \{p + (x, \phi_p(x))_{T_p\MM} ~|~ x\in B_{T_p\MM}(x_0, \rho)\}
.\]
Furthermore, we choose $x_0$ and $\rho$, such that $\lrangle{r_i - p, \tilde y_{\tilde j}}^2$ is large for any point $r_i\in\Gamma_{\phi_p, B_{T_p\MM}(x_0, \rho)}^\sigma$ (see Figure \ref{fig:ManifoldGoodDisc_clean} for an illustration).

For convenience, we denote by $q^y, q^x, q^{\vec x}$ the projections of $q\in \Gamma_{\phi_p, B_{T_p\MM}(x_0, \rho)}^\sigma$ onto $y_{\tilde j}$ (i.e., $T_p\MM^\perp\cap \LL_{\tilde j}$), $u_{\tilde j}$ (i.e., $T_p\MM\cap \LL_{\tilde j}$) and $T_p\MM$ respectively.
Let $a_1, a_2<1$ and we define 
\[\rho = a_2 \cdot\sqrt{\sigma\tau}, ~~~\mbox{ and }~~~
x_0 = (a_1\sqrt{\sigma\tau})\cdot u_{\tilde j}\in T_p\MM \cap \LL_{\tilde j}.\]
That is, 
$
\norm{p - x_0} = a_1 \cdot \sqrt{\sigma\tau}
$.
In order to make sure that $\Gamma^\sigma_{\phi_p, B_{T_p\MM}(x_0, \rho)}\subset U_{\textrm{ROI}}$ we restrict the choice of $a_1, a_2$ such that
\begin{equation}\label{eq:a1a2_restriction_clean}
    \forall x\in B_{T_p\MM}(x_0, a_2\sqrt{\sigma\tau}): \norm{p+(x,\phi_p(x))_{T_p\MM} ~-~ p}
    + \sigma < \sqrt{\sigma\tau} - \sigma
,\end{equation}
We wish to reiterate that $U_\textrm{ROI} = \{r_i ~|~ \norm{r - r_i}\leq \sqrt{\sigma \tau}\}$ and so $U_\textrm{ROI}\subset B_D(r, \sqrt{\sigma\tau})$.
Furthermore, since $\norm{r - p}\leq \sigma$, we have $B_D(p, \sqrt{\sigma\tau} - \sigma)\subset  B_D(r, \sqrt{\sigma\tau})$.
Accordingly, all points $p+(x, \phi_p(x))_{T_p\MM}$ for  $x\in B_{T_p\MM}(x_0, a_2 \sqrt{\sigma\tau})$ are within our region of interest even when moved $\sigma$ away from the manifold in some direction into $\MM_\sigma$ (in Fig. \ref{fig:ManifoldGoodDisc_clean}  $B_{T_p\MM}(x_0, a_2\sqrt{\sigma\tau})$ is the orange disc and the projections of $p+(x, \phi_p(x))_{T_p\MM} + \eps(x)$ onto $T_p\MM$ are limited by the green disc containing the orange disc).
Using the calculations in Appendix \ref{subsec:simplification_app} we can use a simpler demand using $a_1$ and $a_2$, which ensures that the inequality \eqref{eq:a1a2_restriction_clean} is satisfied. The simplified requirement is
\begin{equation}\label{eq:a1a2_simplification_clean}
    (a_1+a_2)<\frac{1}{\sqrt{2}} -\frac{\sqrt{2\sigma}}{\sqrt{\tau}}
.\end{equation}

Let us now bound the value of $\lrangle{\tilde q - p, \tilde y_{\tilde j}}^2$ from below, for any $\tilde  q\in \Gamma^\sigma_{\phi_p,B_{T_p\MM}(x_0, a_2\sqrt{\sigma\tau})}$.
Every such $\tilde  q$ can be written as
$
\tilde q = q + \eps
$,
where $q\in \Gamma^0_{\phi_p,B_{T_p\MM}(x_0, a_2\sqrt{\sigma\tau})}$, and so 
 \begin{equation}\label{eq:qxBound_clean}
 (a_1 - a_2)\sqrt{\sigma\tau}  \leq \abs{q^x} \leq \|q^{\vec{x}}\| \leq \|p-x_0\|+a_2\sqrt{\sigma\tau} = (a_1 + a_2)\sqrt{\sigma\tau}   
.\end{equation}
From Lemma \ref{lem:f_bound_circle_clean} we get that
\[
\abs{\lrangle{q, y_{\tilde j}}} \leq \tau-\sqrt{\tau^2-\norm{q^{\vec{x}}}^2} 
.\]
Thus, 
\[
\abs{\langle \tilde{q}, y_{\tilde j} \rangle} = |\tilde{q}^y| \leq \tau+\sigma-\sqrt{\tau^2-\|q^{\vec{x}}\|^2}
,\]
and by plugging the right hand side of \eqref{eq:qxBound_clean} we get
\begin{align*}
\langle \tilde{q}, y_{\tilde j} \rangle^2 & \leq \left(\tau+ \sigma -\sqrt{\tau^2-(a_1+a_2)^2\sigma\tau}\right)^2
 =\tau^2 \left(1+ \sigma/\tau -\sqrt{1-(a_1+a_2)^2\sigma/\tau}\right)^2
\\&\leq \tau^2(1+\sigma/\tau -(1-(a_1+a_2)^2\sigma/\tau))^2
= \tau^2 (\sigma/\tau +  (a_1+a_2)^2\sigma/\tau)^2
\\&=  \sigma^2(1 +  (a_1+a_2)^2)^2
\leq 2\sigma^2
,\end{align*}
where the last inequality comes from \eqref{eq:a1a2_simplification_clean}.
Since $\angle(y_{\tilde j}, \tilde y_{\tilde j}) =\beta_{\tilde j} $ we can use the Euclidean geometry on $\LL_{\tilde j}$ (Figure \ref{fig:Lk_clean}) to get
\[
\lrangle{q, \tilde y_{\tilde j}} = \lrangle{q, \textrm{Rot}(\beta_{\tilde j}) y_{\tilde j}} = \lrangle{\textrm{Rot}(-\beta_{\tilde j}) q,  y_{\tilde j}}
,\]
where $\textrm{Rot}(\theta)$ denotes the rotation matrix in $\RR^D$ with respect to the angle $\theta$ in $\LL_{\tilde j}$ .
Therefore,
$$|\langle q, \tilde{y}_{\tilde j} \rangle| \geq \abs{-|q^x|\sin \beta_{\tilde j}  + |q^y|\cos \beta_{\tilde j}} = \abs{|q^x|\sin \beta_{\tilde j}  - |q^y|\cos \beta_{\tilde j}}.$$
Using Lemma \ref{lem:f_bound_circle_clean} and \eqref{eq:qxBound_clean} as before we get
\begin{align*}
 |\langle q, \tilde{y}_{\tilde j} \rangle| &=\abs{  |q^x|\sin \beta_{\tilde j}  -  \left(\tau-\sqrt{\tau^2-\|q^{\vec{x}}\|^2} \right)\cos \beta_{\tilde j}}
 \\
 &\geq\left((a_1-a_2)\sqrt{\sigma\tau}\sin \beta_{\tilde j}  -  \tau\left(1-\sqrt{1- (a_1+a_2)^2\sigma/\tau} \right)\cos \beta_{\tilde j} \right)
 \\
 &\geq\tau\left((a_1-a_2)\sqrt{\sigma/\tau}\sin \beta_{\tilde j}-\cos \beta_{\tilde j} + \cos \beta_{\tilde j}(1- (a_1+a_2)^2\sigma /\tau) \right)
 \\
 &= \tau \left((a_1-a_2)\sqrt{\sigma/\tau}\sin \beta_{\tilde j} -(a_1+a_2)^2\sigma/\tau \cos\beta_{\tilde j}\right)
 \\
 &\geq \tau \left((a_1-a_2)\sqrt{\sigma/\tau}\sin \beta_{\tilde j} -(a_1+a_2)^2\sigma/\tau \right) 
,\end{align*}
and
\begin{equation}\label{eq:q_yk_prod_clean}
    |\langle q, \tilde{y}_{\tilde j} \rangle| \geq \tau \left((a_1-a_2)\sqrt{\sigma/\tau}\sin \alpha_0 -(a_1+a_2)^2\sigma/\tau \right) 
.\end{equation}
Since 
$\|\tilde{q} - q\| \leq \sigma $
we get 
$$\abs{\lrangle{\tilde q - q, \tilde y_{\tilde j}}} = |\langle \tilde{q}, \tilde{y}_{\tilde j}\rangle - \langle q, \tilde{y}_{\tilde j}\rangle| \leq\sigma,$$
and so,
\begin{align*}
|\langle q, \tilde{y}_{\tilde j}\rangle| - |\langle \tilde{q}, \tilde{y}_{\tilde j}\rangle|&\leq |\langle q, \tilde{y}_{\tilde j}\rangle - \langle \tilde{q}, \tilde{y}_{\tilde j}\rangle| \leq  \sigma\\
|\langle \tilde{q}, \tilde{y}_{\tilde j}\rangle|&\geq |\langle q, \tilde{y}_{\tilde j}\rangle| - \sigma\\
\langle \tilde{q}, \tilde{y}_{\tilde j}\rangle^2&\geq \langle q, \tilde{y}_{\tilde j}\rangle^2 - 2\sigma|\langle q, \tilde{y}_{\tilde j}\rangle| + \sigma^2\\
\langle \tilde{q}, \tilde{y}_{\tilde j}\rangle^2&\geq \langle q, \tilde{y}_{\tilde j}\rangle^2 - 2\sigma|\langle q, \tilde{y}_{\tilde j}\rangle|
.\end{align*}
 Substituting $\abs{\lrangle{q, \tilde y_{\tilde j}}}$ with the bound from \eqref{eq:q_yk_prod_clean} we get
\begin{align*}
    \lrangle{\tilde q, \tilde y_{\tilde j}}^2 & \geq \tau^2 \left((a_1-a_2)\sqrt{\sigma/\tau}\sin \alpha_0 -  (a_1+a_2)^2{\sigma/\tau}\right)^2 \\&~~~ - 2\sigma\tau \left((a_1-a_2)\sqrt{\sigma/\tau}\sin \alpha_0 -  (a_1+a_2)^2{\sigma/\tau}\right) \\
    &\geq \tau^2\left((a_1-a_2)\sqrt{\sigma/\tau}\sin \alpha_0 -  {\sigma/\tau}\right)^2 - 2\sigma\tau \left((a_1-a_2)\sqrt{\sigma/\tau}\sin \alpha_0\right)\\
    &= \tau^2 \left((a_1-a_2)^2{\sigma/\tau}\sin^2 \alpha_0 - 2(a_1-a_2)(\sigma/\tau)^{3/2}\sin\alpha_0 + (\sigma/\tau)^2\right) \\&~~~ -2(a_1-a_2)\sigma^{3/2}\sqrt{\tau} \sin \alpha_0 \\
    &\geq (a_1-a_2)^2{\sigma\tau}\sin^2 \alpha_0 - 2(a_1-a_2)\sigma^{3/2}\tau^{1/2}\sin\alpha_0 + \sigma^2 -2(a_1-a_2)\sigma^{3/2}{\tau}^{1/2} \\
    &= (a_1-a_2)^2{\sigma\tau}\sin^2 \alpha_0 - 2(a_1-a_2)\sigma^{3/2}\tau^{1/2} + \sigma^2 -2(a_1-a_2)\sigma^{3/2}{\tau}^{1/2} \\
    &= (a_1-a_2)^2{\sigma\tau}\sin^2 \alpha_0 - 4(a_1-a_2)\sigma^{3/2}\tau^{1/2} + \sigma^2 
\end{align*}
Thus, by choosing for example $a_1 = \frac{1}{4}$ and $a_2 = \frac{1}{8}$ we get
\begin{equation}
    \lrangle{\tilde q, \tilde y_{\tilde j}}^2 \geq \frac{1}{64}\sigma \tau sin\alpha_0 - \frac{1}{2}\sigma^{3/2}\tau^{1/2}
.\end{equation}
\end{proof}
\subsubsection{Simplification of \eqref{eq:a1a2_restriction_clean} to achieve \eqref{eq:a1a2_simplification_clean}}\label{subsec:simplification_app}
We wish to rewrite the requirement of \eqref{eq:a1a2_restriction_clean} in terms of $a_1$ and $a_2$.
By Lemma \ref{lem:f_bound_circle_clean}, for all $x\in B_{T_p\MM}(x_0, a_2 \sqrt{\sigma\tau})$:
\begin{equation*}
    \|\phi_p(x)\| \leq \tau - \sqrt{\tau^2 - (a_1+a_2)^2 {\sigma\tau}} 
;\end{equation*}
see Figure \ref{fig:ManifoldGoodDisc_clean}.
Thus,
\begin{align*}
\norm{(x,\phi_p(x))_{T_p\MM}}
& \leq \sqrt{(a_1+a_2)^2 {\sigma\tau} + \left(\tau - \sqrt{\tau^2 - (a_1+a_2)^2 {\sigma\tau}}\right)^2} \\
&= \sqrt{(a_1+a_2)^2{\sigma\tau} + \tau^2 - 2\tau\sqrt{\tau^2 - (a_1+a_2)^2 {\sigma\tau}} + \tau^2 - (a_1+a_2)^2 {\sigma\tau}} \\
&= \sqrt{2\tau^2 - 2\tau^2\sqrt{1 - (a_1+a_2)^2 {\sigma/\tau}} } \\
&= \tau\sqrt{2} \sqrt{1 - \sqrt{1 - (a_1+a_2)^2 {\sigma/\tau}} } \\
&\leq \tau\sqrt{2} \sqrt{1 - (1 - (a_1+a_2)^2 {\sigma/\tau}) } \\
&= \sqrt{2}(a_1+a_2) \tau\sqrt{ {\sigma/\tau} } \\
&= \sqrt{2}(a_1+a_2) \sqrt{ {\sigma\tau} },\end{align*}
Where the second inequality results from applying \eqref{eq:Taylor2ndOrder}.
Therefore, the requirement of \eqref{eq:a1a2_restriction_clean} translates to
$$
\norm{(x,\phi_p(x))_{T_p\MM}}_{T_p\MM,y_{\tilde j}}\leq \sqrt{2}(a_1+a_2)\sqrt{\sigma\tau}<\sqrt{\sigma\tau}-2\sigma
,$$
which can be simplified to 
$$
(a_1+a_2)<\frac{1}{\sqrt{2}} - \frac{\sqrt{2\sigma}}{\sqrt{\tau}} 
.$$

\subsection{Supporting Lemmas on Sample size in a given volume}
In this section we concentrated all assisting lemmas that are used in the proofs of Step 1.

\begin{Lemma}[Number of samples in a ball]\label{lem:sampling_density_almost_uniform_clean}
Suppose $\nu$ is a distribution on $\Omega\subset B_d(0,R) \subset \RR^d$ which is close to the uniform distribution $\mu$. That is, there exists $\mu_{max}$, $\mu_{min}$  such that for any $x\in \Omega$ we have  $\mu_{\min}\mu(x) \leq \nu(x) \leq \mu_{\max}\mu(x)$. Suppose $X=\{x_j\}_{j=1}^n$ is a set of $n$ i.i.d. sample from $\nu$, and denote the volume of a $ d $-dimensional unit ball by  $ \mathrm{V}_d = \frac{\pi^{d/2}}{\Gamma(d/2+1)} $. For any $\eps$, $\delta$ and radius $\rho$, there is $N$, such that if $n>N$ the following holds: For any $x_0\in \Omega$ such that $B_d(x_0,\rho)\subset \Omega$, we have 
\[
n(\frac{\mu_{max}}{2}\cdot \mathrm{V}_d \cdot\rho^d-\eps) < \#|X \cap B_d(x_0, \rho)| < n(2\cdot \mu_{min}\cdot \mathrm{V}_d \cdot\rho^d+\eps)
\]
with probability of at least $1-\delta$, where $\#|A|$ denotes the number of elements in the set $A$.
\end{Lemma}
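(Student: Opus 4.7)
The plan is to reduce the claim to a uniform concentration estimate for empirical measures of radius-$\rho$ balls. Set $p(x_0) := \nu(B_d(x_0,\rho))$. Then $\#|X \cap B_d(x_0,\rho)| = \sum_{j=1}^n \mathbf{1}[X_j \in B_d(x_0,\rho)]$ is a sum of i.i.d.\ Bernoulli$(p(x_0))$ variables, and the density hypothesis yields
\[
\mu_{\min}\,\mu(B_d(x_0,\rho)) \le p(x_0) \le \mu_{\max}\,\mu(B_d(x_0,\rho)),
\]
where $\mu(B_d(x_0,\rho)) = \mathrm{V}_d\rho^d$ for admissible centers $x_0$ (those with $B_d(x_0,\rho)\subset\Omega$). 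So once $\tfrac{1}{n}\#|X \cap B_d(x_0,\rho)|$ is shown to be within $\eps$ of $p(x_0)$ simultaneously for all admissible $x_0$, the desired two-sided bound follows, with the factors $\tfrac{1}{2}$ and $2$ in the statement absorbing the concentration slack together with small multiplicative constants.

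The main obstacle is making this concentration \emph{uniform} in $x_0$, since a naive Hoeffding plus union bound fails over the uncountable family $\{B_d(x_0,\rho) : x_0 \in \Omega\}$. I would invoke Vapnik--Chervonenkis uniform convergence for the class $\mathcal{F}_\rho = \{\mathbf{1}_{B_d(y,\rho)} : y \in \RR^d\}$: Euclidean balls in $\RR^d$ form a VC class of dimension $d+1$, so $\mathcal{F}_\rho$ has VC dimension at most $d+1$. The VC inequality then furnishes $N = N(\eps,\delta,d)$ such that for $n > N$, with probability at least $1-\delta$,
\[
\sup_{x_0 \in \RR^d}\left|\tfrac{1}{n}\#|X \cap B_d(x_0,\rho)| - p(x_0)\right| \le \eps.
\]
Combining this with the density bounds on the high-probability event immediately gives both inequalities in the statement.

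An elementary alternative avoids VC theory: since $\Omega \subset B_d(0,R)$ is bounded, cover it by an $\eta$-net $\{y_i\}$ of cardinality $O((R/\eta)^d)$, apply Hoeffding with a union bound at the finitely many net points to obtain pointwise control within $\eps/2$, and for an arbitrary admissible $x_0$ pick the nearest $y_i$ and use the inclusions $B_d(x_0,\rho-\eta) \subset B_d(y_i,\rho) \subset B_d(x_0,\rho+\eta)$ together with the density bound to estimate $|p(x_0) - p(y_i)| \le O(\mu_{\max}\eta\rho^{d-1})$. Choosing $\eta$ small relative to $\eps/(\mu_{\max}\rho^{d-1})$ closes the gap. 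Either route delivers the required uniform estimate; the VC route is more compact and is the one I would present.
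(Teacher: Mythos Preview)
Your proposal is correct. The paper takes precisely your ``elementary alternative'': it lays down an $\tilde\eps$-net $\Xi$ in $\Omega$, applies Hoeffding at each net point to balls of radii $(1\pm\tilde\eps)\rho$, takes a union bound over the $\lceil 3R/\tilde\eps\rceil^d$ net points, and then sandwiches an arbitrary $B_d(x_0,\rho)$ between a contained and a containing net ball. The factors $\tfrac12$ and $2$ arise there from the volume ratios $(1\pm\tilde\eps)^d$ after choosing $\tilde\eps\sim 1/d$. Your primary route via VC theory is genuinely different and in fact cleaner: since balls in $\RR^d$ have finite VC dimension, uniform Glivenko--Cantelli gives $\sup_{x_0}\bigl|\tfrac1n\#|X\cap B_d(x_0,\rho)|-p(x_0)\bigr|\le\eps$ directly, so no net, no radius perturbation, and no $\tfrac12$, $2$ factors are needed at all---you actually obtain the sharper bound $n(\mu_{\min}\mathrm{V}_d\rho^d-\eps)<\#|X\cap B_d(x_0,\rho)|<n(\mu_{\max}\mathrm{V}_d\rho^d+\eps)$. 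The trade-off is that the paper's argument is self-contained, while yours imports a standard but nontrivial empirical-process result.
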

\begin{proof}
Since $\Omega \subset B_d(0,R)\subset\RR^d$, there exists an $\tilde\eps$-net (denoted by $\Xi$) such that 
\[
\#\abs{\Xi} = \ceil{ \frac{3R}{\tilde\eps}}^d
,\]
where $ \ceil{x} $ is the ceiling value of $ x $ \cite{vershynin2018high}.
Around each point $p$ in $\Xi$, we consider a ball $B_d(p, (1 - \tilde\eps)\rho)$. 
Note, that this $\tilde\eps$-net along with these balls are independent of the choice of a specific ball $B_d(x_0,\rho)$.

For each of the $B_d(p, (1-\tilde\eps)\rho)$ , we  consider our sample set as $n$ i.i.d random variables   $Z^p_j$ which return the value $1$ if the sample lies within $B_d(p, (1-\tilde\eps)\rho)$ and $0$ if not.
Naturally, we get for all $j$ that
\[
\Pr[z_j^p = 1] = \int\limits_{B_d(p, (1-\tilde\eps)\rho)} d\nu 
\] 
Applying Hoeffding's inequality for each of the $B_d(p, (1 - \tilde\eps)\rho)$ we arrive at
\begin{equation}\label{eq:HoeffingtonInequality1_clean}
\Pr[\overline{Z}^p - \EE[\overline{Z}^p] \leq -\eps] \leq e^{-2 n \eps^2}
,    
\end{equation}
and
\begin{equation}\label{eq:HoeffingtonInequality2_clean}
\Pr[\overline{Z}^p - \EE[\overline{Z}^p] \geq \eps] \leq e^{-2 n \eps^2}
,    
\end{equation}
where
\[
\overline{Z}^p = \frac{1}{n}\sum_{j=1}^n Z_j^p.
\]
As a result
\[
\EE[\overline{Z}^p] = \frac{1}{n}\sum_{j=1}^n\Pr[Z_j^p = 1] =
\frac{1}{n}\sum_{j=1}^n\int\limits_{B_d(p, (1 - \tilde\eps)\rho)}d\nu
,\]
and
\begin{equation}\label{eq:EzBounds_clean}
\mu_{min} \cdot \vol(B_d(p, (1 - \tilde\eps)\rho)) \leq \EE[\overline{Z}^p] \leq \mu_{max} \cdot \vol(B_d(p, (1 - \tilde\eps)\rho))
.\end{equation}
Plugging this into \eqref{eq:HoeffingtonInequality1_clean} we get
\begin{equation}\label{eq:PrBound1_clean}
\Pr\left[\overline{Z}^p - \mu_{max}\vol\left(B_d(p, (1- \tilde\eps)\rho)\right)\leq -\eps\right] \leq e^{-2n \eps^2}
,\end{equation}
or, alternatively, since $\#|X \cap B_d(p,(1-\tilde\eps)\rho)| = n \cdot \overline{Z}^p$ we get
\[
\Pr\left[\#|X \cap B_d(p,(1-\tilde\eps)\rho)| \leq n(\mu_{max}\vol(B_d(p,(1-\tilde\eps)\rho))-\eps)\right]\leq e^{-2n \eps^2}
.\]

Denoting by $A_p$ the event $\#|X \cap B_d(p,(1-\tilde\eps)\rho)| \leq n(\mu_{max}\vol(B_d(p,(1-\tilde\eps)\rho))-\eps)$ we use the union bound to achieve 
\begin{equation}\label{eq:UnionBoundAp_clean}
\Pr\left(\bigcup\limits_{p\in\Xi} A_p\right) \leq \sum\limits_{p\in\Xi} \Pr\left(A_p \right) \leq \ceil{ 3R/\tilde\eps}^d \cdot  e^{-2n \eps^2}
.\end{equation}
Explicitly, the chances that there exists $B_d(p,(1-\tilde\eps)\rho)$ containing \textbf{less} sampled points than $n(\mu_{max}\vol(B_d(p,(1-\tilde\eps)\rho))-\eps)$ are less than $c \cdot e^{-2n \eps^2}$, where $c = \ceil{ 3R/\tilde\eps}^d$.
Going back to $B_d(x_0, \rho)$, we know that there exists a point $\tilde p\in \Xi$ such that 
\[
B_d(\tilde p,(1-\tilde\eps)\rho)\subset B_d(x_0, \rho)
.\]
As a result, for any $\delta, \eps, \rho$ there exists $N$ such that for all $n>N$
\[
\#|X \cap B_d(x_0, \rho)| > n(\mu_{max}\vol(B_d((1-\tilde\eps)\rho))-\eps)
= n(\mu_{max}\cdot \mathrm{V}_d \cdot(1-\tilde\eps)^d\rho^d-\eps)
,\]
with probability larger than $1-\delta$, where
\[
\mathrm{V}_d = \frac{\pi^{d/2}}{\Gamma(d/2+1)}
.\]

Similarly, instead of considering $B_d(p, (1-\tilde\eps)\rho)$ we look at $B_d(p, (1+\tilde\eps)\rho)$ for $p\in\Xi$ and alter the definitions of $Z^p$ accordingly.
Then, by plugging the left inequality of \eqref{eq:EzBounds_clean} into \eqref{eq:HoeffingtonInequality2_clean} we get
\[
\Pr[\overline{Z}^p - \mu_{min}\vol(B_d((1+\tilde\eps)\rho))\geq \eps] \leq e^{-2n\eps^2} 
\]
Utilizing the union bound once more we get the same bound as in \eqref{eq:UnionBoundAp_clean}. 
In other words, the chances that there exists a $B_d(p, (1+\tilde\eps)\rho)$ containing \textbf{more} sampled points than $n(\mu_{min} \cdot \vol(B_d((1+\tilde\eps)\rho)) + \eps )$ are less than $\ceil{3R/\tilde\eps}^d\cdot e^{-2n\eps^2}$.

Going back to $B_d(x_0, \rho)$, we know that there exists a point $\tilde p\in \Xi$ such that 
\[
B_d(x_0, \rho) \subset B_d(\tilde p,(1+\tilde\eps)\rho)
,\]
and for $\tilde\eps$ small enough
\[
B_d(\tilde p,(1+\tilde\eps)\rho) \subset \Omega
.\]
As a result, for any $\delta, \eps, \rho$ there exists $N$ such that for all $n>N$
\[
\#|X \cap B_d(x_0, \rho)| < n(\mu_{min}\vol(B_d((1+\tilde\eps)\rho))+\eps)
= n(\mu_{min}\cdot \mathrm{V}_d \cdot(1+\tilde\eps)^d\rho^d+\eps)
,\]
with probability larger than $1-\delta$.

Finally, we get that for any $\delta, \eps, \rho$ there exists $N$ large enough such that if $n>N$ we get
\[
n(\mu_{max}\cdot \mathrm{V}_d \cdot(1-\tilde\eps)^d\rho^d-\eps) < \#|X \cap B_d(x_0, \rho)| < n(\mu_{min}\cdot \mathrm{V}_d \cdot(1+\tilde\eps)^d\rho^d+\eps)
\]
Since this is true for any $\tilde\eps$, $(1+1/a)^a \leq 3$ and $(1-1/a)^a \geq 0.25$ we can choose $\tilde\eps = \frac{1}{c_1 d}$ such that
\[(1 - \tilde\eps)^d \geq 0.25^{1/c_1} \geq 0.5,\]
and
\[(1 + \tilde\eps)^d \leq 3^{1/c_1} \leq 2,\]
and achieve
\[
n(\frac{\mu_{max}}{2}\cdot \mathrm{V}_d \cdot\rho^d-\eps) < \#|X \cap B_d(x_0, \rho)| < n(2\cdot \mu_{min}\cdot \mathrm{V}_d \cdot\rho^d+\eps)
\]
\end{proof}

\begin{Lemma}[The projection of the Lebesgue measure onto $T_p\MM$ is almost uniform] \label{lem:measure_projecting_to_tangent_clean}
    Let $\MM$ be a $d$-dimensional sub-manifold of $\RR^D$ with bounded reach $\tau$ and a Riemannian metric $G$ pronounced through the chart $\varphi_p$ around a point $ p\in \MM $ \eqref{eq:LocalChart_clean}. 
    Let $r\in\MM_\sigma$, and let $\mu, \mu_\MM, \mu_{T_p\MM}$ denote the uniform distribution on $\MM_\sigma\subset\RR^D$, $\MM$, $T_p\MM$ correspondingly.
    Denote $Proj_\MM, Proj_{T_p\MM}$ the projection operators onto $\MM$ and $T_p\MM$.
    Then $\left(Proj_{T_p\MM}Proj_{\MM}\right)_*\mu$ is a measure on $T_p\MM$, and upon restricting this measure to $B_d(0,\rho)$ for some $\rho < \tau/2$ we get
    \[
    (Proj_{T_p\MM}Proj_\MM)_*\mu = \mathrm{V}_{D-d}\sigma^{D-d} \sqrt{det(G)}\mu_{T_p\MM}
    ,\]
    where $\mathrm{V}_d$ denotes the volume of a $d$-dimensional ball.
\end{Lemma}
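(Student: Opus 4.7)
The plan is to factor the composition $Proj_{T_p\MM}\circ Proj_\MM$ through $\MM$ and compute each pushforward in turn.

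First, since $\sigma<\tau$ (see Section~\ref{sec:SamplingAssumptions}), by the very definition of the reach the normal-bundle parametrization
\[
\Psi:\{(q,v) : q\in\MM,\ v\in T_q\MM^\perp,\ \|v\|<\sigma\}\longrightarrow \MM_\sigma,\qquad \Psi(q,v)=q+v,
\]
is a diffeomorphism whose inverse is $r\mapsto(Proj_\MM(r),\,r-Proj_\MM(r))$. The tube (Weyl) formula decomposes the ambient Lebesgue measure $\mu$ as $d\mu(\Psi(q,v))=\det(I-L_v)\,dA_\MM(q)\otimes dv$, where $L_v$ is the Weingarten map of the normal direction $v$ and $dA_\MM$ is the Riemannian volume form on $\MM$; since $\|v\|<\sigma\ll\tau$ we have $\|L_v\|\lesssim 1/\tau$ and hence $\det(I-L_v)=1+O(\sigma/\tau)$. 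Integrating out $v$ over the normal fiber $B_{T_q\MM^\perp}(0,\sigma)$, whose Euclidean volume is $\mathrm{V}_{D-d}\sigma^{D-d}$, gives, to leading order,
\[
(Proj_\MM)_*\mu=\mathrm{V}_{D-d}\sigma^{D-d}\,dA_\MM.
\]

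Second, by Corollary~\ref{cor:GraphOfFunctionTau2_clean}, the chart $\varphi_p$ is a diffeomorphism between $B_{T_p\MM}(0,\tau/2)$ and the patch of $\MM$ lying in $\mathrm{Cyl}_{T_p\MM}(p,\tau/2,\tau/2)$, and the restriction of $Proj_{T_p\MM}$ to this patch of $\MM$ is exactly $\varphi_p^{-1}$. The Riemannian volume form in $\varphi_p$-coordinates is $dA_\MM=\sqrt{\det G(x)}\,dx$ by the definition of $G$. Hence, restricting the previous identity to $B_d(0,\rho)\subset T_p\MM$ with $\rho<\tau/2$ and pushing forward once more by $\varphi_p^{-1}$ yields the density $\mathrm{V}_{D-d}\sigma^{D-d}\sqrt{\det G(x)}$ against the Lebesgue measure $\mu_{T_p\MM}$, as claimed.

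The main difficulty is the curvature factor $\det(I-L_v)$ arising in the first step: strictly speaking it prevents an exact equality and only yields the stated density modulo a multiplicative error $1+O(\sigma/\tau)$. In the downstream application the lemma is invoked only through the uniform bounds $\mu_{\min}$ and $\mu_{\max}$ on the density, so any bounded multiplicative perturbation is harmlessly absorbed into those constants. A subsidiary verification is that both disk bundles invoked above—the normal $\sigma$-bundle of $\MM$ and the graph of $\phi_p$ over $B_d(0,\rho)$—are embedded without self-intersection, which follows from $\sigma<\tau$ and $\rho<\tau/2$ via Lemma~\ref{lem:reach_ball_no_intersect_clean}.
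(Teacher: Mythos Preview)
Your approach is essentially the same as the paper's: factor through $\MM$, compute $(Proj_\MM)_*\mu$ as $\mathrm{V}_{D-d}\sigma^{D-d}$ times the Riemannian volume on $\MM$, and then pull back to $T_p\MM$ via $\varphi_p$ to pick up the $\sqrt{\det G}$ factor. The only notable difference is that you are actually more careful than the paper: you invoke the tube formula and correctly flag the curvature Jacobian $\det(I-L_v)$, whereas the paper simply asserts $(Proj_\MM)_*\mu=\mathrm{V}_{D-d}\sigma^{D-d}\mu_\MM$ without mentioning this correction. Your remark that the $1+O(\sigma/\tau)$ discrepancy is harmlessly absorbed into $\mu_{\min},\mu_{\max}$ in Corollary~\ref{cor:measure_projecting_bounds_clean} is exactly right and is implicitly what the paper relies on.
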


\begin{proof}
We first note that since $\mu$ is the Lebesgue measure on $\MM_\sigma$ we have
\[
\int_{\MM_\sigma} d\mu = \int_{\MM}\mathrm{V}_{D-d}\sigma^{D-d}\mu_\MM
,\]
where $\mu_\MM$ is the uniform distribution on $\MM$. Thus,
\[
Proj_\MM \mu = {\mathrm{V}_{D-d}\sigma^{D-d}}\mu_\MM
.\]
Now, $Proj_\MM\mu$ is a measure defined on $\MM$, which can be pulled back to the tangent domain $T_p\MM\simeq\RR^d$ in the neighborhood $B_d(p, \tau/2)$ according to Corollary \ref{cor:GraphOfFunctionTau2_clean}.
If we denote the chart from $T_p\MM\simeq\RR^d$ to $\MM$ by $\varphi_p$ we get
\[
(Proj_{T_p\MM}Proj_\MM)_*\mu = \mathrm{V}_{D-d}\sigma^{D-d} \sqrt{det(G)}\mu_{T_p\MM}
,\]
where $G$ is the Riemannian metric expressed in this chart, and $\mu_{T_p\MM}$ is the Lebesgue measure on $T_p\MM$.

\end{proof}

\begin{corollary}\label{cor:measure_projecting_bounds_clean}
	From the fact that $\MM\in\C^k$ is compact and the restriction to a ball of radius $\tau/2$ we get that $\sqrt{det(G)}$ is bounded and
	\[
	\mu_{min} \mu_{T_p\MM} \leq (Proj_{T_p\MM}Proj_\MM)_*\mu \leq \mu_{max} \mu_{T_p\MM}
	\]
	where $\mu_{min}, \mu_{max}$ are constants that depend on $\tau$, and $\mu_{T_p\MM}$ is the Lebesgue measure on $T_p\MM$.
	The constants $ \mu_{min}, \mu_{max} $ can be described explicitly to show their exact relationship to $ \tau $.
\end{corollary}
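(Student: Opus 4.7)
The plan is to invoke Lemma \ref{lem:measure_projecting_to_tangent_clean} and then produce uniform two-sided bounds on $\sqrt{\det(G(x))}$ as $x$ ranges over $B_d(0,\tau/2)\subset T_p\MM$, uniformly over $p\in\MM$. Given those bounds, the corollary follows by plugging them into the explicit pushforward formula from the lemma.

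First I would express $\MM$ locally as the graph of $\phi_p:B_{T_p\MM}(0,\tau/2)\to T_p\MM^\perp$ via Corollary \ref{cor:GraphOfFunctionTau2_clean}. In these coordinates, pulling back the ambient Euclidean inner product yields the explicit representation $G(x)=I_d+J_{\phi_p}(x)^T J_{\phi_p}(x)$, where $J_{\phi_p}(x)$ is the Jacobian of $\phi_p$ at $x$. The lower bound $\det(G(x))\geq 1$ is immediate since $J_{\phi_p}^T J_{\phi_p}$ is positive semidefinite.

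The upper bound is where the geometry enters. The columns of $\binom{I_d}{J_{\phi_p}(x)}$ span $T_{\varphi_p(x)}\MM$, where $\varphi_p(x)=p+(x,\phi_p(x))_{T_p\MM}$, and a brief linear-algebra computation identifies the singular values of $J_{\phi_p}(x)$ with the tangents of the principal angles between $T_p\MM$ and $T_{\varphi_p(x)}\MM$. Applying Lemma \ref{lem:alpha_beta_x} with $H=T_p\MM$ (so that $\alpha=0$) then yields the pointwise bound $\maxangle(T_p\MM,T_{\varphi_p(x)}\MM)\leq 2\|x\|/\tau+c\|x\|^2/\tau^2$; on $B_{T_p\MM}(0,\tau/2)$ this is at most some explicit angle $\beta^{\ast}<\pi/2$. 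Consequently $\|J_{\phi_p}(x)\|\leq \tan\beta^{\ast}$, whence $\det(G(x))\leq (1+\tan^2\beta^{\ast})^d=\sec^{2d}\beta^{\ast}$.

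Combining these bounds with the formula in Lemma \ref{lem:measure_projecting_to_tangent_clean} delivers the corollary with the explicit choices $\mu_{\min}=\mathrm{V}_{D-d}\sigma^{D-d}$ and $\mu_{\max}=\mathrm{V}_{D-d}\sigma^{D-d}\sec^d\beta^{\ast}$. Uniformity across $p\in\MM$ is automatic, since $\beta^{\ast}$ was extracted purely from the reach bound and does not depend on $p$; compactness of $\MM$ only enters to guarantee that the local graph parametrization makes sense simultaneously at every base point. I do not anticipate a real obstacle here: the only piece that deserves care is the identification of singular values of $J_{\phi_p}(x)$ with tangents of principal angles, which is a standard fact about Grassmannian distances between column spaces.
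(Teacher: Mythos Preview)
Your proposal is correct and in fact more detailed than what the paper provides. The paper treats this corollary as essentially self-evident: it appeals only to the continuity of $\sqrt{\det G_p(x)}$ in $(p,x)$ together with compactness of the domain $\{(p,x):p\in\MM,\ x\in B_{T_p\MM}(0,\tau/2)\}$ to conclude that a finite maximum and positive minimum exist. It does not actually compute anything, and the explicit constants are simply \emph{defined} later (in Lemma \ref{lem:num_of_samples_in_a_ball_clean}) as the extrema of $\mathrm{V}_{D-d}\sigma^{D-d}\sqrt{\det G_p(x)}$ over the relevant region.

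Your route is genuinely different in that it supplies the quantitative content the corollary only promises: via the graph representation $G=I_d+J_{\phi_p}^T J_{\phi_p}$ you get the sharp lower bound $\sqrt{\det G}\ge 1$ for free, and via the principal-angle identification $\sigma_i(J_{\phi_p}(x))=\tan\theta_i$ together with the reach-based angle bound you obtain an explicit upper bound $\sec^d\beta^\ast$ depending only on $\tau$ (and $d$). This delivers the ``explicit relationship to $\tau$'' that the paper alludes to but never writes down, and it removes any reliance on compactness for the bounds themselves. One small caveat: Lemma \ref{lem:alpha_beta_x} is stated for $\|x\|\le c_{\pi/4}\tau$, and the paper does not assert $c_{\pi/4}\ge 1/2$, so to cover the full ball $B(0,\tau/2)$ it is cleaner to invoke directly the inequality $\sin(\maxangle/2)\le \|p_1-p_2\|/(2\tau)$ from \cite{boissonnat2017reach} combined with Lemma \ref{lem:f_bound_circle_clean}; this yields $\beta^\ast<\pi/2$ on $\|x\|\le\tau/2$ without difficulty.
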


Combining Lemma \ref{lem:sampling_density_almost_uniform_clean} and Corollary \ref{cor:measure_projecting_bounds_clean}, we have the following result

\begin{Lemma}\label{lem:num_of_samples_in_a_ball_clean}
	Let $\MM$ be a compact $d$-dimensional sub-manifold of $\RR^D$ with reach $\tau$  bounded away from zero, and a Riemannian metric $G_p$ pronounced through the chart $\varphi_p$ around a point $ p\in \MM $ \emphbrackets{see \eqref{eq:LocalChart_clean}}. 
    Let $\MM_\sigma$ be a tubular neighborhood around $ \MM $ of radius $ \sigma $ \emphbrackets{see \eqref{eq:Msigma}}, and assume $\sqrt{\frac{\sigma}{\tau}} < \frac{1}{2}$. 
    Suppose that $\mu$ is the uniform distribution on $ \MM_\sigma $. 
    Let $X = \{r_1, \ldots,r_n\}$ be $n$ points sampled i.i.d from $\mu$, and denote the volume of a $d$-dimensional unit ball by  $ \mathrm{V}_d = \frac{\pi^{d/2}}{\Gamma(d/2+1)} $.
    Denote,
    \begin{align}
    \label{eq:mu_min_max_def}
    \begin{split}
    \mu_{\min} &= \mathrm{V}_{D-d}\sigma^{D-d} \min_{\substack{p\in \MM\\ x\in B_{T_p\MM}(0,\sqrt{\sigma\tau}- \sigma)}}  \sqrt{det(G_p(x))}\\
    \mu_{\max} &= \mathrm{V}_{D-d}\sigma^{D-d}\max_{\substack{p\in \MM \\ x \in B_{T_p\MM}(0,\sqrt{\sigma\tau}- \sigma)}}  \sqrt{det(G_p(x))}
    .\end{split}
    \end{align}
    
    Then for any $\eps$ and $\delta$, there is $N$, such that for all $n>N$ the following holds: For any $x_0\in T_p\MM$ and $\rho \in \RR^+$ such that $B_{T_p\MM}(x_0,\rho + \sigma)\subset  B_{T_p\MM}(0,\sqrt{\sigma\tau}- \sigma)$ \emphbrackets{see the red, green, and blue discs in Figure \ref{fig:ManifoldGoodDisc_clean}}, we have 
\begin{align*}
\begin{split}
&\#\{r_i|Proj_{T_p\MM}(r_i) \in B_{T_p\MM}(x_0,\rho) \}
\leq
n(2\cdot \mu_{min}\cdot \mathrm{V}_d \cdot\rho^d+\eps) \\
&\#\{r_i|Proj_{T_p\MM}(r_i) \in B_{T_p\MM}(x_0,\rho+\sigma) \}
\geq
n \left(\frac{\mu_{max}}{2}\cdot V \cdot \rho^d - \eps\right)
\end{split}
\end{align*}
with probability of at least $1-\delta$.
\end{Lemma}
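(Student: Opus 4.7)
The plan is to combine Corollary \ref{cor:measure_projecting_bounds_clean}, which controls the density of the distribution obtained by first projecting samples onto $\MM$ and then onto $T_p\MM$, with Lemma \ref{lem:sampling_density_almost_uniform_clean}, which provides concentration of sample counts in balls under a near-uniform sampling assumption. The only additional work is to absorb the $\sigma$-deviation caused by the fact that the samples $r_i$ lie in the tubular neighborhood $\MM_\sigma$ rather than on $\MM$ itself.

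First, I define the composite projection $\pi \defeq Proj_{T_p\MM} \circ Proj_\MM : \MM_\sigma \to T_p\MM$, which is well-defined because $\sigma < \tau/4$ guarantees unique nearest-point projection onto $\MM$. By Corollary \ref{cor:measure_projecting_bounds_clean}, the pushforward measure $\pi_* \mu$ has density bounded between $\mu_{\min}$ and $\mu_{\max}$ (as defined in \eqref{eq:mu_min_max_def}) with respect to the Lebesgue measure on $T_p\MM$, on the neighborhood of interest. Consequently, the points $\pi(r_1),\ldots,\pi(r_n)$ are i.i.d.\ samples from a near-uniform distribution on a bounded subset of $T_p\MM \simeq \RR^d$, so Lemma \ref{lem:sampling_density_almost_uniform_clean} applies and yields two-sided concentration bounds of the desired form on $\#\{i : \pi(r_i) \in B_{T_p\MM}(x_0, \rho')\}$ for every admissible ball.

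Next, I transfer from $\pi(r_i)$ to $Proj_{T_p\MM}(r_i)$ via the Lipschitz property: because orthogonal projection onto the linear subspace $T_p\MM$ is $1$-Lipschitz and $\|r_i - Proj_\MM(r_i)\| \leq \sigma$, one has $\|Proj_{T_p\MM}(r_i) - \pi(r_i)\| \leq \sigma$. This yields the containments
\[
\{r_i : Proj_{T_p\MM}(r_i) \in B_{T_p\MM}(x_0, \rho)\} \subseteq \{r_i : \pi(r_i) \in B_{T_p\MM}(x_0, \rho+\sigma)\},
\]
\[
\{r_i : \pi(r_i) \in B_{T_p\MM}(x_0, \rho)\} \subseteq \{r_i : Proj_{T_p\MM}(r_i) \in B_{T_p\MM}(x_0, \rho+\sigma)\}.
\]
The hypothesis $B_{T_p\MM}(x_0, \rho+\sigma) \subset B_{T_p\MM}(0, \sqrt{\sigma\tau}-\sigma)$ ensures the inflated ball still sits inside the region where Corollary \ref{cor:measure_projecting_bounds_clean} gives density control, so applying Lemma \ref{lem:sampling_density_almost_uniform_clean} to the first containment (with ball radius $\rho+\sigma$) produces the claimed upper bound on $\#\{r_i : Proj_{T_p\MM}(r_i) \in B(x_0,\rho)\}$, and applying it to the second containment produces the claimed lower bound on $\#\{r_i : Proj_{T_p\MM}(r_i) \in B(x_0, \rho+\sigma)\}$.

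The only mild subtlety, and what I expect to be the main obstacle, is that Lemma \ref{lem:sampling_density_almost_uniform_clean} is stated for a fixed ball, whereas the statement here must hold simultaneously for every admissible $(x_0, \rho)$. This is already handled inside the proof of Lemma \ref{lem:sampling_density_almost_uniform_clean} by a net-plus-union-bound argument on $\Omega$, so a single $N$ works uniformly. The slight mismatch between the factor $(\rho+\sigma)^d$ that comes out of the Lipschitz transfer and the $\rho^d$ that appears in the target expression is $O(\sigma \rho^{d-1})$ and can be absorbed into the additive $\eps$ slack by shrinking $\eps$ in the invocation of Lemma \ref{lem:sampling_density_almost_uniform_clean} and enlarging $N$ accordingly.
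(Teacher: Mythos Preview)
Your proposal is correct and follows essentially the same approach as the paper: combine Corollary~\ref{cor:measure_projecting_bounds_clean} with Lemma~\ref{lem:sampling_density_almost_uniform_clean}, and bridge between $Proj_{T_p\MM}(r_i)$ and $Proj_{T_p\MM}\circ Proj_\MM(r_i)$ via the $\sigma$-closeness of the two projections. Your containment for the upper bound (inflating the ball by $\sigma$ and then absorbing the $(\rho+\sigma)^d-\rho^d$ discrepancy into the $\eps$-slack) is in fact more careful than the paper's, which asserts $\#\{r_i:Proj_{T_p\MM}(r_i)\in B(x_0,\rho)\}\le \#\{r_i:\pi(r_i)\in B(x_0,\rho)\}$ without inflation.
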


\begin{proof}
    Note that both the minimum and maximum in \eqref{eq:mu_min_max_def} exist and finite since $ \MM $ is compact and the determinant is continuous.
	We mention that since $ \sqrt{\frac{\sigma}{\tau}} < \frac{1}{2} $ we get that $ \sqrt{\sigma \tau} < \frac{\tau}{2} $ and the conditions of Corollary \ref{cor:measure_projecting_bounds_clean} are met.
	Note that 
	\[
	\#\{r_i|Proj_{T_p\MM}(r_i) \in B_d(x_0,\rho) \} \leq \#\{r_i|Proj_{T_p\MM}\circ Proj_\MM(r_i) \in B_d(x_0,\rho) \}
	,\]
	and from Lemma \ref{lem:sampling_density_almost_uniform_clean} combined with Corollary \ref{cor:measure_projecting_bounds_clean} we get
	\[
	\#\{r_i|Proj_{T_p\MM}\circ Proj_\MM(r_i) \in B_d(x_0,\rho) \} \leq n(2\cdot \mu_{min}\cdot \mathrm{V}_d \cdot\rho^d+\eps)
	.\]
	Thus, the first inequality is achieved.
	
	On the other hand,
	\[
	\#\{r_i|Proj_{T_p\MM}(r_i) \in B_d(x_0,\rho+\sigma) \}
	\geq \#\{r_i|Proj_{T_p\MM}\circ Proj_\MM(r_i) \in B_d(x_0,\rho)\}
	.\]
	Using again Lemma \ref{lem:sampling_density_almost_uniform_clean} combined with Corollary \ref{cor:measure_projecting_bounds_clean} we get
	\[
	\#\{r_i|Proj_{T_p\MM}\circ Proj_\MM(r_i) \in B_d(x_0,\rho)\} \geq n \left(\frac{\mu_{max}}{2}\cdot \mathrm{V}_{d} \cdot \rho^d - \eps\right)
	,\]
	and the second inequality holds.
\end{proof}

\section{Supporting lemmas for Step 2}
\begin{Lemma}\label{lem:H0_Tf_Init}
    Let the sampling assumptions of Section \ref{sec:SamplingAssumptions} hold. Let $(q, H_0)\in \MM_\sigma\times Gr(d,D)$ be the initialization of Algorithm \ref{alg:step2_clean}, and let $p = Proj_\MM(q)$. 
    Assume $\maxangle (H_0, T_p\MM) \leq \sqrt{C_\sM/M}$, where $C_\sM$ is a constant independent of $M$ \emph{(}see Theorem \ref{thm:Step1}\emph{)}. 
    Let $ f_{0}:\RR^d\to\RR^{D-d} $ be defined 
    as a function whose graph $ \Gamma_{f_{0}, r, H_0} $ coincides with $ \MM $ in the sense of Lemma \ref{lem:M_is_locally_a_fuinction_clean}; explicitly 
	\[
	\Gamma_{f_{0},r, H_0} = \{r + (x, f_{0}(x))_{H_0} ~|~ x\in B_{H_0}(r, c_{\pi/4}\tau)\}
	,\]
	where $c_{\pi/4}$ is a constant and $\Gamma_{f_0, r, H_0} \subset \MM$.
    Then, for any $\alpha  \geq 0$, there is  $M=\tau/\sigma $ large enough \emph{(}see assumption \ref{assume:noise} in Section \ref{sec:SamplingAssumptions}\emph{)}, such that  
	\[
	\maxangle(H_0, T_0 f_{0}) 
	\leq  \alpha
	.\]
\end{Lemma}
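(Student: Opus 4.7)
The plan is to identify $T_0 f_0$ with the tangent space of $\MM$ at a concrete point on the manifold close to $p$, and then combine the reach-based tangent comparison (Corollary~3 of \cite{boissonnat2017reach}, already used in \eqref{eq:sin_TPM_Tfk}) with the hypothesis $\maxangle(H_0, T_p\MM) \leq \sqrt{C_\sM/M}$ via the triangle inequality for maximal principal angles.

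First, I would invoke Lemma~\ref{lem:M_is_locally_a_fuinction_clean} to guarantee that $f_0$ is well defined on a ball around $0$ in $H_0$ and that its graph sits inside $\MM$; this is legitimate because, for $M$ large enough, the hypothesis $\maxangle(H_0, T_p\MM) \leq \sqrt{C_\sM/M}$ keeps the coordinate tilt comfortably below $\pi/4$, and Lemma~\ref{lem:alpha_beta_x_no_func} shows the same remains true at any point on $\MM$ close to $p$. Once $f_0$ is defined, the point $p_0 := r + (0, f_0(0))_{H_0}$ lies on $\MM$, and since the graph of $f_0$ is an open subset of $\MM$ through $p_0$, the tangent to the graph at $x=0$ coincides with $T_{p_0}\MM$; that is, $T_0 f_0 = T_{p_0}\MM$.

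Second, I would bound $\|p_0 - p\|$. Write $r_\MM := Proj_\MM(r)$, so that $\|r - r_\MM\| \leq \sigma$ and $r - r_\MM \in T_{r_\MM}\MM^\perp$. Because the angle between $T_{r_\MM}\MM$ and $H_0$ is $\mathcal{O}(1/\sqrt{M})$ (combining the hypothesis with another application of Corollary~3 of \cite{boissonnat2017reach} to compare $T_p\MM$ and $T_{r_\MM}\MM$), the component of $r - r_\MM$ lying in $H_0$ has norm $\mathcal{O}(\sigma/\sqrt{M})$ by Lemma~\ref{lem:angle_space_perp_space}. Coupling this with the local Lipschitz-type control on $f_0$ coming from Lemma~\ref{lem:f_bound_circle_H_no_func_ver2_clean} (whose gradient at the nearby base point on $\MM$ is $\mathcal{O}(1/\sqrt{M})$) gives $\|f_0(0)\| \leq \sigma\bigl(1 + \mathcal{O}(1/M)\bigr)$, whence $\|p_0 - r\| = \mathcal{O}(\sigma)$. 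Since $\|r - q\| < 2\sigma$ by the search-region constraint of Algorithm~\ref{alg:alg1_clean} and $\|q - p\| \leq \sigma$, the triangle inequality yields $\|p_0 - p\| \leq C\sigma$ for a universal constant $C$.

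Third, Corollary~3 of \cite{boissonnat2017reach} yields $\maxangle(T_{p_0}\MM, T_p\MM) \leq 2\|p_0 - p\|/\tau = \mathcal{O}(1/M)$, and then the triangle inequality for principal angles gives
\[
\maxangle(H_0, T_0 f_0) \leq \maxangle(H_0, T_p\MM) + \maxangle(T_p\MM, T_{p_0}\MM) \leq \sqrt{C_\sM/M} + \mathcal{O}(1/M),
\]
which is $\leq \alpha$ for $M$ sufficiently large. The main obstacle will be the second step: establishing $\|f_0(0)\| = \mathcal{O}(\sigma)$ requires simultaneously tracking the position of $r$ relative to $\MM$ and the tilt of $H_0$ relative to $T_{r_\MM}\MM$, since $r$ itself need not lie on $\MM$, so the natural geometric arguments based at $p$ must be transferred to a base point on the manifold.
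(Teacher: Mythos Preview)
Your proposal is correct and follows a route that is close in spirit to the paper's but organized differently and, in one respect, more direct.

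The paper also uses the triangle inequality for principal angles together with Corollary~3 of \cite{boissonnat2017reach}, but it inserts a different intermediate object: it introduces the function $f_q$ centered at $q$ (rather than at $r$) and splits
\[
\maxangle(H_0, T_0 f_0) \leq \maxangle(H_0, T_0 f_q) + \maxangle(T_0 f_q, T_0 f_0).
\]
The first term is handled by a separate Lemma~\ref{lem:Tf_Tftilde_Init}, which invokes the general shift lemma~\ref{lem:shift_ang_general}; the second term is bounded by passing through the point $r^*=Proj_{(q,H_0)}(r)$, estimating $\|f_{r^*}(0)-f_q(0)\|$ via Lemma~\ref{lem:f_bound_circle_H_no_func_ver2_clean}, and then applying the reach-based tangent comparison. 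Your decomposition instead uses $T_p\MM$ as the pivot, so the first term is literally the hypothesis $\maxangle(H_0,T_p\MM)\leq\sqrt{C_\sM/M}$ and no analogue of Lemma~\ref{lem:Tf_Tftilde_Init} (and hence no appeal to Lemma~\ref{lem:shift_ang_general}) is needed. What you identify as the main obstacle---showing $\|f_0(0)\|=\mathcal{O}(\sigma)$---is exactly the content of the paper's Lemma~\ref{lem:dist_to_f_00}, and your sketch of it (project $r$ to $r_\MM\in\MM$, control the $H_0$-component of $r-r_\MM$ via the angle bound, then use the local slope control from Lemma~\ref{lem:f_bound_circle_H_no_func_ver2_clean}) is essentially how that lemma is proved. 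The trade-off: your argument is more self-contained here, while the paper's route reuses the modular Lemma~\ref{lem:shift_ang_general} that is needed later anyway.
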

\begin{proof}
First, we define $f_{q}:\RR^d\to\RR^{D-d}$ as the function whose graph $\Gamma_{f_{q}, q, H_0}$ coincides with $\MM$ in the sense of Lemma \ref{lem:M_is_locally_a_fuinction_clean}.
Using the triangle inequality for angles (see Figure \ref{fig:bound_f_0_angle}) we get
    \begin{equation}\label{eq:Tri-angle}
    \maxangle(H_0, T_0 f_{0}) \leq 
    \maxangle(T_0 f_{q}, T_0 f_{0}) +
    \maxangle(H_0, T_0 f_{q}) 
    .\end{equation}
    From Lemma \ref{lem:Tf_Tftilde_Init} we have
    \begin{equation}\label{eq:Tri-angle_part1}
    \maxangle(H_0, T_0 f_{q})  \leq \frac{3\alpha_M}{2}
    ,\end{equation}
    where $\alpha_M = \sqrt{\frac{C_\sM}{M}}$.
    Thus, to finish the argument we are left with bounding $\maxangle(T_0 f_{q}, T_0 f_{0}) $.
    
    Denote by $r^*$ the projection of $r$ on the plane $H_0$ that passes through $q$ (see Figure \ref{fig:bound_f_0_angle}). 
    Let $ f_{r^*}:\RR^d\to\RR^{D-d} $ be defined as the function whose graph $ \Gamma_{f_{r^*}, r^*, H_0} $ coincides with $ \MM $ in the sense of Lemma \ref{lem:M_is_locally_a_fuinction_clean}.
    Note that $f_{r^*}$ is just a shifted version of $f_{0}$ (see Figure \ref{fig:bound_f_0_angle}).
    Since $dist(q,r^*) \leq dist(q,r) \leq 2 \sigma$, we can use Lemma \ref{lem:f_bound_circle_H_no_func_ver2_clean}
    and get that
    \begin{equation}\label{eq:f_r_star_to_f_q_star}
    \|(0, f_{r^*}(0))_{(r^*, H_0)}\| - \|(0, f_{q}(0))_{(q, H_0)}\| \leq 2 \sigma (\tan \alpha_M + c \frac{2 \sigma}{\tau}) =  2 \sigma (\tan \alpha_M +  \frac{c_1}{M}) 
    \end{equation}
    and thus, 
    \[
    dist(f_{r^*}(0), f_{q}(0)) \leq \sqrt{2 ^2\sigma^2 (\tan \alpha_M +  \frac{c_1}{M})^2 + (2 \sigma)^2} =  2\sigma\sqrt{ (\tan \alpha_M +  \frac{c_1}{M})^2 + 1}
    \]
    From  Corollary 3 in \cite{boissonnat2017reach}  we have that 
    \begin{equation}\label{eq:Tri-angle_part2}
    \maxangle(T_0 f_{q}, T_0 f_{0}) \leq 
    \frac{1}{2\tau} 2\sigma\sqrt{ (\tan \alpha_M +  \frac{c_1}{M})^2 + 1} \leq  \frac{1}{M} \sqrt{ (\tan \alpha_M +  \frac{c_1}{M})^2 + 1}
    \end{equation}
    Combining \eqref{eq:Tri-angle_part2} with \eqref{eq:Tri-angle_part1} and \eqref{eq:Tri-angle}, we have
    \[
    \maxangle(H_0, T_0 f_{0}) \leq \frac{3\alpha_M}{2} +  \frac{1}{M} \sqrt{ (\tan \alpha_M +  \frac{c_1}{M})^2 + 1} \leq \alpha
    \]
    for $M$ large enough.
    
\end{proof}
\begin{figure}
    \centering
    \includegraphics[width=0.5\linewidth]{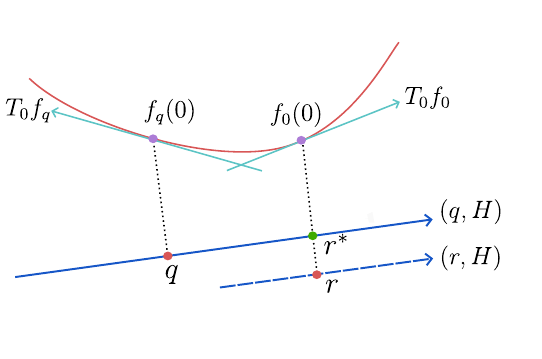}
    \caption{This figure illustrates concepts from the proof of Lemma \ref{lem:H0_Tf_Init} in which we aim to show that angular difference between $T_0 f_0$ (the tangent to the manifold at $f_0(0)$) and  $T_q f_0$ (the tangent to the manifold at $f_q(0)$) is bounded. We denote by $r^*$ the projection of $r$ onto $(q, H)$  }
    \label{fig:bound_f_0_angle}
\end{figure}

\begin{Lemma}\label{lem:Tf_Tftilde_Init}
	Let the sampling assumptions of Section \ref{sec:SamplingAssumptions} hold.
	Let $(q, H_0)\in \MM_\sigma\times Gr(d,D)$ be the initialization of Algorithm \ref{alg:step2_clean}. Assume $\maxangle (H_0, T_p\MM) \leq \alpha_M$, for $p = Proj_\MM(q)$.
	Let $ f_q:\RR^d\to\RR^{D-d} $ be defined as the function whose graph $ \Gamma_{f_q, q, H_0} $ coincides with $ \MM $ in the sense of Lemma \ref{lem:M_is_locally_a_fuinction_clean}; explicitly 
	\[
	\Gamma_{f_q, q_, H_0} = \{q + (x, f_q(x))_{H_0} ~|~ x\in B_{H_0}(q, c_{\pi/4}\tau)\}
	,\]
	where $c_{\pi/4}$ is a constant and $\Gamma_{f_q, q, H_0} \subset \MM$.
	Then, for any $\alpha_M \leq \alpha_c$ where $\alpha_c$ is a constant depending only on $c_{\pi/4}$ of Lemma \ref{lem:M_is_locally_a_fuinction_clean}, for $M=\tau/\sigma$ sufficiently large (see assumption \ref{assume:noise} in Section \ref{sec:SamplingAssumptions}) we have 
	\[
	\maxangle(H_0, T_0 f_q) 
	\leq  \frac{3 \alpha_M}{2}
	.\]
\end{Lemma}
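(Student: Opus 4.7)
The plan is to identify $T_0 f_q$ with $T_{\tilde p}\MM$ for $\tilde p := q + (0, f_q(0))_{H_0}\in\MM$, and then estimate the angle between $H_0$ and $T_{\tilde p}\MM$ via a triangle inequality, anchored at $T_p\MM$. Since $\Gamma_{f_q, q, H_0}$ is a $d$-dimensional open subset of $\MM$ passing through $\tilde p$ at $x=0$, we have $T_0 f_q = T_{\tilde p}\MM$, and the triangle inequality for maximal principal angles gives
\[
\maxangle(H_0, T_0 f_q) \;\leq\; \maxangle(H_0, T_p\MM) + \maxangle(T_p\MM, T_{\tilde p}\MM) \;\leq\; \alpha_M + \maxangle(T_p\MM, T_{\tilde p}\MM).
\]
The remaining task is to show that the second term is at most $\alpha_M/2$ once $M$ is sufficiently large.

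Since $q\in\MM_\sigma$, we have $\|q-p\|<\sigma$, so I decompose $q-p=(a,b)_{H_0}$ with $a\in H_0$, $b\in H_0^\perp$, and $\|a\|,\|b\|<\sigma$. For $M$ large (so that $\sigma\leq c_{\pi/4}\tau$), Lemma \ref{lem:M_is_locally_a_fuinction_clean} represents $\MM$ locally as the graph of a map $\Phi:B_{H_0}(0,c_{\pi/4}\tau)\to H_0^\perp$ based at $p$. The uniqueness of such a representation, applied to $\tilde p\in\MM$ with $\tilde p - q\in H_0^\perp$, forces $\tilde p = p+(a,\Phi(a))_{H_0}$, so that $f_q(0)=\Phi(a)-b$. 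Applying Lemma \ref{lem:f_bound_circle_H_no_func_ver2_clean} to $\Phi$ gives
\[
\|\Phi(a)\|\;\leq\;\|a\|\bigl(\tan\alpha_M + c\|a\|/\tau\bigr)\;\leq\;\sigma\tan\alpha_M + c\sigma^2/\tau,
\]
whence $\|\tilde p - p\|^2 = \|a\|^2 + \|\Phi(a)\|^2 \leq \sigma^2\bigl(1 + (\tan\alpha_M+c/M)^2\bigr)$. Because $\alpha_M\leq\alpha_c$ is a constant, $\|\tilde p-p\|\leq C_0\,\sigma$ for some constant $C_0$ depending only on $\alpha_c$.

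This distance estimate feeds directly into Corollary 3 of \cite{boissonnat2017reach} (the same reach-based tangent comparison already invoked around \eqref{eq:sin_TPM_Tfk}), yielding
\[
\maxangle(T_p\MM, T_{\tilde p}\MM)\;\leq\;\frac{\|\tilde p-p\|}{\tau}\;\leq\;\frac{C_0}{M}.
\]
Substituting this into the triangle inequality, $\maxangle(H_0, T_0 f_q)\leq \alpha_M + C_0/M$, and the conclusion $\leq 3\alpha_M/2$ follows as soon as $M\geq 2C_0/\alpha_M$ (which is the quantitative content of "$M$ sufficiently large" in the lemma's hypotheses).

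The main technical step is the identification of $\tilde p$ via the unique tilted-graph representation of $\MM$ over $H_0$ based at $p$, together with the application of Lemma \ref{lem:f_bound_circle_H_no_func_ver2_clean} to express the $H_0^\perp$-component of $\tilde p - p$ in terms of its $H_0$-component. Once this geometric book-keeping is in place, the reach-based curvature bound from \cite{boissonnat2017reach} absorbs the residual discrepancy into a lower-order $O(1/M)$ term, which is dominated by $\alpha_M$ for $M$ sufficiently large.
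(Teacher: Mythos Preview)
Your argument is correct. Both your proof and the paper's share the same skeleton—triangle inequality anchored at $T_p\MM$, followed by a reach-based comparison of $T_p\MM$ and $T_{\tilde p}\MM$ via Corollary~3 of \cite{boissonnat2017reach}—but the execution differs. The paper invokes the general-purpose Lemma~\ref{lem:shift_ang_general} (which packages the rotation-of-coordinate-system estimate needed later in the iterative step of Lemma~\ref{lem:main_support_theorem_step2}), whereas you unwrap the relevant piece by hand: identify $\tilde p$ via the tilted graph $\Phi$ over $(p,H_0)$, bound $\|\Phi(a)\|$ through Lemma~\ref{lem:f_bound_circle_H_no_func_ver2_clean}, and feed $\|\tilde p - p\|$ into the reach bound. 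Your route is more elementary and self-contained; the paper's buys reuse of machinery already needed elsewhere.

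One refinement worth noting: you bound $\|a\|<\sigma$, which yields an additive error $C_0/M$ and forces the threshold $M\geq 2C_0/\alpha_M$ to depend on $\alpha_M$. Since $p=Proj_\MM(q)$ implies $q-p\in T_p\MM^\perp$, Lemma~\ref{lem:angle_space_perp_space} gives the sharper $\|a\|=\|Proj_{H_0}(q-p)\|\leq\sigma\sin\alpha_M$, whence $\|\tilde p-p\|=O(\sigma\alpha_M)$ and the extra term becomes $O(\alpha_M/M)$; this makes the required largeness of $M$ absolute, matching what the paper obtains via Lemma~\ref{lem:shift_ang_general}. As stated, the lemma's phrasing (``for any $\alpha_M\leq\alpha_c$ \ldots\ for $M$ sufficiently large'') tolerates your $\alpha_M$-dependent threshold, and in the sole application (Lemma~\ref{lem:H0_Tf_Init}, where $\alpha_M=\sqrt{C_\sM/M}$) your condition reduces to $M$ larger than an absolute constant anyway, so no downstream result is affected.
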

\begin{proof}
    Let $\phi_p: T_p\MM \simeq \RR^d \rightarrow \RR^{D-d}$ be defined in \eqref{eq:phi_def_clean} and \eqref{eq:FunctionGraph_init}. Let $g_0:(q,T_p\MM) \simeq \RR^d \to \RR^{D-d}$ defined as $g_0(x) = \phi_p(x) + (p - q)$ (note that $ (p-q) \in T_p \MM^\perp$ and thus this can be understood with some abuse of notation). From Corollary \ref{cor:GraphOfFunctionTau2_clean}, $\Gamma_{g_0, q, T_p\MM}$ coincides with $\MM \cap \mathrm{Cyl}(p,\tau/2, \tau/2)$.
    We note that $\maxangle (T_0g_0, T_p\MM)=0$,  $\maxangle(H_0, T_p\MM)\leq \alpha_M$ where $\alpha_M \leq \alpha_c$, $\|p-q\| \leq \sigma$ and from assumption \ref{assume:noise} in Section \ref{sec:SamplingAssumptions}, we have that $\sigma \leq \frac{3\tau}{4 \cdot 16}$.  
    Then, we can apply Lemma \ref{lem:shift_ang_general}, with the above defined $g_0$ and $g_1 = f_q$, $G_0 = T_p\MM$, and $G_1 = H_0$ and get
    \[
    \maxangle (H_0, T_0f_q) \leq \maxangle(H_0, T_0g_0)  + \frac{4\|g_0(0)\|\left(2+\|g_0(0)\|/\tau\right)\alpha_M}{\tau}
    .\]
Since $T_0g_0 = T_p\MM$ and $\frac{4\|g_0(0)\|\left(2+\|g_0(0)\|/\tau\right)\alpha}{\tau} \leq \frac{\alpha_M}{2}$ for large enough $M=\tau/\sigma$ (assumption \ref{assume:noise} in Section \ref{sec:SamplingAssumptions}) we have 
\[
    \maxangle (H_0, T_0f_q) \leq \frac{3\alpha_M}{2}.
\]

\end{proof}
\begin{Lemma}\label{lem:dist_to_f_00}
    Let the sampling assumptions of Section \ref{sec:SamplingAssumptions} hold. Let $(q, H_0)\in \MM_\sigma\times Gr(d,D)$ be the initialization of Algorithm \ref{alg:step2_clean}, and let $p = Proj_\MM(q)$, the projection of $q$ onto $\MM$.  Assume $\maxangle (H_0, T_{p_q}\MM) \leq \sqrt{C_\sM/M}$, where $C_\sM$ is a constant independent of $M$ \emph{(}see Theorem \ref{thm:Step1}\emph{)}.
    Let $ f_{0}:\RR^d\to\RR^{D-d} $ be defined 
    as a function whose graph $ \Gamma_{f_{0}, r, H_0} $ coincides with $ \MM $ in the sense of Lemma~\ref{lem:M_is_locally_a_fuinction_clean}.
    Then, for $M$ large enough
	\[
	\| f_0(0) \| \leq 10 \sigma
	,\] 
 where $c$ is some general constant.
\end{Lemma}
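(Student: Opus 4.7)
The plan is to compare the two local graph representations of $\MM$ over the plane $H_0$: one centered at $p$, and one centered at $r$. The value $f_0(0)$ measures how far we must travel in the $H_0^\perp$ direction from $r$ to hit $\MM$, and the bound will follow from the facts that (i) $\|r-p\|$ is $O(\sigma)$, and (ii) the two charts differ by a shift that can be controlled by Lemma~\ref{lem:f_bound_circle_H_no_func_ver2_clean}.

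First, I would collect the distance estimates. By the search constraint of Algorithm~\ref{alg:alg1_clean} we have $\|r-q\|\leq 2\sigma$, and because $q\in\MM_\sigma$ we have $\|q-p\|\leq\sigma$, so $\|r-p\|\leq 3\sigma$ by the triangle inequality. Decompose $r - p = (a,b)_{H_0}$ with $a\in H_0$ and $b\in H_0^\perp$; then $\|a\|\leq 3\sigma$ and $\|b\|\leq 3\sigma$.

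Next, I would introduce the companion chart centered at $p$. Since $\maxangle(H_0,T_p\MM)\leq \alpha_M=\sqrt{C_\sM/M}\leq \pi/4$ for $M$ large, Lemma~\ref{lem:M_is_locally_a_fuinction_clean} gives a function $g:B_{H_0}(0,c_{\pi/4}\tau)\to H_0^\perp$ with $g(0)=0$ and $\Gamma_{g,p,H_0}\subset \MM$. Both charts (at $r$ and at $p$) describe the same piece of $\MM$ inside their common domain, which includes a neighborhood of $p$ once $M$ is large enough to make $3\sigma\ll c_{\pi/4}\tau$. Now the point $r+(0,f_0(0))_{H_0}$, which lies on $\MM$ by definition of $f_0$, can be rewritten in the $p$-chart as $p+(a,\,b+f_0(0))_{H_0}$. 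By the uniqueness of the graph representation (the $H_0^\perp$-component of a point of $\MM$ with prescribed $H_0$-component $a$ is $g(a)$, as guaranteed by Lemma~\ref{lem:M_is_locally_a_fuinction_clean}), we conclude the key identity
\[
f_0(0) = g(a) - b.
\]

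Finally, I would bound $\|g(a)\|$ using Lemma~\ref{lem:f_bound_circle_H_no_func_ver2_clean} applied to $g$ with tilt angle $\alpha_M$: since $\|a\|\leq 3\sigma\leq c_{\pi/4}\tau$,
\[
\|g(a)\| \leq \|a\|\bigl(\tan\alpha_M + c\,\|a\|/\tau\bigr) \leq 3\sigma\bigl(\tan\alpha_M + 3c/M\bigr),
\]
which tends to $0$ as $M\to\infty$. Combined with the triangle inequality
\[
\|f_0(0)\| \leq \|g(a)\| + \|b\| \leq 3\sigma\bigl(\tan\alpha_M + 3c/M\bigr) + 3\sigma,
\]
we get $\|f_0(0)\|\leq 10\sigma$ for all $M$ sufficiently large, as required. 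The main (minor) obstacle is checking that the two graph representations really describe the same piece of $\MM$ over a domain containing both $0$ (relative to $r$) and $a$ (relative to $p$); this is handled by ensuring $M$ is large enough that $3\sigma$ is much smaller than $c_{\pi/4}\tau$, so the point above $a$ in the $p$-chart lies well within the domain of uniqueness guaranteed by Lemma~\ref{lem:M_is_locally_a_fuinction_clean}.
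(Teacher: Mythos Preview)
Your proof is correct, and in fact somewhat cleaner than the paper's. The paper takes a more circuitous route: it first bounds $\|f_q(0)\|\le 5\sigma$ for the chart $f_q$ centered at $q$ (via an auxiliary geometric construction with an intermediate point $s$ on the affine plane through $p$ parallel to $H_0$), and then transfers this to $f_0$ by passing through $r^*=\operatorname{Proj}_{(q,H_0)}(r)$ and invoking an inequality established inside the proof of Lemma~\ref{lem:H0_Tf_Init}. Your approach bypasses both $q$ and $r^*$ entirely: by centering the comparison chart $g$ at $p$ itself (so that $g(0)=0$), you obtain the clean identity $f_0(0)=g(a)-b$ and finish with a single application of Lemma~\ref{lem:f_bound_circle_H_no_func_ver2_clean}. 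Both arguments ultimately rest on that same bounding-ball lemma; yours just reaches it in one step rather than two. One minor remark: you invoke $\|q-p\|\le\sigma$ from the hypothesis $q\in\MM_\sigma$ in the lemma statement, whereas the paper's own proof uses the weaker $\|p-q\|\le 4\sigma$ coming from Theorem~\ref{thm:Step1}; either suffices for the final $10\sigma$ bound, and your reading of the stated hypothesis is legitimate.
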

\begin{proof}  
    First we show that $\|f_{q}(0)\| \leq 5\sigma $ (see Figure \ref{fig:f_q_star0_bound}).
    From Theorem \ref{thm:Step1} we have that $\|p-q\| \leq 4\sigma$. From Theorem \ref{thm:Step1} we also have $\maxangle(H_0, T_{p}\MM) \leq \alpha_M$, where we denote $\alpha_M =  \sqrt{C_\sM/M}$. 
    Then, denoting by $s$ the intersection between the affine plane parallel to $H_0$ that goes through $p$ and the line between $q$  and $(0, f_{q}(0))_{H_0, q}$ (see Fig. \ref{fig:f_q_star0_bound}), we also get that  $\angle (p, q, s) \leq \alpha_M$ as an angle between two lines in the perpendicular domains of two flats whose maximal principal angle is bounded by $\alpha_M$.
    Thus, 
    \begin{equation}\label{eq:q_star-s}
        \|q-s\| \leq 4\sigma 
    .\end{equation} 
    Similarly, 
    \begin{equation} \label{eq:bound_x_of_dist_to_f_00 }
    x = \|p - s\| \leq 4\sigma \sin \alpha_M.
    \end{equation}

    Denote by $y$ the distance between $s$ and $(0,f_{q}(0))_{H_0, q}$. Using Lemma \ref{lem:f_bound_circle_H_no_func_ver2_clean} we have that 
    \[
    y \leq x (\tan \alpha_M + c_1 x/\tau)
    .\]
    substituting \eqref{eq:bound_x_of_dist_to_f_00 }, we have
    \[
    y \leq 4\sigma \sin \alpha_M (\tan \alpha_M + 4c_1 \sigma \sin \alpha_M/\tau).
    \]
    Combining this with \eqref{eq:q_star-s} we have 
    \begin{equation}\label{eq:f_q_bound}
    \|f_{q}(0)\| \leq 4\sigma+y \leq 5 \sigma 
    \end{equation}
    for $M$ large enough, as $\alpha_M = \sqrt{C_\sM/M}$.

    Now we combine \eqref{eq:f_r_star_to_f_q_star} with  \eqref{eq:f_q_bound} and with the fact that  $dist(q,r^*) \leq 2 \sigma$ we conclude the proof.
\end{proof}
\begin{figure}
    \centering
    \includegraphics[width=0.5\linewidth]{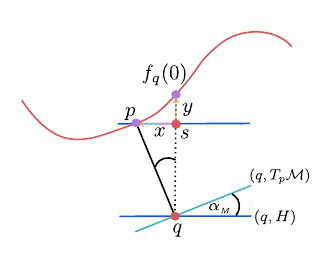}
    \caption{This Figure illustrates concepts from the proof of Lemma \ref{lem:dist_to_f_00}. It demonstrates that the distance between $q$ and $f_q(0)$ is small given that the distance between $q$ and $p=Proj_{\MM} (q)$ is small, and the angle between $H$ and $T_p\MM$ is also small.}
    \label{fig:f_q_star0_bound}
\end{figure}
\begin{Lemma}\label{lem:comute_kappa}
For any $\delta$ and for any $n,\alpha_1,r_1$, Let $C_{0}$ be the constant from Theorem 3.2 of \cite{aizenbud2021VectorEstimation}. We have that 
    \begin{equation}\label{eq:kappa_res_lem}
        \kappa = r_1\log_2(n) + \bar{C}_{\alpha_1,d} - \log\left(\ln\left(\frac{2r_1\log_2(n) + 2\bar{C}_{ \alpha_1,d}}{\delta}\right)\right)
    ,\end{equation}
    where
  \[
    \bar{C}_{ \alpha_1,d} = 1 + \log_2\left(\frac{ \alpha_1}{12\sqrt{d}}\right) -\log_2(C_0 )
    ,
    \]
    satisfies 
    \begin{equation}\label{eq:n_bound1_kappa_lem}
        2^{\kappa-1}C_0 \ln(1/\delta_1) \leq \frac{ \alpha_1}{12\sqrt{d}}n^{r_1},
    \end{equation}
    for $\delta_1 = \frac{\delta}{2\kappa}$, and $C_0$ from Theorem 3.2 of \cite{aizenbud2021VectorEstimation}.
    Furthermore, for $\kappa$ as in \eqref{eq:kappa_res_lem} we have 
    \[
     \alpha_1 2^{-\kappa} \leq C_{d} \ln\left(\frac{1}{\delta}\right) n^{-r_1}\left(\ln\left(\ln(n) \right)\right)^{2r_1}
    \]
\end{Lemma}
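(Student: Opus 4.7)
The plan is to verify both claims by direct substitution, treating $\log$ in \eqref{eq:kappa_res_lem} as $\log_2$ so that $2^\kappa$ unpacks cleanly. Both parts are elementary algebraic manipulations; the only mild subtlety is that the first inequality is self-referential, since $\delta_1 = \delta/(2\kappa)$ depends on $\kappa$, and I handle this by first establishing a crude closed-form upper bound on $\kappa$ and then using monotonicity.

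For \eqref{eq:n_bound1_kappa_lem}, I would take $\log_2$ of both sides and rearrange (using $\delta_1 = \delta/(2\kappa)$) into the equivalent form
\[
\kappa \;\leq\; r_1 \log_2(n) + \bar{C}_{\alpha_1, d} - \log_2 \ln(2\kappa/\delta).
\]
First observe that the definition \eqref{eq:kappa_res_lem} gives $\kappa \leq r_1\log_2(n) + \bar{C}_{\alpha_1, d}$ whenever the subtracted $\log_2 \ln(\cdot)$ is non-negative, which holds throughout the regime of interest (i.e., as long as $(2r_1\log_2 n + 2\bar{C}_{\alpha_1, d})/\delta \geq e$). This crude bound yields $2\kappa \leq 2r_1 \log_2(n) + 2\bar{C}_{\alpha_1, d}$, and by monotonicity of $\log_2 \ln(\cdot/\delta)$ we get
\[
\log_2 \ln(2\kappa/\delta) \;\leq\; \log_2 \ln\!\bigl((2r_1 \log_2 n + 2\bar{C}_{\alpha_1, d})/\delta\bigr).
\]
Substituting this into the rearranged form and plugging in the definition of $\kappa$ directly yields the desired inequality.

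For the second claim, I would substitute \eqref{eq:kappa_res_lem} into $\alpha_1 \cdot 2^{-\kappa}$, obtaining
\[
\alpha_1 \cdot 2^{-\kappa} \;=\; \alpha_1 \cdot n^{-r_1} \cdot 2^{-\bar{C}_{\alpha_1, d}} \cdot \ln\!\bigl((2r_1 \log_2 n + 2\bar{C}_{\alpha_1, d})/\delta\bigr),
\]
and then apply the identity $2^{-\bar{C}_{\alpha_1, d}} = 6\sqrt{d}\, C_0/\alpha_1$ (immediate from the definition of $\bar{C}_{\alpha_1, d}$) to cancel the $\alpha_1$ prefactor. Splitting the remaining logarithm as $\ln(1/\delta) + \ln(2r_1 \log_2 n + 2 \bar{C}_{\alpha_1, d})$ and bounding the second summand by $O(\ln \ln n)$ yields a bound of the form $C_d \ln(1/\delta)\, n^{-r_1}\cdot \mathrm{polylog}(n)$, which can be absorbed into the stated $(\ln \ln n)^{2r_1}$ factor for $n$ sufficiently large by appropriately enlarging $C_d$.

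The main obstacle is purely bookkeeping: carefully tracking the self-referential occurrence of $\kappa$ in $\delta_1 = \delta/(2\kappa)$ through the monotonicity argument, and making sure the crude upper bound on $\kappa$ is valid in the range of $n$ and $\delta$ where the lemma is applied. All other steps reduce to routine algebra with logarithms.
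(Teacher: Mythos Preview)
Your proposal is correct and follows essentially the same approach as the paper: both take $\log_2$ of \eqref{eq:n_bound1_kappa_lem}, rewrite it as $\kappa + \log_2\ln(2\kappa/\delta) \leq x$ with $x = r_1\log_2 n + \bar C_{\alpha_1,d}$, and then verify that $\kappa = x - \log_2\ln(2x/\delta)$ works via the crude bound $\kappa \leq x$ and monotonicity of $\log_2\ln(\cdot/\delta)$. The second part is likewise identical in structure---direct substitution, splitting the logarithm as $\ln(1/\delta) + \ln(2r_1\log_2 n + 2\bar C_{\alpha_1,d})$, and absorbing into constants---with your explicit computation $2^{-\bar C_{\alpha_1,d}} = 6\sqrt{d}\,C_0/\alpha_1$ making the cancellation of $\alpha_1$ slightly more transparent than in the paper.
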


\begin{proof}
We find $\kappa$ that will satisfy \eqref{eq:n_bound1_kappa_lem}. Recalling that $\delta_1 = \frac{\delta}{2\kappa}$, we have that
	\[
    C_0 \left(\ln \frac{1}{\delta_1}\right) = C_0 \left(\ln \frac{2\kappa}{\delta}\right).
	\]
Rewriting \eqref{eq:n_bound1_kappa_lem}, we need $\kappa$ to satisfy
    \begin{equation}
        2^{\kappa-1}C_0\left(\ln \frac{2\kappa}{\delta}\right) \leq \frac{ \alpha_1}{12\sqrt{d}}n^{r_1},
    \end{equation}
    or, taking $\log_2$ of both sides, we have
    \begin{equation}
        \kappa - 1 +\log_2(C_0 ) + \log_2\left(\ln \frac{2\kappa}{\delta}\right) \leq \log_2\left(\frac{ \alpha_1}{12\sqrt{d}}\right) + r_1\log_2(n),
    \end{equation}
    or, 
    \begin{equation}\label{eq:kappa_bound1}
        \kappa  + \log_2\left(\ln \frac{2\kappa}{\delta}\right) \leq 1 + \log_2\left(\frac{ \alpha_1}{12\sqrt{d}}\right) -\log_2(C_0 ) +  r_1\log_2(n),
    \end{equation}
    To simplify the expression we denote the RHS of \eqref{eq:kappa_bound1} by $x$. Then we are looking for $\kappa$ such that
    \begin{equation}\label{eq:kappa_bound_simplified_x}
    \kappa+\log_2\left(\ln\left(\frac{2\kappa}{\delta}\right)\right) < x     
    \end{equation}
    We note that 
    \[
    \kappa = x-\log_2\left(\ln\left(\frac{2x}{\delta}\right)\right)
    \]
    satisfies \eqref{eq:kappa_bound_simplified_x} since
    \[
    x-\log_2\left(\ln\left(\frac{2x}{\delta}\right)\right)   +   \log_2\left(\ln\left( \frac{2x-2\log_2\left(\ln\left(\frac{x}{\delta}\right)\right)}{\delta} \right) \right)<x
    \]
    Thus, the following $\kappa$ satisfies Eq. \eqref{eq:kappa_bound1}
    \begin{equation}\label{eq:kappa_res}
        \kappa = r_1\log_2(n) + \bar{C}_{ \alpha_1,d} - \log\left(\ln\left(\frac{2r_1\log_2(n) + 2\bar{C}_{ \alpha_1,d}}{\delta}\right)\right)
    \end{equation}
    where 
    \[
    \bar{C}_{ \alpha_1,d} = 1 + \log_2\left(\frac{ \alpha_1}{12\sqrt{d}}\right) -\log_2(C_0 ).
    \]
    
    We now bound $ \alpha_1 2^{-\kappa}$ by 
    
       \begin{equation}
    \begin{array}{ll}
     \alpha_1 2^{-r_1\log_2(n) - \bar{C}_{\alpha_1,d} + \log\left(\ln\left(\frac{2r_1\log_2(n) + 2 \bar{C}_{\alpha_1,d}}{\delta}\right)\right)} &=   \alpha_1 n^{-r_1}2^{-\bar{C}_{ \alpha_1,d}}\left(\ln\left(\frac{2r_1\log_2(n) + 2\bar{C}_{ \alpha_1,d}}{\delta}\right)\right)
     \\
         & =  \alpha_1 2^{-\bar{C}_{\alpha_1,d}} n^{-r_1}\left(\ln\left(2r_1\log_2(n) + 2\bar{C}_{ \alpha_1,d}\right)+\ln\left(\frac{1}{\delta}\right)\right)\\
         & \leq C_{ \alpha_1,d} \ln\left(\frac{1}{\delta}\right) n^{-r_1}\left(\ln\left(\ln(n) \right)\right)^{2r_1}
    \end{array}
    \end{equation}
    Since $\alpha_1$ is bounded from above, there is  $ C_{d}$ independent of $\alpha_1$ for which
    \begin{equation}
      \alpha_1 2^{-\kappa} = \alpha_1 2^{-r_1\log_2(n) - \bar{C}_{ \alpha_1,d} + \log\left(\ln\left(\frac{2r_1\log_2(n) + 2 \bar{C}_{ \alpha_1,d}}{\delta}\right)\right)} \leq C_{ d} \ln\left(\frac{1}{\delta}\right) n^{-r_1}\left(\ln\left(\ln(n) \right)\right)^{2r_1}
    \end{equation}
\end{proof}

\subsection{Proof of Lemma \ref{lem:main_support_theorem_step2}}
 \label{subsubsec:proof_of_main_support_lemma}


\begin{proof}
From Lemma \ref{lem:Tf_TtildeF_D} we have that 
\begin{equation}\label{eq:bound_Tfl_Tftile_l}
\maxangle(T_0 f_\ell, T_0 {\wtilde{f}_\ell}) 
\leq  
\frac{\alpha}{6}.    
\end{equation}

From Lemma \ref{lem:AngleImprovement} we have that for any $\delta$ and $n>N_{\delta}$, 
\[
\Pr(\maxangle(T_0\wtilde f_\ell, T_0\pi^*_{q_0, H_\ell}) \geq 2\sqrt{d}\frac{C_0 \ln(1/\delta)}{n^{r_1}}) < \delta.
\]
Thus, as $ 12\sqrt{d} \frac{C_0 \ln(1/\delta)}{n^{r_1}}\leq \alpha $  (see assumption 3 in the lemma), we have 
\begin{equation}\label{eq:pr_alpha_6}
\Pr(\maxangle(T_0\wtilde f_\ell, T_0\pi^*_{q_0, H_\ell}) \geq \frac{\alpha}{6}) < \delta.
\end{equation}
And, from
\[
\maxangle (T_0f_{\ell}, H_{\ell+1}) \leq \maxangle (T_0f_\ell, T_0\wtilde{f}_\ell)  + \maxangle (T_0\wtilde{f}_\ell, H_{\ell+1}). 
\]
along with \eqref{eq:bound_Tfl_Tftile_l} and \eqref{eq:pr_alpha_6}, we have that 
\begin{equation}\label{eq:pr_alpha_3}
\Pr\left(\maxangle (T_0f_{\ell}, H_{\ell+1}) \geq \alpha/3\right) < \delta.    
\end{equation}

In order to apply Lemma \ref{lem:shift_ang_general} we denote $g_0 = f_\ell$, $g_1 = f_{\ell+1/2}$, $G_0 = H_\ell$, and $G_1 = H_{\ell+1}$.
Thus, we have that $\maxangle (T_0g_0, G_0) = \maxangle (T_0f_\ell, H_\ell)\leq \alpha$ and  $\maxangle(G_0,G_1) = \maxangle(H_\ell,H_{\ell+1})\leq \maxangle(H_\ell,T_0f_\ell) + \maxangle(T_0f_\ell,H_{\ell+1})$.
Under the assumption that the event described in \eqref{eq:pr_alpha_3} holds we also know that $\maxangle(G_0,G_1)  \leq \alpha + \alpha/3 = \beta$.
Lastly, as $\alpha \leq \pi/16$, $\beta = \frac{4\alpha}{3} \leq \alpha_c$, and  $\|f_\ell(0)\| \leq c_{\pi/4} \cdot \tau $ (follows from assumption 2 of the lemma), we can use the result of Lemma  \ref{lem:shift_ang_general} and obtain 

\begin{align*}
\|f_{\ell+1}(0)\| &\leq \|f_{\ell}(0)\|(1+(\alpha+\beta)^2 (3+ 2\|f_{\ell}(0)\|/\tau) ) 
\\&
\leq \|f_\ell(0)\|(1+(7/3\alpha)^2 (3+ 2\|f_\ell(0)\|/\tau) ) 
\\&
\leq \|f_\ell(0)\|\left(1+10\alpha^2 \left(3+ 2\sqrt{\frac{1}{32\tau}}\right) \right)
&& \left(\text{since } \|f_\ell(0)\| \leq \sqrt{\tau/32} \right)
\\&
\leq 
\|f_\ell(0)\|\left(1+10\alpha^2 \left(3+ \sqrt{\frac{1}{8\tau}}\right) \right)
\\&
\leq \|f_\ell(0)\|\left(1+40\alpha^2\right) && \left(\text{for } \tau \text{ large enough}\right)
\end{align*}

and 
\[
\begin{aligned}
    \alpha_{\ell+1} &= \maxangle (T_0f_{\ell+1}, H_{\ell+1}) = \maxangle (T_0f_{\ell+1/2}, H_{\ell+1}) \\&\leq \maxangle(T_0f_\ell,H_{\ell+1})  +
    \frac{4\|f_\ell(0)\|\left(2+\|f_\ell(0)\|/\tau\right) \beta} {\tau}.
    \\&= \maxangle(T_0f_\ell,H_{\ell+1})  +
    \frac{16\|f_\ell(0)\|\left(2+\|f_\ell(0)\|/\tau\right) \alpha} {3\tau}.
\end{aligned}
\]
Note that for $\|f_\ell(0)\| \leq\sqrt{\tau/32} -1$, we have $\frac{16\|f_\ell(0)\|\left(2+\|f_\ell(0)\|/\tau\right)} {3\tau}\leq 1/6$.
Furthermore, since $\maxangle(T_0f_\ell,H_{\ell+1}) \leq \alpha/3$, we have 
\[
\maxangle (T_0f_{\ell+1}, H_\ell)  = \maxangle (T_0f_{\ell+1/2}, H_\ell) \leq \alpha/2
.\]
\end{proof}

\subsection{Supporting lemmas for Lemma \ref{lem:main_support_theorem_step2}}
\label{sec:ProofofLemAlphaHalf}

\subsubsection{Bounding the error between $T_0 {f_\ell}$ and $T_0\wtilde{f}_\ell$}\label{sec:Tf_Tf_tilde_err}
\usetikzlibrary{calc}

\begin{figure}[H]
    \centering
    \adjustbox{center}{%
    \begin{tikzpicture}[auto]
        \node [Rbbblock] (main_main_lem) {L \textcolor{blue}{\ref{lem:Tf_TtildeF_D}}
        \[
        \maxangle(T_0 f_\ell, T_0 {\wtilde{f}_\ell}) 
        \leq  
         \frac{\alpha}{6}
        \]};
        
        \node [bbblock, below =1cm of main_main_lem] (main_lem) { L \textcolor{blue}{\ref{lem:bound_Df-Dfwtilde}}
        \vspace{-0.3cm}
         \begingroup\makeatletter\def\f@size{7}\check@mathfonts
        \[
        \|\DD\wtilde{f}(0) - \DD f(0)\| < \eps \alpha
        \]
        \endgroup
        };

        \draw [imp] (main_lem) to[out=90,in=-90,looseness=0.73] (main_main_lem) node [midway, below, sloped] (TextNode) {};
       
        \node [bbblock, below right =1cm and 1cm of main_main_lem] (Diff_Tf_bound) {L \textcolor{blue}{\ref{lem:Diff_Tf_bound}}
        \vspace{-0.3cm}
        \begingroup\makeatletter\def\f@size{7}\check@mathfonts
        \begin{gather*}
    	\norm{\DD_f[0] - \DD_{\tilde f}[0]}_{op}\leq \varepsilon 
    	\\
    	\Downarrow
    	\\
    	\sin(\maxangle(T_0 f, T_0 \wtilde{f})) \leq  \varepsilon 
    	\end{gather*}
    	\endgroup
	};
        
 	   \draw [imp] (Diff_Tf_bound) to[out=90,in=-90,looseness=0.73] (main_main_lem) node [midway, below, sloped] (TextNode) {};

        \node [bbblock, below left = 1cm and 1cm of main_lem] (bound_g) {L \textcolor{blue}{\ref{lem:g_bond_weak}}
        \vspace{-0.3cm}
         \begingroup\makeatletter\def\f@size{7}\check@mathfonts
         \[
        \sigma \leq g(0,\theta) \leq \sigma +  4\sigma \alpha^2
        \]
        \endgroup}; 
        
         \draw [imp] (bound_g) to[out=90,in=-90,looseness=0.73] (main_lem) node [midway, below, sloped] (TextNode) {};

        \node [bbblock, below =1cm of main_lem] (bound_partI) {L \textcolor{blue}{\ref{lem:bound_nablag}}
        \vspace{-0.3cm}
         \begingroup\makeatletter\def\f@size{7}\check@mathfonts
         \[
        \|\nabla_x{g}(0,\theta)\| =  \OO(\frac{\sigma}{\tau}\sin(\alpha))
        \]\endgroup}; 
        
         \draw [imp] (bound_partI) to[out=90,in=-90,looseness=0.73] (main_lem) node [midway, below, sloped] (TextNode) {};

        \node [bbblock, below right =1.5cm and -0.7cm of bound_partI] (bound_J) {L \textcolor{blue}{\ref{lem:bound_J}}
        \vspace{-0.3cm}
        \begingroup\makeatletter\def\f@size{7}\check@mathfonts
        \[
            \|I-J_{\wtilde{x}_\theta}\| = \frac{3\sigma}{\tau} + \OO(\frac{\sigma\sin \alpha}{\tau})
        \]
        \endgroup};

        \node [bbblock, below =0.5cm of bound_J] (Jacobi_bound_clean) {L \textcolor{blue}{\ref{lem:bound_A_inv_B}}
        }; 
        
        \node [bbblock, below right =1cm and -2cm of Jacobi_bound_clean] (Jacobi_expression_clean) {L \textcolor{blue}{\ref{lem:Dxtildew+Dxw}}
        \vspace{-0.3cm}
        \begingroup\makeatletter\def\f@size{7}\check@mathfonts
        \begin{equation*}
        \begin{array}{l}
            \| D_{\wtilde{x}_\theta} w + D_x w\| \leq \mathcal{O}(\frac{\sigma}{\tau}\sin \alpha)\\
            \| D_{\wtilde{x}_\theta} w \| \leq  \mathcal{O}(\sin \alpha)\\ \|D_x w\| \leq \mathcal{O}(\sin \alpha)\\
            1/2 \sigma^2 \leq \Delta(0,\wtilde x_\theta (0)) \leq 2 \sigma^2
        \end{array}
        \end{equation*}
        \endgroup
        }; 
        
        \node [bbblock, below left =1cm and -2cm of Jacobi_bound_clean] (hessian_bounds_clean) {L \textcolor{blue}{\ref{lem:bound_DDf}}  
        \vspace{-0.3cm}
        \begingroup\makeatletter\def\f@size{7}\check@mathfonts
        \[
        \HH f(\wtilde x_\theta)_w = \frac{\sqrt{2}\sigma}{\tau} + \OO(\frac{\sigma\sin \alpha}{\tau})
        \]
        \endgroup};
        
        \node [xbblock,text width=13em, below right =5.5cm and -2.5cm of Jacobi_bound_clean] (Df_Dftilde_diff) {L \textcolor{blue}{\ref{lem:Dfx-Dfxtilde}}
        \vspace{-0.3cm}
        \begingroup\makeatletter\def\f@size{7}\check@mathfonts
        \[
        \|Df(\wtilde{x}_\theta(0)) - Df(0)\|_{op} \leq \OO(\frac{\sigma}{\tau}\sin \alpha)
        \]
        \endgroup
};
        \draw [imp] (bound_J) to[out=90,in=-90,looseness=0.73] (bound_partI) node [midway, below, sloped] (TextNode) {};
        
        \draw [imp] (Jacobi_bound_clean) to[out=90,in=-90,looseness=0.73] (bound_J) node [midway, below, sloped] (TextNode) {};

        \draw [imp] (Jacobi_expression_clean) to[out=90,in=-90,looseness=0.73] (Jacobi_bound_clean) node [midway, below, sloped] (TextNode) {};
        
        \draw [imp] (Jacobi_expression_clean) to[out=90,in=0,looseness=0.73] (bound_J) node [midway, below, sloped] (TextNode) {};
        \draw [imp] (hessian_bounds_clean) to[out=90,in=-90,looseness=0.73] (Jacobi_bound_clean) node [midway, below, sloped] (TextNode) {};
        
        \draw [imp] (Df_Dftilde_diff)  to[out=90,in=-90,looseness=0.73] (Jacobi_expression_clean) node [midway, below, sloped] (TextNode) {};
        
        \draw [imp] (Jacobi_expression_clean)   to[out=90,in=-90,looseness=0.73] (7.2,-8) to[out=90,in=0,looseness=0.7] (bound_partI) node [midway, below, sloped] (TextNode) {};

        \draw [imp] (Df_Dftilde_diff)  to[out=90,in=-90,looseness=0.73] (9.5,-14.3) to[out=90,in=-90,looseness=0.73] (9.5,-8) to[out=90,in=0,looseness=0.7] (bound_partI) node [midway, below, sloped] (TextNode) {};

          \node [xbblock,text width=13em, below left =2.42cm and -1cm of bound_partI] (Grad_fl_bounds) {L  \textcolor{blue}{\ref{lem:bounding_stuff}}
         \vspace{-0.3cm} 
         \begingroup\makeatletter\def\f@size{7}\check@mathfonts
          \begin{equation*}
          \begin{array}{l}
            \|\wtilde{x}_\theta(0)\| \leq \sigma \sin (\alpha + c\frac{\sigma}{\tau}\alpha)
            \\
            \|f(\wtilde{x}_\theta(0))\| \leq 2\sigma \sin (\alpha + c\frac{\sigma}{\tau}\alpha) \tan \alpha 
            \\
            \|Df(0)\|_2 \leq \sin \alpha    
            \\
            \|Df(\wtilde{x}_\theta(0))\|_2 \leq \sin (\alpha + c\frac{\sigma}{\tau}\alpha)
          \end{array}
          \end{equation*}
          \endgroup
            }; 

         \draw [imp] (Grad_fl_bounds) to[out=90,in=-90,looseness=0.73] (bound_partI) node [midway, below, sloped] (TextNode) {};
         
         \draw [imp] (Grad_fl_bounds) to[out=90,in=-90,looseness=0.73] (bound_g) node [midway, below, sloped] (TextNode) {};
         
         \draw [imp] (Grad_fl_bounds) to[out=0,in=180,looseness=0.73] (Jacobi_bound_clean) node [midway, below, sloped] (TextNode) {};

         \node [bbblock, below right =1.6cm and -2.3cm of Grad_fl_bounds] (sectional_derivative_diff_bound) {L \textcolor{blue}{\ref{lem:wtilde_x_bound_clean_D}}
         \vspace{-0.3cm}
         \begingroup\makeatletter\def\f@size{7}\check@mathfonts
         \[
         \|\wtilde{x}_\theta(0)\| \leq \sigma \sin (\alpha + c\frac{\sigma}{\tau}\alpha)
         \]   \endgroup}; 
         
         \node [bbblock, below =4cm of Grad_fl_bounds] (Grad_f_bound) {L \textcolor{blue}{\ref{lem:x_tilde_bound_beta_D}}
         \vspace{-0.3cm}
          \begingroup\makeatletter\def\f@size{7}\check@mathfonts
          \[
        \norm{\wtilde x_\theta(0) } \leq \sigma \sin \beta(\wtilde x_\theta(0))
        \] \endgroup}; 

         \draw [imp] (sectional_derivative_diff_bound) to[out=90,in=-90,looseness=0.73] (Grad_fl_bounds) node [midway, below, sloped] (TextNode) {};

         \draw [imp] (Grad_f_bound) to[out=90,in=-90,looseness=0.73] (sectional_derivative_diff_bound) node [midway, below, sloped] (TextNode) {};

         \node [bbblock, below left =1.6cm and -2.3cm of Grad_fl_bounds] (wtilde_x_bound_clean) {L \textcolor{blue}{\ref{lem:alpha_beta_D}}
         \vspace{-0.3cm}
         	 \begingroup\makeatletter\def\f@size{7}\check@mathfonts
         	 \[
        	\alpha_\ell - c_2\alpha_\ell\frac{\sigma}{\tau}  \leq \beta(x_0) \leq \alpha_\ell + c_1\alpha_\ell\frac{\sigma}{\tau}
        	\]\endgroup
	        };

         \draw [imp] (wtilde_x_bound_clean) to[out=90,in=-90,looseness=0.73] (Grad_fl_bounds) node [midway, below, sloped] (TextNode) {};
          \draw [imp] (Grad_f_bound) to[out=90,in=-90,looseness=0.73] (wtilde_x_bound_clean) node [midway, below, sloped] (TextNode) {};

        \draw [imp] (wtilde_x_bound_clean) to[out=0,in=180,looseness=0.73] (sectional_derivative_diff_bound) node [midway, below, sloped] (TextNode) {};
    \end{tikzpicture}%
    }
    \caption{Road-map for proof of Lemma  \ref{lem:Tf_TtildeF_D}}
    \label{tikz:lemTf-Tftilde_lemmas}
\end{figure}
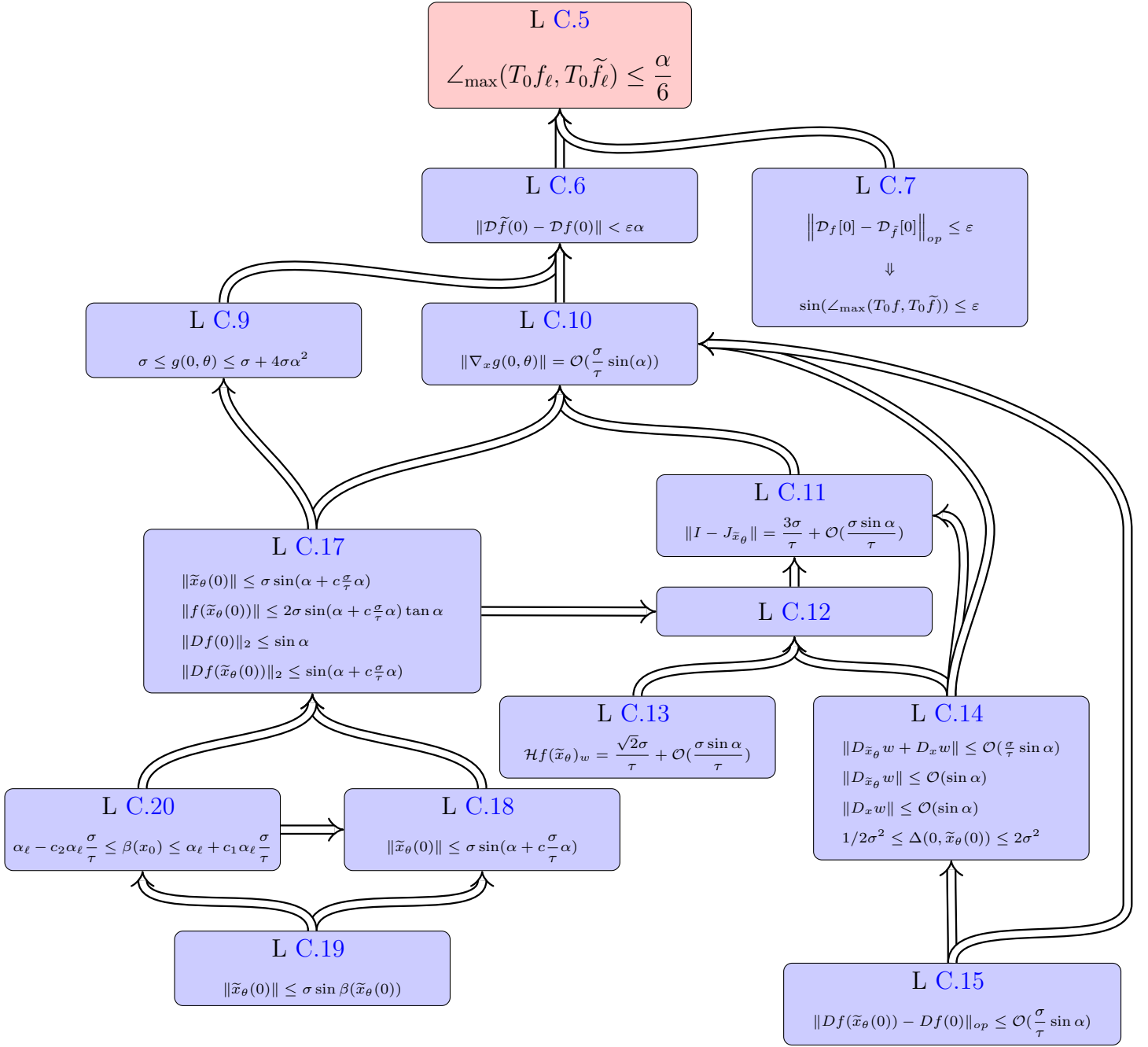

Back to Theorem \ref{thm:Step2} proof road-map see Figure \ref{tikz:thm33_lemmas}.

The main Lemma in the Section is Lemma \ref{lem:Tf_TtildeF_D}. A road-map for the proof appears in Figure~\ref{tikz:lemTf-Tftilde_lemmas}.

\begin{Lemma}\label{lem:Tf_TtildeF_D}
    Let the assumptions of Theorem \ref{thm:Step2} hold, and let $ f:H\simeq\RR^d\to\RR^{D-d} $ be a function such that its graph coincides with a neighborhood on the manifold $\MM$ (see Lemma \ref{lem:M_is_locally_a_fuinction_clean}). Let $\wtilde f:\RR^d\to\RR^{D-d} $ be the regression function defined in \eqref{eq:ftilde_def}. Denoting $\alpha = \maxangle(H, T_0 f)$, there is a constant $C_\stau$ large enough such that for $M = \frac{\tau}{\sigma}>C_\stau\sqrt{D \log D}$, and $\alpha \leq \sqrt{1/D}$, where $C_\sM$ is from Theorem \ref{thm:Step1}, we have 
    \[
	\maxangle(T_0 f, T_0 {\wtilde{f}}) 
	\leq  
	 \frac{\alpha}{6} 
	\]
\end{Lemma}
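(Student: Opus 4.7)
The plan is to reduce the angle bound to a derivative-difference bound, and then grind through the geometric estimates of that derivative difference using the local tilted-graph picture established in the appendix.

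First, I would invoke Lemma \ref{lem:Diff_Tf_bound}, which converts the target angle bound into an operator-norm bound: it suffices to show that $\|\DD f[0] - \DD \wtilde f[0]\|_{op} \leq \sin(\alpha/6)$, and since $\sin(\alpha/6) \geq \alpha/12$ for $\alpha$ small, it is enough to show $\|\DD f[0] - \DD \wtilde f[0]\|_{op} \leq \alpha/12$. This is exactly the content of Lemma \ref{lem:bound_Df-Dfwtilde} (for a suitable choice of $\eps$), so essentially my task reduces to assembling the pieces under the appendix's road-map and confirming that the constants produced there give $\eps \alpha \leq \alpha/6$ under the hypothesis $M \geq C_\stau\sqrt{D\log D}$ and $\alpha \leq \sqrt{1/D}$.

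Next, to see why the derivative-difference bound holds, I would write $\wtilde f(x)$ as the conditional mean of $\eta(y\mid x)$ over the tube $\Omega(x) = (x + H^\perp)\cap \MM_\sigma$. Since the conditional density is uniform along the tube fiber, $\wtilde f(x)$ can be written as an explicit symmetric integral involving a boundary function $g(x,\theta)$ that measures how far $\Omega(x)$ extends from $f(x)$ in direction $\theta \in S^{D-d-1}\subset H^\perp$. When $\alpha = 0$ the set $\Omega(x)$ is symmetric about $f(x)$, so $\wtilde f(x) = f(x)$ and both derivatives agree; the whole point is to quantify how quickly this symmetry breaks in $\alpha$. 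I would then differentiate the integral expression and use the chain rule through the Jacobian of the map $\theta\mapsto \wtilde x_\theta$ that takes a boundary direction to the $H$-projection of the nearest manifold point. The bounds on this Jacobian, on $g$, on $\nabla_x g$, and on $Df(\wtilde x_\theta) - Df(0)$ are precisely what Lemmas \ref{lem:g_bond_weak}, \ref{lem:bound_nablag}, \ref{lem:bound_J}, \ref{lem:bound_A_inv_B}, \ref{lem:Dxtildew+Dxw}, \ref{lem:bound_DDf}, \ref{lem:Dfx-Dfxtilde}, \ref{lem:bounding_stuff}, \ref{lem:wtilde_x_bound_clean_D}, \ref{lem:x_tilde_bound_beta_D}, and \ref{lem:alpha_beta_D} are designed to supply, each one being proved by careful application of the reach-based lemmas (especially Lemmas \ref{lem:f_bound_circle_H_no_func_clean}, \ref{lem:f_bound_circle_H_no_func_ver2_clean}, and \ref{lem:alpha_beta_x}) to the tilted local coordinate system $(r, H)$.

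Finally, I would combine the estimates into the operator-norm bound. The rough accounting is that $g(0,\theta)-\sigma = \OO(\sigma\alpha^2)$ and $\|\nabla_x g(0,\theta)\| = \OO(\sigma\alpha / \tau)$, so after normalization by $|\Omega(0)|$ the derivative of the bias evaluates to something of order $\alpha / M$ per coordinate. Since the differential acts between spaces of dimension $d$ and $D-d$, summing the squared components contributes a factor at most $\sqrt{D}$ in operator norm, and an extra $\sqrt{\log D}$ enters through the covering/volume estimates implicit in the $g$ and Jacobian bounds. Thus $\|\DD f[0] - \DD \wtilde f[0]\|_{op} \leq C \alpha \sqrt{D\log D}/M$ for some universal $C$, and the hypothesis $M \geq C_\stau\sqrt{D\log D}$ with $C_\stau \geq 12C$ gives $\leq \alpha/12 \leq \sin(\alpha/6)$, completing the reduction.

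The main obstacle is the bookkeeping in the derivative computation: each of the supporting lemmas is itself a delicate Taylor expansion of a non-linear function (projection onto the manifold, Jacobian of the implicit map $\theta \mapsto \wtilde x_\theta$, Hessian of $f$) carried out in a tilted frame where $\alpha$, $\sigma/\tau$, and $\|f(0)\|/\tau$ all interact. Keeping the leading-order $\alpha$-dependence sharp, while showing that the cross-terms are absorbed by $1/M$ and the $D$-dependence is only $\sqrt{D\log D}$ (not polynomial in $D$), is the delicate part, and is what forces the quantitative threshold on $M$ in the statement.
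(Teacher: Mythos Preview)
Your high-level reduction is exactly the paper's: the proof of Lemma~\ref{lem:Tf_TtildeF_D} is just the two-line combination of Lemma~\ref{lem:bound_Df-Dfwtilde} (derivative bound $\|\DD_{\wtilde f}[0]-\DD_f[0]\|_{op}\le \eps\alpha$) and Lemma~\ref{lem:Diff_Tf_bound} (derivative bound $\Rightarrow$ angle bound), and your list of supporting lemmas for Lemma~\ref{lem:bound_Df-Dfwtilde} matches the road-map in Figure~\ref{tikz:lemTf-Tftilde_lemmas}.

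However, your ``rough accounting'' paragraph misidentifies the source of the $\sqrt{D\log D}$ factor, and this matters because it is the only place where the hypothesis $M\ge C_\stau\sqrt{D\log D}$ is used. The factor does \emph{not} arise from a Frobenius-to-operator norm conversion on a $d\times(D-d)$ matrix, nor from any covering argument. In the proof of Lemma~\ref{lem:bound_Df-Dfwtilde}, differentiating the integral representation \eqref{eq:f_tilde-f_integrals} produces a factor $\wtilde D+1$ from $\partial_x g^{\wtilde D+1}$, and the integrand contains the factor $z=\vec z^{\,T}\theta$ against surface measure on $S_{\wtilde D-1}$. The paper then splits the sphere into an equatorial band $\{|z|\le\xi\}$ and caps $\{|z|>\xi\}$ and applies the concentration-of-measure inequality \eqref{eq:Omega3_to_ball_ratio}; the choice $\xi=2\sqrt{(\log D)/D}$ balances the two pieces and is precisely what converts the raw $\wtilde D$-prefactor into $\sqrt{D\log D}$. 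Your per-coordinate estimate $\OO(\alpha/M)$ is fine, but without this sphere-concentration step you would be stuck with a bound of order $D\alpha/M$, which would force the much stronger hypothesis $M\gtrsim D$ rather than $M\gtrsim\sqrt{D\log D}$.
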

\begin{proof}
From Lemma \ref{lem:bound_Df-Dfwtilde} we have 
\[
\|\DD_{\wtilde{f}}[0] - \DD_f[0]\| < \frac{\alpha}{6}
\]
From Lemma \ref{lem:Diff_Tf_bound} we have that 
    \[
	\maxangle(T_0 f, T_0 {\wtilde{f}}) 
	\leq   \frac{\alpha}{6}
	\]
	and the proof is concluded. 
\end{proof}

\begin{Lemma}\label{lem:bound_Df-Dfwtilde}
 Let the assumptions of Theorem \ref{thm:Step2} hold, and let $ f:H\simeq\RR^d\to\RR^{D-d} $ be a function such that its graph coincides with a neighborhood on the manifold $\MM$ (see Lemma \ref{lem:M_is_locally_a_fuinction_clean}). Let $\wtilde f:\RR^d\to\RR^{D-d} $ be the regression function   defined in \eqref{eq:ftilde_def}. 
 For any $\eps>0$, denoting $\alpha = \maxangle(H, T_0 f)$, there is a constant $C_\stau$ large enough such that for $M = \frac{\tau}{\sigma} >C_\stau\sqrt{D \log D}$, and $\alpha \leq \sqrt{1/D}$, where $C_\sM$ is from Theorem \ref{thm:Step1}, we have 
    \[
    \|\DD\wtilde{f}(0) - \DD f(0)\| < \eps \alpha
    \]
\end{Lemma}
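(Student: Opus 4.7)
The plan is to express the bias $\wtilde f(x) - f(x)$ explicitly as a barycenter of the tubular cross–section above $x$, differentiate this expression in $x$ using the Leibniz rule, and then control each term using the auxiliary bounds collected in Lemmas \ref{lem:g_bond_weak} and \ref{lem:bound_nablag} (and the ``bounding stuff'' lemma \ref{lem:bounding_stuff}). Throughout I identify $H \simeq \RR^d$, $H^\perp \simeq \RR^{D-d}$, parametrize $H^\perp$ by a unit vector $\theta \in S^{D-d-1}$ together with a radial coordinate, and denote by $g(x,\theta)$ the (positive) distance from the graph of $f$ above $x$ to the boundary of $\MM_\sigma$ in direction $\theta$, following the notation already set up around Figure \ref{fig:draw_step_2_bias_clean}.

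First I would write the conditional expectation concretely. Since $\eta_\ell(\cdot\mid x)$ is uniform on $\Omega_\ell(x) = (x+H^\perp)\cap \MM_\sigma$, and since that fiber can be written as $\{f(x) + s\theta : \theta \in S^{D-d-1},\, -g(x,-\theta) \leq s \leq g(x,\theta)\}$, integration in polar coordinates gives
\[
\wtilde f(x) - f(x) \;=\; \frac{1}{|\Omega_\ell(x)|}\int_{S^{D-d-1}} \theta\, \Phi(x,\theta)\, d\sigma(\theta),
\qquad
\Phi(x,\theta) := \int_{-g(x,-\theta)}^{g(x,\theta)} s^{D-d}\, ds,
\]
where the $s^{D-d}$ weight is the Jacobian of polar coordinates on $H^\perp$. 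The denominator $|\Omega_\ell(x)|$ is uniformly bounded above and below by constants times $\sigma^{D-d}$ by Lemma \ref{lem:g_bond_weak}, since $\sigma \leq g(x,\theta) \leq \sigma(1+O(\alpha^2))$ there.

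Second I would differentiate in $x$, using that the only $x$-dependence inside the integral is through $g(x,\pm\theta)$. By Leibniz,
\[
\partial_{x_i}\Phi(x,\theta) \;=\; g(x,\theta)^{D-d}\,\partial_{x_i}g(x,\theta) \;+\; g(x,-\theta)^{D-d}\,\partial_{x_i}g(x,-\theta).
\]
Combined with a similarly symmetric derivative of the normalizing volume $|\Omega_\ell(x)|$, one sees that every term that survives after differentiation is of the schematic form
\[
\text{(unit vector in }H^\perp)\,\times\,\text{(a }g\text{-weight of order }\sigma^{D-d})\,\times\,\nabla_x g(0,\pm\theta),
\]
divided by something of order $\sigma^{D-d}$. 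Hence $\|\DD\wtilde f(0) - \DD f(0)\|_{\rm op}$ is bounded, up to an absolute constant, by $\sup_{\theta}\|\nabla_x g(0,\theta)\|$. Crucially, the symmetric (even-in-$\theta$) part of $g(0,\theta)-\sigma$ produces contributions whose integrals against $\theta$ vanish, so the leading $\sigma$ and $O(\sigma\alpha^2)$ terms from Lemma \ref{lem:g_bond_weak} drop out. Lemma \ref{lem:bound_nablag} then gives $\|\nabla_x g(0,\theta)\| = O(\tfrac{\sigma}{\tau}\sin\alpha)$.

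Third, I would collect dimensional factors. Averaging over $S^{D-d-1}$ introduces at most a $\sqrt{D-d}$ factor when passing from pointwise-in-$\theta$ bounds to an operator-norm bound on $\DD\wtilde f(0)-\DD f(0)$ (because the columns of this matrix are such averages, and a union bound over a discretization of the sphere in $H^\perp$ costs $O(\sqrt{D\log D})$). This yields
\[
\|\DD\wtilde f(0) - \DD f(0)\|_{\rm op} \;\leq\; C\,\sqrt{D\log D}\,\frac{\sigma}{\tau}\,\sin\alpha
\;\leq\; \frac{C\sqrt{D\log D}}{M}\,\alpha.
\]
Choosing the constant $C_\stau$ so that $C\sqrt{D\log D}/M < \eps$ whenever $M > C_\stau\sqrt{D\log D}$ completes the proof.

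The main obstacle is the bookkeeping in the second step: one must verify that the $O(\sigma)$ and $O(\sigma\alpha^2)$ symmetric parts of $g(0,\theta)$ really do cancel after integration against $\theta$ (otherwise one would get a spurious $O(1)$ bias rather than $O(\sigma/\tau)\sin\alpha$), and one must check that the Leibniz differentiation of both numerator and denominator is valid under the $C^k$ regularity and the tubular-neighborhood geometry of $\MM_\sigma$. Both points reduce to the estimates in Lemma \ref{lem:bounding_stuff} on $\wtilde x_\theta(0)$, $f(\wtilde x_\theta(0))$, and $Df(\wtilde x_\theta(0))$, which control the shape of the fiber smoothly in $x$.
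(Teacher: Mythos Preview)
Your overall structure — polar representation of the fiber, Leibniz differentiation, and appeals to Lemmas \ref{lem:g_bond_weak} and \ref{lem:bound_nablag} — matches the paper's. But there is a genuine gap in how you handle the dimensional factor, and the mechanism you invoke in step 3 is not the right one.

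The issue is already in step 2. When you differentiate the numerator $\int_{\mathbb{S}_{\wtilde D-1}} \theta\, g(x,\theta)^{\wtilde D+1}\, d\theta$ in $x$, the Leibniz rule produces a factor $(\wtilde D+1)$ multiplying $g^{\wtilde D}\nabla_x g$. This factor is of order $D$, not an absolute constant, so your claim that ``$\|\DD\wtilde f(0)-\DD f(0)\|_{\rm op}$ is bounded, up to an absolute constant, by $\sup_\theta\|\nabla_x g(0,\theta)\|$'' is not correct. Your symmetry observation is about the value $\wtilde f(0)-f(0)$ (where oddness of $\theta$ against the even part of $g$ does help), but it does not touch this multiplicative $D$ in the differentiated expression: the integrand $\theta\, g^{\wtilde D}\,\nabla_x g$ has no useful parity because $\nabla_x g(0,\theta)$ is not even in $\theta$ when $H$ is tilted. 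A naive bound therefore gives $\|\DD\wtilde f(0)-\DD f(0)\|_{\rm op}\lesssim D\cdot\tfrac{\sigma}{\tau}\alpha$, which would force $M\gtrsim D$ rather than $M\gtrsim\sqrt{D\log D}$.

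Your step 3 then tries to recover a $\sqrt{D\log D}$ factor from ``averaging over the sphere'' or a ``union bound over a discretization'', but neither of those is what is actually happening; averaging cannot make the bound worse, and the operator-norm-from-columns argument is irrelevant here. The paper's device is different and essential: fix a unit direction $\vec z\in\RR^{\wtilde D}$ and look at $\vec z^{\,T}(\DD\wtilde f(0)-\DD f(0))$, so that the integrand acquires a scalar factor $z=\vec z^{\,T}\theta$. One then splits the sphere into the equatorial band $\Omega_1=\{|z|\le\xi\}$ and the cap $\Omega_2=\{|z|>\xi\}$. On $\Omega_1$ the factor $|z|\le\xi$ knocks the $\wtilde D$ down to $\wtilde D\,\xi$; on $\Omega_2$ one uses the concentration inequality $|\Omega_2|/|\mathbb{S}_{\wtilde D-1}|\le \tfrac{2}{\xi\sqrt{\wtilde D-2}}e^{-(\wtilde D-2)\xi^2/2}$ together with $(1+4\alpha^2)^{\wtilde D}=O(1)$ (this is exactly where $\alpha\le 1/\sqrt D$ is used). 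Choosing $\xi=2\sqrt{\log D/D}$ balances the two pieces and converts the $D$ into $\sqrt{D\log D}$, after which $M>C_\stau\sqrt{D\log D}$ finishes the job. This concentration-of-measure step is the missing idea in your argument.
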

\begin{proof}
We reiterate the definition of $\Omega(x):H \simeq \RR^d \to 2^{\RR^{D-d}}$ from \eqref{eq:Omega_def}
\begin{equation*}
\Omega(x) = (x + H^\perp) \cap  \MM_\sigma
,    
\end{equation*}
where $x + H^\perp = \{x + y ~|~ y\in H^\perp\}$.
Next, denoting $S_{D-d}$ to be the $D-d$ dimensional unit sphere, we define $g(x, \theta):H\times S_{D-d}\to \RR$ the maximal length from $f(x)$ in the direction $\theta$ that is inside $\Omega(x)$.
Explicitly,
\begin{equation}\label{eq:g_def}
g(x, \theta) = \max\{y\in \RR ~|~ (x,f(x) + y\cdot \theta)_H  \in \Omega(x)\}
.    
\end{equation}
Note, that the farthest point from $f(x)$ in $\Omega(x)$ at each direction $\theta$, by which we define $g(x, \theta)$, belongs to $\partial\MM_\sigma$ (the boundary of the domain $\MM_\sigma$), and is therefore exactly $\sigma$ away from some point on the manifold itself.
Since we are viewing the manifold locally as the graph of the function $f:H\to H^\perp$ we denote this point by $(\wtilde x_\theta, f(\wtilde x_\theta(x)))_H$.
Explicitly, $\wtilde{x}_\theta:H \to H$ is such that
\begin{equation}\label{eq:x_tilde_def}
    (x ,f(x) + g(x, \theta) \cdot \theta)_H^T  = (\wtilde x_\theta(x), f(\wtilde x_\theta(x)))_H^T + \sigma \vec N_\theta (x)
,\end{equation}
where $\vec N_\theta \in \RR^D$ is perpendicular to $T_{\wtilde x_\theta}f$.
We introduced the definition of $\wtilde{x}_\theta$ here as it will be pivotal in the proofs of Lemmas \ref{lem:g_bond_weak} and \ref{lem:bound_nablag} upon which the the current proof relies.
Furthermore, we wish to stress here that by Lemma \ref{lem:bounding_stuff} $\wtilde x_\theta(0) \leq \sigma \sin (\alpha + c\alpha\sigma/\tau)$ for some general constant $c$.
Therefore, for sufficiently large $M = \frac{\tau}{\sigma}$, $\wtilde x_\theta(0)$ is within the domain of definition of the function $f$ which by Lemma \ref{lem:M_is_locally_a_fuinction_clean} is of radius of at least $c_{\pi/4}\tau$.

Next, by the definitions and Eq. \eqref{eq:g_def} and \eqref{eq:Omega_def} we have that 
\begin{equation*}
\wtilde{f}(x) - f(x) = \frac{\int\limits_{y\in\Omega(x)} y dy}{\int\limits_{y\in\Omega(x)} dy} 
.\end{equation*}
Since $\Omega \subset \RR^D$ is perpendicular to $H\simeq \RR^d$ we get that $\Omega\simeq \RR^{D-d}$.
Thus, by change of variables we can breakdown the integrals over $\Omega(x)$ to a radial component $r$ and directions on the $(D-d-1)$-dimensional sphere.
Explicitly,
\begin{equation*}
\wtilde{f}(x) - f(x) = 
\frac{\int\limits_{S_{D-d-1}}\int\limits_0^{g(x,\theta)} \theta r r^{D-d-1} drd\theta}{\int\limits_{S_{D-d-1}}\int\limits_0^{g(x,\theta)} r^{D-d-1} drd\theta}  = 
\frac{(D-d)\int\limits_{S_{D-d-1}}\theta g(x,\theta)^{D-d+1} d\theta}{(D-d+1)\int\limits_{S_{D-d-1}}g(x,\theta)^{D-d} d\theta}     
,\end{equation*}
where $dr$ is the measure over the radial component, $r^{D-d-1}$ is the Jacobian introduced by the change of variables and $d\theta$ is the measure over the $(D-d-1)$-dimensional sphere.
For brevity we introduce the notation $\wtilde D = D - d$ and get
\begin{equation}\label{eq:f_tilde-f_integrals}
\wtilde{f}(x) - f(x) = 
\frac{\int\limits_{\mathbb{S}_{\wtilde D-1}}\int\limits_0^{g(x,\theta)} \theta r r^{\wtilde D-1} drd\theta}{\int\limits_{\mathbb{S}_{\wtilde D-1}}\int\limits_0^{g(x,\theta)} r^{\wtilde D-1} drd\theta}  = 
\frac{\wtilde D \int\limits_{\mathbb{S}_{\wtilde D-1}}\theta g(x,\theta)^{\wtilde D+1} d\theta}{(\wtilde D+1)\int\limits_{\mathbb{S}_{\wtilde D-1}}g(x,\theta)^{\wtilde D} d\theta}     
.\end{equation}
 
Next, by taking the differential of that expression with respect to $x$ we have that 
\begin{multline*}
    \DD_{\wtilde{f}}[x] - \DD_f[x] =
\frac{\wtilde D}{(\wtilde D+1)}\left(
\frac{(\wtilde D+1)\int\limits_{\mathbb{S}_{\wtilde D-1}}\theta g(x,\theta)^{\wtilde D} \nabla_x g^T d\theta}
{\int\limits_{\mathbb{S}_{\wtilde D-1}}g(x,\theta)^{\wtilde D} d\theta} \right.\\
-\left.
\frac{\wtilde D\int\limits_{\mathbb{S}_{\wtilde D-1}}\theta g(x,\theta)^{\wtilde D+1} d\theta \int\limits_{\mathbb{S}_{\wtilde D-1}} g(x,\theta)^{\wtilde D-1}\nabla_x g^T d\theta}
{\left(\int\limits_{\mathbb{S}_{\wtilde D-1}}g(x,\theta)^{\wtilde D} d\theta\right)^2}     
\right)
,\end{multline*}
where $\nabla_x g$ stands for the gradient of $g(\theta, x)$ with respect to the $x$ variables only.
As can be seen in the above equation there is a multiplicative factor of size $\approx (D - d)$ for both summends.
In order to deal with this obstacle we wish to utilize the fact that in high dimensions most of the volume of a sphere is concentrated near an equator.
Thus, we split the domain into two different regions that we will deal with separately (we will use this trick in few of the other proofs). 
In case $D-d$ is small, then the following computations can be done without splitting the domain into two regions (one near the equator and the the second being the remaining cap), and include the $D-d$ factor in the constant that will be cancelled by $M$.
Therefore, we assume without losing the generality of our claim that $D-d = \wtilde D > 3$.
For any direction/unit vector $\vec{z}\in S_{\wtilde D}$ denote $\vec{z}^T\theta = z$ and 
\begin{equation}\label{eq:Omegas_def}
    \begin{aligned}
    \Omega_1 &= \{\theta~|~0\leq |\vec{z}^T\theta| \leq \xi\}\\
    \Omega_2 &= \{\theta~|~|\vec{z}^T\theta| > \xi\}
    \end{aligned}
\end{equation}
For some $\xi$ to be chosen later. Using the above notation, we have 
\begin{multline}\label{eq:zDftilde}
    \vec{z}^T\cdot  (\DD_{\wtilde{f}}[x] - \DD_f[x] )=
\frac{\wtilde D}{(\wtilde D+1)}\left(
\underbrace{\frac{(\wtilde D+1)\int\limits_{\mathbb{S}_{\wtilde D-1}}z g(x,\theta)^{\wtilde D} \nabla_x g^T d\theta}
{\int\limits_{\mathbb{S}_{\wtilde D-1}}g(x,\theta)^{\wtilde D} d\theta}  }_{I} \right.  \\
-\left.
\underbrace{\frac{\wtilde D\int\limits_{\mathbb{S}_{\wtilde D-1}}z g(x,\theta)^{\wtilde D+1} d\theta \int\limits_{\mathbb{S}_{\wtilde D-1}} g(x,\theta)^{\wtilde D-1}\nabla_x g^T d\theta}
{\left(\int\limits_{\mathbb{S}_{\wtilde D-1}}g(x,\theta)^{\wtilde D} d\theta\right)^2}}_{II}     
\right)
\end{multline}
First we treat part $(I)$ of Eq. \eqref{eq:zDftilde} by splitting the domain into $\Omega_1$ and $\Omega_2$ of Eq. \ref{eq:Omegas_def}.

\begin{align*}
    (I) &= \frac{(\wtilde D+1)\left(\int\limits_{\Omega_1}z g(x,\theta)^{\wtilde D} \nabla_x g^T d\theta + \int\limits_{\Omega_2}z g(x,\theta)^{\wtilde D} \nabla_x g^T d\theta \right) }
{\int\limits_{\mathbb{S}_{\wtilde D-1}}g(x,\theta)^{\wtilde D} d\theta} 
\end{align*}

From \ref{lem:g_bond_weak} we have that $0 < \sigma \leq g(0,\theta)\leq \sigma + 4\sigma\alpha^2$, and from Lemma \ref{lem:bound_nablag} we have $\|\nabla_x g(0,\theta)\|\leq c_1 \sigma \alpha/\tau$. Then,
\begin{align*}
\norm{(I)} &\leq \frac{(\wtilde D+1)\left(\xi\int\limits_{\Omega_1} g(x,\theta)^{\wtilde D} \|\nabla_x g^T\| d\theta + \int\limits_{\Omega_2} g(x,\theta)^{\wtilde D} \|\nabla_x g^T\| d\theta \right)}
{\int\limits_{\mathbb{S}_{\wtilde D-1}}g(x,\theta)^{\wtilde D} d\theta} \\
&\leq  \frac{(\wtilde D+1)c_1 \sigma \alpha \left(\xi\int\limits_{\Omega_1} g(x,\theta)^{\wtilde D}  d\theta + \int\limits_{\Omega_2} g(x,\theta)^{\wtilde D}  d\theta \right)}
{\tau\int\limits_{\mathbb{S}_{\wtilde D-1}}g(x,\theta)^{\wtilde D} d\theta}
\\
&\leq  \frac{\xi(\wtilde D+1)c_1 \sigma \alpha}{\tau} + \frac{(\wtilde D+1)c_1 \sigma \alpha \int\limits_{\Omega_2} g(x,\theta)^{\wtilde D}  d\theta }
{\tau\int\limits_{\mathbb{S}_{\wtilde D-1}}g(x,\theta)^{\wtilde D} d\theta}
\\
&\leq  \frac{\xi(\wtilde D+1)c_1 \sigma \alpha}{\tau} + \frac{(\wtilde D+1)c_1 \sigma \alpha ( \sigma + 4\sigma\alpha^2)^{\wtilde D}\int\limits_{\Omega_2}  d\theta }
{\tau \sigma^{\wtilde D}\int\limits_{\mathbb{S}_{\wtilde D-1}} d\theta}
\end{align*}

Furthermore, using the following concentration of measure inequality (see e.g. \cite{milman2009asymptotic,Venkatesan2012high})
\begin{equation}\label{eq:Omega3_to_ball_ratio}
\frac{\int\limits_{\Omega_2}  d\theta}{\int\limits_{\mathbb{S}_{\wtilde D-1}}  d\theta} \leq \frac{2}{\xi\sqrt{\wtilde D-2}}e^{-(\wtilde D - 2 )\xi^2/2}
,\end{equation}
we have that,
\begin{align*}
    \norm{(I)} 
&\leq  \frac{\xi(\wtilde D+1)c_1 \sigma\alpha}{\tau} + \frac{(\wtilde D+1)c_1 \sigma\alpha (1 + 4\alpha^2)^{\wtilde D} }
{\tau}\frac{2}{\xi\sqrt{\wtilde D-2}}e^{-(\wtilde D - 2 )\xi^2/2}
\\
&\leq  \frac{\xi(\wtilde D+1)c_1 \sigma\alpha}{\tau} + \frac{c_2 \sigma\alpha \sqrt{\wtilde D+1}( 1 + 4\alpha^2)^{\wtilde D} }
{\tau}\frac{2}{\xi}e^{-(\wtilde D - 2 )\xi^2/2}
,\end{align*}
where $c_2 = 4c_1 \geq \frac{\sqrt{\wtilde D +1}}{\sqrt{\wtilde D - 2}} c_1$.

Since $M = \frac{\tau}{\sigma}>C_\stau\sqrt{D \log D}$, we choose $\xi = 2\sqrt{\log D/D}$ and we have

\begin{align*}
    \norm{(I)} 
&\leq \frac{2c_1 \alpha(\wtilde D+1)}{C_\stau D} + \frac{c_2 \alpha \sqrt{\wtilde D+1}( 1 + 4\alpha^2)^{\wtilde D} }
{C_\stau\sqrt{D \log D}}\frac{2}{\xi}e^{-(\wtilde D - 2 )\xi^2/2}
\\
&\leq \alpha\left(\frac{2c_1 }{C_\stau} + \frac{c_1 \sqrt{\wtilde D+1}( 1 + 4\alpha^2)^{\wtilde D} } 
{C_\stau\sqrt{D\log D }}\frac{4\sqrt{D}}{\sqrt{\log D}}e^{-\log D}\right)
\\
&\leq \alpha\left(\frac{2c_1 }{C_\stau} + \frac{2c_1 \sqrt{D}( 1 + 4\alpha^2)^{\wtilde D} }
{C_\stau D\log D }\right)
,\end{align*}
where the second inequality is true since  $(\wtilde D-2)/D \geq 1/2$.
Since $\alpha \leq  \sqrt{1/D} $, and we also have that $M>C_\stau\sqrt{D \log D}$ we have that 
$( 1 + 4\alpha^2)^{\wtilde D}$ is bounded for any $D$. Thus, we have that for $C_\stau$ large enough, 
\begin{align}
\norm{(I)} 
&\leq \frac{\eps}{2}\alpha
\end{align}
for any $D$.

Next we bound $\|(II)\|$ from \eqref{eq:zDftilde}:
\begin{align*}
\frac{\wtilde D\int\limits_{\mathbb{S}_{\wtilde D-1}}z g(x,\theta)^{\wtilde D+1} d\theta \int\limits_{\mathbb{S}_{\wtilde D-1}} g(x,\theta)^{\wtilde D-1}\nabla_x g^T d\theta}
{\left(\int\limits_{\mathbb{S}_{\wtilde D-1}}g(x,\theta)^{\wtilde D} d\theta\right)^2}
\end{align*}
First, we note that 
\begin{align*}
\norm{\frac{\int\limits_{\mathbb{S}_{\wtilde D-1}} g(x,\theta)^{\wtilde D-1}\nabla_x g^T d\theta}
{\int\limits_{\mathbb{S}_{\wtilde D-1}}g(x,\theta)^{\wtilde D} d\theta}} & \leq \frac{c_1 \sigma\alpha}{\tau} \frac{\int\limits_{\mathbb{S}_{\wtilde D-1}} g(x,\theta)^{\wtilde D-1} d\theta}
{\int\limits_{\mathbb{S}_{\wtilde D-1}}g(x,\theta)^{\wtilde D-1} d\theta} = \frac{c_1 \sigma \alpha}{\tau}.
\end{align*}
Thus, similarly to the way we bounded $(I)$, we split the domain to $\Omega_1$ and $\Omega_2$ of Eq. \eqref{eq:Omega_def} and achieve
\begin{align*}
\norm{(II)} &\leq \frac{\wtilde Dc_1 \sigma\alpha}{\tau}\frac{\int\limits_{\mathbb{S}_{\wtilde D-1}}z g(x,\theta)^{\wtilde D+1} d\theta}
{\int\limits_{\mathbb{S}_{\wtilde D-1}}g(x,\theta)^{\wtilde D} d\theta}
\\
&\leq \frac{\wtilde Dc_1 \sigma\alpha}{\tau} \frac{ \left(\xi\int\limits_{\Omega_1} g(x,\theta)^{\wtilde D+1}  d\theta + \int\limits_{\Omega_2} g(x,\theta)^{\wtilde D+1}  d\theta \right)}
{\int\limits_{\mathbb{S}_{\wtilde D-1}}g(x,\theta)^{\wtilde D} d\theta}
\\
&\leq  \frac{\xi\wtilde Dc_1 \sigma\alpha}{\tau} + \frac{\wtilde Dc_1 \sigma\alpha ( \sigma + 4\sigma \alpha^2)^{\wtilde D+1}\int\limits_{\Omega_2}  d\theta }
{\tau \sigma^{\wtilde D}\int\limits_{\mathbb{S}_{\wtilde D-1}} d\theta}
\\
&\leq  \frac{\xi\wtilde Dc_1 \sigma\alpha}{\tau} + \frac{\wtilde Dc_1 \sigma^2\alpha ( 1 + 4\alpha^2)^{\wtilde D+1}\int\limits_{\Omega_2}  d\theta }
{\tau \int\limits_{\mathbb{S}_{\wtilde D-1}} d\theta}
\end{align*}
From \eqref{eq:Omega3_to_ball_ratio} we have that 
\begin{align*}
\norm{(II)} &\leq  \frac{\xi\wtilde Dc_1 \sigma\alpha}{\tau} + \frac{\wtilde Dc_1 \sigma^2\alpha ( 1 + 4\alpha^2)^{\wtilde D+1}}
{\tau} \frac{2}{\xi\sqrt{\wtilde D-2}}e^{-(\wtilde D - 2 )\xi^2/2}
\end{align*}
Since $M = \frac{\tau}{\sigma}>C_\stau\sqrt{D\log D}$, and choosing again $\xi = 2\sqrt{\log D/D}$ we have

\begin{align*}
\norm{(II)} &\leq  \frac{2\wtilde Dc_1 \alpha}{C_\stau D} + \frac{\wtilde Dc_1 \sigma\alpha ( 1 + 4\alpha^2)^{\wtilde D+1}}
{C_\stau \sqrt{D\log D}} \frac{\sqrt{D}}{\sqrt{\log D} \sqrt{\wtilde D-2}}e^{-\log D}
\\
&\leq \frac{2c_1 \alpha}{C_\stau } + \frac{\alpha c_1 \sigma ( 1 + 4\alpha^2)^{\wtilde D+1}}
{C_\stau D\log D} \frac{1}{\sqrt{\wtilde D-2}}
\end{align*}
Since From Theorem \ref{thm:Step1} we have that $\alpha \leq \sqrt{ 1/D}$, and thus 
$( 1 + \alpha)^{\wtilde D}$ is bounded for any $D$. Thus, we have that for $C_\stau$ large enough, 
\begin{align}
\norm{(II)}&\leq \frac{\eps}{2}\alpha
\end{align}
for any $D$.

Thus, we have from \eqref{eq:zDftilde} that for any unit vector $z$, $\|\vec{z}( \DD_{\wtilde{f}}[x] - \DD_f[x])\| \leq \eps\alpha$
or 
\[
\| \DD_{\wtilde{f}}[x] - \DD_{f}[x]\|_{op} \leq \eps\alpha
\]
\end{proof}

\begin{Lemma}\label{lem:Diff_Tf_bound}
	Let $ f, \tilde f $ be functions from $ \RR^d $ to $ \RR^{D-d} $, and denote their differentials by $ \DD_f, \DD_{\wtilde f} $ respectively.
	Denote by $ T_0 f, T_0 \wtilde f $ the tangent planes of the graphs of $ f $ and $ \wtilde f $ respectively.
	Assume that
	\[
	\norm{\DD_f[0] - \DD_{\tilde f}[0]}_{op}\leq \varepsilon 
	.\]
	Then, for sufficiently small $ \varepsilon $
	\[
	\sin(\maxangle(T_0 f, T_0 \wtilde{f})) \leq  \varepsilon 
	.\]
\end{Lemma}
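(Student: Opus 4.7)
The plan is to exploit that a tangent plane to a graph of a function $h: \RR^d \to \RR^{D-d}$ at $0$ is precisely the column space of the block matrix $\bigl(\begin{smallmatrix} I_d \\ \DD_h[0] \end{smallmatrix}\bigr)$. So we can write
\[
T_0 f = \{(x, \DD_f[0]\, x) : x \in \RR^d\}, \qquad T_0 \wtilde f = \{(x, \DD_{\wtilde f}[0]\, x) : x \in \RR^d\},
\]
and the difference of the two ``graph parametrizations'' at the same base vector $x$ is controlled directly by $\|\DD_f[0] - \DD_{\wtilde f}[0]\|_{op} \leq \varepsilon$.

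First I would invoke the standard characterization of the largest principal angle for equidimensional subspaces, namely
\[
\sin\bigl(\maxangle(T_0 f, T_0\wtilde f)\bigr) \;=\; \sup_{\substack{v \in T_0 f \\ \|v\|=1}} \operatorname{dist}(v, T_0 \wtilde f),
\]
which follows from Definition \ref{def:principal_angles_clean} together with the observation that $\dim T_0 f = \dim T_0 \wtilde f = d$ (see also the projection-gap identity $\|P_{T_0 f} - P_{T_0 \wtilde f}\|_{op}$). This reduces the lemma to producing, for each unit vector $v \in T_0 f$, a near-by vector in $T_0 \wtilde f$.

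Second, for any unit $v \in T_0 f$ write $v = (x, \DD_f[0]\, x)$ with $\|x\|^2 + \|\DD_f[0]\, x\|^2 = 1$ (so in particular $\|x\| \leq 1$). Take the candidate $\wtilde v = (x, \DD_{\wtilde f}[0]\, x) \in T_0 \wtilde f$. The top $d$ coordinates agree, hence
\[
v - \wtilde v = \bigl(0,\ (\DD_f[0] - \DD_{\wtilde f}[0])\, x\bigr),
\]
and so $\|v - \wtilde v\| = \|(\DD_f[0] - \DD_{\wtilde f}[0])\, x\| \leq \varepsilon \|x\| \leq \varepsilon$. Since $\wtilde v \in T_0 \wtilde f$, this yields $\operatorname{dist}(v, T_0 \wtilde f) \leq \varepsilon$; taking the supremum over unit $v \in T_0 f$ closes the argument.

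There is essentially no obstacle here beyond quoting the correct identity in the first step; the ``sufficiently small $\varepsilon$'' qualifier is only needed to keep $\maxangle(T_0 f, T_0 \wtilde f) < \pi/2$ so that applying $\arcsin$ to the bound is meaningful (equivalently, so that the two tangent planes remain transverse in the sense demanded by the graph parametrization). No iteration, truncation, or fine estimation is required — the entire content is a one-line perturbation of the graph representation combined with the standard sine-of-principal-angle formula.
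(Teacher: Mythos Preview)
Your proposal is correct and follows essentially the same approach as the paper: both represent the tangent planes as $\{(x,\DD_h[0]x):x\in\RR^d\}$, both use the identity $\sin(\maxangle(L_1,L_2))=\sup_{v\in L_1,\,\|v\|=1}\operatorname{dist}(v,L_2)$ (which the paper isolates as Lemma~\ref{lem:Q_angle_between_flats} in terms of the operator $Q_{L_1,L_2}(v)=v-Proj_{L_2}(v)$), and both bound this distance by choosing the obvious candidate $\wtilde v=(x,\DD_{\wtilde f}[0]x)\in T_0\wtilde f$. The arguments are functionally identical.
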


\begin{proof}
	By definition we know that
	\[
	T_0 f = \{ \DD_{f}[0]v ~|~ v\in \RR^d \} \subset \RR^D
	,\]
	and
	\[
	T_0 \wtilde f_\ell = \{ (v, \DD_{\wtilde f}[0]v) ~|~ v\in \RR^d \} \subset \RR^D
	.\] 
	Let $ L_1, L_2 $ be two linear spaces, denote by $ Q_{L_1, L_2} :L_1\to L_2^\perp $  the following operator
	\[
	Q_{L_1, L_2} = {v - Proj_{L_2}(v)}
	.\]
	By Lemma \ref{lem:Q_angle_between_flats} we know that 
	\[
	\sin(\maxangle\lrbrackets{L_1, L_2}) = \norm{Q_{L_1, L_2}}_{op}
	.\]
	We now turn to look at the operator $ Q_{T_0 f, T_0 \wtilde f } $ operating on some vector $ v\in T_0 f $.
	\begin{align*}
	\norm{Q_{T_0 f, T_0 \wtilde f}(v,\DD_{f}[0]v )} 
	&\leq 
	\norm{(v,\DD_{f}[0]v ) - (v,\DD_{\wtilde f}[0]v )}\\
	&\leq 
	\norm{\DD_{f}[0] - \DD_{\wtilde f}[0]}_{op}\norm{(v,\DD_{f}[0]v)}
	,\end{align*}
	and thus,
	\[
	\sin (\angle_{max}(T_0 f, T_0 \wtilde f ))  = \|Q_{T_0 f, T_0 \wtilde f}\|_{op} \leq \|\DD_{f}[0] - \DD_{\wtilde f}[0]\|_{op} \leq  \varepsilon  
	\]
\end{proof}

\begin{Lemma}\label{lem:Q_angle_between_flats}
    Let $L_1$ and $L_2$ be two linear subspaces of $\RR^D$. Denote $Q_{L_1,L_2}:L_1 \rightarrow L_2^\perp$ defined as
    \[
    Q_{L_1,L_2}(v) = v - Proj_{L_2}(v)
    ,\]
    where $ Proj_{L_2} $ is the projection onto $ L_2 $.
    Then, 
    \[
    \sin \angle_{max}(L_1,L_2) = \|Q_{L_1,L_2}\|_{op}
    .\]
\end{Lemma}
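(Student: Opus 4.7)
The plan is to identify $\|Q_{L_1,L_2}\|_{op}$ with $\sin$ of the largest principal angle by routing through singular values of the restricted projection $P_{L_2}|_{L_1}$. Since the paper's Definition \ref{def:principal_angles_clean} assumes $\dim L_1 \leq \dim L_2$, I will work under this convention; the case $\dim L_1 > \dim L_2$ follows by the same computation applied to $P_{L_1}|_{L_2}$ together with the symmetry of $\maxangle$.

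First I would reduce the operator norm to a singular-value computation. For any unit $v \in L_1$, the vector $v - P_{L_2}(v)$ lies in $L_2^\perp$, so by the Pythagorean identity
\[
\|Q_{L_1,L_2}(v)\|^2 \;=\; \|v\|^2 - \|P_{L_2}(v)\|^2 \;=\; 1 - \|P_{L_2}(v)\|^2.
\]
Taking the supremum over unit vectors $v \in L_1$ yields
\[
\|Q_{L_1,L_2}\|_{op}^2 \;=\; 1 - \min_{v \in L_1,\, \|v\|=1} \|P_{L_2}(v)\|^2 \;=\; 1 - \sigma_{\min}^2\bigl(P_{L_2}|_{L_1}\bigr),
\]
where $\sigma_{\min}$ denotes the smallest singular value.

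Next I would invoke the classical identification (see \cite{bjorck1973numerical}) that the cosines of the principal angles $\beta_1 \leq \cdots \leq \beta_d$ between $L_1$ and $L_2$ are exactly the singular values of $P_{L_2}|_{L_1}$ in decreasing order; in particular $\cos \beta_d = \sigma_{\min}(P_{L_2}|_{L_1})$. This can be read off directly from Definition \ref{def:principal_angles_clean}: writing $\cos \angle(u,w) = \langle u, w \rangle = \langle u, P_{L_2}(w) \rangle$, the first step $\beta_1 = \min \angle(u,w)$ is equivalent to $\cos \beta_1 = \max_{u \in L_1,\|u\|=1} \|P_{L_2}(u)\| = \sigma_{\max}(P_{L_2}|_{L_1})$, and the recursive orthogonality constraints on $\UU_{i-1}, \WW_{i-1}$ then peel off the singular values in order, terminating at $\cos \beta_d = \sigma_{\min}(P_{L_2}|_{L_1})$.

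Combining the two ingredients gives
\[
\|Q_{L_1,L_2}\|_{op}^2 \;=\; 1 - \cos^2 \beta_d \;=\; \sin^2 \maxangle(L_1, L_2),
\]
and taking square roots of nonnegative quantities concludes the proof. The only substantive ingredient is the SVD characterization of principal angles, which is standard and can be cited rather than re-derived; everything else is a one-line Pythagorean computation, so I do not anticipate any obstacle.
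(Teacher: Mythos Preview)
Your proof is correct. Both your argument and the paper's hinge on the same core fact---that the cosines of the principal angles are the singular values of $P_{L_2}|_{L_1}$---but you arrive there differently. You invoke the Pythagorean identity $\|Q(v)\|^2 = 1 - \|P_{L_2}v\|^2$ to reduce the operator norm to $\sigma_{\min}(P_{L_2}|_{L_1})$ and then cite the Bj\"orck--Golub SVD characterization of principal angles as a known result. The paper instead re-derives that characterization from scratch: it proves by induction on the variational Definition~\ref{def:principal_angles_clean} that each $w_i$ can be taken as the normalized projection $Proj_{L_2}(u_i)/\|Proj_{L_2}(u_i)\|$, which is equivalent to showing that the $u_i$ are the right singular vectors of $P_{L_2}|_{L_1}$. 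Your route is shorter and more transparent because it treats the SVD identification as standard; the paper's route is self-contained but longer, and its final step (passing from ``$w_i$ is the normalized projection of $u_i$'' to the operator-norm identity) is left implicit.
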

\begin{proof}
	Recalling Definition \ref{def:principal_angles_clean}, the Principal Angles $ \beta_i $ between  $ L_1, L_2 $ and their corresponding pairs of vectors $ u_i\in L_1, w_i\in L_2 $ are defined as
	\begin{equation*}
	\begin{array}{ll}
	u_1, w_1 \defeq \argmin\limits_{\substack{u\in L_1, w\in L_2 \\ \norm{u}=\norm{w} =1}}\arccos\left(\abs{\langle u, w \rangle}\right), &
	\beta_1 \defeq \angle(u_1, w_1)
	\end{array}    
	,\end{equation*}
	and for $i > 1$
	\begin{equation*}
	\begin{array}{ll}
	u_i, w_i \defeq \argmin\limits_{\substack{u\perp\UU_{i-1}, w\perp\WW_{i-1} \\ \norm{u}=\norm{w} =1}}\arccos\left(\abs{\langle u, w \rangle}\right), &
	\beta_i \defeq \angle(u_i, w_i)
	\end{array}
	,\end{equation*}
	where $$\UU_i \defeq Span\{u_j\}_{j=1}^i ~,~ \WW_i \defeq Span\{w_j\}_{j=1}^i.$$
	We now wish to show that for all $ i $ we can choose
	\[
	w_i = \frac{Proj_{L_2}(u_i)}{\norm{Proj_{L_2}(u_i)}}
	.\]
	Since the definition is inductive so will be our proof.
	\paragraph*{Basis of the induction $ i=1 $:}
	We first denote
	\[
	v_1 = Proj_{L_2(u_1)}
	.\]
	Note that,
	\[
	\beta_1 = \angle(u_1, w_1) = \angle(u_1, \norm{v_1} w_1 )
	,\]
	and by the minimization problem defining $ \beta_1 $ we know that
	\[
	\beta_1 \leq \angle(u_1, v_1)
	.\]
	Then, since the projection onto a linear space minimizes the Least-Squares norm we get
	\begin{align*}
	\lrangle{u_1 - v_1, u_1 - v_1} 
	&\leq 
	\lrangle{u_1 - \norm{v_1}w_1, u_1 - \norm{v_1}w_1}\\
	\lrangle{u_1,u_1} - 2\lrangle{u_1, v_1} + \lrangle{v_1,v_1} 
	&\leq
	 \lrangle{u_1,u_1} - 2\lrangle{u_1, \norm{v_1}w_1} + \lrangle{\norm{v_1}w_1,\norm{v_1}w_1} \\
	 - 2\lrangle{u_1, v_1} + \norm{v_1}^2 
	 &\leq
	 - 2\lrangle{u_1, \norm{v_1}w_1} + \norm{v_1}^2 \\
	 \lrangle{u_1, \norm{v_1}w_1} 
	 &\leq
	 \lrangle{u_1, v_1}\\
	 \lrangle{u_1, w_1} 
	 &\leq
	 \lrangle{u_1, \frac{v_1}{\norm{v_1}}}
	 \\
	 \beta_1 =
	 \angle(u_1, w_1) =
	 \arccos(\lrangle{u_1, w_1}) 
	 &\geq
	 \arccos(\lrangle{u_1, \frac{v_1}{\norm{v_1}}}) = \angle(u_1, v_1)
	.\end{align*}
	Thus, 
	\[
	\angle(u_1, v_1) = \beta_1
	,\]
	and we can choose $ w_1 = v_1 $.
	\paragraph*{The induction step:}
	Now we assume that for all $ i\leq j $
	\[
	w_i = \frac{v_i}{\norm{v_i}}
	,\]
	where
	\[
	v_i = Proj_{L_2}(u_i)
	.\]
	And, we wish to show that 
	\[
	\angle(u_{j+1}, v_{j+1}) = \beta_{j+1}
	,\]
	where the fact that $ \beta_{j+1} \leq  \angle(u_{j+1}, v_{j+1})$ results directly from the definition of $ \beta_{j+1} $.
	
	We first note that since
	\[
	u_{j+1}\perp\UU_j
	,\]
	we have
	\[	Proj_{L_2}(u_{j+1})\notin Span\{Proj_{L_2}(u_i)\}_{i=1}^j = \WW_j
	,\]
	thus, 
	\[
	v_{j+1} \in \WW_j^\perp  
	.\]
	From here on we can repeat the same argument as in the basis of the induction, just replacing $ L_1, L_2 $ with $ \UU_j^\perp, \WW_j^\perp $ respectively.
\end{proof}

\begin{Lemma}\label{lem:g_bond_weak}
Let the conditions of Lemma \ref{lem:bound_Df-Dfwtilde} hold.
Let $g(x, \theta)$ be as defined in Equation~\eqref{eq:g_def}.
Then, for $\alpha$ smaller than some constant and $M$ larger then some constant, 
\[
\sigma \leq g(0,\theta) \leq \sigma +  4\sigma \alpha^2
\]
\end{Lemma}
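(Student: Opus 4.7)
The plan is to treat the two inequalities separately. The lower bound is essentially immediate: since the graph of $f$ lies inside $\MM$, the point $(0, f(0))_H$ belongs to $\MM$, and therefore $(0, f(0) + y\theta)_H \in \MM_\sigma$ for every $y \in [0,\sigma]$. By the definition of $g$ in \eqref{eq:g_def}, this forces $g(0,\theta)\geq \sigma$.

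For the upper bound, I would exploit the geometric identity \eqref{eq:x_tilde_def} established in the proof of Lemma~\ref{lem:bound_Df-Dfwtilde}, namely
\[
(0, f(0) + g(0,\theta)\theta)_H \;=\; (\wtilde x_\theta(0),\, f(\wtilde x_\theta(0)))_H + \sigma\, \vec N_\theta(0),
\]
with $\|\vec N_\theta(0)\| = 1$ (since the boundary point of $\MM_\sigma$ reached by going from $f(0)$ in direction $\theta$ is exactly $\sigma$-away from its footpoint on $\MM$). Splitting this identity into its $H$ and $H^\perp$ components gives the Pythagorean relation
\[
\sigma^2 \;=\; \|\wtilde x_\theta(0)\|^2 + \bigl\|g(0,\theta)\theta - \bigl(f(\wtilde x_\theta(0)) - f(0)\bigr)\bigr\|^2,
\]
so dropping the nonnegative term $\|\wtilde x_\theta(0)\|^2$ and invoking the triangle inequality yields
\[
g(0,\theta) \;=\; \bigl\|g(0,\theta)\theta\bigr\| \;\leq\; \bigl\|g(0,\theta)\theta - \bigl(f(\wtilde x_\theta(0)) - f(0)\bigr)\bigr\| + \bigl\|f(\wtilde x_\theta(0)) - f(0)\bigr\| \;\leq\; \sigma + \bigl\|f(\wtilde x_\theta(0)) - f(0)\bigr\|.
\]

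The remaining work is to bound the increment $\|f(\wtilde x_\theta(0)) - f(0)\|$ by $4\sigma \alpha^2$. For this I would apply the mean value theorem along the segment $[0, \wtilde x_\theta(0)] \subset H$, obtaining $\|f(\wtilde x_\theta(0)) - f(0)\| \leq \sup_{t\in[0,1]} \|\DD f(t\wtilde x_\theta(0))\|_{op}\cdot \|\wtilde x_\theta(0)\|$. Lemma~\ref{lem:bounding_stuff} already supplies $\|\wtilde x_\theta(0)\| \leq \sigma\sin(\alpha + c\sigma\alpha/\tau)$ and bounds $\|\DD f\|_{op}$ at the endpoints by $\sin(\alpha + c\sigma\alpha/\tau)$; the same argument extends to intermediate points $t\wtilde x_\theta(0)$ via Lemma~\ref{lem:alpha_beta_x} (since $\beta$ varies continuously and is close to $\alpha$ throughout the tiny segment, whose length is at most $\sigma\alpha$). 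Combining these estimates produces $\|f(\wtilde x_\theta(0)) - f(0)\| \leq \sigma\alpha^2(1 + c\sigma/\tau)^2$, which is bounded by $4\sigma\alpha^2$ once $M=\tau/\sigma$ is sufficiently large and $\alpha$ is sufficiently small.

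The only mildly delicate point is ensuring the derivative bound holds uniformly along the segment rather than only at the endpoint; everything else is routine once the Pythagorean identity above is in hand.
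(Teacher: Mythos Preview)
Your proof is correct and follows essentially the same route as the paper: the lower bound is identical, and for the upper bound you both use the Pythagorean identity coming from \eqref{eq:x_tilde_def} to reduce the problem to bounding $\|f(\wtilde x_\theta(0)) - f(0)\|$. The only difference is that the paper simply quotes this bound from Lemma~\ref{lem:bounding_stuff} (which in turn relies on the bounding-ball Lemma~\ref{lem:f_bound_circle_H_no_func_ver2_clean} rather than the mean value theorem), whereas you rederive it via MVT; since Lemma~\ref{lem:bounding_stuff} already hands you $\|f(\wtilde x_\theta(0)) - f(0)\| \leq 2\sigma\sin(\alpha + c\sigma\alpha/\tau)\tan\alpha$ directly, you could have skipped the MVT step entirely.
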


\begin{proof}
    
    Since $(0,f(0))_H \in \MM$, any point $p\in \RR^D$ such that $\norm{p - (0,f(0))_H}\leq\sigma$ belongs to $\MM_\sigma$, the $\sigma$-tubular neighborhood of $\MM$. 
    In particular, this is also true for $p\in H^\perp\subset \RR^D$, and thus, we obtain the lower bound 
    \begin{equation*}
     \sigma \leq g(0,\theta)   
    ,\end{equation*}
    as by the definition of \eqref{eq:g_def} we have $g(0,\theta) = \max_{p\in (\textrm{Span}\{\theta\}\cap\MM_\sigma)} \norm{p - (0, f(0))_H}$.
    
    From Lemma \ref{lem:bounding_stuff} we have that 
    \[
    \|f(\wtilde{x}_\theta(0)) - f(0)\| \leq 2\sigma \sin (\alpha + c\frac{\sigma}{\tau}\alpha) \tan \alpha,
    \]
    where $c$ is some constant, and $\wtilde{x}_\theta$ defined in \eqref{eq:x_tilde_def}.
    
    Since $(0, f(0))_H + \theta g(0,\theta)$ is at distance  $\sigma$ from $(\wtilde{x}_\theta(0), f(\wtilde{x}_\theta(0)))_H$, and denoting $\theta\in H^\perp\subset \RR^D$ by $(0, \bar \theta)_H$ we have that
    \[
    \sigma  =
    \norm{(0, f(0) + \bar \theta g(0, \theta)  )_H -(\wtilde x_{\theta}(0), f(\wtilde x_\theta(0))_H}
    = \sqrt{\norm{\wtilde x_\theta}^2 + \norm{f(\wtilde x_\theta(0) - f(0)}^2 + g(0,\theta)^2 },\]
    and thus,
    \[
    g(0,\theta) \leq \sigma + 2\sigma \sin (\alpha + c\frac{\sigma}{\tau}\alpha) \tan \alpha
    .\]
    Then, for $\alpha$ smaller than some constant and $M$ larger than some constant we have 
    \[
    g(0,\theta) \leq \sigma + 4\sigma \sin^2 \alpha,
    \] 
    or,
    \[
    g(0,\theta) \leq \sigma + 4\sigma \alpha^2,
    \] 
\end{proof}
\begin{Lemma}\label{lem:bound_nablag}
Let the conditions of Lemma \ref{lem:bound_Df-Dfwtilde} hold.
Let $g(x, \theta)$ be as defined in Equation \eqref{eq:g_def}. Then, 
\[
\|\nabla_x{g}(0,\theta)\| \leq C \cdot \frac{\sigma}{\tau}\alpha 
,\]
where $\nabla_x{g}$ denotes the gradient of $g(x, \theta)$ with respect to the $x$ variables only, and $C$ is some constant.
\end{Lemma}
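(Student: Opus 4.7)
My approach is implicit differentiation of the defining relation of $g$. By \eqref{eq:x_tilde_def},
\[
(x, f(x) + g(x,\theta)\theta)_H \;=\; (\wtilde x_\theta(x), f(\wtilde x_\theta(x)))_H + \sigma \vec N_\theta(x),
\]
with $\vec N_\theta(x)\perp T_{\wtilde x_\theta(x)}f$ of unit norm. Differentiating both sides with respect to $x\in H$ at $x=0$ and separating into $H$- and $H^\perp$-components gives, with $J_0 := D_x \wtilde x_\theta |_{0}$,
\[
I_d = J_0 + \sigma\,(D_x \vec N_\theta)_H(0),\qquad Df(0) + \theta\,\nabla_x g(0,\theta)^T = Df(\wtilde x_\theta(0))\,J_0 + \sigma\,(D_x \vec N_\theta)_{H^\perp}(0).
\]
Left-multiplying the $H^\perp$-equation by $\theta^T$ (a unit vector) produces a closed-form expression
\[
\nabla_x g(0,\theta)^T = \theta^T\!\bigl[Df(\wtilde x_\theta(0))\,J_0 - Df(0)\bigr] + \sigma\,\theta^T (D_x \vec N_\theta)_{H^\perp}(0),
\]
and everything reduces to bounding the two pieces on the right.

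\textbf{Key steps.} First, decompose $Df(\wtilde x_\theta(0))J_0 - Df(0) = Df(\wtilde x_\theta(0))(J_0-I) + (Df(\wtilde x_\theta(0)) - Df(0))$. Lemma \ref{lem:bounding_stuff} gives $\|Df(\wtilde x_\theta(0))\|_2 \le \sin(\alpha + c\sigma\alpha/\tau)$ and Lemma \ref{lem:bound_J} yields $\|I - J_0\| = \OO(\sigma/\tau)$, so the first summand is $\OO(\tfrac{\sigma\alpha}{\tau})$; the second summand is $\OO(\tfrac{\sigma\sin\alpha}{\tau})$ directly from Lemma \ref{lem:Dfx-Dfxtilde}. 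Next, handle the normal-variation term. I would exploit the identity $\vec N_\theta(x) = \tfrac{1}{\sigma}\bigl[(x,f(x)+g\theta)_H - (\wtilde x_\theta(x), f(\wtilde x_\theta(x)))_H\bigr]$, so differentiating in $x$ and isolating via $\theta^T$ recovers essentially the same structure. Equivalently, one can observe that $\vec N_\theta$ is a unit normal field along $\MM$, whose variation with respect to motion on $\MM$ is controlled by $1/\tau$ (the reach), composed with $J_0\approx I$; the projection onto $\theta$ extracts the $\sin\alpha$ factor because $\theta\in H^\perp$ while $\vec N_\theta\perp T_{\wtilde x_\theta} f$ and by Lemma \ref{lem:alpha_beta_D} the angle between $H$ and $T_{\wtilde x_\theta}f$ is $\alpha + \OO(\sigma/\tau)$. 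Combining: $\|\sigma\theta^T(D_x \vec N_\theta)_{H^\perp}(0)\| = \OO(\tfrac{\sigma\alpha}{\tau})$. Summing both contributions gives the desired $\|\nabla_x g(0,\theta)\| \le C\tfrac{\sigma}{\tau}\alpha$.

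\textbf{Main obstacle.} The subtle point is producing the factor $\alpha$ rather than merely $\sigma/\tau$ in the normal-variation term $\sigma\theta^T(D_x \vec N_\theta)_{H^\perp}(0)$. If $H$ were the true tangent $T_0 f$ (i.e.\ $\alpha=0$), then $\vec N_\theta(x)\equiv \theta$ for all $x$, so $D_x \vec N_\theta \equiv 0$ and the term vanishes; the $\alpha$ factor is therefore a ``first-order in the tilt'' quantity. Making this rigorous requires expressing $\vec N_\theta(x)$ as $\theta$ plus a correction governed by the angle between $H^\perp$ and the normal bundle of $\MM$ at $\wtilde x_\theta(x)$, using Lemma \ref{lem:alpha_beta_D} to control this angle by $\alpha$, and then differentiating carefully to keep the $\sin\alpha$ factor intact while the reach-type $1/\tau$ appears from $D_x \wtilde x_\theta$. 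This is precisely where the bounds $\|D_{\wtilde x_\theta} w + D_x w\| \leq \OO(\tfrac{\sigma\sin\alpha}{\tau})$ and $\|D_x w\| \leq \OO(\sin\alpha)$ from Lemma \ref{lem:Dxtildew+Dxw} will be invoked, so the argument dovetails exactly with the infrastructure already built for Lemma \ref{lem:bound_J}.
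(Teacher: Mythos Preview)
Your approach is correct and is essentially the paper's argument reorganized: the paper takes norms in \eqref{eq:x_tilde_def}, solves the resulting quadratic explicitly for $g$ (Eq.~\eqref{eq:sol_g}), and then differentiates that closed form, whereas you differentiate \eqref{eq:x_tilde_def} directly and project onto $\theta$; since $\sigma\,\theta^T(\vec N_\theta)_{H^\perp}=\theta^T w=\sqrt{\Delta}$, the two routes produce the identical expression and are bounded via exactly the same Lemmas \ref{lem:bound_J}, \ref{lem:Dxtildew+Dxw}, \ref{lem:Dfx-Dfxtilde}, \ref{lem:bounding_stuff}. One minor caveat: your heuristic ``$\vec N_\theta(x)\equiv\theta$ when $\alpha=0$'' is not literally true away from $x=0$ (the tangent tilts as $x$ moves), but your rigorous route---writing $\theta^T[D^x_w + D^{\wtilde x_\theta}_w J_0]=\theta^T[D^x_w + D^{\wtilde x_\theta}_w]+\theta^T D^{\wtilde x_\theta}_w(J_0-I)$ and invoking the bound $\|D^x_w+D^{\wtilde x_\theta}_w\|=\OO(\tfrac{\sigma}{\tau}\sin\alpha)$ from Lemma \ref{lem:Dxtildew+Dxw} together with $\|J_0-I\|=\OO(\tfrac{\sigma}{\tau})$---does give the required $\OO(\tfrac{\sigma}{\tau}\alpha)$.
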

\begin{proof}
Following the definition of $\wtilde{x}_\theta$ in \eqref{eq:x_tilde_def} and $g(x,\theta)$ in \eqref{eq:g_def}, we have the following equations that describe the connection between $x, \wtilde{x}_\theta$ and $g(x,\theta)$
\begin{equation*}
\left(\begin{array}{c}
     \wtilde{x}_\theta \\
     f(\wtilde{x}_\theta) 
\end{array}\right)
+
\sigma \vec N(x,\wtilde{x}_\theta,\theta) = 
\left(\begin{array}{c}
     x \\
     f(x) 
\end{array}\right)_H
+ 
\left(\begin{array}{c}
     0 \\
     \bar \theta
\end{array}\right)_H g(x,\theta)
,\end{equation*}
where $\theta\in \RR^D$ is written as $(0, \bar \theta)_H$, and $ \vec N(x,\wtilde{x}_\theta,\theta)\in \RR^D$ is some unit vector perpendicular to $T_{\wtilde{x}_\theta}f$.
Explicitly, 
\begin{equation}\label{eq:c_2}
 \vec N(x,\wtilde{x}_\theta,\theta) \perp T_{\wtilde{x}_\theta}f
,\end{equation}
and 
\begin{equation}\label{eq:c_3}
\|\vec N(x,\wtilde{x}_\theta,\theta) \| = 1
.\end{equation}
Alternatively, we can write,
\begin{equation}\label{eq:c_1}
\sigma \vec N(x,\wtilde{x}_\theta,\theta) = 
\left(\begin{array}{c}
     x \\
     f(x) 
\end{array}\right)_H
-
\left(\begin{array}{c}
     \wtilde{x}_\theta \\
     f(\wtilde{x}_\theta) 
\end{array}\right)_H
+ 
\left(\begin{array}{c}
     0 \\
     \bar \theta
\end{array}\right)_H g(x,\theta)
\end{equation}

Taking the norm of \eqref{eq:c_1} and using \eqref{eq:c_3} we have  

\begin{equation*}
\left\|\left(\begin{array}{c}
     x \\
     f(x) 
\end{array}\right)
-
\left(\begin{array}{c}
     \wtilde{x}_\theta \\
     f(\wtilde{x}_\theta) 
\end{array}\right)
+ 
\left(\begin{array}{c}
     0 \\
     \bar  \theta
\end{array}\right) g(x,\theta)\right\|^2 = \sigma^2
\end{equation*}
or,
\[
g^2 + 2g \left(\begin{array}{c}
     x - \wtilde{x}_\theta \\
     f(x) - f(\wtilde{x}_\theta) 
\end{array}\right) ^T
\left(\begin{array}{c}
     0 \\
     \bar \theta
\end{array}\right)
+
\left\|\left(\begin{array}{c}
     x - \wtilde{x}_\theta \\
     f(x) - f(\wtilde{x}_\theta) 
\end{array}\right)\right\|^2 
-
\sigma^2 =0
\]
or,
\[
g^2 + 2g (f(x) - f(\wtilde{x}_\theta))^T \bar \theta 
+
\|x - \wtilde{x}_\theta\|^2 
+
\| f(x) - f(\wtilde{x}_\theta) \|^2
-
\sigma^2 =0
\]
the two solutions are 
\begin{multline}\label{eq:sol_g_pm}
    g_{\pm}(x,\theta) =\\ -f(x)^T \bar \theta + f(\wtilde{x}_\theta)^T \bar \theta \pm
    \sqrt{\sigma^2 + (f(x)^T \bar \theta - f(\wtilde{x}_\theta)^T \bar \theta)^2 - \|x- \wtilde{x}_\theta\|^2 - \|f(x)- f(\wtilde{x}_\theta) \|^2}
\end{multline}

From Lemma \ref{lem:bounding_stuff}, for $\alpha$ smaller than some constant, we have that
\[ 
\|0- \wtilde{x}_\theta(0)\|^2 + \|f(0)- f(\wtilde{x}_\theta(0)) \|^2 \leq \sigma^2
\]
and thus, the solutions of Eq. \eqref{eq:sol_g_pm} at $x=0$ are $g_{-}(0, \theta) < 0$ and $g_{+}(0, \theta) > 0$.
Therefore, from continuity we get that there is a neighborhood of $x=0$ such that the only non-negative solution is
\begin{multline}\label{eq:sol_g}
    g(x,\theta) =\\ -f(x)^T \bar \theta + f(\wtilde{x}_\theta)^T \bar \theta + 
    \sqrt{\sigma^2 + (f(x)^T \bar \theta - f(\wtilde{x}_\theta)^T \bar \theta)^2 - \|x- \wtilde{x}_\theta\|^2 - \|f(x)- f(\wtilde{x}_\theta) \|^2}
.\end{multline}
In addition, from the definition of $g$ the only valid solution is the non-negative one which appears on Eq. \eqref{eq:sol_g}.
Thus, denoting 
\begin{equation}\label{eq:Delta}
\Delta = \sigma^2 + (f(x)^T \bar \theta - f(\wtilde{x}_\theta)^T \bar \theta)^2 - \|x- \wtilde{x}_\theta\|^2 - \|f(x)- f(\wtilde{x}_\theta) \|^2,
\end{equation}
we have that near $x=0$
\begin{multline}\label{eq:nabla_g}
    \nabla_x{g}(x,\theta) =\\ -\DD_f[x]^T\bar\theta + J_{\wtilde{x}_\theta} \DD_f[\wtilde{x}_\theta]^T \bar \theta 
    +
    \frac{1}{\sqrt{\Delta}}
    \left( 
     (f(\wtilde{x}_\theta)^T \bar \theta - f(x)^T \bar \theta)\left(J_{\wtilde{x}_\theta}\DD_f[\wtilde{x}_\theta]^T \bar \theta - \DD_f[x]^T \bar \theta\right) 
    \right. \\
    \left.- 
    (I_d-J_{\wtilde{x}_\theta})(x- \wtilde{x}_\theta) 
    - 
    (\DD_f[x]^T - J_{\wtilde{x}_\theta}\DD_f[\wtilde{x}_\theta]^T)(f(x)- f(\wtilde{x}_\theta))  \right)
,\end{multline}
where $J_{\wtilde x_\theta(x)} = \DD_{\wtilde x_\theta (x)}[x]$ is the Jacobi matrix of the function $\wtilde x_\theta (x)$, and $I_d$ is the $d$-dimensional identity matrix.
Alternatively, we can write
\begin{multline}\label{eq:nabla_g2}
    \nabla_x{g}(x,\theta) =\\ (J_{\wtilde{x}_\theta} \DD_f[\wtilde{x}_\theta]^T - \DD_f[x]^T ) \bar\theta 
    +
    \frac{1}{\sqrt{\Delta}}
    \left( 
     (f(\wtilde{x}_\theta)^T\bar\theta - f(x)^T\bar\theta ) \left(J_{\wtilde{x}_\theta}\DD_f[\wtilde{x}_\theta]^T - \DD_f[x]^T \right)\bar\theta 
    \right. \\
    \left.- 
    (I_d-J_{\wtilde{x}_\theta})(x- \wtilde{x}_\theta) 
    - 
    (\DD_f[x]^T - J_{\wtilde{x}_\theta}\DD_f[\wtilde{x}_\theta]^T)(f(x)- f(\wtilde{x}_\theta))  \right)
\end{multline}

Next, using Lemma \ref{lem:bound_J}, Lemma \ref{lem:Dfx-Dfxtilde} and Lemma \ref{lem:bounding_stuff}, we bound 
\begin{equation}\label{eq:Df-JDfwtilde}
\begin{aligned}
     \|\DD_f[0]^T - J_{\wtilde{x}_\theta} \DD_f[\wtilde{x}_\theta(0)]^T \| &= \|\DD_f[0]^T -\DD_f[\wtilde{x}_\theta(0)]^T+\DD_f[\wtilde{x}_\theta(0)]^T - J_{\wtilde{x}_\theta(0)} \DD_f[\wtilde{x}_\theta(0)]^T \| 
     \\
     &\leq \|\DD_f[0]^T -\DD_f[\wtilde{x}_\theta(0)]^T\|+\|\DD_f[\wtilde{x}_\theta(0)]^T - J_{\wtilde{x}_\theta(0)} \DD_f[\wtilde{x}_\theta(0)]^T \|
     \\
     &\leq \|\DD_f[0]^T -\DD_f[\wtilde{x}_\theta(0)]^T\|+\|I_d - J_{\wtilde{x}_\theta(0)} \|\|\DD_f[\wtilde{x}_\theta(0)]^T \| 
     \\
     &= \OO(\frac{\sigma}{\tau}\sin \alpha) + \OO(\frac{\sigma}{\tau}) \|\DD_f[\wtilde{x}_\theta(0)]^T \|
     \\
     &= \OO(\frac{\sigma}{\tau}\cdot\alpha) 
\end{aligned}
\end{equation}

Now, using \eqref{eq:nabla_g2} and the fact that $\norm{\bar \theta} = 1$ we get
\begin{align*}
    \|\nabla_x{g}(0,\theta)\|  &\leq\|\DD_f[0]^T - J_{\wtilde{x}_\theta}[0] \DD_f[\wtilde{x}_\theta(0)]^T\|  \\\
    &~~~
    +
    \frac{1}{\sqrt{\Delta}}
    \left( 
     \|f(\wtilde{x}_\theta(0)) - f(0)\| \|J_{\wtilde{x}_\theta}[0]\DD_f[\wtilde{x}_\theta(0)]^T - \DD_f[0]^T \| 
    \right. \\
    &~~~ \left.+ 
    \|I_d-J_{\wtilde{x}_\theta}[0]\|\|\wtilde{x}_\theta(0)\| 
    + 
    \|\DD_f[0]^T - J_{\wtilde{x}_\theta}[0]\DD_f[\wtilde{x}_\theta(0)]^T\|\|f(0)- f(\wtilde{x}_\theta(0))\|\right)
\end{align*}
From Lemmas \ref{lem:bound_J} and \ref{lem:bounding_stuff} we know that $\|I_d - J_{\wtilde x_\theta}[0]\| =\OO(\frac{\sigma}{\tau})$, $\|\wtilde x_\theta(0)\|\leq \sigma \sin(\alpha + c\alpha{\sigma/\tau} )$, and that $\|f(\wtilde x_\theta(0)) - f(0)\| \leq 2\sigma \sin(\alpha + c\alpha\sigma/\tau)\tan(\alpha)$.
In other words, for $\alpha$ smaller than some constant we can say that $\|I_d - J_{\wtilde x_\theta}[0]\| = \OO(\sigma/\tau)$, $\|\wtilde x_\theta(0)\|\leq \OO(\sigma \alpha)$, and that $\|f(\wtilde x_\theta(0)) - f(0)\| \leq \OO(\sigma \alpha^2)$.
Combining this with \eqref{eq:Df-JDfwtilde} as well, we have
\begin{align*}
    \|\nabla_x{g}(0,\theta)\| &\leq\OO(\frac{\sigma}{\tau}\cdot\alpha)
    +
    \frac{1}{\sqrt{\Delta}}
    \left( 
    \OO(\frac{\sigma^2}{\tau}\cdot\alpha^3)
    + 
    \OO(\frac{\sigma^2}{\tau}\cdot\alpha) 
    +
    \OO(\frac{\sigma^2}{\tau}\cdot\alpha^3)\right)
\end{align*}
Since Lemma \ref{lem:Dxtildew+Dxw} gives us 
\[
 \Delta(0,\wtilde x_\theta (0)) \geq \frac{1}{2} \sigma^2
,\]
we have that $\frac{1}{\sqrt{\Delta}}\leq \sqrt{2}/\sigma$, and 
\begin{equation*}
    \|\nabla_x{g}(x,\theta)\| =  \OO(\frac{\sigma}{\tau}\cdot\alpha)
.\end{equation*}
\end{proof}

\begin{Lemma}\label{lem:bound_J}
Let the conditions of Lemma \ref{lem:bound_nablag} hold.
Let $g(x, \theta), \wtilde x(x)$ be as defined in Equation \eqref{eq:g_def} and \eqref{eq:x_tilde_def} respectively, let $I_d:\RR^d\to\RR^d$ denote the identity matrix and let $J_{\wtilde x_\theta}$ denote the differential of $\wtilde x(x)$ with respect to $x$. Then, 
\begin{equation*}
\|I_d-J_{\wtilde{x}_\theta}[0]\| = \OO(\frac{\sigma}{\tau})
.\end{equation*}    
\end{Lemma}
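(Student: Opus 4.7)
The plan is to obtain the Jacobian $J_{\wtilde{x}_\theta}[0]$ by implicit differentiation of the equations that define the pair $(\wtilde{x}_\theta(x), g(x,\theta))$ through \eqref{eq:x_tilde_def}. That defining relation is equivalent to two coupled conditions: the orthogonality $\vec N(x) \perp T_{\wtilde x_\theta} f$, and the length condition $\|(x-\wtilde x_\theta,\, f(x)+g\theta - f(\wtilde x_\theta))\| = \sigma$ (equivalently, $\Delta = \sigma^2$ in the notation of \eqref{eq:Delta}). Viewing $\wtilde{x}_\theta$ as an independent variable for the moment, both conditions assemble into a $d$-dimensional implicit equation $w(x,\wtilde x_\theta(x)) = 0$ (where $w$ is the orthogonality residual), so that chain-rule differentiation yields
\[
D_x w + D_{\wtilde x_\theta} w \cdot J_{\wtilde{x}_\theta} = 0,
\]
and therefore
\[
I_d - J_{\wtilde x_\theta}[0] = (D_{\wtilde x_\theta} w)^{-1}\bigl(D_{\wtilde x_\theta} w + D_x w\bigr)\big|_{x=0}.
\]

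Next, I would apply Lemma \ref{lem:bound_A_inv_B} with $A = D_{\wtilde x_\theta} w$ and $B = D_{\wtilde x_\theta} w + D_x w$. The ingredients needed by that lemma are exactly supplied by Lemma \ref{lem:Dxtildew+Dxw} (giving $\|D_{\wtilde x_\theta} w + D_x w\| = \OO(\tfrac{\sigma}{\tau}\sin\alpha)$, together with the individual bounds $\|D_{\wtilde x_\theta} w\|, \|D_x w\| \leq \OO(\sin\alpha)$ and the nondegeneracy $\Delta(0,\wtilde x_\theta(0)) \geq \tfrac12 \sigma^2$) and Lemma \ref{lem:bound_DDf} (giving the directional Hessian bound that enters the expression for $A$ via a $\HH f(\wtilde x_\theta)[w]$ contribution). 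The bookkeeping shows that $A$ decomposes as an identity-like block plus a correction of size $\OO(\sigma/\tau) + \OO(\sin^2\alpha)$ — both small in our regime — so $A$ is invertible with $\|A^{-1}\| = \OO(1)$, and $B$ is controlled by $\OO(\tfrac{\sigma}{\tau}\sin\alpha)$. Multiplying out, $\|I_d - J_{\wtilde x_\theta}[0]\| \leq \OO(1)\cdot\OO(\tfrac{\sigma}{\tau}\sin\alpha) = \OO(\sigma/\tau)$, which is the claim.

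A subtle point worth flagging is that we cannot invoke Lemma \ref{lem:bound_nablag} at any stage: the bound on $\nabla_x g$ proved there relies on the present lemma, so the computation of $D_x w$ must avoid substituting any a priori estimate on $\nabla_x g$. This is accomplished automatically by Lemma \ref{lem:Dxtildew+Dxw}, whose statement bounds the \emph{combination} $D_{\wtilde x_\theta} w + D_x w$ directly; the $\nabla_x g$ contributions inside $D_x w$ are absorbed there via the orthogonality identity rather than being estimated individually.

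The main obstacle is therefore the invertibility and conditioning of $A$: we have to verify, using Lemma \ref{lem:bound_DDf} and the lower bound on $\Delta$, that the perturbation of the leading block stays strictly below $1$ in operator norm uniformly over the regime $M > C_\stau\sqrt{D\log D}$ and $\alpha \leq \sqrt{1/D}$ assumed throughout Section \ref{sec:Step2Analysis}. Once this uniform invertibility is in place, the final bound is a one-line multiplication.
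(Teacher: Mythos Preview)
Your approach is the same as the paper's --- implicit differentiation of the defining relation \eqref{eq:x_tilde_def}, followed by the Neumann-series/perturbation bound via Lemma~\ref{lem:bound_A_inv_B} with inputs from Lemmas~\ref{lem:Dxtildew+Dxw} and~\ref{lem:bound_DDf}. The strategy is sound, but your bookkeeping conflates two distinct objects that the paper keeps separate, and this makes several of your intermediate claims literally false.

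The symbol $w$ is overloaded in your write-up. The paper's $w(x,\wtilde x_\theta)\in\RR^{D-d}$ from \eqref{eq:def_w} is (a rescaling of) the normal direction $\vec N$; it is \emph{not} identically zero and is \emph{not} the implicit equation. The $d$-dimensional implicit equation is $G(x,\wtilde x_\theta) = -\DD_f[\wtilde x_\theta]^T w - x + \wtilde x_\theta = 0$ from \eqref{eq:G_def}. When you write ``$w(x,\wtilde x_\theta(x))=0$ where $w$ is the orthogonality residual'' you mean $G$, but Lemma~\ref{lem:Dxtildew+Dxw} is stated for the paper's $w$, so you cannot cite it directly for your sum $D_{\wtilde x_\theta}G + D_x G$.

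Relatedly, your identification of $A,B$ does not match Lemma~\ref{lem:bound_A_inv_B}: in the paper $A = I_d + \DD_f[\wtilde x_\theta(0)]^T \DD^x_w = -\DD^x_G$ (not $\DD^{\wtilde x_\theta}_G$), and $B = \HH f(\wtilde x_\theta(0))^w + \DD_f^T(\DD^x_w + \DD^{\wtilde x_\theta}_w)$. In your notation one computes $\DD^{\wtilde x_\theta}_G = A - B$ and $\DD^{\wtilde x_\theta}_G + \DD^x_G = -B$, so your formula $I_d - J_{\wtilde x_\theta} = (\DD^{\wtilde x_\theta}_G)^{-1}(\DD^{\wtilde x_\theta}_G + \DD^x_G) = -(A-B)^{-1}B$ is correct, and Lemma~\ref{lem:bound_A_inv_B} (which bounds $\|A^{-1}B\|$) together with $\|A^{-1}B\|\leq 1/2$ gives $\|(A-B)^{-1}B\|\leq 2\|A^{-1}B\| = \OO(\sigma/\tau)$, as in the paper's \eqref{eq:I-J}. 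Finally, note that the Hessian term $\HH f^w$ sits inside your $B$ (as well as your $A$), so your $B$ is only $\OO(\sigma/\tau)$, not $\OO(\tfrac{\sigma}{\tau}\sin\alpha)$ --- this does not affect the conclusion, but the sharper order you asserted is not what the supporting lemmas deliver.
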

\begin{proof}
We begin by reiterating equations \eqref{eq:c_1},\eqref{eq:c_2}, and \eqref{eq:c_3}.
Namely we have
\begin{equation*}
\sigma \vec N(x,\wtilde{x}_\theta,\theta) = 
\left(\begin{array}{c}
     x \\
     f(x) 
\end{array}\right)
-
\left(\begin{array}{c}
     \wtilde{x}_\theta \\
     f(\wtilde{x}_\theta) 
\end{array}\right)
+ 
\left(\begin{array}{c}
     0 \\
     \bar \theta
\end{array}\right) g(x,\theta)
,\end{equation*}
where
\begin{equation*}
 \vec N(x,\wtilde{x}_\theta,\theta) \perp T_{\wtilde{x}_\theta}f
,\end{equation*}
and 
\begin{equation*}
\|\vec N(x,\wtilde{x}_\theta,\theta) \| = 1
.\end{equation*}
Thus, there is a vector $v(x,\wtilde x_\theta)\in \RR^{D-d}$ with $\|v(x,\wtilde x_\theta)\| = 1$,
\[
\vec N(x,\wtilde{x}_\theta,\theta) = \frac{1}{\sqrt{\|\DD_f[\wtilde{x}_\theta] v(x,\wtilde x_\theta)\|^2 + 1}} 
\left(\begin{array}{c}
     -\DD_f[\wtilde{x}_\theta]^T v(x,\wtilde x_\theta) \\
     v(x,\wtilde x_\theta)
\end{array}\right)
\]
or, denoting $ w(x,\wtilde x_\theta) = \frac{\sigma}{\sqrt{\|\DD_f[\wtilde{x}_\theta]^T v(x,\wtilde x_\theta))\|^2 + 1}}  v$, we have 
\[
\sigma \vec N(x,\wtilde{x}_\theta) = 
\left(\begin{array}{c}
     -\DD_f[\wtilde{x}_\theta]^T  w(x,\wtilde x_\theta) \\
     w(x,\wtilde x_\theta)
\end{array}\right)
\]

Using this pronunciation of $\vec N$ we can rewrite the above equation as 
\begin{equation}\label{eq:c_1_w}
\left(\begin{array}{c}
     -\DD_f[\wtilde{x}_\theta]^T w(x,\wtilde x_\theta) \\
     w(x,\wtilde x_\theta)
\end{array}\right) = 
\left(\begin{array}{c}
     x \\
     f(x) 
\end{array}\right)
-
\left(\begin{array}{c}
     \wtilde{x}_\theta \\
     f(\wtilde{x}_\theta) 
\end{array}\right)
+ 
\left(\begin{array}{c}
     0 \\
     \bar \theta
\end{array}\right) g(x,\theta)
.\end{equation}
From Eq. \eqref{eq:sol_g} in the proof of Lemma \ref{lem:bound_nablag} we know that 
$g(x, \theta) = -f(x)^T\bar\theta + f(\wtilde x_\theta)^T\bar \theta + \sqrt{\Delta}$, near $x=0$, where $\Delta$ is defined in Eq. \eqref{eq:Delta}.
Combining this with the last $D-d$ equations  we get,
\begin{equation}\label{eq:def_w}
    w(x,\wtilde x_\theta) =\left(f(x) - f(\wtilde{x}_\theta) + \bar \theta \left(- f(x)^T \bar \theta + f(\wtilde{x}_\theta) ^T \bar \theta + 
    \sqrt{\Delta}\right)\right)
.\end{equation}
Looking at the first $d$ equations of \eqref{eq:c_1_w}, we have 
\[
-\DD_f[\wtilde{x}_\theta]^T w(x,\wtilde x_\theta) - x + \wtilde{x}_\theta = 0
.\]
Denoting the function 
\begin{equation} \label{eq:G_def}
G(x,\wtilde{x}_\theta) = -\DD_f[\wtilde{x}_\theta]^T w(x,\wtilde x_\theta) - x + \wtilde{x}_\theta    
,\end{equation}
we aim at using the Inverse Function Theorem (IFT) to compute $J_{\wtilde{x}_\theta}$.
First, we compute  $\DD^{x}_G $ and $\DD^{\wtilde{x}_\theta}_G$, the partial differentials of $G$ with respect to the variables $x$ and $\wtilde x_\theta$:
\begin{equation*}
    \DD^x_G[x, \wtilde x_\theta] = - \DD_f[\wtilde{x}_\theta]^T \DD^x_w[x, \wtilde x_\theta] - I_d
\end{equation*}
\begin{equation*}
    \DD^{\wtilde{x}_\theta}_G[x, \wtilde x_\theta] = - \HH f(\wtilde x_\theta)^w - \DD_f[\wtilde{x}_\theta]^T \DD^{\wtilde{x}_\theta}_w[x, \wtilde x_\theta]  + I_d
,\end{equation*}
where $\HH f(\wtilde x_\theta)^w \in \RR^{d\times d}$ is the tensor Hessian of $f(\wtilde x_\theta):\RR^d\to\RR^{D-d}$ projected onto the target direction $w\in \RR^{D-d}$; that is
\begin{equation}\label{eq:H_w_def}
\HH f(\wtilde x_\theta)^w =
\left(\frac{\partial\left(\DD_f[\wtilde{x}_\theta]^T\right)}{\partial \wtilde{x}_1} w \right|\left. \frac{\partial\left(\DD_f[\wtilde{x}_\theta]^T\right)}{\partial \wtilde{x}_2} w \right| \cdots \left|\frac{\partial\left(\DD_f[\wtilde{x}_\theta]^T\right)}{\partial \wtilde{x}_d} w \right)
.\end{equation}
Notice that $\DD_f[\wtilde x_\theta]^T\in \RR^{d\times D-d}$; therefore, $\partial_{\wtilde x_j}\DD_f[\wtilde x_\theta]^T\in \RR^{d\times D-d}$ and $\partial_{\wtilde x_j}\DD_f[\wtilde x_\theta]^T w\in \RR^{d}$.

Next, using the IFT we have that 
\[
J_{\wtilde{x}_\theta} = -(\DD^{\wtilde{x}_\theta}_{G})^{-1} \DD^x_G = \left( I_d - \DD_f[\wtilde{x}_\theta]^T \DD^{\wtilde{x}_\theta}_ w + \HH f(\wtilde x_\theta)^w  \right)^{-1} \left(I_d + \DD_f[\wtilde{x}_\theta]^T \DD^x_w \right)
,\]
and thus
\[
J_{\wtilde{x}_\theta}[0] = (A(I_d-A^{-1}B))^{-1}A = (I_d-A^{-1}B)^{-1}A^{-1}A =  (I_d-A^{-1}B)^{-1}
,\]
where
\begin{equation}\label{eq:A_def}
A = I_d +  \DD_f[\wtilde{x}_\theta(0)]^T \DD^x_w[0,\wtilde{x}_\theta(0)]    
,\end{equation}
\begin{equation}\label{eq:B_def}
B = \DD_f[\wtilde{x}_\theta(0)]^T (\DD^x_w[0,\wtilde{x}_\theta(0)] + \DD^{\wtilde{x}_\theta}_w[0,\wtilde{x}_\theta(0)] )+ \HH f(\wtilde x_\theta(0))^w
.\end{equation}
From Lemma \ref{lem:bound_A_inv_B} we have that $\|A^{-1}B\| \leq \OO(\frac{\sigma}{\tau}) \leq 1/2$ for $\sigma/\tau$ smaller than some constant, and thus, using the first order approximation of this term we get that there is a matrix
\[\mathcal{E} = \sum_{t=2}^{\infty}(A^{-1}B)^t\]
such that
\[
J_{\wtilde{x}_\theta} = (I_d - A^{-1}B)^{-1} = I_d +A^{-1}B + \mathcal{E}
,\]
and
with
\[
\|\mathcal{E}\| \leq \|A^{-1}B\| \sum_{t=1}^\infty \frac{1}{2^t} = \|A^{-1}B\|
.\]

Thus we have, 
\begin{equation}\label{eq:I-J}
\|I_d -J_{\wtilde{x}_\theta}\| \leq  2\|A^{-1}B\|  =  \OO(\frac{\sigma}{\tau})
\end{equation}

\end{proof}

\begin{Lemma}\label{lem:bound_A_inv_B}
Let the conditions of Lemma \ref{lem:bound_J} hold and let $A$ and $B$ be as defined in \eqref{eq:A_def} and \eqref{eq:B_def}.
Then,
    \[
    \|A^{-1}B\| \leq \OO(\frac{\sigma}{\tau})
    \]
\end{Lemma}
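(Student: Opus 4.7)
The plan is to control $A$ and $B$ separately and combine via a Neumann-series estimate. First I would note that $A - I_d = \DD_f[\wtilde x_\theta(0)]^T \DD^x_w[0,\wtilde x_\theta(0)]$. From Lemma \ref{lem:bounding_stuff} we have $\|\DD_f[\wtilde x_\theta(0)]\|_2 \leq \sin(\alpha + c\sigma\alpha/\tau)$, and from Lemma \ref{lem:Dxtildew+Dxw} we have $\|\DD^x_w\| \leq \OO(\sin\alpha)$. Hence $\|A - I_d\| \leq \OO(\sin^2\alpha) = \OO(\alpha^2)$, so for $\alpha$ small enough (which holds by assumption) we obtain $\|A - I_d\| \leq 1/2$, whence the Neumann series yields $A^{-1} = \sum_{t \geq 0}(I_d - A)^t$ and $\|A^{-1}\| \leq 2$.

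Next I would bound $\|B\|$ term by term. For the Hessian contribution, Lemma \ref{lem:bound_DDf} gives $\|\HH f(\wtilde x_\theta(0))^w\| \leq \frac{\sqrt{2}\sigma}{\tau} + \OO(\sigma\sin\alpha/\tau) = \OO(\sigma/\tau)$. For the remaining piece
\[
\DD_f[\wtilde x_\theta(0)]^T\bigl(\DD^x_w[0,\wtilde x_\theta(0)] + \DD^{\wtilde x_\theta}_w[0,\wtilde x_\theta(0)]\bigr),
\]
I would exploit the crucial cancellation provided by Lemma \ref{lem:Dxtildew+Dxw}, namely $\|\DD^x_w + \DD^{\wtilde x_\theta}_w\| \leq \OO(\sigma\sin\alpha/\tau)$. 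Combined with $\|\DD_f[\wtilde x_\theta(0)]\|_2 \leq \OO(\sin\alpha)$, this contribution is $\OO(\sigma\sin^2\alpha/\tau) = \OO(\sigma\alpha^2/\tau)$, which is dominated by the Hessian term. Summing gives $\|B\| \leq \OO(\sigma/\tau)$.

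Finally, I would combine via submultiplicativity of the operator norm:
\[
\|A^{-1}B\| \leq \|A^{-1}\|\,\|B\| \leq 2 \cdot \OO(\sigma/\tau) = \OO(\sigma/\tau),
\]
which is the claim. The main obstacle is not the invertibility of $A$ (which is routine once we know $\DD_f$ and $\DD^x_w$ are small), nor the Hessian bound (given by Lemma \ref{lem:bound_DDf}), but rather ensuring that the $\DD_f[\wtilde x_\theta(0)]^T(\DD^x_w + \DD^{\wtilde x_\theta}_w)$ term is absorbed — this absolutely requires the cancellation of Lemma \ref{lem:Dxtildew+Dxw}, since naively one only gets $\|\DD^x_w\|, \|\DD^{\wtilde x_\theta}_w\| \leq \OO(\sin\alpha)$, which is not $\OO(\sigma/\tau)$ in general. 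All the heavy lifting is already packaged in the upstream lemmas \ref{lem:Dxtildew+Dxw}, \ref{lem:bound_DDf}, and \ref{lem:bounding_stuff}, so the present lemma is a clean algebraic assembly.
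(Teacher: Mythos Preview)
Your proposal is correct and follows essentially the same approach as the paper: bound $\|A-I_d\|$ via Lemmas~\ref{lem:bounding_stuff} and~\ref{lem:Dxtildew+Dxw} to control $\|A^{-1}\|$ by a Neumann series, then bound $\|B\|$ using the Hessian estimate of Lemma~\ref{lem:bound_DDf} together with the cancellation $\|\DD^x_w+\DD^{\wtilde x_\theta}_w\|=\OO(\sigma\sin\alpha/\tau)$ from Lemma~\ref{lem:Dxtildew+Dxw}, and conclude by submultiplicativity. Your identification of the cancellation in Lemma~\ref{lem:Dxtildew+Dxw} as the essential ingredient (since the individual $\DD^x_w$, $\DD^{\wtilde x_\theta}_w$ are only $\OO(\sin\alpha)$) matches the paper's logic exactly.
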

\begin{proof}
We begin by noting that 
\[
\|A^{-1}B\| \leq \|A^{-1}\|\|B\|
,\]
where
\begin{equation*}
A = I_d +  \DD_f[\wtilde{x}_\theta(0)]^T \DD^x_w[0,\wtilde{x}_\theta(0)]    
,\end{equation*}
\begin{equation*}
B = \DD_f[\wtilde{x}_\theta(0)]^T (\DD^x_w[0,\wtilde{x}_\theta(0)] + \DD^{\wtilde{x}_\theta}_w[0,\wtilde{x}_\theta(0)] )+ \HH f(\wtilde x_\theta(0))^w
.\end{equation*}
Moreover,
\[
A^{-1} = (I_d +  \DD_f[\wtilde{x}_\theta(0)]^T \DD^x_w[0, \wtilde x_\theta(0)])^{-1} = I_d + \sum_{t=1}^\infty(\DD_f[\wtilde{x}_\theta(0)]^T \DD^x_w[0, \wtilde x_\theta(0)])^t
.\]
From Lemma \ref{lem:Dxtildew+Dxw} we have that $\|\DD^x_w[0, \wtilde x_\theta(0)]\| = \OO(\sin \alpha)$, where we remind the reader that $\DD^x_w[x, \wtilde x_\theta]$ is the partial differential of $w(x, \wtilde x_\theta)$ with respect to the $x$ variables only.
And, from Lemma \ref{lem:bounding_stuff}we have that $\|\DD_f[\wtilde{x}_\theta(0)]\|_2 \leq \sin (\alpha + c\frac{\sigma}{\tau}\alpha)$ . Thus $\|\DD_f[\wtilde{x}_\theta]^T \DD^x_w[0, \wtilde x_\theta(0)]\| = \OO(\sin\alpha)$, and thus, for $\alpha$ smaller than some constant we have
\begin{equation}\label{eq:bound_A}
    \|A^{-1}\| = 1 + \OO(\sin^2\alpha)
\end{equation}

Furthermore, from Lemma \ref{lem:Dxtildew+Dxw} we also know that $(\DD^x_w[0, \wtilde x_\theta(0)] + \DD^{\wtilde{x}_\theta}_ w[0, \wtilde x_\theta(0)] ) = \OO(\frac{\sigma}{\tau} \sin\alpha)$ and so 
\[
\| \DD_f[\wtilde{x}_\theta(0)]^T (\DD^x_w[0, \wtilde x_\theta(0)] + \DD^{\wtilde{x}_\theta}_ w[0, \wtilde x_\theta(0)] )\| \leq \OO(\frac{\sigma}{\tau}\sin^2 \alpha)
.\]
Combining this bound with the fact that $\|\HH f(\wtilde x_\theta(0))^w\| \leq \OO(\frac{\sigma}{\tau}) $ shown in Lemma \ref{lem:bound_DDf} we have 
\begin{equation}\label{eq:bound_B}
\|B\| \leq  \|\DD_f[\wtilde{x}_\theta]^T (\DD^x_w + \DD^{\wtilde{x}_\theta}_ w )\|+\| \HH f(\wtilde x_\theta)^w\| \leq \OO(\frac{\sigma}{\tau}) +  \OO(\frac{\sigma}{\tau}\sin^2 \alpha)
.
\end{equation}

Finally, from  \eqref{eq:bound_A} and \eqref{eq:bound_B} we have that for $\alpha$ smaller than some constant
\[
\|A^{-1}B\| \leq \OO(\frac{\sigma}{\tau})
\]
\end{proof}

\begin{Lemma}\label{lem:bound_DDf}
Let the conditions of Lemma \ref{lem:bound_J} and let $\HH f(\wtilde x_\theta(0))^w$ be as defined in \eqref{eq:H_w_def}.
Then, we have
\[
\|\HH f(\wtilde x_\theta(0))^w\|_{op} = \frac{\sqrt{2}\sigma}{\tau} + \OO\left(\frac{\sigma\sin \alpha}{\tau}\right)
\]
\end{Lemma}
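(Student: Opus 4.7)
My plan is to identify $\HH f(\wtilde x_\theta(0))^w$ with $\sigma$ times the scalar second fundamental form of $\MM$ at the manifold point $p = (\wtilde x_\theta(0), f(\wtilde x_\theta(0)))_H$, evaluated in a canonical unit-normal direction, and then invoke the reach-based bound $|II_n(X,Y)|\le |X|\,|Y|/\tau$ valid for every unit normal $n$ at $p$ and all tangent vectors $X,Y$ (a standard consequence of Lemma~\ref{lem:reach_ball_no_intersect_clean}).

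The key observation, implicit in the derivation of equation \eqref{eq:c_1_w} in the proof of Lemma \ref{lem:bound_J}, is that the vector $\sigma\vec N = (-\DD_f[\wtilde x_\theta(0)]^T w,\, w)_H$ is by construction perpendicular to $T_{\wtilde x_\theta(0)}f = T_p\MM$ with Euclidean norm exactly $\sigma$. Hence $\vec N$ is a \emph{unit} normal to $\MM$ at $p$ whose $H^\perp$-block equals $w/\sigma$, and in particular $|w|^2 + |\DD_f[\wtilde x_\theta(0)]^T w|^2 = \sigma^2$. Writing the local parametrization as $\Phi(x)=(x,f(x))_H$, we have $\partial_i\Phi = (e_i,\partial_i f)_H$ and $\partial_i\partial_j\Phi = (0,\partial_i\partial_j f)_H$, so a one-line computation yields, for any $u,v\in\RR^d$,
\[
u^T\HH f(\wtilde x_\theta(0))^w v \;=\; w^T\HH f(\wtilde x_\theta(0))(u,v) \;=\; \sigma\cdot II_{\vec N}(\partial\Phi\,u,\partial\Phi\,v).
\]

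Applying the reach inequality to $\vec N$ gives $|u^T\HH f^w v|\le (\sigma/\tau)\,|\partial\Phi\,u|\cdot|\partial\Phi\,v|$. It then remains to convert the tangent-space Euclidean norms back to $\RR^d$: since $|\partial\Phi\,u|^2 = |u|^2 + |\DD_f[\wtilde x_\theta(0)]\,u|^2 \le |u|^2(1+\|\DD_f[\wtilde x_\theta(0)]\|_{op}^2)$, and Lemma \ref{lem:bounding_stuff} provides $\|\DD_f[\wtilde x_\theta(0)]\|_{op}\le \sin(\alpha+c\sigma\alpha/\tau)$, I will bound $\sqrt{1+\|\DD_f\|_{op}^2}\le \sqrt 2$ (valid as soon as $\alpha\le \pi/4$) and expand to first order in the $c\sigma\alpha/\tau$ correction, arriving at $\|\HH f(\wtilde x_\theta(0))^w\|_{op}\le \sqrt 2\,\sigma/\tau+\OO(\sigma\sin\alpha/\tau)$.

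The main obstacle is the tensorial bookkeeping in the identification step: one has to verify carefully that $\vec N$ is a \emph{unit} normal to $\MM$ at $p$ (and not merely perpendicular to some ambient plane) so that the reach inequality applies to $II_{\vec N}$ without any extra rescaling, and that the normal coordinate really contributes the factor $w$ rather than some rescaled version. Once this is done, the rest is a straightforward Cauchy--Schwarz combined with the operator-norm bound on $\DD_f[\wtilde x_\theta(0)]$ already supplied by Lemma \ref{lem:bounding_stuff}.
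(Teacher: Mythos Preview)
Your approach is correct and genuinely different from the paper's. The paper factors $\|\HH f^w\|_{op}\le \|w\|\cdot\sup_{v}\|\HH_v\|_{op}$, then bounds $\|\HH_v\|_{op}$ by writing the Hessian as a limit of difference quotients of $\DD_f$ and invoking the tangent-angle variation Lemma~\ref{lem:alpha-beta_x_no_func} together with Lemma~\ref{lem:norm_from_angle}; separately it bounds $\|w\|$ from the explicit formula \eqref{eq:def_w} and the estimate $\sqrt{\Delta}\le\sqrt2\,\sigma$ of Lemma~\ref{lem:Dxtildew+Dxw}, whence the leading constant $\sqrt2$. You instead recognise $\HH f^w$ as $\sigma$ times the scalar second fundamental form $II_{\vec N}$ in the graph chart and appeal directly to the reach bound $\|II_n\|_{op}\le 1/\tau$, which is the cleaner geometric statement underlying the paper's angle-variation lemmas. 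Because $\vec N$ is already a unit normal, you never need to estimate $\|w\|$ at all, and the metric distortion $|\partial\Phi u|^2=|u|^2+|\DD_f u|^2$ is all that remains.

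One cosmetic point: your route naturally yields $\|\HH f^w\|_{op}\le(\sigma/\tau)(1+\|\DD_f[\wtilde x_\theta(0)]\|_{op}^2)=\sigma/\tau+\OO(\sigma\sin^2\alpha/\tau)$, with leading constant $1$, which is sharper than the paper's $\sqrt2$. Crudely replacing $1+\|\DD_f\|_{op}^2\le 2$ gives $2\sigma/\tau$, not $\sqrt2\,\sigma/\tau$, so your final sentence slightly misstates the constant you actually obtain; but since the only downstream use (Lemma~\ref{lem:bound_A_inv_B}) needs merely $\|\HH f^w\|=\OO(\sigma/\tau)$, this is harmless. The one external ingredient you rely on that the paper does not state explicitly is the inequality $|II_n(X,X)|\le|X|^2/\tau$ for every unit normal $n$; this is a standard consequence of Lemma~\ref{lem:reach_ball_no_intersect_clean}, but you should spell out that step rather than cite it as folklore.
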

\begin{proof}
We denote the tensor Hessian of $f:\RR^d\to\RR^{D-d}$ at $\wtilde x_\theta(0)$ by $\HH f(\wtilde x_\theta(0)):\RR^d\times\RR^d \to \RR^{D-d}$. 
For brevity of notation, throughout this proof we will use $\HH$ instead of $\HH f(\wtilde x_\theta(0))$. For any chosen direction $u\in \RR^{D-d}$ (i.e., a unit vector), $\HH$ can be thought of as a function:
$\HH ^u:\RR^d\times\RR^d \to \RR$
defined as $\HH ^u(v_1,v_2) = \lrangle{\HH (v_1,v_2), u}$. 
We note that this definition is consistent with the definition of $\HH f(\wtilde x_\theta(0))^u$ in \eqref{eq:H_w_def}.
Given $v\in\RR^d$, $\norm{v} = 1$ we also define $\HH_v:\RR^d \to \RR^{D-d}$
as $\HH_v(\cdot) = \HH(v,\cdot)$.
Note, that for any $w\in \RR^{D-d}$ 
\[
{\|\HH^w\|_{op}} = {\sup\limits_{v_1, v_2\in \mathbb{S}^{d}}|\lrangle{\HH(v_1, v_2), w}|} 
\leq \sup\limits_{v_1, v_2\in \mathbb{S}^{d}}\|\HH(v_1, v_2)\|_2\norm{w}_2
= \norm{w}_2\sup_{v\in\mathbb{S}^d}\|\HH_v\|_{op}
,\]
where the right-most equality is true since $\HH$ is symmetric.

Thus, in essence, we need to bound $\|\HH_v\|_{op}$ for an arbitrary $v$. 
From the definition of $\HH_v$ we know that 
\[
\HH_v\ =  \lim_{t\to 0} \frac{\DD_f[\wtilde x_\theta(0)] - \DD_f[\wtilde x_\theta(0) + tv]}{t} 
.\]
Then, from Lemma \ref{lem:alpha-beta_x_no_func}  we have for small $t$
\[
\sin(\maxangle (T_{\wtilde x_\theta}f,T_{\wtilde x_\theta + tv}f))\leq \frac{t}{\tau}(1+\tan^2\beta)+\OO(t^2/\tau^2)
\]
where $\beta = \maxangle(T_{\wtilde x_\theta}f,H)$. 
Therefore, applying Lemma \ref{lem:norm_from_angle} we get 
\[
 \|\DD_f[\wtilde x_\theta(0)] - \DD_f[\wtilde x_\theta(0) + tv] \|_{op} \leq \frac{t}{\tau}(1+\tan^2\beta)(1+\sin \beta) + \OO(t^2/\tau^2)
,\]
and we get 
\[
\|\HH_v\| \leq  \frac{1}{\tau}(1+\tan^2\beta)(1+\sin \beta)
.\]
Furthermore, from Lemma \ref{lem:alpha_beta_D} we know 
\[
\beta \leq \alpha + c_1\alpha{\frac{\sigma}{\tau}}
,\]
and so,
\[
\|\HH_v\| \leq  \frac{1}{\tau}(1+\tan^2
\left(\alpha + c_1\alpha{\frac{\sigma}{\tau}}\right))(1+\sin \left(\alpha + c_1\alpha{\frac{\sigma}{\tau}}\right))
= \frac{1}{\tau} + \OO(\frac{\sin \alpha}{\tau})
.\]
Thus, we obtain
\begin{equation}\label{eq:norm_H^w_bound}
\|\HH^w\| \leq \|w\| \left(\frac{1}{\tau} + \OO(\frac{\sin \alpha}{\tau})\right)    
.\end{equation}

Hence, all we are left with is bounding  $\|w(0,\wtilde x_\theta (0))\|$. 
From Eq. \eqref{eq:def_w}, Lemma \ref{lem:bounding_stuff}, and Lemma \ref{lem:Dxtildew+Dxw}, we have 
\begin{align*}
    \|w(0,\wtilde x_\theta(0))\| &=\|f(0) - f(\wtilde{x}_\theta(0)) + \theta \left(- f(0)\cdot \theta + f(\wtilde{x}_\theta(0)) \cdot \theta + 
    \sqrt{\Delta(0,\wtilde x_\theta (0))}\right)\|
    \\
    &\leq \|f(0) - f(\wtilde{x}_\theta(0))\| + \left|(- f(0) + f(\wtilde{x}_\theta(0))) \cdot \theta\right| + 
    \sqrt{\Delta(0,\wtilde x_\theta (0))}
    \\
    &\leq 2\sigma \sin \alpha + 2\sigma \sin \alpha + \sqrt{2}\sigma 
    \\
    &\leq \sqrt{2}\sigma  + 4\sigma \sin \alpha 
\end{align*}

Since $\|w\| \leq \sqrt{2}\sigma + \OO(\sin \alpha)$ 
we have from Eq. \eqref{eq:norm_H^w_bound}
\[
\|\HH^w\| \leq \frac{\sqrt{2}\sigma}{\tau} + \OO(\frac{\sigma\sin \alpha}{\tau})
\]
\end{proof}

\begin{Lemma}\label{lem:Dxtildew+Dxw}
Let the conditions of Lemma \ref{lem:bound_J} hold, let $w(x, \wtilde x_\theta)$ be as defined in \eqref{eq:def_w}, $\wtilde x_\theta(x)$ be as defined in \eqref{eq:x_tilde_def}, and $\Delta$ as defined in \eqref{eq:Delta}. 
Denote by $\DD^x_w, \DD^{\wtilde{x}_\theta}_w$ the partial differentials of $w$ with respect to the variables $x$ and $\wtilde x_\theta$.
Then, for $\alpha$ smaller than some constant
\begin{equation}\label{eq:Delta_bound}
    \frac{1}{2} \sigma^2 \leq \Delta(0,\wtilde x_\theta (0)) \leq 2 \sigma^2
,\end{equation}
\begin{equation} \label{eq:Dws_bound}
\| \DD^{\wtilde{x}_\theta}_ w[0, \wtilde x_\theta(0)] \| \leq  \mathcal{O}(\sin \alpha) \qquad \|\DD^x_w[0, \wtilde x_\theta(0)]\| \leq \mathcal{O}(\sin \alpha)
,\end{equation}
and
\begin{equation}\label{eq:Dxtildew+Dxw_bound}
\| \DD^{\wtilde{x}_\theta}_w[0, \wtilde x_\theta(0)] + \DD^x_w[0, \wtilde x_\theta(0)]\| \leq \mathcal{O}(\frac{\sigma}{\tau}\sin \alpha)
.\end{equation}
\end{Lemma}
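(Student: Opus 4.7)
The plan is to compute the partial derivatives of $w$ explicitly after rewriting it in a cleaner form, and then bound each resulting term using the geometric estimates already established in Lemma~\ref{lem:bounding_stuff} (on $\|\wtilde x_\theta(0)\|$, $\|f(\wtilde x_\theta(0))\|$, $\|\DD_f[0]\|_2$, $\|\DD_f[\wtilde x_\theta(0)]\|_2$) and Lemma~\ref{lem:Dfx-Dfxtilde} (on the Lipschitz continuity of $\DD_f$). The starting observation is that \eqref{eq:def_w} can be regrouped as
\[
w(x,\wtilde x_\theta) = (I-\bar\theta\bar\theta^T)\bigl(f(x)-f(\wtilde x_\theta)\bigr) + \bar\theta\sqrt{\Delta(x,\wtilde x_\theta)},
\]
from which
\[
\DD^x_w = (I-\bar\theta\bar\theta^T)\DD_f[x] + \bar\theta\,\frac{\nabla_x\Delta}{2\sqrt{\Delta}}, \qquad \DD^{\wtilde x_\theta}_w = -(I-\bar\theta\bar\theta^T)\DD_f[\wtilde x_\theta] + \bar\theta\,\frac{\nabla_{\wtilde x_\theta}\Delta}{2\sqrt{\Delta}}.
\]

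To prove \eqref{eq:Delta_bound}, I will substitute $x=0$ into the definition \eqref{eq:Delta} and observe that both $\|\wtilde x_\theta(0)\|^2$ and $\|f(0)-f(\wtilde x_\theta(0))\|^2$ are $\OO(\sigma^2\sin^2\alpha)$ by Lemma~\ref{lem:bounding_stuff} together with the Lipschitz bound $\|f(0)-f(\wtilde x_\theta(0))\|\leq \|\DD_f\|_2\cdot\|\wtilde x_\theta(0)\|$. For $\alpha$ sufficiently small these corrections are strictly smaller than $\sigma^2/2$, sandwiching $\Delta(0,\wtilde x_\theta(0))$ between $\sigma^2/2$ and $2\sigma^2$.

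For \eqref{eq:Dws_bound}, I will differentiate $\Delta$ directly to get
\[
\nabla_x\Delta = 2\bigl((f(x)-f(\wtilde x_\theta))^T\bar\theta\bigr)\DD_f[x]^T\bar\theta - 2(x-\wtilde x_\theta) - 2\DD_f[x]^T(f(x)-f(\wtilde x_\theta)),
\]
and analogously $\nabla_{\wtilde x_\theta}\Delta$. At $x=0$, the dominant contribution comes from the linear term $-2(x-\wtilde x_\theta(0))$, which is $\OO(\sigma\sin\alpha)$ by Lemma~\ref{lem:bounding_stuff}; the two remaining terms are $\OO(\sigma\sin^3\alpha)$. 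Dividing by $2\sqrt{\Delta}=\OO(\sigma)$ contributes $\OO(\sin\alpha)$, matching the already-$\OO(\sin\alpha)$ bound of the $(I-\bar\theta\bar\theta^T)\DD_f$ term, and the symmetric computation handles $\DD^{\wtilde x_\theta}_w$.

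The main obstacle is \eqref{eq:Dxtildew+Dxw_bound}, where we must improve from $\OO(\sin\alpha)$ to $\OO(\tfrac{\sigma}{\tau}\sin\alpha)$. The $(I-\bar\theta\bar\theta^T)$ piece of the sum is $(I-\bar\theta\bar\theta^T)(\DD_f[0]-\DD_f[\wtilde x_\theta(0)])$, which is already $\OO(\tfrac{\sigma\sin\alpha}{\tau})$ by Lemma~\ref{lem:Dfx-Dfxtilde}. The critical cancellation occurs in the $\Delta$ piece: the terms $-2(x-\wtilde x_\theta)$ in $\nabla_x\Delta$ and $+2(x-\wtilde x_\theta)$ in $\nabla_{\wtilde x_\theta}\Delta$ cancel exactly, so
\[
\nabla_x\Delta + \nabla_{\wtilde x_\theta}\Delta = 2\bigl((f(x)-f(\wtilde x_\theta))^T\bar\theta\bigr)\bigl(\DD_f[x]-\DD_f[\wtilde x_\theta]\bigr)^T\bar\theta - 2\bigl(\DD_f[x]-\DD_f[\wtilde x_\theta]\bigr)^T\bigl(f(x)-f(\wtilde x_\theta)\bigr),
\]
whose norm at $x=0$ is $\OO\bigl(\sigma\sin^2\alpha \cdot \tfrac{\sigma\sin\alpha}{\tau}\bigr)$ by combining Lemma~\ref{lem:bounding_stuff} with Lemma~\ref{lem:Dfx-Dfxtilde}. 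Dividing by $2\sqrt{\Delta}=\OO(\sigma)$ gives the required $\OO(\tfrac{\sigma\sin\alpha}{\tau})$, completing the proof. The whole argument is essentially a careful ledger-keeping exercise; the only non-routine step is spotting that the linear-in-$(x-\wtilde x_\theta)$ contributions to $\nabla\Delta$ must cancel, since without that cancellation one would be stuck at $\OO(\sin\alpha)$ and could not propagate the improvement needed to feed Lemma~\ref{lem:bound_J}.
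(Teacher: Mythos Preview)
Your proposal is correct and follows essentially the same route as the paper: compute the two partial differentials of $w$ explicitly, bound each using Lemma~\ref{lem:bounding_stuff}, and observe that in the sum the $(x-\wtilde x_\theta)$ contributions cancel so that everything factors through $\DD_f[x]-\DD_f[\wtilde x_\theta]$, which is $\OO(\tfrac{\sigma}{\tau}\sin\alpha)$ by Lemma~\ref{lem:Dfx-Dfxtilde}. Your regrouping $w=(I-\bar\theta\bar\theta^T)(f(x)-f(\wtilde x_\theta))+\bar\theta\sqrt{\Delta}$ is in fact a bit cleaner than the paper's fully expanded formulas in \eqref{eq:D_xw}--\eqref{eq:D_xwtilde} (and corrects what appears to be a sign slip there, $I+\theta\theta^T$ versus $I-\theta\theta^T$, which is immaterial for the operator-norm bounds), but the logic and the key cancellation are identical.
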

\begin{proof}
First we bound $\Delta$ from Eq. \eqref{eq:Delta} at $x=0,\wtilde x_\theta = x_\theta(0)$ using Lemma \ref{lem:bounding_stuff}, and assuming $\alpha$ is smaller than some constant.
\begin{align}
\Delta(0,\wtilde x_\theta (0)) &= \sigma^2 + (f(0)^T \theta - f(\wtilde{x}_\theta(0))^T \theta)^2 - \|0- \wtilde{x}_\theta(0)\|^2 - \|f(0)- f(\wtilde{x}_\theta(0)) \|^2 
\notag\\
&\geq \sigma^2 -\|\wtilde{x}_\theta(0)\|^2 - 2\|f(0)- f(\wtilde{x}_\theta(0)) \|^2
\notag\\
&\geq \sigma^2 -2\sigma^2 \sin^2 \alpha - 4\sigma^2 \sin^2 \alpha 
\notag\\
&\geq \sigma^2(1-6\sin^2 \alpha )
\notag\\
&\geq \frac{1}{2} \sigma^2
.\end{align}
Similarly, 
\begin{align}
\Delta(0,\wtilde x_\theta (0)) &= \sigma^2 + (f(0)^T \theta - f(\wtilde{x}_\theta(0))^T \theta)^2 - \|0- \wtilde{x}_\theta(0)\|^2 - \|f(0)- f(\wtilde{x}_\theta(0)) \|^2 
\notag\\
&\leq \sigma^2 +\|\wtilde{x}_\theta(0)\|^2 + 2\|f(0)- f(\wtilde{x}_\theta(0)) \|^2
\notag\\
&\leq \sigma^2 +2\sigma^2 \sin^2 \alpha + 4\sigma^2 \sin^2 \alpha 
\notag\\
&\leq \sigma^2(1+6\sin^2 \alpha )
\notag\\
&\geq 2 \sigma^2
.\end{align}
and thus we showed Eq. \eqref{eq:Delta_bound}.

Next we compute $\DD^x_w$ and $D^{\wtilde{x}}_ w$
\begin{align}\label{eq:D_xw}
    \begin{split}
     \DD^x_w =& \DD_f[x] 
    +
    \theta {(\theta^T \DD_f[x])} \\
    &~~+
    {\frac{1}{\sqrt{\Delta}}} \theta \left( 
    2 {(f(x)\cdot \theta - f(\wtilde{x}_\theta) \cdot \theta)}{\theta^T \DD_f[x]} 
    - 
    2(x- \wtilde{x}_\theta)^T 
    - 
    2(f(x)- f(\wtilde{x}_\theta))^T\DD_f[x]  \right)\\
    =&\left(I_{D-d} 
    +
    \theta \theta^T  
    +
    \frac{1}{\sqrt{\Delta}} \theta \left( 
    2 (f(x)^T- f(\wtilde{x}_\theta)^T ) (\theta \theta^T - I_{D-d} )
      \right)\right) \DD_f[x] -
    \frac{2}{\sqrt{\Delta}} \theta (x- \wtilde{x}_\theta)^T 
    \end{split}
,\end{align}

\begin{align}\label{eq:D_xwtilde}
    \begin{split}
    \DD^{\wtilde{x}_\theta}_ w = & -\DD_f[\wtilde{x}_\theta] 
    - 
    \theta {(\theta^T \DD_f[\wtilde{x}_\theta])} \\
    &~~+
    {\frac{1}{\sqrt{\Delta}}} \theta \left( 
    -2 {(f(x)\cdot \theta - f(\wtilde{x}_\theta) \cdot \theta)}{\theta^T \DD_f[\wtilde{x}_\theta]} 
    + 
    2(x- \wtilde{x}_\theta)^T 
    +
    2(f(x)- f(\wtilde{x}_\theta))^T\DD_f[\wtilde{x}_\theta]  \right)\\
    =&-\left(I_{D-d} 
    +
    \theta \theta^T  
    +
    \frac{1}{\sqrt{\Delta}} \theta \left( 
    2 (f(x)^T- f(\wtilde{x}_\theta)^T ) (\theta \theta^T - I_{D-d} )
      \right)\right) \DD_f[x] +
    \frac{2}{\sqrt{\Delta}} \theta (x- \wtilde{x}_\theta)^T 
    \end{split}
.\end{align}
From Eq. \eqref{eq:D_xw}, Lemma \ref{lem:bounding_stuff}, and Eq. \eqref{eq:Delta_bound},  we have that 
\begin{align*}
     \|\DD^x_w[0,\wtilde x_\theta(0)]\| \leq& 
    \|I_{D-d} 
    +
    \theta \theta^T  
    +
    \frac{1}{\sqrt{\Delta}} \theta \left( 
    2 (f(0)^T- f(\wtilde{x}_\theta(0))^T ) (\theta \theta^T - I_{D-d} )
      \right)\|_{op} \|\DD_f[0]\|_{op} \\
      &~~+
    \frac{2}{\sqrt{\Delta}} \|(0- \wtilde{x}_\theta(0))^T\|_2 
    \\
    \leq& \OO( \sin \alpha)
\end{align*}
Similarly, from Eq. \eqref{eq:D_xwtilde}, Lemma \ref{lem:bounding_stuff}, and Eq. \eqref{eq:Delta_bound},  we have that 
\begin{equation}
     \|\DD^x_w[0,\wtilde x_\theta(0)]\| 
    \leq \OO( \sin \alpha)
\end{equation}
Thus, we showed \eqref{eq:Dws_bound}.

Now we show \eqref{eq:Dxtildew+Dxw_bound}. From \eqref{eq:D_xw} and \eqref{eq:D_xwtilde} we have 
\begin{equation}\label{eq:D_xw-Dxwtilde}
\begin{aligned}
    \sigma \DD^x_w + \sigma \DD^{\wtilde{x}_\theta}_ w 
    &= \DD_f[x] -\DD_f[\wtilde{x}_\theta] +\theta \theta^T (\DD_f[x] - \DD_f[\wtilde{x}_\theta]) \\
    &~~+\frac{1}{\sqrt{\Delta}} \theta 
    \bigg( 2 (f(x)\cdot \theta - f(\wtilde{x}_\theta) \cdot \theta)\theta^T - 2(f(x)- f(\wtilde{x}_\theta))^T\bigg) (\DD_f[x] - \DD_f[\wtilde{x}_\theta])\\
    &= \left(I_{D-d} +\theta \theta^T +\frac{1}{\sqrt{\Delta}} \theta 
    \bigg( 2 (f(x)^T - f(\wtilde{x}_\theta)^T)\theta\theta^T - 2(f(x)- f(\wtilde{x}_\theta))^T\bigg)\right) (\DD_f[x] - \DD_f[\wtilde{x}_\theta])\\
    &=\left(I_{D-d} +\theta \theta^T +\frac{2}{\sqrt{\Delta}} \theta 
      (f(x)^T - f(\wtilde{x}_\theta)^T) (\theta\theta^T - I_{D-d})\right) (\DD_f[x] - \DD_f[\wtilde{x}_\theta])
\end{aligned}
\end{equation}
Since we are bounding for $x= 0 $, we have from Lemma \ref{lem:bounding_stuff}, that $\|f(0) - f(\wtilde{x}_\theta(0))\| \leq \OO(\sigma \sin^2 \alpha)$, for $\alpha$ smaller than some constant.
Taking the norm of \eqref{eq:D_xw-Dxwtilde} we have that 
\[
\|\sigma \DD^x_w + \sigma \DD^{\wtilde{x}_\theta}_ w \| \leq \left\|I_{D-d} +\theta \theta^T +\frac{2}{\sqrt{\Delta}} \theta 
      (f(x)^T - f(\wtilde{x}_\theta)^T) (\theta\theta^T - I_{D-d})\right\| \|\DD_f[x] - \DD_f[\wtilde{x}_\theta]\|
\]
From Eq. \eqref{eq:Delta_bound} and Lemma \ref{lem:Dfx-Dfxtilde} we have,
\[
\|\sigma \DD^x_w + \sigma \DD^{\wtilde{x}_\theta}_ w \| =  \OO(\frac{\sigma}{\tau}\sin \alpha)
\]
\end{proof}

\begin{Lemma}\label{lem:Dfx-Dfxtilde}
Let $f$ be a differentiable function from $H$, a $d$-dimensional subspace of $\RR^D$, to $\RR^{D-d}$.
Assume, $\maxangle(T_0f, H)\leq \alpha$ and that $\mathrm{rch}(\Gamma_f)$ the reach of $\Gamma_f$ (the graph of the function $f$), is bounded by $\tau$.
\[
\|\DD_f[\wtilde{x}_\theta(0)] - \DD_f[0]\|_{op} \leq 
\frac{\sigma \sin (\alpha + c\frac{\sigma}{\tau}\alpha)}{\tau}(1 + \tan^2 \alpha)(1 + \sin\alpha)  +\OO(\sigma^2 \sin^2 (\alpha)/\tau^2)
\]
or
\[
\|\DD_f[\wtilde{x}_\theta(0)] - \DD_f[0]\|_{op} \leq \OO(\frac{\sigma}{\tau}\sin \alpha)
,\]
for $\alpha$ smaller than some constant and some constant $c
\in\RR$.
\end{Lemma}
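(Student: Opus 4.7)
\medskip

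\noindent\textbf{Proof plan.} The strategy is to reduce the bound on $\|\DD_f[\wtilde x_\theta(0)] - \DD_f[0]\|_{op}$ to a bound on the maximal angle between the tangent planes $T_0 f$ and $T_{\wtilde x_\theta(0)} f$, which in turn follows from the reach of the graph and the size of $\wtilde x_\theta(0)$. This mirrors the calculation that already appears in the proof of Lemma \ref{lem:bound_DDf}: there, the same conversion is performed along a straight segment of length $t$ in $H$, and here the role of $t$ is played by $\|\wtilde x_\theta(0)\|$.

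First, invoke Lemma \ref{lem:bounding_stuff} to obtain the base-point displacement
\[
\|\wtilde x_\theta(0)\| \leq \sigma \sin\!\lrbrackets{\alpha + c\tfrac{\sigma}{\tau}\alpha}.
\]
Second, apply Lemma \ref{lem:alpha-beta_x_no_func} with base point $p = (0, f(0))$ and tangential displacement $x_H = \wtilde x_\theta(0) \in H$. Since $\mathrm{rch}(\Gamma_f) \geq \tau$ and $\maxangle(T_0 f, H) \leq \alpha$, setting $\beta \defeq \maxangle(T_{\wtilde x_\theta(0)} f,\, T_0 f)$ yields
\[
\sin\beta \;\leq\; \frac{\|\wtilde x_\theta(0)\|}{\tau}(1 + \tan^2 \alpha) + \OO\!\lrbrackets{\|\wtilde x_\theta(0)\|^2/\tau^2}.
\]

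Third, apply Lemma \ref{lem:norm_from_angle} (used in an identical way in the proof of Lemma \ref{lem:bound_DDf}) to convert the angular bound on the tangent spaces into an operator-norm bound on their representing differentials over $H$. This produces the factor $(1+\sin\beta)$, giving
\[
\|\DD_f[\wtilde x_\theta(0)] - \DD_f[0]\|_{op} \leq \sin\beta \cdot (1 + \sin\beta).
\]
Substituting the bound on $\sin\beta$ from step two and then the bound on $\|\wtilde x_\theta(0)\|$ from step one produces exactly the stated inequality, with $\sin\beta \leq \sin\alpha \cdot (1+o(1))$ absorbing the inner $(1+\sin\beta)$ factor into $(1+\sin\alpha)$ up to higher-order terms in $\sigma/\tau$. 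The coarser bound $\OO(\tfrac{\sigma}{\tau}\sin\alpha)$ then follows by noting that for $\alpha$ smaller than some constant, $\tan^2\alpha$, $\sin\alpha$ and $\sigma/\tau$ are all bounded, so the dominant term is the linear one.

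The only mild subtlety is bookkeeping: the differentials $\DD_f[\cdot]$ are maps from $H$ (the parameter space) while the angle $\beta$ lives between tangent planes of the graph inside $\RR^D$. This is precisely what Lemma \ref{lem:norm_from_angle} is designed to handle, and it is the same device already used in Lemma \ref{lem:bound_DDf}, so no new ideas are needed --- the present lemma is essentially a one-step application of the previously established angle-to-differential conversion, with the step length $\|\wtilde x_\theta(0)\|$ controlled by Lemma \ref{lem:bounding_stuff} rather than by an infinitesimal parameter $t \to 0$.
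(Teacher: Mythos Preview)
Your proposal is correct and follows essentially the same three-step route as the paper: bound $\|\wtilde x_\theta(0)\|$ via Lemma~\ref{lem:bounding_stuff}, convert that into a tangent-angle bound via Lemma~\ref{lem:alpha-beta_x_no_func}, then pass from angles to differentials via Lemma~\ref{lem:norm_from_angle}. One small bookkeeping point: with the roles $L_1=\DD_f[0]$, $L_2=\DD_f[\wtilde x_\theta(0)]$ in Lemma~\ref{lem:norm_from_angle}, the multiplicative factor comes out directly as $(1+\sin\alpha)$ (from $\maxangle(H,T_0f)\le\alpha$), not $(1+\sin\beta)$; your subsequent absorption step is therefore unnecessary, though harmless since $\sin\beta=\OO(\tfrac{\sigma}{\tau}\sin\alpha)\le\sin\alpha$.
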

\begin{proof}
From Lemma \ref{lem:alpha-beta_x_no_func} we have that 
\[
	\sin(\maxangle (T_{0}f, T_{\wtilde{x}_\theta}f)) \leq \frac{\norm{\wtilde{x}_\theta}}{\tau}(1 + \tan^2 \alpha)  +\OO(\norm{\wtilde{x}_\theta}^2/\tau^2)
\]
From Lemma \ref{lem:bounding_stuff}, we have $\|\wtilde{x}_\theta\| \leq \sigma \sin (\alpha + c\alpha{\frac{\sigma}{\tau}})$, for some general constant $c$, and thus
\[
	\sin(\maxangle (T_{0}f, T_{\wtilde{x}_\theta}f)) \leq \frac{\sigma \sin (\alpha + c\alpha{\frac{\sigma}{\tau}})}{\tau}(1 + \tan^2 \alpha)  +\OO(\sigma^2 \sin^2 (\alpha)/\tau^2)
\]
Moreover, we have that 
\[
\sin(\maxangle (T_{0}f, H)) \leq \sin \alpha
\]
Using Lemma \ref{lem:norm_from_angle} we have 
\[
\|\DD_f[\wtilde{x}_\theta] - \DD_f[x]\| 
\leq 
\frac{\sigma \sin (\alpha + c\alpha{\frac{\sigma}{\tau}})}{\tau}(1 + \tan^2 \alpha)(1 + \sin\alpha)  +\OO(\sigma^2 \sin^2 (\alpha)/\tau^2) 
\]

\end{proof}
\begin{Lemma}\label{lem:norm_from_angle}
Let $L_1, L_2$ be two linear operators from $H$ a $d$-dimensional subspace of $\RR^D$ to $\RR^{D-d}$.
Let, $\maxangle(H,(H,L_1(H))_H) \leq \alpha$, where $(H,L_1(H)_H)$ is the subspace spanned by $H$ and $L_1(H)$, the target space of $L_1$. Furthermore, let $\maxangle((H,L_1(H))_H,(H,L_2(H))_H) \leq \beta$.
Then, 
\[\|L_1 - L_2\|_{op} \leq \sin \beta(1 + \sin\alpha).\]
\end{Lemma}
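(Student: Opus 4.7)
The plan is to use Lemma \ref{lem:Q_angle_between_flats} to translate the angle bounds into norm bounds on lifted vectors. Let $G_i := \{(v, L_i v)_H : v \in H\} \subset \RR^D$ denote the graph of $L_i$, so by hypothesis $\maxangle(H, G_1) \leq \alpha$ and $\maxangle(G_1, G_2) \leq \beta$. The strategy is: take an arbitrary unit $v \in H$, lift it to $\vec w := (v, L_1 v)_H \in G_1$, apply $Q_{G_1, G_2}$ to measure the gap to $G_2$, and convert the resulting bound back into control on $\|L_1 v - L_2 v\|$.

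First I bound $\|\vec w\|$. Since $\vec w \in G_1$ and the orthogonal projection of $\vec w$ onto $H$ is $v$, the vector $\vec w - v = (0, L_1 v)_H$ is exactly $Q_{G_1, H}(\vec w)$, so Lemma \ref{lem:Q_angle_between_flats} applied to the pair $(G_1, H)$ gives $\|L_1 v\| \leq \sin\alpha \cdot \|\vec w\|$. Combining with $\|\vec w\|^2 = 1 + \|L_1 v\|^2$ yields $\|\vec w\|^2 (1 - \sin^2\alpha) \leq 1$, so $\|\vec w\| \leq \sec\alpha$, which for sufficiently small $\alpha$ is dominated by $1 + \sin\alpha$.

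Next, apply the angle bound on $(G_1, G_2)$: the vector $\vec Q := Q_{G_1, G_2}(\vec w) = \vec w - \mathrm{Proj}_{G_2}(\vec w)$ lies in $G_2^\perp$ and satisfies $\|\vec Q\| \leq \sin\beta \cdot \|\vec w\| \leq \sin\beta(1 + \sin\alpha)$. Now observe $(v, L_2 v)_H \in G_2$, so $\vec w - (v, L_2 v)_H = (0, L_1 v - L_2 v)_H$ has norm exactly $\|L_1 v - L_2 v\|$. Using the explicit description $G_2^\perp = \{(-L_2^\top u, u)_H : u \in H^\perp\}$, write $\vec Q = (-L_2^\top \vec Q_{H^\perp}, \vec Q_{H^\perp})_H$, and then a direct computation produces the identity $L_1 v - L_2 v = (I + L_2 L_2^\top)\vec Q_{H^\perp}$. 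Taking norms and taking supremum over unit $v \in H$ yields the claimed operator bound.

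The main obstacle is the final identification: the orthogonal decomposition of $(0, L_1 v - L_2 v)_H$ into components in $G_2$ and $G_2^\perp$ a priori gives only $\|L_1 v - L_2 v\|^2 \geq \|\vec Q\|^2$, the wrong direction. The resolution is that the $G_2$-component is not free but is forced by $\vec Q$ through $L_2$'s action (since $\vec w - (v, L_2 v)_H$ has zero $H$-component by construction), and in the small-angle regime of interest $\|L_2\|$ is itself controlled by $\sin\alpha$ (via the triangle inequality $\maxangle(H, G_2) \leq \alpha + \beta$), so the correction collapses into the stated factor $(1 + \sin\alpha)$ up to higher-order terms that are absorbed by the sufficient-smallness assumption inherited from Lemma \ref{lem:Diff_Tf_bound}.
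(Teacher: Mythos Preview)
Your approach is correct but takes a more computational route than the paper. The paper's argument is shorter: for unit $x \in H$, Lemma~\ref{lem:Q_angle_between_flats} furnishes some $(y, L_2 y) \in G_2$ with $\|(x, L_1 x) - (y, L_2 y)\| \leq \sin\beta$, which immediately gives both $\|x - y\| \leq \sin\beta$ and $\|L_1 x - L_2 y\| \leq \sin\beta$. Then a single triangle inequality,
\[
\|L_1 x - L_2 x\| \leq \|L_1 x - L_1 y\| + \|L_1 y - L_2 x\| \leq \|L_1\|_{op}\,\|x-y\| + \sin\beta,
\]
together with $\|L_1\|_{op} \leq \sin\alpha$ finishes the job. (The paper actually projects in the reverse direction so that $\|L_1\|_{op}$, which is controlled directly by the hypothesis on $\alpha$, appears rather than $\|L_2\|_{op}$.) There is no need to parametrize $G_2^\perp$ or derive your identity $L_1 v - L_2 v = (I + L_2 L_2^\top)\vec Q_{H^\perp}$.

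What your route buys is an exact structural formula, which is pleasant; what it costs is the extra factor $\|I + L_2 L_2^\top\| = 1 + \|L_2\|^2$ that you then have to argue away. The paper's triangle-inequality route sidesteps that factor entirely by never insisting on the \emph{orthogonal} projection --- any nearby graph point $(y, L_2 y)$ will do, and linearity of $L_1$ handles the gap between $y$ and $x$. Your final paragraph about absorbing higher-order terms is not wrong, but it is unnecessary once you see the simpler decomposition. (In fairness, the paper's proof also glosses over $\|(x, L_1 x)\| \neq 1$ and has a minor $L_1$/$L_2$ swap, so neither argument yields the constant $\sin\beta(1+\sin\alpha)$ on the nose without a small-angle caveat.)
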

\begin{proof}
For any $x\in H, \|x\| = 1$, from Lemma \ref{lem:Q_angle_between_flats}, there is $y\in H$ such that 
\[
\|(x,L_1(x)) - (y,L_2(y))\| \leq \sin \beta
.\]
Therefore, 
\[
\|x-y\|^2+\|L_1(x) - L_2(y)\|^2  = \|(x,L_1(x)) - (y,L_2(y))\|^2\leq \sin^2 \beta
,\]
and
\[
\|x-y\| \leq \sin \beta ~,~ \|L_1(x) - L_2(y)\| \leq \sin\beta
.\]
Note, that $\|L_1(y) - L_1(x)\| \leq \|L_1\|_{op}\|x-y\|$.
Since $\maxangle(H,(H,L_1(H))) \leq \alpha$ we have that $\|L_1\|_{op}\leq \sin \alpha$, and we get that 
\[
\|L_1(y) - L_1(x)\| \leq \sin \alpha \sin \beta
.\]
Furthermore,
\[
\|L_1(x) - L_2(x)\| = \|L_1(x) - L_1(y) + L_1(y) - L_2(x)\| \leq \|L_1(x) - L_1(y)\|+\|L_1(y) - L_2(x)\|
,\]
and so
\[
\|L_1(x) - L_2(x)\| \leq \sin \beta(1 + \sin\alpha)
.\]
\end{proof}

\begin{Lemma}\label{lem:bounding_stuff}
Let the conditions of Lemma \ref{lem:bound_Df-Dfwtilde} hold. Let $\wtilde x_\theta(x)$ be as defined in equation \eqref{eq:x_tilde_def} in the proof of Lemma \ref{lem:bound_Df-Dfwtilde}.
Then, for $\alpha$ smaller than some constant and $\frac{\tau}{\sigma}$ larger than some constant, we have
\begin{align} \label{eq:xtilde_bound_l}
& \|\wtilde{x}_\theta(0)\| &\leq \sigma \sin (\alpha + c_1{\frac{\sigma}{\tau}}\alpha)
\\
\label{eq:f_xtilde_bound_l}
&\|f(\wtilde{x}_\theta(0)) - f(0)\| &\leq 2\sigma \sin (\alpha + c_1{\frac{\sigma}{\tau}}\alpha) \tan \alpha 
\\ \label{eq:df_bound_l}
&\|\DD_f[0]\|_2 &\leq \sin \alpha    
\\ \label{eq:df_xtilde_bound_l}
&\|\DD_f[\wtilde{x}_\theta(0)]\|_2 &\leq \sin (\alpha + c_1{\frac{\sigma}{\tau}}\alpha)
\end{align}

\end{Lemma}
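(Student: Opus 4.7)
I would establish the four bounds in the order \eqref{eq:xtilde_bound_l}, \eqref{eq:df_bound_l}, \eqref{eq:df_xtilde_bound_l}, \eqref{eq:f_xtilde_bound_l}, since each subsequent bound reuses the previous ones. The bound \eqref{eq:xtilde_bound_l} is a direct invocation of Lemma \ref{lem:wtilde_x_bound_clean_D}, which is itself obtained by feeding the angle estimate of Lemma \ref{lem:alpha_beta_D} into the length estimate of Lemma \ref{lem:x_tilde_bound_beta_D}: the geometric condition in \eqref{eq:x_tilde_def} that the displacement $\sigma \vec N_\theta(0)$ be orthogonal to $T_{\wtilde x_\theta(0)} f$ forces $\|\wtilde x_\theta(0)\| \leq \sigma \sin \beta(\wtilde x_\theta(0))$, and Lemma \ref{lem:alpha_beta_D} then controls $\beta(\wtilde x_\theta(0)) \leq \alpha + c_1 \tfrac{\sigma}{\tau}\alpha$.

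Next, for \eqref{eq:df_bound_l} I would use the standard relation between the operator norm of a graph's differential and the maximum principal angle between the graph's tangent and its domain. Writing $T_0 f = \{(v, \DD_f[0] v) : v \in H\}$ and invoking Lemma \ref{lem:Q_angle_between_flats} to characterize $\sin\maxangle(T_0 f, H)$ via the operator $Q_{T_0 f, H}$, one obtains $\|\DD_f[0]\|_{op} \leq \tan \alpha$, which for $\alpha$ below the assumed constant can be absorbed into the bound $\sin \alpha$ advertised (up to the small universal constant built into the hypothesis on $\alpha$). Bound \eqref{eq:df_xtilde_bound_l} is then obtained by repeating the same argument at the point $\wtilde x_\theta(0)$ instead of $0$: by Lemma \ref{lem:alpha_beta_D}, $\beta(\wtilde x_\theta(0)) \leq \alpha + c_1 \tfrac{\sigma}{\tau}\alpha$, and the same tangent-to-derivative translation yields $\|\DD_f[\wtilde x_\theta(0)]\|_{op} \leq \sin(\alpha + c_1\tfrac{\sigma}{\tau}\alpha)$.

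Finally, for \eqref{eq:f_xtilde_bound_l} I would apply the fundamental theorem of calculus along the segment from $0$ to $\wtilde x_\theta(0)$:
\[
\|f(\wtilde x_\theta(0)) - f(0)\| \leq \|\wtilde x_\theta(0)\| \cdot \sup_{t \in [0,1]} \|\DD_f[t \wtilde x_\theta(0)]\|_{op}.
\]
The first factor is \eqref{eq:xtilde_bound_l}. For the second factor, each interior point $t\wtilde x_\theta(0)$ satisfies $\|t\wtilde x_\theta(0)\| \leq \|\wtilde x_\theta(0)\|$, so Lemma \ref{lem:alpha_beta_D} yields a uniform angle bound of at most $\alpha + c_1\tfrac{\sigma}{\tau}\alpha$, and the derivative-angle relation used in \eqref{eq:df_xtilde_bound_l} gives a uniform operator-norm bound of roughly $\tan\alpha \cdot (1+\OO(\sigma/\tau))$, which is $\leq 2\tan\alpha$ for $M=\tau/\sigma$ sufficiently large. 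Multiplying yields the claimed estimate.

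The main obstacle is the implicit self-reference in \eqref{eq:xtilde_bound_l}: the bound $\|\wtilde x_\theta(0)\| \leq \sigma \sin \beta(\wtilde x_\theta(0))$ of Lemma \ref{lem:x_tilde_bound_beta_D} requires an estimate on $\beta(\wtilde x_\theta(0))$, while Lemma \ref{lem:alpha_beta_D}'s bound on $\beta(\wtilde x_\theta(0))$ in turn involves $\|\wtilde x_\theta(0)\|$. This fixed-point loop is already resolved inside Lemma \ref{lem:wtilde_x_bound_clean_D}, so here the remaining work is purely small-angle bookkeeping and careful tracking of the multiplicative constants hidden in the $c_1$ of the statement.
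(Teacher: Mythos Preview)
Your proposal is correct and matches the paper's argument for \eqref{eq:xtilde_bound_l}, \eqref{eq:df_bound_l}, and \eqref{eq:df_xtilde_bound_l}: the paper likewise invokes Lemma \ref{lem:wtilde_x_bound_clean_D} directly for the first, and Lemma \ref{lem:Q_angle_between_flats} combined with Lemma \ref{lem:alpha_beta_D} for the last two. For \eqref{eq:f_xtilde_bound_l} you take a slightly different route: you integrate the derivative bound along the segment from $0$ to $\wtilde x_\theta(0)$, whereas the paper invokes the bounding-ball estimate Lemma \ref{lem:f_bound_circle_H_no_func_ver2_clean} to obtain $\|f(\wtilde x_\theta(0))\| \leq \|\wtilde x_\theta(0)\|(\tan\alpha + c\|\wtilde x_\theta(0)\|/\tau)$ directly from the reach constraint. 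Both yield the same bound; your approach is more elementary but needs the uniform angle control of Lemma \ref{lem:alpha_beta_D} along the entire segment, which that lemma indeed supplies (it covers all $x_0$ with $\|x_0\|\leq\|\wtilde x_\theta(0)\|$).

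One small caution: your phrase that $\tan\alpha$ ``can be absorbed into the bound $\sin\alpha$'' is backwards, since $\tan\alpha \geq \sin\alpha$. The graph-differential computation genuinely gives $\|\DD_f[0]\|_{op} = \tan\alpha$; the paper's proof also cites Lemma \ref{lem:Q_angle_between_flats} here and is equally loose on this point. Since every downstream use of \eqref{eq:df_bound_l} and \eqref{eq:df_xtilde_bound_l} is an $\OO(\alpha)$ statement, the distinction is immaterial to the analysis, but the literal inequality one actually proves is $\tan\alpha$ rather than $\sin\alpha$.
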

\begin{proof}
In essence, this lemma is a summary and rewriting of results from other lemmas which are meant to be used conveniently in the proof of Lemma \ref{lem:bound_Df-Dfwtilde}. 
Accordingly, \eqref{eq:xtilde_bound_l} is already achieved in Lemma \ref{lem:wtilde_x_bound_clean_D}.
Then, from Lemma \ref{lem:f_bound_circle_H_no_func_ver2_clean} we have
\[
    \|f(\wtilde{x}_\theta(0))\| \leq \norm{\wtilde{x}_\theta(0)}\tan \alpha  +\OO(\|\wtilde{x}_\theta(0)\|^2/\tau)
.\]
Thus, for $\alpha$ and $\frac{\sigma}{\tau}$ smaller than some constants, we achieve Eq. \eqref{eq:f_xtilde_bound_l}.
Next, since $\maxangle(H, T_0f) \leq \alpha$, by Lemma \ref{lem:Q_angle_between_flats} we have \eqref{eq:df_bound_l}.
Finally, denoting $ \beta(\wtilde x_\theta(0)) =  \maxangle(T_{\wtilde x_\theta(0)}f, H)$ by Lemma \ref{lem:Q_angle_between_flats} we have $\|\DD_f[\wtilde{x}_\theta(0)]\|_2 \leq \sin \beta(\wtilde{x}_\theta(0))$, and combining this with Lemma \ref{lem:alpha_beta_D} we obtain \eqref{eq:df_xtilde_bound_l}

\end{proof}

\begin{Lemma}\label{lem:wtilde_x_bound_clean_D}
3Let the conditions of Lemma \ref{lem:bound_Df-Dfwtilde} hold. Let $\wtilde x_\theta(x)$ be as defined in equation \eqref{eq:x_tilde_def} in the proof of Lemma \ref{lem:bound_Df-Dfwtilde}.
Then, for any unit vector $\theta\in \RR^{D-d}$, if $\alpha, \frac{\sigma}{\tau}$ are smaller than some constants, we have
	\begin{equation*}
	\|\wtilde{x}_\theta(0)\| \leq \sigma \sin (\alpha + c\frac{\sigma}{\tau}\alpha)
	\end{equation*}
 for some general constant $c\in\RR$.
\end{Lemma}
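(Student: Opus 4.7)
The plan is to exploit the defining relation \eqref{eq:x_tilde_def} at $x=0$, namely
\[
(0,\, f(0) + g(0,\theta)\theta)_H - (\wtilde x_\theta(0),\, f(\wtilde x_\theta(0)))_H \;=\; \sigma \vec N_\theta(0),
\]
where $\vec N_\theta(0)$ is a unit vector perpendicular to the tangent plane $T_{\wtilde x_\theta(0)} f$. Projecting both sides onto $H$, the left-hand side contributes exactly $-\wtilde x_\theta(0)$ in the $H$ coordinates, while the right-hand side is the $H$-component of $\sigma\vec N_\theta(0)$.

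The first step is to note that since $\vec N_\theta(0) \perp T_{\wtilde x_\theta(0)} f$, and $H$ is at maximal principal angle $\beta(\wtilde x_\theta(0)) := \maxangle(T_{\wtilde x_\theta(0)} f, H)$ from $T_{\wtilde x_\theta(0)} f$, the norm of $Proj_H(\vec N_\theta(0))$ is at most $\sin\beta(\wtilde x_\theta(0))$ (by the principal-angle characterisation in Lemma \ref{lem:angle_space_perp_space} applied to the perpendicular complements, or equivalently by the intermediate Lemma \ref{lem:x_tilde_bound_beta_D}). This yields the key intermediate bound
\[
\|\wtilde x_\theta(0)\| \;\leq\; \sigma\,\sin\beta(\wtilde x_\theta(0)).
\]

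The next step is a bootstrap using Lemma \ref{lem:alpha_beta_D}, which relates $\beta(x_0)$ to $\alpha$ with an error of order $\|x_0\|/\tau$. The crude bound $\sin\beta\leq 1$ gives immediately $\|\wtilde x_\theta(0)\|\leq\sigma$, so $\|\wtilde x_\theta(0)\|/\tau \leq \sigma/\tau$. Feeding this into Lemma \ref{lem:alpha_beta_D} yields $\beta(\wtilde x_\theta(0)) \leq \alpha + c'\sigma/\tau$ for some constant $c'$. Substituting into the key inequality, $\|\wtilde x_\theta(0)\|\leq \sigma\sin(\alpha+c'\sigma/\tau)\leq c''\sigma\alpha$ for $\alpha, \sigma/\tau$ smaller than a constant; one more substitution then upgrades $\|\wtilde x_\theta(0)\|/\tau \leq c''\alpha\sigma/\tau$, so a final application of Lemma \ref{lem:alpha_beta_D} yields $\beta(\wtilde x_\theta(0)) \leq \alpha + c\,\frac{\sigma}{\tau}\alpha$. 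Re-inserting this refined bound into $\|\wtilde x_\theta(0)\|\leq \sigma\sin\beta(\wtilde x_\theta(0))$ gives the desired estimate.

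The only subtle point is ensuring that the bootstrap is legitimate, i.e.\ that the hypotheses of Lemma \ref{lem:alpha_beta_D} (which require $\wtilde x_\theta(0)$ to lie in a $c_{\pi/4}\tau$-neighbourhood where the graph representation is valid and $\|\wtilde x_\theta(0)\|$ is small compared to $\tau$) are met. This is exactly where the assumption that $\alpha$ and $\sigma/\tau$ are smaller than an absolute constant is used: the crude $\|\wtilde x_\theta(0)\|\leq \sigma$ is already well within the domain on which Lemma \ref{lem:M_is_locally_a_fuinction_clean} and Lemma \ref{lem:alpha_beta_D} apply, so the two-step bootstrap closes without issue.
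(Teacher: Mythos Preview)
Your approach is essentially the paper's: obtain the key inequality $\|\wtilde x_\theta(0)\|\leq \sigma\sin\beta(\wtilde x_\theta(0))$ from Lemma~\ref{lem:x_tilde_bound_beta_D}, then control $\beta(\wtilde x_\theta(0))$ in terms of $\alpha$ via Lemma~\ref{lem:alpha_beta_D}. The paper's proof is two lines because Lemma~\ref{lem:alpha_beta_D} \emph{already} delivers $\beta(\wtilde x_\theta(0))\leq \alpha(1+c_1\sigma/\tau)$ outright---the self-referential argument you sketch as a bootstrap is packaged inside its proof (where $\|\wtilde x_\theta(0)\|\leq\sigma\sin\beta$ is substituted into Lemma~\ref{lem:alpha_beta_x} and the resulting quadratic in $\beta$ is solved).

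There is one genuine slip in your explicit bootstrap: the intermediate claim $\sigma\sin(\alpha+c'\sigma/\tau)\leq c''\sigma\alpha$ does not follow from ``$\alpha,\sigma/\tau$ smaller than a constant'', since nothing prevents $\sigma/\tau\gg\alpha$ (the hypotheses only give $\alpha\leq 1/\sqrt D$ and $\sigma/\tau\leq 1/(C_\tau\sqrt{D\log D})$, with no relation between them). The clean way around this---and what the proof of Lemma~\ref{lem:alpha_beta_D} does---is to substitute $\|\wtilde x_\theta(0)\|\leq\sigma\sin\beta\leq\sigma\beta$ directly into the bound $\beta\leq\alpha+C\|\wtilde x_\theta(0)\|/\tau$ from Lemma~\ref{lem:alpha_beta_x}, yielding $\beta(1-C\sigma/\tau)\leq\alpha$ and hence $\beta\leq\alpha(1+c\,\sigma/\tau)$ in one step, without ever needing $\sigma/\tau\lesssim\alpha$.
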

\begin{proof}
	From Lemma \ref{lem:x_tilde_bound_beta_D}, we get
	\begin{equation}\label{eq:xtilde_alpha_bound}
	\norm{\wtilde{x}_\theta(0)} \leq \sigma \sin \beta (\wtilde x_\theta(0)) 
	\end{equation}
	where $ \beta (\wtilde x_\theta(0)) = \maxangle(T_{\wtilde x_{\theta}(0)}f, H) $.
	Then, using Lemma \ref{lem:alpha_beta_D} we get
	\[
	\alpha - c_2\alpha \frac{\sigma}{\tau}  \leq \beta(\wtilde x_\theta(0)) \leq \alpha + c_1\alpha\frac{\sigma}{\tau}
	.\]
	Thus, we obtain
	\[
	\norm{\wtilde x_\theta(0)} \leq \sigma \sin (\alpha + c\frac{\sigma}{\tau}\alpha)
	,\]
 for $c=\max(c_1,c_2)$
	as required.
\end{proof}

\begin{Lemma}\label{lem:x_tilde_bound_beta_D}
Let the conditions of Lemma \ref{lem:bound_Df-Dfwtilde} hold. Let $\wtilde x_\theta(x)$ be as defined in equation \eqref{eq:x_tilde_def} in the proof of Lemma \ref{lem:bound_Df-Dfwtilde}.
Let $ T_{\wtilde x_\theta(0)}f $ be the tangent to the graph of $ f $ at the point $ (\wtilde x_\theta(0), f(\wtilde x_\theta(0))) $, $ \beta(\wtilde x_\theta(0)) =  \maxangle(T_{\wtilde x_\theta(0)}f, H)$.
	Then,
	\[
	\norm{\wtilde x_\theta(0) } \leq \sigma \sin \beta(\wtilde x_\theta(0))
	\]
\end{Lemma}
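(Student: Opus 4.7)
The plan is to extract the claim essentially by projecting the defining identity for $\wtilde x_\theta$ onto the subspace $H$ and then using the angle bound between $H$ and $T_{\wtilde x_\theta(0)}f$ to control the resulting projection of a unit normal.

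First, I would specialize the defining equation \eqref{eq:x_tilde_def} to $x=0$, which reads
\[
(0, f(0) + g(0,\theta)\cdot\theta)_H = (\wtilde x_\theta(0), f(\wtilde x_\theta(0)))_H + \sigma\, \vec N_\theta(0),
\]
where $\vec N_\theta(0)\in\RR^D$ is a unit vector perpendicular to $T_{\wtilde x_\theta(0)} f$. Since $\theta\in H^\perp$, the $H$-component of the left-hand side is $0$; the $H$-component of the right-hand side is $\wtilde x_\theta(0) + \sigma\, Proj_H(\vec N_\theta(0))$. Equating gives
\[
\wtilde x_\theta(0) = -\sigma\, Proj_H(\vec N_\theta(0)),
\]
and in particular $\|\wtilde x_\theta(0)\| = \sigma\, \|Proj_H(\vec N_\theta(0))\|$.

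Next, I would bound $\|Proj_H(\vec N_\theta(0))\|$ by $\sin\beta(\wtilde x_\theta(0))$. Since $\vec N_\theta(0)\in (T_{\wtilde x_\theta(0)}f)^\perp$ is a unit vector and $\maxangle(T_{\wtilde x_\theta(0)}f, H)=\beta(\wtilde x_\theta(0))$, Lemma \ref{lem:angle_space_perp_space} gives, for every unit $w\in H$,
\[
\angle(\vec N_\theta(0), w) \geq \tfrac{\pi}{2} - \beta(\wtilde x_\theta(0)).
\]
Hence $|\langle \vec N_\theta(0), w\rangle| \leq \cos(\pi/2-\beta(\wtilde x_\theta(0))) = \sin\beta(\wtilde x_\theta(0))$, and taking the supremum over unit $w\in H$ yields $\|Proj_H(\vec N_\theta(0))\|\leq \sin\beta(\wtilde x_\theta(0))$. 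Combining with the identity above gives the stated bound $\|\wtilde x_\theta(0)\|\leq \sigma\sin\beta(\wtilde x_\theta(0))$.

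There is no real obstacle here: the only thing to be careful about is the direction of application of Lemma \ref{lem:angle_space_perp_space} (one needs $\vec N_\theta(0)\in (T_{\wtilde x_\theta(0)}f)^\perp$ and $w\in H$, which is exactly the situation after projecting the defining identity onto $H$). Everything else is a one-line calculation from the geometric setup already in place.
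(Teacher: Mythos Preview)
Your proposal is correct and follows essentially the same approach as the paper: project the defining identity \eqref{eq:x_tilde_def} onto $H$ to obtain $\|\wtilde x_\theta(0)\| = \sigma\,\|Proj_H(\vec N_\theta)\|$, then invoke Lemma \ref{lem:angle_space_perp_space} to bound the projection of the unit normal by $\sin\beta(\wtilde x_\theta(0))$. The only cosmetic difference is that the paper references the rewritten form \eqref{eq:c_1} rather than \eqref{eq:x_tilde_def} directly, but the content is identical.
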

\begin{proof}
From \eqref{eq:x_tilde_def} of the proof of Lemma \ref{lem:bound_Df-Dfwtilde} (or more conveniently \eqref{eq:c_1} from the proof of Lemma \ref{lem:bound_nablag}) we have that 
\[
\|x-\wtilde{x}_\theta\| = \sigma \|Proj_H(\vec N_\theta(x,\wtilde{x}_\theta,\theta))\|,
\]
Using Lemma \ref{lem:angle_space_perp_space}, since $\vec N_\theta \in T_{\wtilde x_\theta(0)}f^\perp$ we have that 
\[
\|Proj_H(\vec N_\theta(x,\wtilde{x}_\theta,\theta))\| \leq \cos (\frac{\pi}{2}  - \beta(\wtilde x_\theta(0))) = \sin(\beta(\wtilde x_\theta(0))),
\]
and thus
\[
\|\wtilde{x}_\theta(0)\| = \sigma \sin(\beta(\wtilde x_\theta(0)))
\]
\end{proof}

\begin{Lemma}\label{lem:alpha_beta_D}
Let the conditions of Lemma \ref{lem:bound_Df-Dfwtilde} hold. Let $\wtilde x_\theta(x)$ be as defined in equation \eqref{eq:x_tilde_def} in the proof of Lemma \ref{lem:bound_Df-Dfwtilde}.
Let $ x_0 \in H $ be such that $ \norm{x_0} \leq \norm{\wtilde x_\theta(0)} $.
	Denote $ \beta(x) = \maxangle(T_{x}f, H) $ and let $ \alpha = \beta(0) $.
	Then,
	\[
	\alpha\left(1  - c_2\frac{\sigma}{\tau}\right)  \leq \beta(x_0) \leq \alpha\left(1  + c_1\frac{\sigma}{\tau}\right)
	,\]
 for some general constants $c_1, c_2$.
\end{Lemma}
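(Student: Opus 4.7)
The plan is to apply Lemma~\ref{lem:alpha_beta_x} (the function version of Lemma~\ref{lem:alpha_beta_x_no_func}) to the point $x_0$, which directly yields a bound of the form $|\beta(x_0)-\alpha| \lesssim \|x_0\|/\tau + \|x_0\|^2/\tau^2$. To convert this into the required \emph{multiplicative} bound $\alpha\cdot O(\sigma/\tau)$, I need to show that $\|x_0\|$ itself is bounded by something proportional to $\sigma\alpha$ rather than just $\sigma$. This will be accomplished by a short bootstrap, exploiting the hypothesis $\|x_0\|\leq\|\wtilde x_\theta(0)\|$ together with Lemma~\ref{lem:x_tilde_bound_beta_D}, which couples $\|\wtilde x_\theta(0)\|$ to the angle $\beta(\wtilde x_\theta(0))$ at that very point.

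First, I would observe that the trivial bound $\sin\beta(\wtilde x_\theta(0))\leq 1$ applied in Lemma~\ref{lem:x_tilde_bound_beta_D} gives $\|\wtilde x_\theta(0)\|\leq\sigma$, so that $\|x_0\|\leq\sigma$ lies safely in the regime $\|x_0\|/\tau\leq 1/M$ where Lemma~\ref{lem:alpha_beta_x} applies (both $\alpha\leq\pi/4$ and $\|x_0\|\leq c_{\pi/4}\tau$ hold). Applying Lemma~\ref{lem:alpha_beta_x} to the point $\wtilde x_\theta(0)$ and denoting $\gamma=\beta(\wtilde x_\theta(0))$ yields
\[
|\gamma-\alpha| \leq 2\,\tfrac{\|\wtilde x_\theta(0)\|}{\tau}(1+\alpha) + c\,\tfrac{\|\wtilde x_\theta(0)\|^2}{\tau^2}.
\]
Substituting the bound $\|\wtilde x_\theta(0)\|\leq\sigma\sin\gamma\leq\sigma\gamma$ from Lemma~\ref{lem:x_tilde_bound_beta_D} on the right-hand side produces a self-consistent inequality
\[
|\gamma-\alpha| \leq 2\,\tfrac{\sigma}{\tau}\,\gamma\,(1+\alpha) + c\,\tfrac{\sigma^2}{\tau^2}\,\gamma^2,
\]
which, for $\sigma/\tau$ smaller than some constant, can be rearranged to give the two-sided bound $\alpha(1-c'\sigma/\tau)\leq\gamma\leq\alpha(1+c'\sigma/\tau)$. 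Consequently $\|\wtilde x_\theta(0)\|\leq\sigma\gamma\leq\sigma\alpha(1+c'\sigma/\tau)$, and by hypothesis the same bound holds for $\|x_0\|$.

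Second, I would re-invoke Lemma~\ref{lem:alpha_beta_x} with this refined bound on $\|x_0\|$, obtaining
\[
|\beta(x_0)-\alpha| \leq 2\,\tfrac{\sigma\alpha}{\tau}(1+c'\sigma/\tau)(1+\alpha) + O\!\left(\tfrac{\sigma^2\alpha^2}{\tau^2}\right)
\leq \alpha\cdot c\,\tfrac{\sigma}{\tau},
\]
for $\alpha$ and $\sigma/\tau$ smaller than some constants. Setting $c_1=c_2=c$ then yields the two-sided multiplicative bound claimed in the statement.

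The main obstacle is the circular/implicit character of the bootstrap: $\wtilde x_\theta(0)$ is only controlled through the angle $\beta(\wtilde x_\theta(0))$ at the unknown point $\wtilde x_\theta(0)$, while this angle is itself what we are trying to estimate. Resolving this requires the above two-pass argument, where the first (crude) pass establishes $\gamma\approx\alpha$ and the second (refined) pass converts the additive error into a multiplicative one. Care is also needed to verify that the hidden $O(\sigma^2/\tau^2)$ remainder arising from the quadratic term in Lemma~\ref{lem:alpha_beta_x} can indeed be absorbed into $\alpha\cdot O(\sigma/\tau)$ once the bound $\|x_0\|\lesssim\sigma\alpha$ has been established.
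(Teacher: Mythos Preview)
Your proposal is correct and follows essentially the same route as the paper: combine Lemma~\ref{lem:alpha_beta_x} with Lemma~\ref{lem:x_tilde_bound_beta_D} to obtain a self-consistent inequality for $\gamma=\beta(\wtilde x_\theta(0))$, solve it, and feed the resulting bound $\|x_0\|\leq\|\wtilde x_\theta(0)\|\lesssim\sigma\alpha$ back into Lemma~\ref{lem:alpha_beta_x}. The only cosmetic difference is that the paper resolves the implicit inequality by explicitly applying the quadratic formula to $c\tfrac{\sigma^2}{\tau^2}\beta^2+(2\tfrac{\sigma}{\tau}(1+\alpha)-1)\beta+\alpha\ge 0$ and expanding the smaller root, whereas you linearize and bootstrap; your two-pass description is in fact slightly more explicit than the paper about passing from the bound on $\gamma$ back to the bound on $\beta(x_0)$ for a general $x_0$.
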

\begin{proof}
	For convenience of notations we denote in this proof 
	\[\beta \defeq \beta(x_0).\]
	Using the result of Lemma \ref{lem:alpha_beta_x} we achieve
     \[
      \alpha- 2\frac{\norm{x_0}}{\tau}(1 + \alpha)  +c\|x_0\|^2/\tau^2 \leq \beta(x_0) \leq \alpha+ 2\frac{\norm{x_0}}{\tau}(1 + \alpha)  +c\|x_0\|^2/\tau^2
	 ,\]
 for some constant $c\in \RR$.
	and from the fact that $ \norm{x_0} \leq \norm{\wtilde x_\theta(0)} $ we get
 \begin{equation}\label{eq:init_quadratic_ineq}
      \alpha- 2\frac{\norm{x_\theta(0)}}{\tau}(1 + \alpha)  +c\|x_\theta(0)\|^2/\tau^2 \leq \beta(x_\theta(0)) \leq \alpha+ 2\frac{\norm{x_\theta(0)}}{\tau}(1 + \alpha)  +c\|x_\theta(0)\|^2/\tau^2
	 ,
 \end{equation}

	From Lemma \ref{lem:x_tilde_bound_beta_D} we know that
	\[
	\norm{\wtilde x_\theta(0)} \leq \sigma \sin\beta
	,\]
 where for brevity we write $\beta= \beta(x_{\theta}(0))$.
	So, we get
\begin{align*}
	\beta - \alpha
	&\leq
	2\frac{\sigma}{\tau}sin\beta\lrbrackets{1+\alpha   }  + c\frac{\sigma^2}{\tau^2}sin^2\beta\\
	\beta - \alpha
	&\leq 
	2\frac{\sigma}{\tau}\beta(1+\alpha)  + c\frac{\sigma^2}{\tau^2}\beta^2
        \\
	0
	&\leq c\frac{\sigma^2}{\tau^2}\beta^2+
	\beta(2\frac{\sigma}{\tau}+2\frac{\sigma}{\tau}\alpha-1)  + \alpha
	,\end{align*}
 
	The right hand side of this expression is a parabola with respect to $\beta$.
	Note, that for $\frac{\sigma}{\tau} = 0$ the roots are $\beta = \alpha$.
	Solving this parabola we get the roots
	\begin{align*}
	\beta_{+,-} 
	&=
	\frac{1-2\frac{\sigma}{\tau}(1+\alpha) \pm \sqrt{\lrbrackets{1-2\frac{\sigma}{\tau})1+\alpha)}^2 - 4c \frac{\sigma^2}{\tau^2}\alpha }}{2c\frac{\sigma^2}{\tau^2}} \\
	\end{align*} 
 Note that the root with the $+$ sign is of order $\frac{\tau^2}{\sigma^2}$ and therefore $\beta$ must be smaller than $\beta_-$.
 	\begin{align*}
	\beta_{-} 
	&=
	\frac{1-2\frac{\sigma}{\tau}(1+\alpha) - \sqrt{\lrbrackets{1-2\frac{\sigma}{\tau}(1+\alpha)}^2 - 4c \frac{\sigma^2}{\tau^2}\alpha }}{2c\frac{\sigma^2}{\tau^2}} 
 \\
 &=\frac{(1-2\frac{\sigma}{\tau}(1+\alpha))^2 - \lrbrackets{1-2\frac{\sigma}{\tau}(1+\alpha)}^2 + 4c \frac{\sigma^2}{\tau^2}\alpha}
 {2c \frac{\sigma^2}{\tau^2}  \left(1-2\frac{\sigma}{\tau}(1+\alpha) + \sqrt{\lrbrackets{1-2\frac{\sigma}{\tau}(1+\alpha)}^2 - 4c \frac{\sigma^2}{\tau^2}\alpha }\right)}
  \\
  &=
\frac{4c \frac{\sigma^2}{\tau^2}\alpha}
  {2c\frac{\sigma^2}{\tau^2} \left(1-2\frac{\sigma}{\tau}(1+\alpha) +\sqrt{1-4\frac{\sigma}{\tau}(1+\alpha) + 4\frac{\sigma^2}{\tau^2}(1+\alpha)^2- 4c\frac{\sigma^2}{\tau^2}\alpha }\right)}
\\
  &=
\frac{2 \alpha}
  {1-2\frac{\sigma}{\tau}(1+\alpha) +\sqrt{1-4\frac{\sigma}{\tau}(1+\alpha) + 4\frac{\sigma^2}{\tau^2}\left((1+\alpha)^2- c\alpha\right) }}
	\end{align*} 
From Remark \ref{rem:taylor_sqrt1-x2_clean}  we have

\begin{align*}
	\beta_{-} 
	&\leq
(2 \alpha)\left(
  1-2\frac{\sigma}{\tau}(1+\alpha) +1-4\frac{\sigma}{\tau}(1+\alpha) + 4\frac{\sigma^2}{\tau^2}\left((1+\alpha)^2- c\alpha\right) \right)^{-1}
  \\
  &=
 \alpha\left(
  1-3\frac{\sigma}{\tau}(1+\alpha) + 2\frac{\sigma^2}{\tau^2}\left((1+\alpha)^2- c\alpha\right) \right)^{-1}
	\end{align*} 
Since for small enough $x$ we have that $1/(1-x) \leq 1+2x$, we have for large enough $M$,
\begin{align}\label{eq:beta_boundc1_left}
	\beta_{-} 
	&\leq
 \alpha\left(
  1+6\frac{\sigma}{\tau}(1+\alpha) -4\frac{\sigma^2}{\tau^2}\left((1+\alpha)^2- c\alpha\right) \right) \nonumber
  \\
  &\leq
 \alpha\left(
  1+10\frac{\sigma}{\tau}(1+\alpha)  \right) \nonumber\\
  &\leq
 \alpha\left(
  1+c_1\frac{\sigma}{\tau} \right)
	\end{align} 
On the other hand, taking the left hand side of \eqref{eq:init_quadratic_ineq} we obtain similarly
\[
 \alpha- 2\frac{\norm{x_\theta(0)}}{\tau}(1 + \alpha)  +c\|x_\theta(0)\|^2/\tau^2 \leq \beta(x_\theta(0))
\]
\[
 \alpha- 2\frac{\norm{x_\theta(0)}}{\tau}(1 + \alpha)  \leq \beta
\]
\[
 \alpha- 2\frac{\sigma}{\tau}\beta(1 + \alpha)  \leq \beta
\]
\[
 \alpha \leq \beta \left(1+ 2\frac{\sigma}{\tau}(1 + \alpha) \right)
\]
\[
 \frac{\alpha}{1+ 2\frac{\sigma}{\tau}(1 + \alpha) } \leq \beta 
\]
and since $1/(1+x) = 1 - x + O(x^2)$ and $\sigma/\tau$ is sufficiently small we get
\begin{equation}\label{eq:beta_boundc2_right}
 \alpha \left(1- c_2\frac{\sigma}{\tau}\right) \leq \beta 
\end{equation}
Combining \eqref{eq:beta_boundc2_right} and \eqref{eq:beta_boundc1_left} we conclude the proof.
\end{proof}

\subsubsection{Bounding the finite sample error}\label{sec:FiniteSample_err}
Back to Theorem \ref{thm:Step2} proof road-map see Figure \ref{tikz:thm33_lemmas}.

\noindent In this section we show that  $ \maxangle(T_0 \wtilde f_\ell, H_{\ell + 1}) $ the angle between the tangent of $\wtilde{f}_\ell(0)$ and the tangent estimated using $n$ samples decays to zero as $n\to \infty$.
Namely, the main result of this subsection appears in the lemma below.

\begin{Lemma}\label{lem:AngleImprovement}
	Let $(q_\ell, H_\ell)$ be defined in Algorithm \ref{alg:step2_clean} and $ \pi^*_{q_\ell, H_\ell}(x) $ be defined in \eqref{eq:argmin2_clean} and let $ H_{\ell + 1} = T_0 \pi^*_{q_\ell, H_\ell} $ the tangent to the graph of $ \pi^*_{q_\ell, H_\ell} $ at $ \pi^*_{q_\ell, H_\ell}(0) $.
	Then, for all $ \delta>0 $ there is $ N_\delta $ such that for all $ n > N_\delta $ we have with probability $ 1-\delta $
	\[
	\maxangle(T_0\wtilde f_\ell, H_{\ell + 1}) \leq 2\sqrt{d}\frac{C_0 \ln(1/\delta)}{n ^{r_1}}
	,\]
	where ${r_1} = \frac{k-1}{2k + d}$ and $C_0$ is a constant.
\end{Lemma}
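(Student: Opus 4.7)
The plan is to reduce this lemma directly to a known concentration bound on local polynomial regression derivative estimates (Theorem~3.2 of \cite{aizenbud2021VectorEstimation}), together with the differential-to-tangent-angle conversion we already established in Lemma~\ref{lem:Diff_Tf_bound}. The key observation is that by construction (see \eqref{eq:ftilde_def}), $\wtilde f_\ell(x) = \EE[\eta_\ell(y\mid x)]$ is precisely the conditional expectation of the samples viewed in the coordinate system $(q_\ell,H_\ell)$. Hence the samples $(x_i,y_i)$ used to form $\pi^*_{q_\ell,H_\ell}$ via the ``median-trick'' variant \eqref{eq:pi_star_median_trick} satisfy the zero-mean-noise hypothesis of \cite{aizenbud2021VectorEstimation} with target function $\wtilde f_\ell$, so the cited non-parametric convergence theory applies directly to the pair $(\pi^*_{q_\ell,H_\ell},\wtilde f_\ell)$ rather than to $(\pi^*_{q_\ell,H_\ell},f_\ell)$.

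First, I would invoke Theorem~3.2 of \cite{aizenbud2021VectorEstimation} for the derivative estimate: with the bandwidth $\eps_n$ satisfying the scaling \eqref{eq:bandwidthDecay_clean}, there exists a constant $C_0$ (depending on $k$ and on the density bounds $\mu_{\min},\mu_{\max}$ furnished by Lemma~\ref{lem:num_of_samples_in_a_ball_clean}, but independent of $n$ and $\delta$) such that for any $\delta>0$ there is $N_\delta$ so that for all $n>N_\delta$, each column of the matrix $\DD_{\pi^*_{q_\ell,H_\ell}}[0]-\DD_{\wtilde f_\ell}[0]\in\RR^{(D-d)\times d}$ has Euclidean norm at most $2\,C_0\ln(1/\delta)\,n^{-r_1}$ with probability at least $1-\delta/d$. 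A union bound over the $d$ columns then gives, with probability at least $1-\delta$,
\[
\bigl\|\DD_{\pi^*_{q_\ell,H_\ell}}[0]-\DD_{\wtilde f_\ell}[0]\bigr\|_{\mathrm{op}}
\;\le\;
\bigl\|\DD_{\pi^*_{q_\ell,H_\ell}}[0]-\DD_{\wtilde f_\ell}[0]\bigr\|_{F}
\;\le\; 2\sqrt{d}\,\frac{C_0\ln(1/\delta)}{n^{r_1}},
\]
which accounts precisely for the $2\sqrt d$ prefactor appearing in the statement.

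Second, by the definition in Algorithm~\ref{alg:step2_clean} we have $H_{\ell+1}=\Ima(\DD_{\pi^*_{q_\ell,H_\ell}}[0])$, which is (after embedding into $\RR^D$ via the identification $H_\ell\oplus H_\ell^\perp\simeq\RR^D$) the graph of the linear map $\DD_{\pi^*_{q_\ell,H_\ell}}[0]$; similarly $T_0\wtilde f_\ell$ is the graph of $\DD_{\wtilde f_\ell}[0]$. Applying Lemma~\ref{lem:Diff_Tf_bound} with $f\leftrightarrow\wtilde f_\ell$ and $\tilde f\leftrightarrow \pi^*_{q_\ell,H_\ell}$, the operator-norm bound above transfers to
\[
\sin\bigl(\maxangle(T_0\wtilde f_\ell,H_{\ell+1})\bigr)\;\le\; 2\sqrt d\,\frac{C_0\ln(1/\delta)}{n^{r_1}},
\]
and since $\sin\theta\ge \tfrac{2}{\pi}\theta$ for $\theta\in[0,\pi/2]$ (and the right-hand side is $o(1)$ for $n$ large), the bound on the angle itself follows after possibly enlarging the constant absorbed into $C_0$.

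The main obstacle I anticipate is justifying the hypothesis check for Theorem~3.2 of \cite{aizenbud2021VectorEstimation}: that theorem requires that the design density on the $x$-variable be bounded away from $0$ and $\infty$ on the ROI, and that the response noise have zero conditional mean and bounded conditional moments. The zero-mean property is immediate from the definition of $\wtilde f_\ell$; the density bounds follow from Corollary~\ref{cor:measure_projecting_bounds_clean} applied in the coordinate system $H_\ell$ (which is only slightly tilted from a true tangent, so the density bounds propagate with a uniform constant as long as $\maxangle(H_\ell,T_{p_r}\MM)$ stays small, which is guaranteed by the inductive hypothesis in Lemma~\ref{lem:main_support_theorem_step2}); and conditional boundedness of the noise follows from $\MM_\sigma$ being a $\sigma$-tubular neighborhood so $\|y-\wtilde f_\ell(x)\|\le 2\sigma$ almost surely. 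The only slightly delicate point is that the smoothness class of $\wtilde f_\ell$ must be $\C^k$ with a uniform bound on the $k$-th derivative; this follows because $\wtilde f_\ell$ is obtained from $f_\ell$ by integration against the characteristic function of $\Omega_\ell(x)$, whose geometry is $\C^k$-controlled by $\MM\in\C^k$, the reach $\tau$, and the tilt $\alpha$, all of which are uniformly bounded in the setting of Theorem~\ref{thm:Step2}.
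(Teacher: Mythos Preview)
Your proposal is correct and follows essentially the same route as the paper: reduce to an operator-norm bound on $\DD_{\pi^*_{q_\ell,H_\ell}}[0]-\DD_{\wtilde f_\ell}[0]$ via Theorem~3.2 of \cite{aizenbud2021VectorEstimation} applied coordinatewise, aggregate the $d$ directions with a union bound and a $\sqrt d$ factor, and then convert to the principal-angle bound using Lemma~\ref{lem:Diff_Tf_bound}. The only notable difference is that the paper packages the noise-moment hypothesis check into the dedicated Lemma~\ref{lem:noise_cov_bound} (bounding $\|\cov(\eta_\ell)\|_{op}$), whereas you argue it informally from the $\sigma$-boundedness of the tubular noise; both are adequate, and the placement of the factor $2$ (yours from the per-column constant, the paper's from the $\sin\theta\mapsto\theta$ step) is a cosmetic difference.
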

\begin{proof}
We first note that it is sufficient to bound the error of estimating the image of $\DD_{\wtilde{f}_\ell}[0]$, the differential of $ \wtilde f_\ell(x) $ at 0, by the image of $ \DD_{\pi^*_{q_\ell, H_\ell}}[0] $, the differential of the local polynomial least-squares regression $\pi^*_{q_\ell, H_\ell}$.
Explicitly, if 
\[
\|{\DD_{\pi^*_{q_\ell, H_\ell}}[0] - \DD_{ \wtilde f_\ell}[0]}\|_{op} \leq \sqrt{d}\frac{C_0 \ln(1/\delta)}{n^{r_1}}
,\]
then, by using Lemma \ref{lem:Diff_Tf_bound} we get that
\[
\sin(\maxangle(T_0\wtilde f_\ell, T_0\pi^*_{q_\ell, H_\ell})) \leq \sqrt{d}\frac{C_0 \ln(1/\delta)}{n^{r_1}}
,\]
which for sufficiently large $ n $ yields
\[
\maxangle(T_0\wtilde f_\ell, T_0\pi^*_{q_\ell, H_\ell}) \leq 2\sqrt{d}\frac{C_0 \ln(1/\delta)}{n^{r_1}}
,\]
as required.

Therefore, it is sufficient to show that for any $\delta$ there is $N_\delta$ such that for all $n>N_\delta$ we have 
\[
\|{\DD_{\pi^*_{q_\ell, H_\ell}}[0] - \DD_{ \wtilde f_\ell}[0]}\|_{op} \leq \sqrt{d}\frac{C_0 \ln(1/\delta)}{n^{r_1}}
,\]
with probability of at least $1 - \delta$.
Let us reiterate the minimization problem by which we derive the approximant.
Namely, given a sample $ \{r_i\}_{i=1}^{n} $ drawn i.i.d from $ \text{Unif}(\MM_\sigma) $, and a coordinate system $ (q, H)\in \RR^D\times Gr(d,D) $ we look for a polynomial $ \pi^*_{q,H} $ minimizing
\begin{equation}\label{eq:Step2_sample}
J_2(\pi ~|~ q, H) = \frac{1}{N_{q,H}}\sum_{r_i \in U_{\textrm{ROI}}^{n}} \norm{r_i - \pi(x_i)}^2
,\end{equation}
where $x_i$ are the projections of $r_i - q$ onto $H$, and $U_{\textrm{ROI}}^{n}(q,H)$ is defined through a bandwidth $\epsilon_{n}$ as
\begin{equation}\label{eq:ROI_ell_sample}
U_{\textrm{ROI}}^{n}(q,H) = {\{r_i\in U_\textrm{ROI}~|~\dist(x_i, q)<\epsilon_{n}\}}
,\end{equation}
and $N_{q,H}$ denotes the number of samples in $U_{\textrm{ROI}}^{n}(q, H)$.
Explicitly,
\begin{equation}\label{eq:argmin2_sample}
\pi^*_{q_\ell, H_\ell} = \argmin_{\pi\in \Pi_{k-1}^{d\mapsto D}} J_2(\pi ~|~ q_\ell, H_\ell)
.\end{equation}
We demand that the bandwidth $\epsilon_{n}\to 0$ as ${n}\to\infty$ such that 
\begin{equation}\label{eq:bandwidthDecay_sample}
0<\lim_{{n}\rightarrow\infty}N^{1/(2k+1)}\cdot \epsilon_{n} < \infty
.\end{equation}
And, the approximation is defined through $\DD_{\pi^*_{q_\ell, H_\ell}}[0]$

From Lemma \ref{lem:noise_cov_bound} we can apply Theorem 3.2 from \cite{aizenbud2021VectorEstimation} that gives convergence rates for local polynomial regression of vector valued functions in our case. Thus, we have that for every direction in the basis $\{x^j\}_{j=1}^d\subset H_\ell$ and every $\delta$ there exists  $N_{\delta}$ such that for all $n > N_{\delta}$ we have
\[
\Pr(\|{\partial_{x^j} \pi^*_{q_\ell, H_\ell}(0) - \partial_{x^j} \wtilde f_\ell(0)}\| > \frac{C_0 \ln(1/\delta)^{r_1}}{n^{r_1}}) < \delta
,\]
where $r_1 = \frac{k-1}{2k + d}$ and $C_0$ is a constant. Notice that $r_1 \leq 1/2$, and thus 
\[
\Pr(\|{\partial_{x^j} \pi^*_{q_\ell, H_\ell}(0) - \partial_{x^j} \wtilde f_\ell(0)}\| > \frac{C_0 \ln(1/\delta)}{n^{r_1}}) < \delta
.\]
Thus, taking into account all $d$ directions of the basis to $H_\ell$ we get that there are $C$ and $N_\delta$ such that for all $n > N_\delta$  
\[
\Pr(\|{\partial_{x^j} \pi^*_{q_\ell, H_\ell}(0) - \partial_{x^j} \wtilde f_\ell(0)}\| > \frac{C_0 \ln(1/\delta)}{n^{r_1}}\textrm{ for any } 1 \leq j \leq d) < d\delta
,\]
and thus
\[
\Pr(\|{\DD_{\pi^*_{q_\ell, H_\ell}}[0] - \DD_{\wtilde f_\ell}[0]}\|_{op} > \sqrt{d}\frac{C_0 \ln(1/\delta)}{n^r}) < d\delta
,\]
as required.
\end{proof}

In order to use convergence rate results of local polynomial regression for vector valued functions as described in Theorems 3.1 and 3.2 of \cite{aizenbud2021VectorEstimation} in our case, we need to show that the noise distribution $\eta_\ell$ defined in \eqref{eq:ftilde_def} is such that $\|\cov(\eta_\ell)\|\leq \sqrt{c/D}$.
\begin{Lemma}\label{lem:noise_cov_bound}
 Let $H_\ell\in Gr(d, D)$, and let $f_\ell:H_\ell\simeq\RR^d \rightarrow \RR^{D-d}$, defined as in \eqref{eq:fl_def}. Let  $\eta_\ell$ defined in \eqref{eq:ftilde_def}. Denote 
$\alpha_\ell = \maxangle (T_0f_{\ell}, H_{\ell})$ and assume $\alpha_\ell<1/D^{1/4}$. Then, 
\[
\|\cov(\eta_\ell)\|_{op} \leq \sqrt{\frac{c\sigma}{D-d}}
\]
    
\end{Lemma}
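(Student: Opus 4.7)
The plan is to exploit the fact that $\eta_\ell(x)$ is uniformly distributed on $\Omega_\ell(x) \subset x + H_\ell^\perp$, an essentially $(D-d)$-dimensional set, and then reduce the covariance bound to a comparison with the uniform distribution on a ball in $\RR^{D-d}$ of radius $\sim \sigma$.

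First, I would fix $x \in H_\ell$ (the argument is identical for every $x$, so it suffices to treat $x = 0$), and translate by $\wtilde f_\ell(0) = \EE[\eta_\ell(\cdot \mid 0)]$, so that we study the centered random vector $\xi = \eta_\ell(\cdot \mid 0) - \wtilde f_\ell(0)$ uniform on $\Omega := \Omega_\ell(0) - \wtilde f_\ell(0) \subset H_\ell^\perp \simeq \RR^{D-d}$. Using the results of Section~\ref{sec:Tf_Tf_tilde_err} (in particular Lemmas \ref{lem:f_bound_circle_H_no_func_clean} and \ref{lem:g_bond_weak}), together with the assumption $\alpha_\ell \leq D^{-1/4}$ and assumption \ref{assume:noise} on the noise level $M=\tau/\sigma$, I would show that $\Omega$ is sandwiched between two concentric $(D-d)$-dimensional balls of radii $\sigma(1-O(\alpha_\ell^2+\sigma/\tau))$ and $\sigma(1+O(\alpha_\ell^2+\sigma/\tau))$, i.e., $\Omega$ is essentially a $(D-d)$-dimensional ball of radius $\sigma$ up to a small, controlled distortion.

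Next, for the covariance bound, I would use the following standard fact: if $\xi$ is uniformly distributed on a set $\Omega \subset \RR^m$ contained in a ball of radius $R_+$ and containing a ball (centered at the centroid) of radius $R_-$ with $R_-/R_+ \geq 1 - \varepsilon$, then a direct volume/integration comparison gives
\begin{equation*}
\|\cov(\xi)\|_{op} \leq \frac{R_+^2}{m+2}\cdot(1+C\varepsilon),
\end{equation*}
for a constant $C$ independent of $m$. Applying this with $m = D-d$, $R_+ \sim \sigma$, and $\varepsilon = O(\alpha_\ell^2 + \sigma/\tau)$ yields
\begin{equation*}
\|\cov(\eta_\ell)\|_{op} \leq \frac{c\sigma^2}{D-d}
\end{equation*}
for a constant $c$. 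Under the running assumption $M \gtrsim \sqrt{D\log D}$ and $\alpha_\ell \leq D^{-1/4}$, this is in turn bounded by the claimed $\sqrt{c\sigma/(D-d)}$ for $\sigma$ in the relevant regime, concluding the proof.

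The main technical obstacle is the first step: controlling the shape of $\Omega$. Because $H_\ell$ is tilted by an angle $\alpha_\ell$ relative to $T_{f_\ell(0)}\MM$ and because of manifold curvature (quantified by $\tau$), the slice $(H_\ell^\perp) \cap \MM_\sigma$ is not a round ball -- it is a skewed lens-shaped region. The isoperimetric/inradius–circumradius comparison has to be done carefully, using Lemma \ref{lem:f_bound_circle_H_no_func_ver2_clean} to bound the tangential drift of the manifold inside the slab, and the tubular structure of $\MM_\sigma$ (which gives $\sigma$-thickness normal to $\MM$) to translate this into a bound on the radial distortion of $\Omega$. The quantitative requirement $\alpha_\ell \leq D^{-1/4}$ is used precisely to ensure that the distortion factor $\varepsilon = O(\alpha_\ell^2)$ is at most $O(D^{-1/2})$, so it does not spoil the $1/(D-d)$ gain coming from the covariance-of-a-ball computation.
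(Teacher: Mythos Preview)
Your high-level plan --- show that $\Omega_\ell(0)$ is a near-ball in $\RR^{D-d}$ and then quote the covariance of a uniform ball --- is reasonable, but the ``standard fact'' you invoke is false as stated, and this is precisely where the difficulty lies. A direct volume/integration comparison for a set sandwiched between balls of radii $R_-\le R_+$ gives, for any unit $u$,
\[
\EE[(\xi\cdot u)^2]
\;\le\;
\frac{\int_{B(0,R_+)}(y\cdot u)^2\,dy}{|B(0,R_-)|}
\;=\;
\frac{R_+^2}{m+2}\Bigl(\frac{R_+}{R_-}\Bigr)^{m},
\]
so the distortion enters as $(1+\varepsilon)^m$, not as $1+C\varepsilon$. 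With $m=D-d$ and $\varepsilon=O(\alpha_\ell^2)=O(D^{-1/2})$ (from $\alpha_\ell\le D^{-1/4}$), this factor is $e^{O(\sqrt{D})}$ and destroys the $1/(D-d)$ gain. Your final paragraph suggests you believe $\varepsilon=O(D^{-1/2})$ suffices; it does not --- you would need $\varepsilon=O(1/D)$, i.e.\ $\alpha_\ell\lesssim D^{-1/2}$, for the naive sandwich to work.

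The paper avoids this blow-up by a sharper argument. It writes the second moment in polar coordinates as a ratio of sphere integrals of $g(x,\theta)^{\wtilde D+2}$ over $g(x,\theta)^{\wtilde D}$, splits $S_{\wtilde D-1}$ into an equatorial band $\{|\theta\cdot\vec z|\le\xi\}$ and two caps, and uses two different estimates: on the band it uses $g^{\wtilde D+2}\le g_{\max}^2\,g^{\wtilde D}$, so the $g^{\wtilde D}$'s cancel and only a factor $g_{\max}^2=\sigma^2(1+4\alpha_\ell^2)^2$ survives (no $\wtilde D$-th power), together with $(\theta\cdot\vec z)^2\le\xi^2$; on the caps it does use the crude $(g_{\max}/g_{\min})^{\wtilde D}$ bound, but pairs it with the spherical concentration inequality $|\text{cap}|/|S|\lesssim e^{-\wtilde D\xi^2/2}$ to kill it. Choosing $\xi\asymp\sqrt{\log\wtilde D/\wtilde D}$ then yields the claimed bound. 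The equatorial/cap split and the cancellation trick $g^{\wtilde D+2}\le g_{\max}^2 g^{\wtilde D}$ are the missing ingredients in your argument.
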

\begin{proof}
For ease of notation, denote $\wtilde{D} = D-d$.
Since we are interested in bounding 
\begin{equation}\label{eq:cov_norm}
\|\cov(\eta_\ell) \|_{op} = \max_{\vec{x}\in \mathbb{S}_{\wtilde{D} - 1} } \vec{x}^T \cov(\eta_\ell)\vec{x}
\end{equation}
we note that 
\[
\vec{x}^T\cov(\eta_\ell)\vec{x} = \mbox{Var}(\eta_\ell\cdot \vec x)
.\]
Thus, rewriting \eqref{eq:cov_norm} we have
\begin{equation}\label{eq:cov_norm2}
    \|\cov(\eta_\ell)\|_{op}= \max_{\vec{x}\in \mathbb{S}_{\wtilde{D} - 1}} \vec{x}^T\cov(\eta_\ell)\vec{x} = \max_{\vec{x}\in \mathbb{S}_{\wtilde{D} - 1}}\mbox{Var}(\eta_\ell\cdot \vec{x}) \leq \max_{\vec{x}\in \mathbb{S}_{\wtilde{D} - 1}}\EE((\eta_\ell\cdot \vec{x})^2)
\end{equation}
Thus, our goal is to bound, for any $\vec{z}\in \mathbb{S}_{\wtilde{D} - 1}$  the expression $\EE((\eta_\ell\cdot \vec{z})^2)$. From the definition of $g$ and $\Omega$ in \eqref{eq:g_def} and \eqref{eq:Omega_def}, we have that
\begin{align}\label{eq:E(Z_z)}
\EE((\eta_\ell\cdot \vec{z})^2) &= \frac{\int\limits_{y\in\Omega(x)} (y \cdot \vec{z})^2 dy}{\int\limits_{y\in\Omega(x)} dy} = 
\frac{\int\limits_{\mathbb{S}_{\wtilde D - 1}}\int\limits_0^{g(x,\theta)} (\theta \cdot \vec{z})^2 r^2 r^{\wtilde D-1} drd\theta}{\int\limits_{\mathbb{S}_{\wtilde D - 1}}\int\limits_0^{g(x,\theta)} r^{\wtilde D-1} drd\theta} \notag\\
& = \frac{\wtilde D\int\limits_{\mathbb{S}_{\wtilde D - 1}}(\theta \cdot \vec{z})^2 g(x,\theta)^{\wtilde D+2} d\theta}{(\wtilde D+2)\int\limits_{\mathbb{S}_{\wtilde D - 1}}g(x,\theta)^{\wtilde D} drd\theta}     
,\end{align}
where $dr$ is the measure over the radial component, $r^{\wtilde D-1}$ is the Jacobian introduced by the change of variables and $d\theta$ is the measure over the $(\wtilde D-1)$-dimensional sphere.

Following the rationale of the proof of Lemma \ref{lem:bound_Df-Dfwtilde}, we split $\mathbb{S}_{\wtilde D-1}$ into $\Omega_1$ and $\Omega_2$ of \eqref{eq:Omegas_def}.
That is,
\begin{equation*}
    \begin{aligned}
    \Omega_1 &= \{\theta~|~0\leq \vec{z}^T\theta \leq \xi\}\\
    \Omega_2 &= \{\theta~|~\vec{z}^T\theta > \xi\}
    \end{aligned}
,\end{equation*}
for some $\xi$ to be chosen later.
Thus, denoting $z = \theta^T \vec{z}$ we rewrite \eqref{eq:E(Z_z)} as` 
\begin{align*}
\EE((\eta_\ell\cdot \vec{z})^2)   &=\frac{\wtilde D\left(\int\limits_{\Omega_1}z^2 g(x,\theta)^{\wtilde D+2} d\theta + \int\limits_{\Omega_2}z^2 g(x,\theta)^{\wtilde D+2} d\theta\right)}{(\wtilde D+2)\int\limits_{\mathbb{S}_{\wtilde D-1}}g(x,\theta)^{\wtilde D} d\theta} 
\\
&\leq  \frac{\wtilde D\left(\xi^2 \int\limits_{\Omega_1} g(x,\theta)^{\wtilde D+2} d\theta + \int\limits_{\Omega_2} g(x,\theta)^{\wtilde D+2} d\theta\right)}{(\wtilde D+2)\int\limits_{\mathbb{S}_{\wtilde D-1}}g(x,\theta)^{\wtilde D} d\theta} 
\end{align*}

Since the conditions of Lemma \ref{lem:g_bond_weak} are met, we have $\sigma \leq g(0,\theta)\leq \sigma + 4\sigma \alpha_\ell ^2$, and thus
\begin{align*}
\EE((\eta_\ell\cdot \vec{z})^2)
&\leq  \frac{\wtilde D\left(\xi^2 \int\limits_{\Omega_1} g(x,\theta)^{\wtilde D+2} d\theta + \int\limits_{\Omega_2} g(x,\theta)^{\wtilde D+2} d\theta\right)}{(\wtilde D+2)\int\limits_{\mathbb{S}_{\wtilde D-1}}g(x,\theta)^{\wtilde D} d\theta}
\\
&\leq  \frac{\wtilde D\sigma^2(1+4\alpha_\ell ^2)^2\left(\xi^2  \int\limits_{\Omega_1} g(x,\theta)^{\wtilde D} d\theta + \int\limits_{\Omega_2} g(x,\theta)^{\wtilde D} d\theta\right)}{(\wtilde D+2)\int\limits_{\mathbb{S}_{\wtilde D-1}}g(x,\theta)^{\wtilde D} d\theta}
\\
&\leq  \frac{\wtilde D\sigma^2(1+4\alpha_\ell ^2)^2}{\wtilde D+2}\left(\xi^2  +\frac{ \int\limits_{\Omega_2} g(x,\theta)^{\wtilde D} d\theta}{\int\limits_{\mathbb{S}_{\wtilde D-1}}g(x,\theta)^{\wtilde D} d\theta} \right)
\\
&\leq  \frac{\wtilde D\sigma^2(1+4\alpha_\ell ^2)^2}{\wtilde D+2}\left(\xi^2  +\frac{\sigma^{\wtilde D}(1+4\alpha_\ell ^2)^{\wtilde D} \int\limits_{\Omega_2} d\theta}{\sigma^{\wtilde D}\int\limits_{\mathbb{S}_{\wtilde D-1}} d\theta} \right)
\\
&\leq  \frac{\wtilde D\sigma^2(1+4\alpha_\ell ^2)^2}{\wtilde D+2}\left(\xi^2 +\frac{(1+4\alpha_\ell ^2)^{\wtilde D}}{\xi\sqrt{\wtilde D-1}} e^{-(\wtilde D-1)\xi^2/2}\right)
,\end{align*}
where the last inequality comes from Eq. \eqref{eq:Omega3_to_ball_ratio}.
Since $\alpha_\ell <1/D^{1/4}$ we have that $(1+4\alpha_\ell ^2)^{\wtilde D}$ is bounded by some constant $c$. Choosing $\xi = 2\sqrt{\frac{\log(\wtilde D-1)}{\wtilde D-1}}$ we have 
\begin{align*}
\EE((\eta_\ell\cdot \vec{z})^2)
&\leq  \frac{\wtilde D\sigma^2(1+4\alpha_\ell ^2)^2 }{\wtilde D+2}\left(4\frac{\log(\wtilde D-1)}{\wtilde D-1} +\frac{c}{2\sqrt{\log(\wtilde D-1)}} (\wtilde D-1)^{-2}\right)
\\
&\leq  c_1\sigma^2\frac{\log(\wtilde D-1)}{\wtilde D-1}\leq  \sqrt{\frac{c\sigma}{\wtilde D}}
\end{align*}
for some constants $c,c_1$. Combining with Eq. 
\eqref{eq:cov_norm2}, we conclude the proof.

\end{proof}

\subsubsection{Bounding the distance of $q_\ell$ from $f_\ell(0)$}\label{sec:f0_est}

Back to Theorem \ref{thm:Step2} proof road-map see Figure \ref{tikz:thm33_lemmas}.

\begin{Lemma}\label{lem:dist_to_f_l0_srtong_small_alpha}
For $f_\ell$ defined in \eqref{eq:fl_def}. Denote 
$\alpha_\ell = \maxangle (T_0f_{\ell}, H_{\ell})$ and assume $\alpha_\ell<1/D$. Then,
for any $\delta$ there is $N$ such that for any number of samples $n > N$, we have 
\[
\| \pi^*_{r, H_\ell}(0) - f_\ell(0)\| \leq 8 \sigma D \alpha_\ell ^2 + \frac{c \ln\left(\frac{1}{\delta}\right)}{n^{r_0}}.
\]
with probability of at least $1-\delta$, where $r_0 = \frac{k}{2k+d}$ and $\pi^*_{r, H_\ell}(0)$ is the polynomial defined in Equation \eqref{eq:argmin2_clean}, and $c$ is some general constant.
\end{Lemma}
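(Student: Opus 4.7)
The plan is to decompose the quantity we want to bound via the triangle inequality into two contributions: a \emph{variance} part that captures the randomness in local polynomial regression, and a \emph{bias} part that captures the deterministic mismatch between $f_\ell$ and the regression target $\wtilde f_\ell$. Specifically, I would write
\[
\|\pi^*_{r,H_\ell}(0) - f_\ell(0)\| \;\leq\; \|\pi^*_{r,H_\ell}(0) - \wtilde f_\ell(0)\| \;+\; \|\wtilde f_\ell(0) - f_\ell(0)\|,
\]
and argue that the first term is of stochastic order $\OO(n^{-r_0}\ln(1/\delta))$, while the second is a purely geometric quantity bounded by $8\sigma D \alpha_\ell^2$.

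For the stochastic term, I would invoke Theorem~3.2 of \cite{aizenbud2021VectorEstimation} (the zeroth-order version, i.e.\ for the value of the polynomial rather than its derivative), which yields a convergence rate of $n^{-k/(2k+d)}=n^{-r_0}$ for the local polynomial regression estimate of the conditional mean $\wtilde f_\ell$. The applicability of that theorem is exactly the same as in the proof of Lemma~\ref{lem:AngleImprovement}: it requires a bound on the operator norm of the noise covariance, which is supplied by Lemma~\ref{lem:noise_cov_bound} (recall that $\alpha_\ell < 1/D$ here, which is stronger than the $1/D^{1/4}$ assumption of that lemma). This gives, with probability at least $1-\delta$, a bound of $c\ln(1/\delta)/n^{r_0}$.

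For the bias term, I would use the closed-form representation \eqref{eq:f_tilde-f_integrals},
\[
\wtilde f_\ell(0) - f_\ell(0) = \frac{\wtilde D}{\wtilde D+1}\cdot\frac{\int_{\mathbb{S}_{\wtilde D-1}} \theta\, g(0,\theta)^{\wtilde D+1}\,d\theta}{\int_{\mathbb{S}_{\wtilde D-1}} g(0,\theta)^{\wtilde D}\,d\theta},
\]
and exploit the symmetry $\int_{\mathbb{S}_{\wtilde D-1}} \theta\, d\theta = 0$ to rewrite the numerator as $\int \theta[g(0,\theta)^{\wtilde D+1} - \sigma^{\wtilde D+1}]\,d\theta$. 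Applying Lemma~\ref{lem:g_bond_weak} ($\sigma \leq g(0,\theta) \leq \sigma(1+4\alpha_\ell^2)$) in the numerator and the lower bound $g(0,\theta)\geq\sigma$ in the denominator yields
\[
\|\wtilde f_\ell(0) - f_\ell(0)\| \;\leq\; \frac{\wtilde D\, \sigma^{\wtilde D+1}\bigl[(1+4\alpha_\ell^2)^{\wtilde D+1}-1\bigr]}{(\wtilde D+1)\sigma^{\wtilde D}} \;\leq\; \sigma\,\wtilde D\bigl[(1+4\alpha_\ell^2)^{\wtilde D+1}-1\bigr].
\]
Since $\alpha_\ell<1/D$ we have $4\alpha_\ell^2(\wtilde D+1)\leq 4(D+1)/D^2 < 1$, so a first-order expansion of $(1+4\alpha_\ell^2)^{\wtilde D+1}$ gives $(1+4\alpha_\ell^2)^{\wtilde D+1}-1 \leq 8D\alpha_\ell^2$, yielding the desired $8\sigma D\alpha_\ell^2$.

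The main obstacle, and what requires some care, is ensuring the dimension dependence in the bias estimate does not blow up: the naive expansion of $(1+4\alpha_\ell^2)^{\wtilde D+1}$ would be exponential in $D$, so it is essential to use the hypothesis $\alpha_\ell<1/D$ rather than a weaker constraint in order to linearize cleanly. Combining the two bounds and adjusting constants gives the stated inequality. The failure probability comes entirely from the local polynomial regression step.
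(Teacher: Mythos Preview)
Your proposal is correct and follows essentially the same approach as the paper: the same triangle-inequality split into a stochastic regression term and a deterministic bias term, the same integral representation \eqref{eq:f_tilde-f_integrals} with the $\sigma^{\wtilde D+1}$ symmetry subtraction, the same use of Lemma~\ref{lem:g_bond_weak}, and the same linearization $(1+x)^n-1\leq 2nx$ under $nx<1$. The only cosmetic discrepancy is that the paper invokes Theorem~2.1 (rather than~3.2) of \cite{aizenbud2021VectorEstimation} for the zeroth-order regression bound, and bounds the bias in an arbitrary direction $\vec z$ rather than in norm directly; neither changes the argument.
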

\begin{proof}
Using the triangle inequality, we have 
\begin{equation}\label{eq:f_ell0_triangle_l2}
    \|\pi^*_{r, H_\ell}(0) - f_\ell(0)\| \leq \| \pi^*_{r, H_\ell}(0)  - \wtilde{f}_\ell(0) \| + \| \wtilde{f}_\ell(0)  - f_\ell(0)\|.
\end{equation}
Applying Theorem 2.1 of \cite{aizenbud2021VectorEstimation} on $\| \pi^*_{r, H_\ell}(0)  - \wtilde{f}_\ell(0) \|$ we have that
\begin{equation}\label{eq:f_tilde0_bound_stone_l2}
        \| \pi^*_{r, H_\ell}(0) - \wtilde{f}_\ell(0)\| \leq \frac{c\ln\left(\frac{1}{\delta}\right)}{n^{r_0}}
\end{equation}
with probability of at least $1-\delta$.

Now we focus on bounding $\| \wtilde{f}_\ell(0)  - f_\ell(0)\|$. 
From \eqref{eq:f_tilde-f_integrals} we have that 
\begin{equation*}
\wtilde{f}_\ell(x) - f_\ell(x)=
\frac{\wtilde D\int\limits_{\mathbb{S}_{\wtilde D-1}}\theta g(x,\theta)^{\wtilde D+1} d\theta}{(\wtilde D+1)\int\limits_{\mathbb{S}_{\wtilde D-1}}g(x,\theta)^{\wtilde D} drd\theta}     
=
\frac{\wtilde D\int\limits_{\mathbb{S}_{\wtilde D-1}}\theta (g(x,\theta)^{\wtilde D+1} - \sigma^{\wtilde D+1}) d\theta}{(\wtilde D+1)\int\limits_{\mathbb{S}_{\wtilde D-1}}g(x,\theta)^{\wtilde D} drd\theta}     
\end{equation*}
or, looking at some direction $\vec{z}$ we have 
\begin{align*}
z^T\cdot (\wtilde{f}_\ell(x) - f_\ell(x))&=
\frac{\wtilde D\int\limits_{\mathbb{S}_{\wtilde D-1}}z (g(x,\theta)^{\wtilde D+1}- \sigma^{\wtilde D+1}) d\theta}{(\wtilde D+1)\int\limits_{\mathbb{S}_{\wtilde D-1}}g(x,\theta)^{\wtilde D} d\theta} 
\\
&\leq
\frac{\wtilde D}{(\wtilde D+1)}\sigma((1+4\alpha_\ell^2)^{\wtilde D+1} - 1)
\end{align*}

We note that for $x<1/n$ the following holds
\[
(1+x)^n-1<x((1+1/n)^n-1)/(1/n)<nx(e-1)<2nx.
\]
Using the above observation, and the fact that $4\alpha_\ell^2 <\wtilde D+1$ we have that 
\begin{align*}
z^T\cdot (\wtilde{f}_\ell(x) - f_\ell(x))&\leq
\frac{8\sigma\wtilde D}{(\wtilde D+1)}(\wtilde D+1)\alpha_\ell^2 \leq 8\sigma D \alpha_\ell^2
\end{align*}
Combining this with  \eqref{eq:f_ell0_triangle_l2} and \eqref{eq:f_tilde0_bound_stone_l2}, we have that for any $\delta>0$, for $\alpha_\ell >\frac{1}{D}$, and for number of samples $n>N$ large enough, 
\[
\|\pi^*_{r, H_\ell}(0) - f_\ell(0)\| \leq 8\sigma D \alpha_\ell^2 + \frac{C_2\ln\left(\frac{1}{\delta}\right)}{n^{r_0}},
\]
with probability of at least $1-\delta$.
\end{proof}

\subsubsection{Bounding the error induced by the shifted origin}\label{sec:ShiftedCenter_err}
\begin{figure}[ht]
	\centering
	\includegraphics[width=0.6\textwidth]{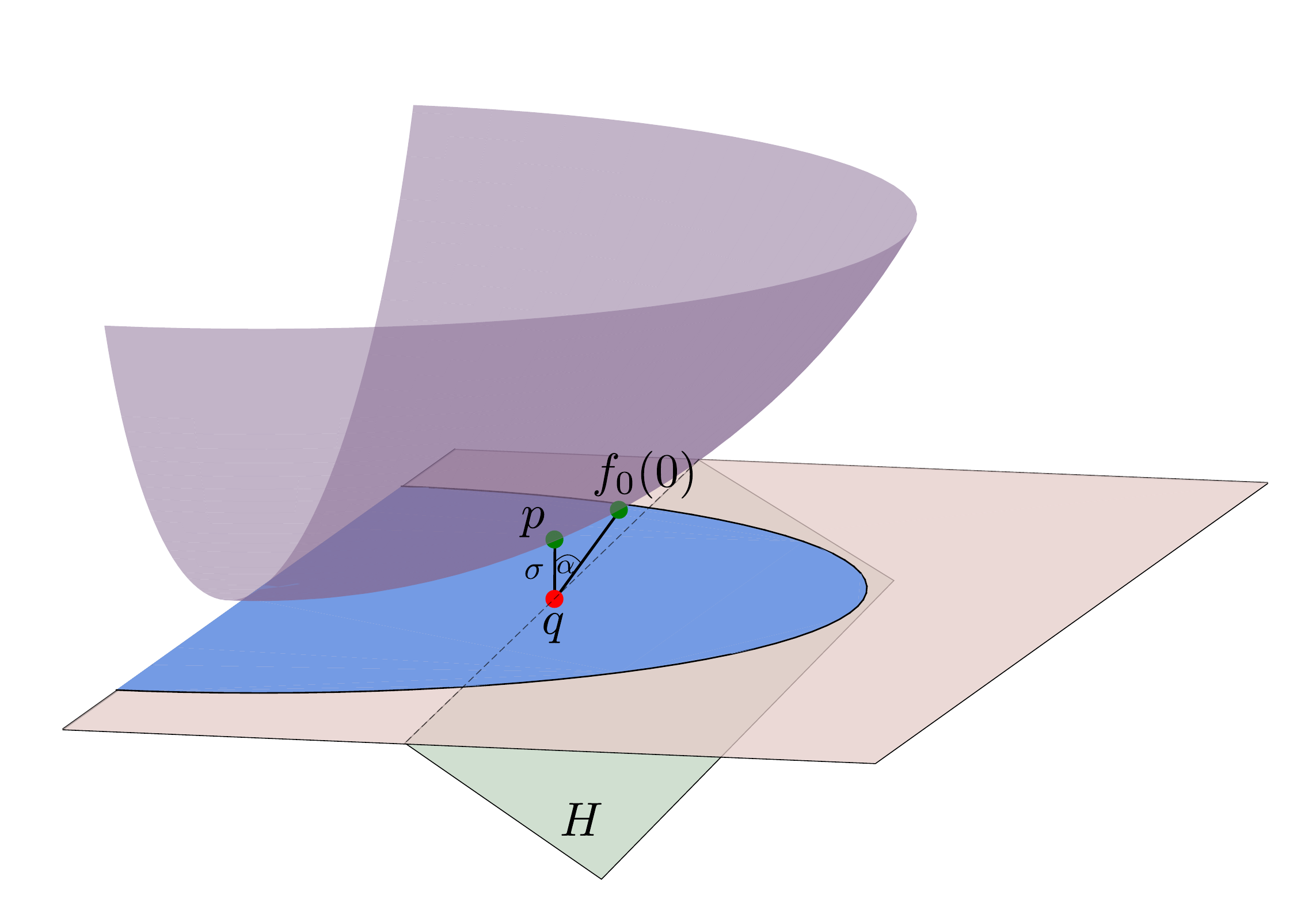}
	\caption{Illustration of the difference between $ p $ and $ f_0(0) $.}
	\label{fig:f0(0)vs_p}
\end{figure}

Back to Theorem \ref{thm:Step2} proof road-map see Figure \ref{tikz:thm33_lemmas}.

\begin{Lemma}\label{lem:shift_ang_general}
    Let $G_0$ be a $d$-dimensional linear space in $\RR^D$, and let $g_0:G_0 \rightarrow \RR^{D-d}$, such that the graph of $g_0$ is a manifold with reach bounded by $\tau$. Assume that $\maxangle (T_0g_0, G_0) \leq \alpha$. Let  $G_1$ be a $d$ dimensional linear space in $\RR^D$, such that $\maxangle(G_0,G_1)\leq \beta$. Define $g_1:G_1 \rightarrow \RR^{D-d}$ as the function who's graph coincides with the graph of $g_0$. 

    Then, for $\alpha \leq \pi/16$, $\beta\leq \beta_c$ where $\beta_c$ is some constant dependent only on $c_{\pi/4}$ of Lemma \ref{lem:M_is_locally_a_fuinction_clean}, and  $\|g_0(0)\| \leq c_{\pi/4} \cdot \tau$, for the constant $c_{\pi/4}$ defined in Lemma \ref{lem:M_is_locally_a_fuinction_clean}, we have 
    \[
    \maxangle (G_1, T_0g_1) \leq \maxangle(T_0g_0,G_1)  + \frac{4\|g_0(0)\|\left(2+\|g_0(0)\|/\tau\right)\beta}{\tau}
    \]
    Moreover,
    \[
    \|g_1(0)\| \leq \|g_0(0)\|(1+(\alpha+\beta)^2 (3+ 2\|g_0(0)\|/\tau) )
    \]
\end{Lemma}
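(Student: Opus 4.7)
I identify the common graph with a single sub-manifold $\MM \subset \RR^D$ of reach at least $\tau$, and focus on the two ``base points'' $p_0 = (0, g_0(0))_{G_0} \in \MM \cap G_0^\perp$ and $q_1 = (0, g_1(0))_{G_1} \in \MM \cap G_1^\perp$. Observe that $\|p_0\| = \|g_0(0)\|$, $\|q_1\| = \|g_1(0)\|$, $T_{p_0}\MM = T_0 g_0$, and $T_{q_1}\MM = T_0 g_1$, so both conclusions of the lemma are really statements about the location of $q_1$ and its tangent plane. I decompose $p_0 = y_0 + z_0$ orthogonally with respect to $\RR^D = G_1 \oplus G_1^\perp$ (so $y_0 \in G_1$, $z_0 \in G_1^\perp$); since $p_0 \in G_0^\perp$ and $\maxangle(G_0, G_1) \leq \beta$, Lemma~\ref{lem:angle_space_perp_space} gives $|\langle p_0, w\rangle| \leq \|p_0\|\sin\beta$ for every unit $w \in G_1$, whence $\|y_0\| \leq \|g_0(0)\|\sin\beta$ and $\|z_0\| \leq \|g_0(0)\|$.

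\textbf{Key inequality.} The heart of the proof is a single application of Lemma~\ref{lem:f_bound_circle_H_no_func_ver2_clean} to $\MM$ at the point $p = p_0$ with the tilted subspace $H = G_1$. By the triangle inequality for maximal angles,
\[
\maxangle(T_{p_0}\MM, G_1) \;\leq\; \maxangle(T_0 g_0, G_0) + \maxangle(G_0, G_1) \;\leq\; \alpha + \beta \;\leq\; \pi/4,
\]
using $\alpha \leq \pi/16$ and $\beta \leq \beta_c$. Writing $q_1 - p_0 = -y_0 + (q_1 - z_0)$ as the sum of a $G_1$-component and a $G_1^\perp$-component, and checking the size hypotheses of the lemma via $\|y_0\| \leq \|g_0(0)\|\sin\beta$ and $\|g_0(0)\| \leq c_{\pi/4}\tau$, the lemma delivers
\[
\|q_1 - z_0\| \;\leq\; \|y_0\|\bigl(\tan(\alpha+\beta) + c\|y_0\|/\tau\bigr).
\]

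\textbf{Conclusion.} For the norm estimate I use that $z_0, q_1 \in G_1^\perp$, whence $\|g_1(0)\| = \|q_1\| \leq \|z_0\| + \|q_1 - z_0\|$; substituting $\|z_0\| \leq \|g_0(0)\|$ and the key inequality, and using $\sin\beta \leq \alpha+\beta$ and $\tan(\alpha+\beta) \leq 2(\alpha+\beta)$ on $[0,\pi/4]$, yields $\|g_1(0)\| \leq \|g_0(0)\|\bigl(1 + (\alpha+\beta)^2(3 + 2\|g_0(0)\|/\tau)\bigr)$ after absorbing constants. For the angle estimate I combine the triangle inequality for maximal angles with Corollary~3 of \cite{boissonnat2017reach} applied to $p_0, q_1 \in \MM$:
\[
\maxangle(G_1, T_0 g_1) \;\leq\; \maxangle(T_0 g_0, G_1) + \maxangle(T_{p_0}\MM, T_{q_1}\MM) \;\leq\; \maxangle(T_0 g_0, G_1) + \frac{2\|p_0 - q_1\|}{\tau}.
\]
Because $p_0 - q_1 = y_0 - (q_1-z_0)$ is again an orthogonal decomposition, $\|p_0-q_1\|^2 = \|y_0\|^2 + \|q_1-z_0\|^2$, and the key inequality gives $\|p_0 - q_1\| \leq C\|g_0(0)\|\beta(1 + c\|g_0(0)\|/\tau)$, which fits inside the stated $4\|g_0(0)\|(2 + \|g_0(0)\|/\tau)\beta/\tau$.

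\textbf{Main obstacle.} The argument is essentially linear-algebraic bookkeeping plus one invocation of the tilted-graph lemma. The delicate points are (i) verifying all admissibility hypotheses of Lemma~\ref{lem:f_bound_circle_H_no_func_ver2_clean} (notably $\|x_H\| \leq c_{\pi/4}\tau$, $\|q_1-z_0\| \leq \tau/2$, and $\|q_1 - p_0\| \leq \tau\cos(\alpha+\beta)$) under the joint hypotheses $\alpha \leq \pi/16$, $\beta \leq \beta_c$, $\|g_0(0)\| \leq c_{\pi/4}\tau$, which forces a careful choice of $\beta_c$; and (ii) tracking constants so that the quadratic dependence on $\alpha+\beta$ in the norm bound and the linear-in-$\beta$ dependence in the angle bound comfortably fit inside the factors $3 + 2\|g_0(0)\|/\tau$ and $4(2 + \|g_0(0)\|/\tau)$ of the statement.
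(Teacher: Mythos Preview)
Your proposal is correct and shares the paper's high-level architecture: bound the displacement between $p_0=(0,g_0(0))_{G_0}$ and $q_1=(0,g_1(0))_{G_1}$ on $\MM$ using the reach, then plug into Corollary~3 of \cite{boissonnat2017reach} for the tangent-angle bound. The execution differs. The paper works in a two-dimensional section through $o_1,\tilde o_1,\bar o_1$, introduces auxiliary angles $\bar\alpha,\bar\beta$ and an intermediate point $x_o$, applies the sine rule, and solves a quadratic in the normal displacement $y$ coming from the bounding-ball constraint; the constants $3+2\|g_0(0)\|/\tau$ and $4(2+\|g_0(0)\|/\tau)$ emerge from several lines of explicit trigonometric simplification. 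You instead take the orthogonal decomposition $p_0=y_0+z_0$ along $G_1\oplus G_1^\perp$ and invoke Lemma~\ref{lem:f_bound_circle_H_no_func_ver2_clean} once at $p_0$ with $H=G_1$, which packages the same bounding-ball geometry in a single inequality. Your route is shorter and more transparent; the paper's route has the advantage that the specific constants in the statement come out mechanically rather than via an ``absorbing constants'' step that depends on the unspecified $c$ in Lemma~\ref{lem:f_bound_circle_H_no_func_ver2_clean}. Note also that the paper isolates a preparatory Lemma~\ref{lem:rough_w_bound} precisely to guarantee the existence of $g_1(0)$ and the a~priori bound $\|q_1-z_0\|\leq \tau/2$ you correctly flag under obstacle (i); you would need the same lemma (or an equivalent argument) to close that gap.
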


We first need a supporting lemma that will show us that $g_{1}$ exists, and specifically, $g_{1}(0)$ exist.
\begin{Lemma} \label{lem:rough_w_bound}
Under the conditions of Lemma \ref{lem:shift_ang_general}, $g_{1}(0)$ exists and 
\[
\|Proj_{T_0 g_0} (o_1 + (0, g_{1}(0))_{G_1} - \tilde o_1) \| \leq \tau/2,
\]
where $o_1 = (0,0)_{G_0}$ is the origin and $\tilde o_1 = (0, g_0(0))_{G_0}$.
\end{Lemma}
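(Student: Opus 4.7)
The plan is to construct $g_1(0)$ by viewing the manifold as a local graph over $G_1$ centered at $\tilde o_1$, and then dominate the target projection by the Euclidean norm of $g_1(0)-g_0(0)$ (since orthogonal projection is $1$-Lipschitz). The bound $\tau/2$ will follow because the shift induced by reorienting the origin from $G_0$ to $G_1$ is small in magnitude.

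First, I would apply Lemma \ref{lem:M_is_locally_a_fuinction_clean} to the manifold $\MM$ (the graph of $g_0$) at the point $p \defeq \tilde o_1 = g_0(0)\in \MM$ with the subspace $H = G_1$. The hypothesis $\maxangle(T_p\MM, G_1) \leq \pi/4$ is obtained from the triangle inequality for principal angles:
\[
\maxangle(T_0 g_0, G_1) \leq \maxangle(T_0 g_0, G_0) + \maxangle(G_0, G_1) \leq \alpha + \beta,
\]
which is below $\pi/4$ once we require $\beta_c \leq \pi/4 - \pi/16$. The lemma produces a function $\tilde g : B_{G_1}(0, c_{\pi/4}\tau) \to G_1^\perp$ whose translated graph $\{p + y + \tilde g(y) : y \in B_{G_1}(0, c_{\pi/4}\tau)\}$ coincides with $\MM \cap \mathrm{Cyl}_{G_1}(p, c_{\pi/4}\tau, \tau/2)$.

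Next, I would realize $g_1(0)$ inside this local patch. A point of the form $x + g_1(x)$, with $x\in G_1$ and $g_1(x)\in G_1^\perp$, belongs to the patch iff $x = y + Proj_{G_1}(p)$ and $g_1(x) = \tilde g(y) + Proj_{G_1^\perp}(p)$; setting $x=0$ forces $y = -Proj_{G_1}(g_0(0))$. Since $g_0(0) \in G_0^\perp$ and $\maxangle(G_0, G_1) \leq \beta$, Lemma \ref{lem:angle_space_perp_space} yields
\[
\|y\| = \|Proj_{G_1}(g_0(0))\| \leq \|g_0(0)\|\sin\beta \leq c_{\pi/4}\tau \sin\beta,
\]
which is strictly below $c_{\pi/4}\tau$ whenever $\sin\beta_c < 1$. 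Hence $y$ lies in the domain of $\tilde g$, the quantity $g_1(0) = Proj_{G_1^\perp}(g_0(0)) + \tilde g(y)$ is well defined, and $g_1(0) - g_0(0) = y + \tilde g(y)$.

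Finally, since $o_1 = 0$ and orthogonal projection is $1$-Lipschitz,
\[
\|Proj_{T_0 g_0}(o_1 + (0, g_1(0))_{G_1} - \tilde o_1)\| = \|Proj_{T_0 g_0}(g_1(0) - g_0(0))\| \leq \|y\| + \|\tilde g(y)\|.
\]
Applying Lemma \ref{lem:f_bound_circle_H_no_func_ver2_clean} at $p$ with subspace $G_1$ and angle at most $\alpha+\beta$ gives $\|\tilde g(y)\| \leq \|y\|(\tan(\alpha+\beta) + c\|y\|/\tau)$, so the right-hand side is of order $\tau\sin\beta$ and can be made $\leq \tau/2$ by taking $\beta_c$ small enough depending only on $c_{\pi/4}$ and $\alpha$. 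The only real subtlety, which I expect to be the mildest obstacle, is the simultaneous calibration of $\beta_c$ to guarantee that (i) the hypothesis of Lemma \ref{lem:M_is_locally_a_fuinction_clean} holds, (ii) the pullback $y$ lies in the domain of $\tilde g$, and (iii) the final displacement is at most $\tau/2$; each of these reduces to an explicit upper bound on $\beta_c$ that can be read off from the constants in the preliminary geometric lemmas.
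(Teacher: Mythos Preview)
Your proposal is correct and follows essentially the same route as the paper: both recentre at $\tilde o_1$, invoke Lemma~\ref{lem:M_is_locally_a_fuinction_clean} with subspace $G_1$, identify the preimage $y=-Proj_{G_1}(g_0(0))$ (the paper calls it $x_o$), bound $\|y\|\le c_{\pi/4}\tau\sin\beta$ via the principal-angle lemmas, and then dominate the projected displacement by the full displacement $\|g_1(0)-g_0(0)\|$. The only cosmetic differences are that the paper uses the Pythagorean form $\sqrt{\|y\|^2+\|\tilde g(y)\|^2}$ where you use the triangle inequality $\|y\|+\|\tilde g(y)\|$, and bounds $\|\tilde g(y)\|$ via Lemma~\ref{lem:f_bound_circle_H_clean} rather than Lemma~\ref{lem:f_bound_circle_H_no_func_ver2_clean}; both lead to the same $\mathcal{O}(\tau\sin\beta)$ estimate and the same conclusion for $\beta\le\beta_c$.
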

\begin{proof}
We begin with defining the coordinate system $(\tilde o_1, G_1)$ with $\tilde o_1 = (0, g_0(0))_{G_0}$. Let $\wtilde g_1:(\wtilde o_1,G_1) \simeq \RR^d \to \RR^{D-d}$ be the function defined in Lemma \ref{lem:M_is_locally_a_fuinction_clean}, such that 
\[
\Gamma_{\wtilde g_1} =  \MM \cap \textrm{Cyl}(\wtilde o_1,c_{\pi/4}\tau, \tau/2)
\]
From Lemma \ref{lem:M_is_locally_a_fuinction_clean} we know that $\wtilde g_1$ is defined for any $x\in \RR^d$ such that $\norm{x} \leq c_{\pi/4}\tau$.
Now, we denote $x_o = Proj_{G_1}(o_1 - \tilde o_1)$, the projection of $o_1$ onto the affine space defined by $(\tilde o_1, G_1)$.
From the assumptions we know that $\norm {\tilde o_1 - o_1} \leq  c_{\pi/4} \tau$. 
Since $\maxangle (G_1,G_0) \leq \beta$ from Lemma~\ref{lem:angle_space_to_vec} we have that $\norm{x_o} \leq c_{\pi/4} \tau \sin \beta$. 
Thus, for any $\beta$ we have  $\|x_o\| \leq c_{\pi/4} \tau$, and $\wtilde{g}_1(x_o)$ is therefore defined (by Lemma \ref{lem:M_is_locally_a_fuinction_clean}). Since $\wtilde g_1$ identifies with $g_1$ up to some shift in the domain and target, it follows that $g_1(0)$ is well defined.

Next we bound  $ \|Proj_{T_0g_0} (o_1 + (0, g_{1}(0))_{G_1} - \tilde o_1) \| $.
Since $\wtilde{g}_1(0) = 0$, from Lemma \ref{lem:f_bound_circle_H_clean} and the triangle inequality for maximal angles between flats
we have that 
\[
\|\wtilde{g}_1(x_o)\| \leq \tau\cos(\alpha + \beta)-\sqrt{\tau^2-(\norm{x_o} + \tau\sin(\alpha+\beta))^2}
\]
Substituting $\|x_o\|$ in the right hand side we set 
\begin{align}
\|\wtilde{g}_1(x_o)\| &\leq \tau\cos(\alpha + \beta)-\sqrt{\tau^2-(\tau c_{\pi/4} \sin \beta + \tau\sin(\alpha+\beta))^2} \notag\\
&= \tau \left( \cos(\alpha + \beta)-\sqrt{1-(c_{\pi/4} \sin \beta + \sin(\alpha+\beta))^2} \right)    
\end{align}
Since  
\begin{multline}
 \|Proj_{T_0g_0} (o_1 + (0, g_{1}(0))_{G_1} - \tilde o_1) \| \leq \norm{o_1 + (0, g_{1}(0))_{G_1} - \tilde o_1)}   = \sqrt{\|\wtilde{g}_1(x_o)\|^2 + \|x_o\|^2} 
\\
= \tau \sqrt{c_{\pi/4}^2 \sin^2 \beta + \left( \cos(\alpha + \beta)-\sqrt{1-(c_{\pi/4} \sin \beta + \sin(\alpha+\beta))^2} \right)^2 }
\end{multline}
which, for small enough $\beta$ (i.e., smaller than some constant $\beta_c$ depending only on $c_{\pi/4}$) and fixed $\alpha$ is smaller than $0.5 \tau$.

\begin{figure}[ht]
    \centering
    \includegraphics[width=0.6\textwidth]{./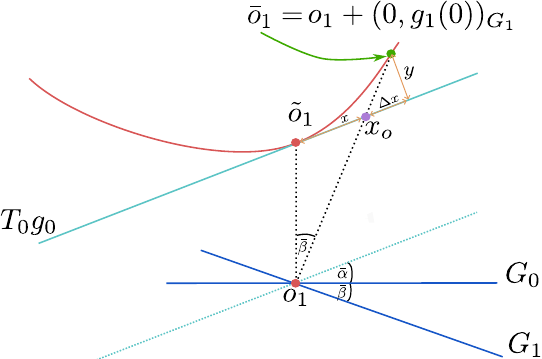}
    \caption{Illustration of an angle change of a coordinate system. We have $(o_1, G_0)$ as some coordinate system. Locally we look at $\MM$ (marked in solid red) as a graph of $g_0:(o_1, G_0)\to G_0\perp$. The point $\tilde o_1$ equals $g_0(0)$. Let $G_1$ be some rotated coordinate system and describe $\MM$ as a local graph of $g_1:(o_1, G_1)\to G_1^\perp$. }
    \label{fig:G0G1}
\end{figure}
\end{proof}

Next we prove Lemma \ref{lem:shift_ang_general}
\begin{proof}[proof of Lemma \ref{lem:shift_ang_general}] 
We first note that from Lemma \ref{lem:rough_w_bound} $(0,g_1(0))_{G_1}\in \MM$ exists.
Then, we denote by $o_1$ the origin, $\tilde o_1 = o_1 + (0, g_0(0))_{G_0}$ and $\bar o_1 = o_1 + (0, g_1(0))_{G_1}$ (see Figure \ref{fig:G0G1}).

\newcommand{\alphae}{\bar{\alpha}}
\newcommand{\betae}{\bar{\beta}}
Denote $\alphae = \angle(o_1,\tilde o_1, x_0) - \pi/2 $ and $\betae = \angle(\tilde o_1,o_1, \bar o_1)$, where $\angle(A,B,C)$ denotes the angle between the straight lines $\overline{AB}$ and $\overline{BC}$.
Note that $\alphae \leq \alpha$ and $\betae \leq \beta$.
Next, we bound the distance between $\tilde{o}_1$ and $x_o$, the intersection between $G_1^\perp$ and $T_0g_0$. Considering the triangle $o_1, \tilde{o}_1, x_o$, and using the sine theorem, we have that 
\begin{equation}\label{eq:x_eval}
x = \frac{\|g_0(0)\|\sin{\betae}}{\cos(\alphae+\betae)}.
\end{equation}
Next, we bound $\Delta x$, the distance between the point $x_o$ and the projection of $\bar{o}_1$ on $T_0g_0$. denote by $y$ the distance between $\bar o_1$ and $T_0g_0$ We have 
\begin{equation}\label{eq:Delta_x_as_y}
\Delta x = y \tan(\alphae+\betae).
\end{equation}
From Lemma B.6, we have that 
\begin{equation}\label{eq:y_bound}
y \leq \tau - \sqrt{\tau^2-(x+\Delta x)^2}
\end{equation}
substituting \eqref{eq:Delta_x_as_y} in \eqref{eq:y_bound}, we have
\[
y \leq \tau - \sqrt{\tau^2-(x+y \tan(\alphae+\betae))^2}.
\]
Simple algebraic manipulations give us
\[
y^2 (1+\tan^2(\alphae+\betae))-y(2\tau - 2x\tan(\alphae+\betae))+x^2 \geq 0.
\]
The above equation has two solutions.
\[
y_1 = \cos^2(\alphae+\betae) \left(\tau - x\tan(\alphae+\betae)-\sqrt{\tau^2-x^2-2\tau x\tan(\alphae+\betae)}\right),
\]
\[
y_2 = \cos^2(\alphae+\betae) \left(\tau - x\tan(\alphae+\betae)+\sqrt{\tau^2-x^2-2\tau x\tan(\alphae+\betae)}\right)
\]
and either $0\leq y \leq y_1$ or $y_2 \leq y$.

Note that since $x,\alphae,\betae$ are small, and using Remark \ref{rem:taylor_sqrt1-x2_clean}
\begin{align}
\sqrt{\tau^2-x^2-2\tau x\tan(\alphae+\betae)} &= \tau\sqrt{1-\frac{x^2}{\tau^2}-2\frac{1}{\tau} x\tan(\alphae+\betae)} 
\\&
\geq
\tau \left(1-\frac{x^2}{\tau^2}-2\frac{1}{\tau} x\tan(\alphae+\betae)\right) 
\\&
= \tau- x^2/\tau-2 x\tan(\alphae+\betae),
\end{align}
and thus,
\begin{align*}
y_1 &\leq \cos^2(\alphae+\betae) \left(\tau - x\tan(\alphae+\betae)-\tau+ x^2/\tau + 2 x \tan (\alphae+\betae)\right)\\
&= \cos^2(\alphae+\betae) \left( x^2/\tau +  x \tan (\alphae+\betae)\right),    
\end{align*}
\begin{align*}
y_2 &\geq \cos^2(\alphae+\betae) \left(\tau - x\tan(\alphae+\betae)+\tau- x^2/\tau-2 x\tan(\alphae+\betae)\right) \\
&= \cos^2(\alphae+\betae) \left(2\tau - 3x\tan(\alphae+\betae)- x^2/\tau\right)    
\end{align*}
for small $x,\alphae$, and $\betae$, and large $\tau$ $y_2>\tau$. Since $y$ cannot be larger than $\tau$ we get that  $0\leq y \leq y_1 = \cos^2(\alphae+\betae) \left( x^2/\tau +  x \tan (\alphae+\betae)\right)$.

Substituting the above inequality into \eqref{eq:Delta_x_as_y}, we get
\[
\Delta x \leq\cos(\alphae+\betae) \left( x^2/\tau +  x \tan (\alphae+\betae)\right) \sin(\alphae+\betae),
\]
and thus, using \eqref{eq:x_eval} and $\sin \betae \leq \sin(\alphae+\betae)$, we have
\begin{align}
x + \Delta x &\leq x \left(1+\cos(\alphae+\betae) \sin(\alphae+\betae)\left( x/\tau +   \tan (\alphae+\betae)\right) \right)
\\&=
x \left(1+\left( \|g_0(0)\|\sin{\betae}\sin(\alphae+\betae)/\tau + \sin^2 (\alphae+\betae)\right) \right)
\\&\leq
x \left(1+\left( \|g_0(0)\|\sin^2(\alphae+\betae)/\tau + \sin^2 (\alphae+\betae)\right) \right)
\\&=
x \left(1+\sin^2(\alphae+\betae)\left( \|g_0(0)\|/\tau + 1\right) \right)
,
\end{align}
substituting $x$ from \eqref{eq:x_eval}, we have
\begin{align}
\Delta x &\leq\cos(\alphae+\betae) \left( \left(\frac{\|g_0(0)\|\sin{\betae}}{\cos(\alphae+\betae)}\right)^2 \frac{1}{\tau}+  \frac{\|g_0(0)\|\sin{\betae}}{\cos(\alphae+\betae)} \tan (\alphae+\betae)\right) \sin(\alphae+\betae)
\\ &= \left( \frac{\left(\|g_0(0)\|\sin{\betae}\right)^2}{\tau\cos(\alphae+\betae)} +  \|g_0(0)\|\sin{\betae} \tan (\alphae+\betae)\right) \sin(\alphae+\betae)
\end{align}

Now, we are ready to bound the distance $\|\tilde{o}_1 - \bar{o}_1\|$
\begin{equation}\label{eq:baro-tilde0_2}
\begin{aligned}
\|\tilde{o}_1 - \bar{o}_1\|^2 &= \|\tilde{o}_1 - x_o\|^2 + \|x_o - \bar{o}_1\|^2 = (x+\Delta x)^2 + y^2
\\&=
x^2 \left(1+\sin^2(\alphae+\betae)\left( \|g_0(0)\|/\tau + 1\right) \right)^2
\\&~~~+
\left(\cos^2(\alphae+\betae) \left( x^2/\tau +  x \tan (\alphae+\betae)\right)\right)^2
\\&=
x^2 \left(1+\sin^2(\alphae+\betae)\left( \|g_0(0)\|/\tau + 1\right) \right)^2
\\&~~~+
x^2\left(\cos^2(\alphae+\betae) \left( x/\tau +  \tan (\alphae+\betae)\right)\right)^2
\\&=
x^2 \left(1+\sin^2(\alphae+\betae)\left( \|g_0(0)\|/\tau + 1\right) \right)^2
\\&~~~+
x^2\left(\cos(\alphae+\betae) \left( \|g_0(0)\|\sin\betae /\tau +  \sin (\alphae+\betae)\right)\right)^2
\\&\leq
x^2 \left(1+\sin^2(\alphae+\betae)\left( \|g_0(0)\|/\tau + 1\right) \right)^2
\\&~~~+
x^2\left(\cos(\alphae+\betae) \sin (\alphae+\betae)\left( \|g_0(0)\|/\tau+ 1 \right)\right)^2
\\&\leq
x^2 \left(1+\sin^2(\alphae+\betae)\left( \|g_0(0)\|/\tau + 1\right) \right)^2
\\&~~~+
x^2\left(\sin (\alphae+\betae)\left( \|g_0(0)\|/\tau+ 1 \right)\right)^2
\\&\leq
2x^2 \left(2+\sin(\alphae+\betae)\|g_0(0)\| /\tau\right)^2
\\&\leq
\frac{2(\|g_0(0)\|\sin{\betae})^2 \left(2+\sin(\alphae+\betae)\|g_0(0)\|/\tau \right)^2}{\cos(\alphae+\betae)}
\\&\leq
4(\|g_0(0)\|\sin{\betae})^2 \left(2+\sin(\alphae+\betae) \|g_0(0)\|/\tau  \right)^2
\end{aligned}
\end{equation}
where the last inequality is correct under the assumption that $\cos(\alphae+\betae)\geq 1/2$.

Finally, from Corollary 3 in \cite{boissonnat2017reach} we conclude that 
\[
\sin \frac{\maxangle(T_0g_0,T_0g_1)}{2} \leq \frac{\|\tilde o_1 - \bar o_1\|}{2\tau}.
\]
Since $x/2 \leq \sin x$, and using \eqref{eq:baro-tilde0_2}
\[
\maxangle(T_0g_0,T_0g_1) \leq \frac{2\|\tilde o_1 - \bar o_1\|}{\tau} \leq \frac{4\|g_0(0)\|\sin{\betae}\left(2+\sin(\alphae+\betae)\|g_0(0)\|/\tau  \right)}{\tau}. 
\]
we have  
\begin{align*}
\maxangle (T_0g_1, G_1) &\leq \maxangle (T_0g_0, G_1) + \maxangle(T_0g_0,T_0g_1)
\\&\leq 
\maxangle(T_0g_0,G_1)  + \frac{4\|g_0(0)\|\sin{\betae}\left(2+\sin(\alphae+\betae)\|g_0(0)\| /\tau \right)}{\tau}
\\&\leq 
\maxangle(T_0g_0,G_1)  + \frac{4\|g_0(0)\|\left(2+\|g_0(0)\|/\tau\right)\betae}{\tau}
\\&\leq 
\maxangle(T_0g_0,G_1)  + \frac{4\|g_0(0)\|\left(2+\|g_0(0)\|/\tau\right)\beta}{\tau}.
\end{align*}
This proves the first part of the lemma.

Next we bound $\|g_1(0)\|$. Note that 
$\|g_1(0)\| = \|x_0-o_1\| + \|\bar{o}_1 - x_0\|$.

\[
\|x_0-o_1\| = \frac{\|g_0(0)\| \cos\betae}{\cos(\alphae+\betae)} \leq \frac{\|g_0(0)\|}{\cos(\alphae+\betae)}.
\]
It can be shown using simple algebraic manipulations that,
\[
\|\bar{o}_1 - x_0\| \leq (\|g_0(0)\|^2/\tau+\|g_0(0)\|)  \frac{\sin^2(\alphae+\betae)}{\cos(\alphae+\betae)}
\]

\[
\|g_1(0)\| \leq  \frac{\|g_0(0)\|}{\cos(\alphae+\betae)} (~~~1 + (\|g_0(0)\|/\tau+1) \sin^2(\alphae+\betae)~~~)
\]
since $\cos(x) \geq 1- x^2/2$, $1/(1-x) \leq 1+2x$, $\sin(x) \leq x$, and $\alphae+\betae <1$,  we have 
\[
\|g_1(0)\| \leq  \frac{\|g_0(0)\|}{\cos(\alphae+\betae)} (~1 + (\|g_0(0)\|/\tau+1) \sin^2(\alphae+\betae)~)
\]
\begin{align}
\|g_1(0)\| &\leq  \frac{\|g_0(0)\|}{1-(\alphae+\betae)^2/2} (~1 + (\|g_0(0)\|/\tau+1) (\alphae+\betae)^2~) 
\\&
\leq \|g_0(0)\|(1+(\alphae+\betae)^2) (~1 + (\|g_0(0)\|/\tau+1) (\alphae+\betae)^2~) 
\\&
\leq \|g_0(0)\|(1+(\alphae+\betae)^2) (~1 + (\alphae+\betae)^2 + \|g_0(0)\|(\alphae+\betae)^2/\tau~) 
\\&
\leq \|g_0(0)\|(1+2(\alphae+\betae)^2 + (\alphae+\betae)^4 + \|g_0(0)\|(\alphae+\betae)^2/\tau+\|g_0(0)\|(\alphae+\betae)^4/\tau) 
\\&
\leq \|g_0(0)\|(1+3(\alphae+\betae)^2 + 2\|g_0(0)\|(\alphae+\betae)^2/\tau) 
\\&
\leq \|g_0(0)\|(1+(\alphae+\betae)^2 (3+ 2\|g_0(0)\|/\tau) ).
\end{align}
And thus we conclude the proof.
\end{proof}

\begin{Lemma}\label{lem:dist_given_angle}
    Let $G, H\in Gr(d, D)$ be two $d$-dimensional linear subspaces of $\RR^D$.
    Let $g: G \to \RR^{D-d}$ be such that the graph of $g$ is a manifold $\MM$ with a reach bounded by $\tau$ and let $h: H\to \RR^{D-d}$ denote the function over $H$ whose graph coincides with $\MM$ in some neighborhood \emph{(}see Figure \ref{fig:inverse_smot}\emph{)}. 
    Assume that $T_0 g$, the tanget of $g$ at $0$, is parallel to $G$, and that $\|g(0)\| \leq \sigma$. 
    If $\maxangle(H, T_0 h) \leq a $  is small enough, then 
    \[
    \|p_g-p_h\| \leq \frac{a}{\left(\frac{1}{2\sigma(2+\sigma/\tau)}-\frac{c}{\tau} \right)},
    \]
    where $p_g = (0, g(0))_{0, G}\in \MM$ and $p_h = (0, h(0))_{0, H}\in \MM$. 
\end{Lemma}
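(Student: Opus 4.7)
The plan is to treat $p_g$ and $p_h$ as two close points of $\MM$ and to compare them by relating everything back to the angle $\theta \defeq \maxangle(G,H)$. Observe first that the hypothesis $T_0 g \parallel G$ together with the fact that $p_g - 0 = g(0) \in G^\perp$ actually makes $p_g$ the nearest point of $\MM$ to the origin in the normal direction $G^\perp$, while by construction $p_h$ is the intersection of $\MM$ with the affine subspace $H^\perp$ through the origin. So morally both points are ``normal projections'' of $0$ onto $\MM$, taken with respect to two different nearby directions $G^\perp$ and $H^\perp$, and the whole task is to quantify how this small change of projection direction displaces the foot of the perpendicular.

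I would first establish the key geometric estimate
\[
\|p_g - p_h\| \;\leq\; 2\sigma\lrbrackets{2 + \sigma/\tau}\,\theta.
\]
To do so I would write $H$ as the graph of a linear map $L: G \to G^\perp$ with $\|L\| = \tan(\theta)$, so that $H^\perp = \{(-L^* w, w):w\in G^\perp\}$. Expressing $\MM$ near $p_g$ as the graph of a function $F: G \to G^\perp$ with $F(0)=g(0)$ and $DF(0)=0$ (using $T_0 g \parallel G$), the equation $(x,F(x)) \in H^\perp$ becomes $x + L^* F(x) = 0$. A short fixed-point/IFT argument combined with the reach-driven quadratic bound of Lemma~\ref{lem:f_bound_circle_H_no_func_ver2_clean} (which controls $\|F(x)-F(0)\|$ by $\OO(\|x\|^2/\tau)$) gives the existence of a unique small solution and the bound $\|x\| \leq \sigma\,\|L\|(1+\OO(\sigma/\tau))$, from which the displayed estimate above follows. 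This step is where the constant $2\sigma(2+\sigma/\tau)$ in the denominator of the lemma is born, and is the one genuine piece of work.

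Next I would invoke Corollary~3 of~\cite{boissonnat2017reach} (already used several times in the paper) applied to the two points $p_g,p_h\in\MM$ whose tangents are $T_{p_g}\MM = G$ and $T_{p_h}\MM = T_0 h$, to obtain
\[
\maxangle(G,\,T_0 h) \;\leq\; \frac{c\,\|p_g - p_h\|}{\tau}
\]
for some absolute constant $c$, valid once $\|p_g - p_h\|/\tau$ is small enough (which is covered by the ``$a$ small enough'' assumption). Finally the triangle inequality for $\maxangle$ on the Grassmannian gives
\[
a \;\geq\; \maxangle(H,T_0 h) \;\geq\; \maxangle(G,H) - \maxangle(G,T_0 h) \;\geq\; \frac{\|p_g-p_h\|}{2\sigma(2+\sigma/\tau)} \;-\; \frac{c\,\|p_g-p_h\|}{\tau},
\]
and solving this linear inequality for $\|p_g - p_h\|$ yields exactly the asserted bound.

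The main obstacle is the first step: extracting the clean linear-in-$\theta$ control of $\|p_g - p_h\|$ while keeping the $\sigma/\tau$ corrections under control and ensuring $H^\perp$ really meets $\MM$ uniquely near $p_g$. Everything after that is essentially bookkeeping built from results already in the paper.
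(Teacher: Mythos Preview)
Your proposal is correct and follows essentially the same route as the paper. Note that since $T_0 g \parallel G$ we have $T_0 g = G$ as subspaces, so your bridge angle $\theta=\maxangle(G,H)$ is literally the paper's $\maxangle(H,T_0 g)$; the two arguments have the same structure (displacement bound $\|p_g-p_h\|\leq 2\sigma(2+\sigma/\tau)\theta$, then Corollary~3 of \cite{boissonnat2017reach} plus the triangle inequality, then solve the linear inequality). The only difference is in how the displacement bound is obtained: you propose a fresh IFT/fixed-point argument, whereas the paper simply recycles equation~\eqref{eq:baro-tilde0_2} from the proof of Lemma~\ref{lem:shift_ang_general} specialized to $\alpha=0$, which already packages the reach-based quadratic control you invoke.
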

\begin{proof}
    \begin{figure}
        \centering
        \includegraphics[width=0.6\linewidth]{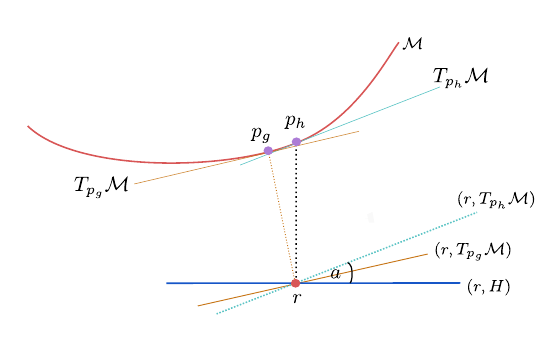}
        \caption{Supporting illustration for the proof of Lemma \ref{lem:dist_given_angle} where we bound the difference between $p_h$ and $p_g$, the evaluation of $r$ under the functions $h$ and $g$ correspondingly, given that the maximal angle between $T_{p_h}\MM$ and $T_{p_g}\MM$ is bounded by $a$.}
        \label{fig:inverse_smot}
    \end{figure}
     From the triangle inequality and \cite{boissonnat2017reach} we bound from above the angle between $T_0 g $ and $H$.
        \begin{equation}\label{eq:H_T_p angle bound}
        \maxangle(H, T_0 g) \leq \maxangle(H,T_0 h) + \maxangle(T_0 h,T_0 g) \leq a + c\|p_g-p_h\|/\tau
        .\end{equation}
    We now wish to give another bound tying between $\maxangle(H, T_0 g)$ and $\|p_g - p_h\|$ just from the other direction so that we could create an inequality limiting the values of $\|p_g- p_h\|$.
    Using the notation in Lemma \ref{lem:shift_ang_general}, we can follow the derivations with $\alpha = 0$, and with $\|g(0)\|\leq \sigma$, we get from \eqref{eq:baro-tilde0_2} that 
    \[
    \|p_g - p_h\|^2 \leq 4(\sigma \sin \maxangle(H, T_0 g))^2 (2+\sigma\sin \maxangle(H, T_0 g)/\tau)^2 \leq 4(\sigma \sin \maxangle(H, T_0 g))^2 (2+\sigma/\tau)^2.
    \]
    Thus,
    \[
    \|p_g -p_h\| \leq 2\sigma(2+\sigma/\tau) \sin \maxangle(H, T_0 g) \leq 2\sigma(2+\sigma) \maxangle(H, T_0 g),
    \]
    or,
    \[
    \maxangle(H, T_0 g)\geq \frac{\|p_g - p_h \|}{2\sigma(2+\sigma/\tau) }.
    \]

    Combining this with \eqref{eq:H_T_p angle bound}, we have
    \[
    \frac{\|p_g - p_h\|}{2\sigma(2+\sigma/\tau) } \leq a + c\|p_g - p_h\|/\tau
    \]
    or, 
    \[
    \|p_g - p_h\|\left(\frac{1}{2\sigma(2+\sigma/\tau)}-\frac{c}{\tau} \right) \leq a .
    \]
    Finally, we have,
    \[
    \|p_g - p_h\| \leq \frac{a}{\left(\frac{1}{2\sigma(2+\sigma/\tau)}-\frac{c}{\tau} \right)} .
    \]
\end{proof}

\end{appendices}
\bibliography{main}{}

\begin{thebibliography}{10}

\bibitem{aamari2018stability}
Eddie Aamari and Cl{\'e}ment Levrard.
\newblock Stability and minimax optimality of tangential delaunay complexes for
  manifold reconstruction.
\newblock {\em Discrete \& Computational Geometry}, 59(4):923--971, 2018.

\bibitem{aamari2019nonasymptotic}
Eddie Aamari and Cl{\'e}ment Levrard.
\newblock Nonasymptotic rates for manifold, tangent space and curvature
  estimation.
\newblock {\em The Annals of Statistics}, 47(1):177--204, 2019.

\bibitem{aizenbud2016SVD}
Yariv Aizenbud and Amir Averbuch.
\newblock Matrix decompositions using sub-gaussian random matrices.
\newblock {\em Information and Inference: A Journal of the IMA}, 8(3):445--469,
  2019.

\bibitem{aizenbud2015OutOfSample}
Yariv Aizenbud, Amit Bermanis, and Amir Averbuch.
\newblock {PCA}-based out-of-sample extension for dimensionality reduction.
\newblock {\em arXiv preprint arXiv:1511.00831}, 2015.

\bibitem{aizenbud2021VectorEstimation}
Yariv Aizenbud and Barak Sober.
\newblock Convergence rates vector-valued local polynomial regression.
\newblock {\em arXiv preprint arXiv:2107.05852}, 2021.

\bibitem{aizenbud2021non}
Yariv Aizenbud and Barak Sober.
\newblock Non-parametric estimation of manifolds from noisy data.
\newblock {\em arXiv preprint arXiv:2105.04754}, 2021.

\bibitem{aizenbud2019approximating}
Yariv Aizenbud and Barak Sober.
\newblock Approximating the span of principal components via iterative
  least-squares.
\newblock {\em Applied and Computational Harmonic Analysis}, 63:84--92, 2023.

\bibitem{allard2012multi}
William~K Allard, Guangliang Chen, and Mauro Maggioni.
\newblock Multi-scale geometric methods for data sets ii: Geometric
  multi-resolution analysis.
\newblock {\em Applied and computational harmonic analysis}, 32(3):435--462,
  2012.

\bibitem{alon1999space}
Noga Alon, Yossi Matias, and Mario Szegedy.
\newblock The space complexity of approximating the frequency moments.
\newblock {\em Journal of Computer and system sciences}, 1999.

\bibitem{belkin2003laplacian}
Mikhail Belkin and Partha Niyogi.
\newblock Laplacian eigenmaps for dimensionality reduction and data
  representation.
\newblock {\em Neural computation}, 15(6):1373--1396, 2003.

\bibitem{belkin2006manifold}
Mikhail Belkin, Partha Niyogi, and Vikas Sindhwani.
\newblock Manifold regularization: A geometric framework for learning from
  labeled and unlabeled examples.
\newblock {\em Journal of machine learning research}, 7(Nov):2399--2434, 2006.

\bibitem{bjorck1973numerical}
\'Ake Bj{\"o}rck and Gene~H Golub.
\newblock Numerical methods for computing angles between linear subspaces.
\newblock {\em Mathematics of computation}, 27(123):579--594, 1973.

\bibitem{boissonnat2014manifold}
Jean-Daniel Boissonnat and Arijit Ghosh.
\newblock Manifold reconstruction using tangential delaunay complexes.
\newblock {\em Discrete \& Computational Geometry}, 51(1):221--267, 2014.

\bibitem{boissonnat2017reach}
Jean-Daniel Boissonnat, Andr{\'e} Lieutier, and Mathijs Wintraecken.
\newblock The reach, metric distortion, geodesic convexity and the variation of
  tangent spaces.
\newblock {\em Journal of Applied and Computational Topology}, 3(1-2):29--58,
  2019.

\bibitem{cheng2005manifold}
Siu-Wing Cheng, Tamal~K Dey, and Edgar~A Ramos.
\newblock Manifold reconstruction from point samples.
\newblock In {\em SODA}, volume~5, pages 1018--1027, 2005.

\bibitem{cleveland1979robust}
William~S Cleveland.
\newblock Robust locally weighted regression and smoothing scatterplots.
\newblock {\em Journal of the American statistical association},
  74(368):829--836, 1979.

\bibitem{coifman2006diffusion}
Ronald~R Coifman and St{\'e}phane Lafon.
\newblock Diffusion maps.
\newblock {\em Applied and computational harmonic analysis}, 21(1):5--30, 2006.

\bibitem{dey2006curve}
Tamal~K Dey.
\newblock {\em Curve and surface reconstruction: algorithms with mathematical
  analysis}, volume~23.
\newblock Cambridge University Press, 2006.

\bibitem{ding2020phase}
Xiucai Ding and Hau-Tieng Wu.
\newblock Phase transition of {G}raph {L}aplacian of high dimensional noisy
  random point cloud.
\newblock {\em arXiv preprint arXiv:2011.10725}, 2020.

\bibitem{dunson2021spectral}
David~B Dunson, Hau-Tieng Wu, and Nan Wu.
\newblock Spectral convergence of graph laplacian and heat kernel
  reconstruction in l-{$\infty$} from random samples.
\newblock {\em Applied and Computational Harmonic Analysis}, 2021.

\bibitem{el2016graph}
Noureddine El~Karoui and Hau-Tieng Wu.
\newblock Graph connection {L}aplacian methods can be made robust to noise.
\newblock {\em The Annals of Statistics}, 44(1):346--372, 2016.

\bibitem{faigenbaum2020manifold}
Shira Faigenbaum-Golovin and David Levin.
\newblock Manifold reconstruction and denoising from scattered data in high
  dimension.
\newblock {\em Journal of Computational and Applied Mathematics}, 421:114818,
  2023.

\bibitem{faigenbaum2025mind}
Shira Faigenbaum-Golovin and David Levin.
\newblock Mind the gap: hole-filling and reconstruction of high-dimensional
  manifolds from noisy scattered data.
\newblock {\em Sampling Theory, Signal Processing, and Data Analysis},
  23(1):1--28, 2025.

\bibitem{federer1959curvature}
Herbert Federer.
\newblock Curvature measures.
\newblock {\em Transactions of the American Mathematical Society},
  93(3):418--491, 1959.

\bibitem{fefferman2018fitting}
Charles Fefferman, Sergei Ivanov, Yaroslav Kurylev, Matti Lassas, and Hariharan
  Narayanan.
\newblock Fitting a putative manifold to noisy data.
\newblock In {\em Conference On Learning Theory}, pages 688--720, 2018.

\bibitem{fefferman2019fitting}
Charles Fefferman, Sergei Ivanov, Matti Lassas, and Hariharan Narayanan.
\newblock Fitting a manifold of large reach to noisy data.
\newblock {\em Journal of Topology and Analysis}, pages 1--82, 2023.

\bibitem{fefferman2023fitting}
Charles Fefferman, Sergei Ivanov, Matti Lassas, and Hariharan Narayanan.
\newblock Fitting a manifold to data in the presence of large noise.
\newblock {\em arXiv preprint arXiv:2312.10598}, 2023.

\bibitem{genovese2012minimax}
Christopher Genovese, Marco Perone-Pacifico, Isabella Verdinelli, and Larry
  Wasserman.
\newblock Minimax manifold estimation.
\newblock {\em Journal of machine learning research}, 13(May):1263--1291, 2012.

\bibitem{genovese2012manifold}
Christopher~R Genovese, Marco Perone-Pacifico, Isabella Verdinelli, Larry
  Wasserman, et~al.
\newblock Manifold estimation and singular deconvolution under {H}ausdorff
  loss.
\newblock {\em The Annals of Statistics}, 40(2):941--963, 2012.

\bibitem{Venkatesan2012high}
Venkatesan Guruswami and Ravi Kannan.
\newblock Lecture notes: Computer science theory for the information age, note
  1.
\newblock {\em
  https://www.cs.cmu.edu/{$\sim$}venkatg/teaching/CStheory-infoage/chap1-high-dim-space.pdf},
  2012.

\bibitem{halko2011algorithm}
Nathan Halko, Per-Gunnar Martinsson, Yoel Shkolnisky, and Mark Tygert.
\newblock An algorithm for the {P}rincipal {C}omponent {A}nalysis of large data
  sets.
\newblock {\em SIAM Journal on Scientific computing}, 33(5):2580--2594, 2011.

\bibitem{hastie1984principal}
Trevor Hastie.
\newblock Principal curves and surfaces.
\newblock Technical report, Stanford University Lab for Computational
  Statistics, 1984.

\bibitem{jordan1875essai}
Camille Jordan.
\newblock Essai sur la g{\'e}om{\'e}trie {\`a} $ n $ dimensions.
\newblock {\em Bulletin de la Soci{\'e}t{\'e} math{\'e}matique de France},
  3:103--174, 1875.

\bibitem{kaslovsky2011optimal}
Daniel~N. Kaslovsky and Fran{\c{c}}ois~G. Meyer.
\newblock Optimal tangent plane recovery from noisy manifold samples.
\newblock {\em ArXiv eprints}, 2011.

\bibitem{kaslovsky2014non}
Daniel~N. Kaslovsky and Fran{\c{c}}ois~G. Meyer.
\newblock Non-asymptotic analysis of tangent space perturbation.
\newblock {\em Information and Inference: a Journal of the IMA}, 3(2):134--187,
  2014.

\bibitem{kim2015tight}
Arlene~KH Kim, Harrison~H Zhou, et~al.
\newblock Tight minimax rates for manifold estimation under {H}ausdorff loss.
\newblock {\em Electronic Journal of Statistics}, 9(1):1562--1582, 2015.

\bibitem{knyazev2007majorization}
Andrew~V Knyazev and Merico~E Argentati.
\newblock Majorization for changes in angles between subspaces, ritz values,
  and graph laplacian spectra.
\newblock {\em SIAM journal on matrix analysis and applications}, 29(1):15--32,
  2007.

\bibitem{kouvrimska2024free}
Hana Dal~Poz Kou{\v{r}}imsk{\'a}, Andr{\'e} Lieutier, and Mathijs Wintraecken.
\newblock A free lunch: manifolds of positive reach can be smoothed without
  decreasing the reach.
\newblock {\em arXiv preprint arXiv:2412.04899}, 2024.

\bibitem{levin2004mesh}
David Levin.
\newblock Mesh-independent surface interpolation.
\newblock In {\em Geometric modeling for scientific visualization}, pages
  37--49. Springer, 2004.

\bibitem{lim2021tangent}
Uzu Lim, Vidit Nanda, and Harald Oberhauser.
\newblock Tangent space and dimension estimation with the {W}asserstein
  distance.
\newblock {\em arXiv preprint arXiv:2110.06357}, 2021.

\bibitem{mclain1976two}
Dermot~H McLain.
\newblock Two dimensional interpolation from random data.
\newblock {\em The Computer Journal}, 19(2):178--181, 1976.

\bibitem{milman2009asymptotic}
Vitali~D Milman and Gideon Schechtman.
\newblock {\em Asymptotic theory of finite dimensional normed spaces:
  Isoperimetric inequalities in {R}iemannian manifolds}, volume 1200.
\newblock Springer, 2009.

\bibitem{mohammed2017manifold}
Kitty Mohammed and Hariharan Narayanan.
\newblock Manifold learning using kernel density estimation and local
  {P}rincipal {C}omponents {A}nalysis.
\newblock {\em preprint arXiv:1709.03615}, 2017.

\bibitem{niyogi2008finding}
Partha Niyogi, Stephen Smale, and Shmuel Weinberger.
\newblock Finding the homology of submanifolds with high confidence from random
  samples.
\newblock {\em Discrete \& Computational Geometry}, 39(1-3):419--441, 2008.

\bibitem{puchkin2022structure}
Nikita Puchkin and Vladimir Spokoiny.
\newblock Structure-adaptive manifold estimation.
\newblock {\em Journal of Machine Learning Research}, 23(40):1--62, 2022.

\bibitem{roweis2000LLE}
Sam~T Roweis and Lawrence~K Saul.
\newblock Nonlinear dimensionality reduction by locally linear embedding.
\newblock {\em Science}, 290(5500):2323--2326, 2000.

\bibitem{saul2003LLE}
Lawrence~K Saul and Sam~T Roweis.
\newblock Think globally, fit locally: unsupervised learning of low dimensional
  manifolds.
\newblock {\em The Journal of Machine Learning Research}, 2003.

\bibitem{scholkopf1998KPCA}
Bernhard Sch{\"o}lkopf, Alexander Smola, and Klaus-Robert M{\"u}ller.
\newblock Nonlinear component analysis as a kernel eigenvalue problem.
\newblock {\em Neural computation}, 1998.

\bibitem{shen2020scalability}
Chao Shen and Hau-Tieng Wu.
\newblock Scalability and robustness of spectral embedding: landmark diffusion
  is all you need.
\newblock {\em arXiv preprint arXiv:2001.00801}, 2020.

\bibitem{singer2012vector}
Amit Singer and Hau-Tieng Wu.
\newblock Vector diffusion maps and the connection {L}aplacian.
\newblock {\em Communications on pure and applied mathematics},
  65(8):1067--1144, 2012.

\bibitem{sober2017approximation}
Barak Sober, Yariv Aizenbud, and David Levin.
\newblock Approximation of functions over manifolds: A moving least-squares
  approach.
\newblock {\em Journal of Computational and Applied Mathematics}, 383:113140,
  2021.

\bibitem{sober2020Geodesics}
Barak Sober, Ingrid Daubechies, and Robert Ravier.
\newblock Approximating the {R}iemannian metric from point clouds via manifold
  moving least squares.
\newblock {\em arXiv preprint arXiv:2007.09885}, 2020.

\bibitem{sober2016MMLS}
Barak Sober and David Levin.
\newblock Manifold approximation by moving least-squares projection ({MMLS}).
\newblock {\em Constructive Approximation}, pages 1--46, 2019.

\bibitem{stone1980optimal}
Charles~J. Stone.
\newblock Optimal rates of convergence for nonparametric estimators.
\newblock {\em The Annals of Statistics}, pages 1348--1360, 1980.

\bibitem{stone1982optimal}
Charles~J Stone.
\newblock Optimal global rates of convergence for nonparametric regression.
\newblock {\em The annals of statistics}, pages 1040--1053, 1982.

\bibitem{vershynin2018high}
Roman Vershynin.
\newblock {\em High-dimensional probability: An introduction with applications
  in data science}, volume~47.
\newblock Cambridge university press, 2018.

\bibitem{wendland2004scattered}
Holger Wendland.
\newblock {\em Scattered data approximation}, volume~17.
\newblock Cambridge university press, 2004.

\end{thebibliography}
\bibliographystyle{plain}

\end{document}